\documentclass[12pt,UTF-8,reqno]{amsart}
\usepackage{geometry}
\usepackage{amsfonts}
\usepackage{mathrsfs}
\usepackage[greek,english]{babel}
\usepackage{amsmath,amsfonts,amsthm,amssymb,mathtools,amscd,color,xcolor,mathrsfs,verbatim,microtype}
\usepackage{graphicx,eurosym}
\usepackage{amssymb,amsmath,comment}
\usepackage{amsfonts,mathrsfs}
\usepackage{color}
\usepackage{bbm}
\usepackage{bm}
\usepackage{hyperref}
\usepackage{stmaryrd} 
\usepackage{cleveref}

\usepackage[cyr]{aeguill}

\usepackage{enumerate, bbm}

\colorlet{darkblue}{blue!50!black}

\hypersetup{
	colorlinks,%
	citecolor=darkblue,%
	filecolor=darkred,%
	linkcolor=blue,%
	urlcolor=blue,%
	pdfnewwindow=true,%
	pdfstartview={FitH}
}

\usepackage{graphicx,amscd,mathrsfs,wrapfig,mathrsfs,lipsum}
\usepackage{eufrak}
\usepackage{float}
\usepackage{tikz}
\usepackage{multicol}
\usepackage{caption}
\usetikzlibrary{arrows}
\usepackage{capt-of}

\colorlet{darkblue}{blue!50!black}

\newcommand{\vertiii}[1]{{\left\vert\kern-0.25ex\left\vert\kern-0.25ex\left\vert #1
		\right\vert\kern-0.25ex\right\vert\kern-0.25ex\right\vert}}

\usepackage{marginnote}

\newcommand{\bae}{\begin{equation}\begin{aligned}}
		\newcommand{\eae}{\end{aligned}\end{equation}}
\newcommand{\baee}{\begin{equation*}\begin{aligned}}
		\newcommand{\eaee}{\end{aligned}\end{equation*}}

\theoremstyle{plain}

\newtheorem*{lemma*}{Lemma}
\newtheorem{theorem}{Theorem}[section]
\newtheorem{lemma}[theorem]{Lemma}
\newtheorem{proposition}[theorem]{Proposition}
\newtheorem{corollary}[theorem]{Corollary}
\theoremstyle{definition}
\newtheorem{definition}[theorem]{Definition}
\newtheorem{condition}[theorem]{Condition}

\theoremstyle{remark}

\newtheorem{remark}{Remark}[section]

\numberwithin{equation}{section}

\newcommand{\ea}{\end{array}}

\newcommand{\worknote}[1]{}

\allowdisplaybreaks

\begin{document}
\title[Mean-Field Stochastic PDEs]
{Mean-Field Stochastic PDEs: Well-posedness and Quantitative Dimension-Free Propagation of Chaos$^\dagger$}
\thanks{$\dagger$
	This work is supported by National Key R\&D program of China (No.~2023YFA1010101).  W. Hong is supported by  NSFC (No.~12401177) and Basic
	Research Program of Jiangsu (No.~BK20241048).  W. Liu is supported by NSFC (No.~12571155). }

\maketitle
\centerline{ Wei Hong$^a$,  Wei Liu$^{a,}$\footnote{Corresponding author: weiliu@jsnu.edu.cn}, Luhan Yang$^{b}$ }

\vspace{2mm}
\medskip
{\footnotesize
	\centerline{ $a.$   School of Mathematics and Statistics, Jiangsu Normal University, Xuzhou 221116, China}
	
	\vspace{1mm}
	\centerline{ $b.$   School of Statistics and Data Science, Nankai University, Tianjin 300071, China}}

\begin{abstract}
This work investigates the mean-field stochastic PDEs involving a class of pseudo-monotone kernels. We first study the well-posedness--in both the strong and weak sense--within the variational framework by introducing a notion of measure-dependent pseudo-monotone operators, which generalizes  the classical framework due to  Br\'{e}zis. Furthermore, we establish   the quantitative dimension-free propagation of chaos within a $p$-uniformly convex Banach space for general infinite-dimensional weakly interacting systems,  obtaining convergence rates that are near-optimal in a suitable sense.

Our results reveal a new insight: the convergence rate of the mean-field limit is intrinsically governed by the geometry of the underlying solution space, specifically its modulus of convexity.
As applications, we study several finite- and infinite-dimensional interacting particle systems arising in machine learning and fluid mechanics, including stochastic Stein variational gradient descent, mean-field Allen-Cahn equations, and Lagrangian-averaged Burgers equations.
	
	\bigskip
	\noindent
	\textbf{Keywords}:  Mean-field system; Stochastic PDEs; Variational framework; Propagation of chaos; $p$-uniform convexity; Convergence rates.
	\\
	\textbf{Mathematics Subject Classification (2020)}: {60H10,~60H15,~60K35}
\end{abstract}

\tableofcontents

\section{Introduction}
Interacting particle systems provide a fundamental framework for modeling collective dynamics in statistical physics, machine learning, and game theory (cf.~e.g.~\cite{BS20,CD18,LW1,RV18}).
To circumvent the curse of dimensionality inherent in high-dimensional joint dynamics, the mean-field approximation replaces intricate  pairwise interactions with an averaged coupling to the empirical measure.
In the infinite-system limit, this approximation gives rise to the propagation of chaos (PoC), originally formulated by Kac in  kinetic theory \cite{Kac}, whereby particles become asymptotically independent. The limiting dynamics are governed by a  class of mean-field  (or McKean-Vlasov) stochastic differential equations (SDEs),   first introduced by McKean \cite{Mckean}, whose coefficients depend on the time-marginal law of the solution process itself. Nowadays, this class of SDEs has become a powerful theoretical tool for understanding the large-scale stochastic systems.

More formally, an $N$-particle interacting system  is described by the coupled SDEs
\begin{equation*}\label{SDE1}
    dX_t^{i,N} = b(X_t^{i,N}, \mu_t^N)dt + \sigma( X_t^{i,N},\mu_t^N)dW_t^i, \quad i=1, \dots, N,
\end{equation*}
where $\{W^i_t\}_{i=1}^N$ are independent standard Brownian motions and the empirical measure is defined as
$$\mu_t^N:= N^{-1}\sum_{j=1}^N \delta_{X_t^{j,N}}.$$
The PoC  asserts that, as $N\to\infty$, the empirical measures $\mu_t^N$ converge in law to the deterministic flow $\mathscr{L}_{X_t}$, where $X$ solves the following mean-field SDE
\begin{equation*}\label{SDE2}
    dX_t = b(X_t, \mu_t)dt + \sigma( X_t,\mu_t)dW_t,\quad \mu_t=\mathscr{L}_{X_t}.
\end{equation*}
This provides a rigorous foundation for the mean-field approximation: in the limit, particles become asymptotically independent and identically distributed.
When $b$ and $\sigma$ are globally Lipschitz in both the state and the measure (e.g.~in the $2$-Wasserstein metric), McKean \cite{Mc67} and Sznitman \cite{S} firstly established the following convergence result
$$\mathbb{E}\Big[\sup_{t\in[0,T]}|X_t^{1,N}-X_t|^2\Big]\lesssim N^{-1},$$
which implies an $L^2$-strong error of order $N^{-1/2}$.     This rate is sharp in the strong sense.
Subsequent work has significantly extended this theory to non-Lipschitz settings, which are crucial for practical applications in mathematical physics, machine learning, and optimization algorithms (cf.~e.g.~\cite{DT18,DNS,JW,S2020,V}).
For comprehensive surveys on  mean-field SDEs, we refer to \cite{BR24,BLPR,CD22,HSS,WR25} and references therein.

One application of the present work is motivated by interacting particle systems arising in Bayesian inference. For high-dimensional or large-scale posterior approximation, classical Markov Chain Monte Carlo (MCMC) methods \cite{H-MCMC} are asymptotically exact but scale poorly due to slow mixing. Conversely, Variational Inference (VI) \cite{BKM-VI} offers computational spectral gains by framing inference as optimization, but sacrifices asymptotic consistency. This trade-off led to interacting particle methods as a scalable and consistent alternative.
Particle-based methods address this problem by evolving   interacting particles  toward the target posterior. One important  example is Stein Variational Gradient Descent (SVGD), introduced in \cite{LW1}, which moves a set of particles $\{X^{i,N}\}_{i=1}^N$ to approximate a target distribution $\pi$.
SVGD updates the particles along the steepest descent direction of the Kullback--Leibler (KL) divergence. Given a step size $\varepsilon>0$, each particle is transported by
\[
x \mapsto x+\varepsilon \psi(x),
\]
where $\psi$ is a vector field determined by a user-chosen kernel $\kappa(x,y)$. When the target distribution is of the form
\[
\pi(x) \propto \exp(-\mathcal{V}(x)),
\]
the discrete SVGD update is given by
\begin{equation*}
	X_{n+1}^{i,N}
	=
	X_{n}^{i,N}
	-\frac{\varepsilon}{N}\sum_{j=1}^N \nabla_y \kappa(X_{n}^{i,N}, X_{n}^{j,N})
	-\frac{\varepsilon}{N}\sum_{j=1}^N \kappa(X_{n}^{i,N}, X_{n}^{j,N}) \nabla \mathcal{V}(X_{n}^{j,N}).
\end{equation*}
This interacting particle structure allows SVGD to approximate posterior distributions efficiently, while avoiding some limitations of traditional MCMC and variational inference methods.

Recently, Zhang et al.~\cite{ZZCC} pointed out a theoretical limitation of SVGD: under certain conditions, the particles may collapse to a local mode.  Motivated by the unified particle-optimization framework in \cite{CZWLC}, they proposed a stochastic variant of SVGD, where the deterministic transport field is augmented by a diffusion term.  Intuitively, the injected noise enhances the ability of the particles to escape local modes, thereby improving exploration compared with standard SVGD.  With Gaussian noise added, the particle update rule becomes
\begin{eqnarray*}
X_{n+1}^{i,N} =\!\!\!\!\!\!\!\!&&X_{n}^{i,N}-\varepsilon\lambda^{-1}\nabla \mathcal{V}(X_{n}^{i,N})-\frac{\varepsilon}{N}\sum_{j=1}^N \nabla_y \kappa(X_{n}^{i,N}, X_{n}^{j,N})\\
\!\!\!\!\!\!\!\!&&- \frac{\varepsilon}{N}\sum_{j=1}^N \kappa(X_{n}^{i,N}, X_{n}^{j,N}) \nabla \mathcal{V}(X_{n}^{j,N})+\sqrt{2 \lambda^{-1}\varepsilon}\xi_n^{i}.
\end{eqnarray*}
where $\lambda>0$ is the temperature parameter and $\xi_n^{i} \sim \mathcal{N}(0, I_d)$. In the continuous-time limit, this leads to SDEs
\begin{eqnarray}\label{SSVGD}
dX_{t}^{i,N} =\!\!\!\!\!\!\!\!&&-\lambda^{-1}\nabla \mathcal{V}(X_t^{i,N})dt-\frac{1}{N}\sum_{j=1}^N \nabla_y \kappa(X_t^{i,N}, X_t^{j,N}) dt\nonumber\\
\!\!\!\!\!\!\!\!&&- \frac{1}{N}\sum_{j=1}^N \kappa(X_t^{i,N}, X_t^{j,N}) \nabla \mathcal{V}(X_t^{j,N})dt+\sqrt{2 \lambda^{-1}}dW_t^{i},
\end{eqnarray}
where $W_t^i$, $i = 1, \dots, N$, are independent $\mathbb{R}$-valued Brownian motions.
Inspired by \cite[Remark 46]{DNS}, we consider the kernel
$$
\kappa(x,y) := x y,
\qquad
\mathcal{V}(y) := \frac{1}{2}y^{2}.
$$
In fact, Liu and Wang \cite{LW2} observed that super-linear kernels of the form $\kappa(x,y)=xy$ can handle cases in which the target distribution is asymptotically Gaussian, provided that the number of particles exceeds the dimension.
We also refer interested readers to \cite{DNS,GI} and the references therein for further studies on stochastic SVGD.

Notably, the interaction kernels in (\ref{SSVGD}) exhibit super-linear growth and fail to satisfy standard global Lipschitz continuity or monotonicity conditions widely adopted in the literature (e.g.~\cite{BCC,S,W18} and references therein). Dealing with such super-linear interaction kernels is a longstanding challenge (cf.~\cite{BCC,E,NTT}). As remarked in Section 3.1.2 in the recent review \cite{CD22}:

\noindent``{\it There is no real hope for better results at this level of generality, many directions have been explored to weaken the hypotheses in specific cases}.''

\vspace{1mm}
\noindent Motivated by the (stochastic) SVGD in Bayesian inference, this paper aims to establish a unified mathematical framework for systems with super-linear interaction kernels, encompassing both finite- and infinite-dimensional settings. Moreover, beyond proving the well-posedness of the  mean-field limit equations and  the convergence of $N$-particle interacting systems, we establish the quantitative dimension-free   PoC and obtain near-optimal rates, which is applicable to examples including the stochastic SVGD (\ref{SSVGD}).

\subsection{Well-posedness}\label{subCon1}
To establish  a general framework for studying the following mean-field stochastic dynamics in infinite dimensions
\begin{equation}\label{eqSPDE1}
	 dX_t=\mathcal{A}(t,X_t,\mathscr{L}_{X_t})dt+\mathcal{B}(t,X_t,\mathscr{L}_{X_t})dW_t,
\end{equation}
where $\{W_t\}_{t\geq 0}$ is a cylindrical Wiener process defined on a complete filtered probability space $\left(\Omega,\mathscr{F},\{\mathscr{F}_t\}_{t\geq 0},\mathbb{P}\right)$,
 we adopt the generalized variational approach in this work.

More precisely, let $(U,\langle\cdot,\cdot\rangle_U)$ and $({\mathbb{H}}, \langle\cdot,\cdot\rangle_{\mathbb{H}}) $ be  separable Hilbert spaces, and ${\mathbb{H}}^*$ be the dual space of ${\mathbb{H}}$. Let $({\mathbb{V}},\|\cdot\|_{\mathbb{V}})$ denote a reflexive Banach space  such that
$$\text{the embedding}~~\mathbb{V}\subset \mathbb{H}~~\text{is compact and dense}.$$
Identifying $\mathbb{H}$ with its dual space in terms of the Riesz isomorphism, we obtain a classical Gelfand triple
\begin{equation}\label{gelfand}
 {\mathbb{V}}\subset {\mathbb{H}}(\simeq {\mathbb{H}}^*)\subset {\mathbb{V}}^*.
 \end{equation}
The dualization between ${\mathbb{V}}$ and ${\mathbb{V}}^*$ is denoted by $_{{\mathbb{V}}^*}\langle\cdot,\cdot\rangle_{\mathbb{V}}$. Then we have
$$_{{\mathbb{V}}^*}\langle\cdot,\cdot\rangle_{\mathbb{V}}|_{{\mathbb{H}}\times {\mathbb{V}}}=\langle\cdot,\cdot\rangle_{\mathbb{H}}.$$
Let $L_2(U;\mathbb{H})$ be  the space consisting of all Hilbert-Schmidt operators from $U$ to $\mathbb{H}$, which is equipped with the standard Hilbert-Schmidt norm $\|\cdot\|_{L_2(U;\mathbb{H})}$.

The classical variational approach to stochastic partial differential equations (PDEs) was pioneered by Pardoux, Krylov, and Rozovskii (cf.~\cite{KR,P,RL}), who utilized the well-known  monotonicity trick to establish the existence and uniqueness of solutions under standard monotonicity and coercivity conditions. To handle more complex nonlinear systems arising in practical applications, Liu and R\"{o}ckner \cite{liu2015stochastic} subsequently introduced the framework of local monotonicity for stochastic PDEs. More recently, R\"{o}ckner et al. \cite{RSZ} established the well-posedness of  stochastic PDEs driven by multiplicative noise with fully locally monotone operators. By combining pseudo-monotonicity techniques with stochastic compactness arguments, their work unifies and extends existing frameworks, making them applicable to a wider class of nonlinear stochastic PDEs.
We also refer the reader to \cite{AV24,HLL21,HLL24,HLL25,NS,NTT,W15} for recent progress on the  variational approach to stochastic PDEs.

In this work, we introduce the notion of  measure-dependent pseudo-monotone operators adapted for mean-field stochastic dynamics,  which generalized  the classical framework due to  Br\'{e}zis \cite{B68} and  are applicable to a broad class of finite- and infinite-dimensional mean-field models with super-linear kernels. Unlike classical stochastic PDEs, establishing the well-posedness of mean-field stochastic PDEs is inherently more challenging due to the intricate interplay between highly nonlinear operators and the nonlocal effects introduced by measure-dependent coefficients. For instance, if we consider  the stopped process $Y_t:= X_{t\wedge\tau}$ associated with (\ref{eqSPDE1}) for some stopping time $\tau$, it satisfies
$$Y_t=Y_0+\int_0^{t\wedge\tau}\mathcal{A}(s,Y_s,\mathscr{L}_{X_s})ds
+\int_0^{t\wedge\tau}\mathcal{B}(s,Y_s,\mathscr{L}_{X_s})dW_s,$$
where, crucially, the law of the original process (i.e.,~$\mathscr{L}_{X_s}$)--rather than that of the stopped process $Y_s$--remains in the coefficients, thereby causing standard localization arguments to fail. This limitation is highlighted  by Ertel and Stannat  \cite{ES24}:

\noindent``{\it So far there does not seem to exist a localization argument that is suited to prove the well-posedness of a general class of McKean-Vlasov equations under local Lipschitz assumptions}.''

\noindent To resolve this issue, we will propose a refined formulation of the local monotonicity condition.

We now briefly summarize the well-posedness results established in this work. First, by developing  a measure cut-off argument, we establish the existence of weak solutions to  mean-field SDEs whose drift and diffusion coefficients  exhibit
super-linear growth (see Theorem \ref{finite-th1}). Notably, this result is of independent interest within the finite-dimensional setting.
Subsequently,  by leveraging the framework of  measure-dependent pseudo-monotone operators, we establish the existence of weak solutions to Eq.~(\ref{eqSPDE1})  via the Faedo-Galerkin approximation and stochastic compactness method (as presented in Theorem \ref{th1}).
To address the existence of strong solutions  to Eq.~(\ref{eqSPDE1}), we introduce a fully local monotonicity condition (see condition $(\mathbf{A}_5)$ in Subsection \ref{Strongresult}) with respect to both the state and measure variables, i.e.,
\begin{eqnarray}\label{unip1}
		\!\!\!\!\!\!\!\!&&2_{{\mathbb{V}}^*}\langle \mathcal{A}(t,u,\mu)-\mathcal{A}(t,v,\nu),u-v\rangle_{\mathbb{V}}+\|\mathcal{B}(t,u,\mu)-\mathcal{B}(t,v,\nu)\|_{L_2(U;\mathbb{H})}^2
		\nonumber\\
		\leq\!\!\!\!\!\!\!\!&&
		(\rho(u,\mu)+\eta(v,\nu))(\|u-v\|_{\mathbb{H}}^2+\mathcal{W}_{2,{\mathbb{H}}}(\mu,\nu)^2).
	\end{eqnarray}
Under suitable additional assumptions, we verify that the operators satisfying this condition (\ref{unip1}) are indeed measure-dependent pseudo-monotone. We then establish the existence of strong solutions by proving the uniqueness of the corresponding decoupled stochastic PDEs (see Theorem \ref{th2}).

However,  in contrast to the classical variational theory for stochastic PDEs,  the fully local monotonicity condition  in the mean-field setting fails to guarantee the uniqueness of either strong or weak solutions; we refer to the paper \cite{S87} by Scheutzow for relevant counterexamples. To overcome this difficulty, we propose a {\it decoupled} local monotonicity condition (see $(\mathbf{A}^*_5)$ in Section \ref{sec4}):
\begin{eqnarray*}
		\!\!\!\!\!\!\!\!&&2{}_{{\mathbb{V}}^*}\langle \mathcal{A}(t,u,\mu)-\mathcal{A}(t,v,\nu),u-v\rangle_{\mathbb{V}}+\|\mathcal{B}(t,u,\mu)-\mathcal{B}(t,v,\nu)\|_{L_2(U;\mathbb{H})}^2
		\nonumber\\
		\leq\!\!\!\!\!\!\!\!&&
		(\rho(0,\mu)+\eta(0,\nu))\|u-v\|_{\mathbb{H}}^2
		+(\rho(u,\mu)+\eta(v,\nu))\mathcal{W}_{2,\mathbb{H}}(\mu,\nu)^2.
	\end{eqnarray*}
Under this  condition, we first establish the uniqueness of strong solutions to the equation, and then derive the existence and uniqueness of both weak and strong solutions, as well as their continuous dependence on the initial data (as illustrated in Theorem \ref{th3}).

We now compare the well-posedness  results  with related literature  in both finite and infinite-dimensional settings. For the mean-field SDEs, while Bolley et al.~\cite{BCC} extended McKean's classical theory to locally Lipschitz interactions, their framework relies on several restrictive assumptions, such as bounded exponential moments of the solutions and linear growth of the interaction kernels. Similarly, a recent work by Erny \cite{E} established the well-posedness and mean-field limit for mean-field SDEs under local Lipschitz conditions. However, their approach still requires linear growth of the drift, bounded diffusion coefficients, and exponential integrability of the initial values.

In contrast, our theory drops exponential moment requirements and linear growth constraints on the interaction kernels. Instead, we generalize the local Lipschitz condition by employing pseudo-monotonicity and local monotonicity frameworks. Furthermore, the interaction kernels in our diffusion coefficients are also  of super-linear growth. Consequently, our results  cover a wider range of mean-field dynamics (e.g., stochastic SVGD (\ref{SSVGD})) that cannot be analyzed using the methods in \cite{BCC,E}.
For infinite-dimensional systems, the presence of nonlinear kernels introduces highly non-trivial challenges. Therefore,  there are only very few works on general mean-field stochastic PDEs, which were  limited to settings with linear growth and Lipschitz-type conditions (cf.~\cite{BKKX,CKS,C23,ES,KX}). Nevertheless, several key infinite-dimensional mean-field models (cf.~e.g.~\cite{Drivas2020,GRZ,SSZZ}) arising in stochastic quantization and fluid mechanics inherently exhibit superlinear kernels.  To address this limitation, we develop a measure-dependent variational framework for mean-field stochastic PDEs with coefficients satisfying pseudo-monotonicity or local monotonicity conditions, thereby presenting a unified result for such infinite-dimensional mean-field dynamics.

\subsection{Propagation of chaos}
PoC theory characterizes the behavior of high-dimensional stochastic particle system by
approximating the collective influence of the ensemble on a representative particle as a single averaged effect. Recently, PoC  has been  investigated within broader frameworks and applied to areas such as  data science \cite{D}, mean-field games \cite{CD18}, and neural network training \cite{RV}. A comprehensive overview of the mathematical models, methods, and applications of PoC can be found in the survey by Chaintron and Diez \cite{CD22}.

The convergence of empirical measures is central to the analysis of PoC. In high-dimensional settings, however, this convergence typically suffers from the curse of dimensionality (CoD), whereby the convergence rate decreases as the dimension of the state space increases. For instance, under classical metrics such as Wasserstein distances, the empirical measure of $N$ particles in $\mathbb{R}^d$ converges at a rate of order $N^{-\frac{c_0}{d}}$ for some constant $c_0>0$ independent of $d$ (cf.~\cite{D69,FG,WB}). This CoD phenomenon--fundamentally tied to the choice of the Wasserstein metric and the nature of the underlying interaction kernels--remains a major challenge in high-dimensional analysis and algorithm design.

Accordingly,  the CoD phenomenon becomes a fundamental challenge in studying the PoC for mean-field stochastic PDEs, where the  state space is inherently infinite-dimensional.  Establishing quantitative PoC for such systems requires substantial improvements that go beyond adaptation of existing finite-dimensional results. In particular, it is crucial to leverage the specific structures of the underlying nonlinear operators and solution spaces to overcome the infinite-dimensional nature of the system. Therefore, this work aims to develop quantitative estimates  that remain stable under finite-dimensional approximations, thereby yielding the quantitative dimension-free PoC.

Building on the well-posedness theory for mean-field stochastic PDEs developed in Subsection \ref{subCon1}, we  consider the associated non-interacting particle system
\begin{equation}\label{nIPS1}
	X^i_t=\xi^i+\int_0^t\mathcal{A}(s,X^i_s,\mathscr{L}_{X^i_s})ds
	+\int_0^t\mathcal{B}(s,X^i_s,\mathscr{L}_{X^i_s})dW^i_s,
\end{equation}
with $i \in \{1, \dots, N\}$, where $\{\xi^i\}_{i=1}^{N}$ and $\{W^i\}_{i=1}^{N}$ are $N$ independent copies of the initial value and cylindrical Wiener process associated with Eq.~(\ref{eqSPDE1}), respectively.
We then turn to the corresponding interacting particle system
\begin{equation}\label{IPS1}
	X_t^{i,N} = \xi^i + \int_0^t \mathcal{A}(s,X^{i,N}_s, \mu_s^{N}) ds + \int_0^t \mathcal{B}(s,X^{i,N}_s, \mu_s^{N}) dW_s^i,
\end{equation}
where each particle evolves under the empirical measure $\mu^{N}$ and is driven by the same family of cylindrical Wiener processes and initial conditions as in the non-interacting system (\ref{nIPS1}).

In this work, we combine Sznitman's  synchronous coupling method with a delicate analysis of martingale difference sequences taking values in the dual of an $\alpha$-uniformly convex Banach space, in order to address the infinite-dimensional interacting particle dynamics. Moreover,  under the following strengthened decoupled local monotonicity condition (see $(\mathbf{A}'_5)$ in Subsection \ref{poc}),
\begin{equation}\label{ass01}
		_{{\mathbb{V}}^*}\langle \mathcal{A}(t,u,\mu)-\mathcal{A}(t,v,\nu),u-v\rangle_{\mathbb{V}}
		\leq
		\rho(0,\mu)\|u-v\|_{\mathbb{H}}^2
		+\rho(u,\mu)\mathcal{W}_{2,\mathbb{H}}(\mu,\nu)^2,
	\end{equation}
we employ a refined stopping time argument to study the strong convergence rate from Eq.~(\ref{IPS1}) to Eq.~(\ref{nIPS1}).

We first state an informal version of our quantitative dimension-free PoC result; the precise statement is given in Theorem \ref{rate1}.
\begin{theorem}\label{pocth}
Let $\mathbb{V}$ be a separable $\alpha$-uniformly convex Banach space with $\alpha\geq2$. Under assumption $(\ref{ass01})$, together with other suitable assumptions on the coefficients and initial data,  the following pathwise chaos  holds
	\begin{equation}\label{pocth11}
		\sup_{i \in \{1,\dots,N\}}  \mathbb{E}\Big[\sup_{t \in [0,T]}\|X_t^{i,N} - X_t^i\|_{\mathbb{H}}^2\Big] \lesssim N^{-\frac{1}{\alpha}}.
	\end{equation}
\end{theorem}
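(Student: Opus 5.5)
We will prove this by synchronous coupling. Set $Z^i_t:=X^{i,N}_t-X^i_t$, which starts at $Z^i_0=0$. Applying Itô's formula for $\|\cdot\|_{\mathbb{H}}^2$ in the Gelfand triple (\ref{gelfand}) gives
\begin{equation*}
\|Z^i_t\|_{\mathbb{H}}^2=2\int_0^t{}_{\mathbb{V}^*}\langle \mathcal{A}(s,X^{i,N}_s,\mu^N_s)-\mathcal{A}(s,X^i_s,\mathscr{L}_{X^i_s}),Z^i_s\rangle_{\mathbb{V}}ds+\int_0^t\|\Delta\mathcal{B}^i_s\|_{L_2(U;\mathbb{H})}^2ds+M^i_t ,
\end{equation*}
where $\Delta\mathcal{B}^i_s$ is the difference of the diffusion coefficients and $M^i_t$ is a local martingale.

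The central step is to split the measure discrepancy through the empirical measure of the non-interacting particles, $\bar\mu^N_s:=N^{-1}\sum_j\delta_{X^j_s}$. We write $\mathcal{A}(s,X^{i,N}_s,\mu^N_s)-\mathcal{A}(s,X^i_s,\mathscr{L}_{X^i_s})=[\mathcal{A}(s,X^{i,N}_s,\mu^N_s)-\mathcal{A}(s,X^i_s,\bar\mu^N_s)]+[\mathcal{A}(s,X^i_s,\bar\mu^N_s)-\mathcal{A}(s,X^i_s,\mathscr{L}_{X^i_s})]$.

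For the first bracket we apply (\ref{ass01}). The synchronous coupling gives $\mathcal{W}_{2,\mathbb{H}}(\mu^N_s,\bar\mu^N_s)^2\le N^{-1}\sum_j\|Z^j_s\|_{\mathbb{H}}^2$, so the right-hand side of (\ref{ass01}) is bounded by
\begin{equation*}
\rho(0,\mu^N_s)\|Z^i_s\|^2_{\mathbb{H}}+\rho(X^{i,N}_s,\mu^N_s)\,N^{-1}\sum_j\|Z^j_s\|^2_{\mathbb{H}} .
\end{equation*}
The diffusion difference is treated in the same way by a Lipschitz-type assumption.

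The coefficients $\rho$ are only locally bounded, so we introduce a stopping time $\tau_R$ at which $\int_0^t\rho(X^{i,N}_s,\mu^N_s)+\rho(0,\mu^N_s)\,ds$, or the relevant norms of all particles, exceed $R$. On $[0,\tau_R]$ we average over $i$, use exchangeability, and apply BDG and a stochastic Gronwall lemma. This yields
\begin{equation*}
\mathbb{E}\sup_{t\le T\wedge\tau_R}\|Z^i_t\|^2_{\mathbb{H}}\le C_R\,\mathbb{E}\int_0^T\big|{}_{\mathbb{V}^*}\langle \mathcal{A}(s,X^i_s,\bar\mu^N_s)-\mathcal{A}(s,X^i_s,\mathscr{L}_{X^i_s}),Z^i_s\rangle_{\mathbb{V}}\big|ds .
\end{equation*}
The complement $\{\tau_R<T\}$ is controlled by uniform moment bounds, obtained via Chebyshev and Hölder. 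Choosing $R=R(N)$ growing slowly (logarithmically, say, if exponential-type moments of $\int\rho$ are assumed; otherwise as in the precise hypotheses of Theorem \ref{rate1}) absorbs $C_R$ up to the near-optimal loss.

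The main obstacle is the second, fluctuation term, and this is where the $\alpha$-uniform convexity enters. We assume that the measure dependence has a mean-field form, e.g.\ $\mathcal{A}(s,x,\mu)=\int \mathcal{K}(s,x,y)\mu(dy)$ with values in $\mathbb{V}^*$. Then the fluctuation equals $N^{-1}\sum_j\zeta_j$ with $\zeta_j:=\mathcal{K}(s,X^i_s,X^j_s)-\mathbb{E}'\mathcal{K}(s,X^i_s,X'_s)$; conditionally on $X^i$, the $\zeta_j$ for $j\ne i$ are independent and centered in $\mathbb{V}^*$. Since $\mathbb{V}$ is $\alpha$-uniformly convex, $\mathbb{V}^*$ is $\alpha'$-uniformly smooth with $\alpha'=\alpha/(\alpha-1)\in(1,2]$. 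Pisier's martingale inequality for martingale differences in such spaces therefore gives
\begin{equation*}
\mathbb{E}\Big\|\sum_j\zeta_j\Big\|^{\alpha'}_{\mathbb{V}^*}\lesssim\sum_j\mathbb{E}\|\zeta_j\|^{\alpha'}_{\mathbb{V}^*},
\end{equation*}
hence $\mathbb{E}\|N^{-1}\sum_j\zeta_j\|_{\mathbb{V}^*}\lesssim N^{1/\alpha'-1}=N^{-1/\alpha}$, with constants independent of any Galerkin dimension. The diagonal term $j=i$ contributes only $O(N^{-1})$.

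We pair this bound with $\|Z^i_s\|_{\mathbb{V}}$ via Hölder, using the a priori integrability of $\int_0^T\|X_s\|_{\mathbb{V}}^{\alpha}ds$ coming from coercivity; we do not try to absorb it into a small quantity. This produces a contribution of order $N^{-1/\alpha}$, which gives (\ref{pocth11}). The delicate points are that the Hölder exponents must match the coercivity exponent and that the estimate must be carried out first at the Galerkin level, where it is uniform in the dimension, before passing to the limit.
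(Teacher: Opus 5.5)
Your treatment of the fluctuation term matches the paper's mechanism for producing $N^{-1/\alpha}$: splitting through $\bar\mu^N_s$, Pisier's inequality in the $\frac{\alpha}{\alpha-1}$-uniformly smooth space $\mathbb{V}^*$, and H\"older against $(\mathbb{E}\int_0^T\|Z^i_s\|_{\mathbb{V}}^{\alpha}ds)^{1/\alpha}$. The gap is in the localization.

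You apply (\ref{ass01}) with $(u,\mu)=(X^{i,N}_s,\mu^N_s)$. The Gronwall weights $\rho(0,\mu^N_s)$ and $\rho(X^{i,N}_s,\mu^N_s)$ are then functionals of the \emph{interacting} system, and your stopping time $\tau_R$ has to control them. For the interacting particles, which are exchangeable but not independent, you only have moment bounds uniform in $N$. Chebyshev therefore gives $\mathbb{P}(\tau_R<T)\lesssim R^{-m}$ with no decay in $N$. Meanwhile the constant in the stochastic Gronwall lemma grows exponentially in $R$, since that lemma needs $\int_0^{\tau_R}\rho\,ds\le R$ almost surely. Balancing $e^{CR}N^{-1/\alpha}$ against $R^{-m'}$ gives at best a logarithmic rate. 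Even exponential moments of $\int_0^T\rho\,ds$ along the interacting system would only give $N^{-c}$ for some $c<1/\alpha$, and these are neither assumed nor available for super-linear kernels. So the argument does not deliver (\ref{pocth11}); your phrase ``up to the near-optimal loss'' already concedes this.

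The missing idea is to use the decoupled structure of (\ref{ass01}). Its left-hand side is symmetric under $(u,\mu)\leftrightarrow(v,\nu)$, so you may put the non-interacting pair in the first slot: $(u,\mu)=(X^i_s,\bar\mu^N_s)$, $(v,\nu)=(X^{i,N}_s,\mu^N_s)$. After averaging over $i$, the Gronwall coefficient becomes $\rho(0,\bar\mu^N_s)+\frac1N\sum_j\rho(X^j_s,\bar\mu^N_s)$. By (\ref{esq22}) this is controlled by empirical averages of the i.i.d.\ quantities $\|X^j_s\|_{\mathbb{H}}^{\beta}$ and $\|X^j_s\|_{\mathbb{V}}^{\alpha}(1+\|X^j_s\|_{\mathbb{H}}^{\beta})$.

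The paper stops when $\frac1N\sum_i\|X^i_t\|_{\mathbb{H}}^{\beta}+\frac1N\sum_i\int_0^t\|X^i_s\|_{\mathbb{V}}^{\alpha}(1+\|X^i_s\|_{\mathbb{H}}^{\beta})ds$ reaches a \emph{fixed} level $K=2\mathbb{E}Z_1$, where $Z_1$ is the corresponding path functional of $X^1$. The $Z_i$ are i.i.d.\ with $\mathbb{E}Z_1^2<\infty$; this is where $p\geq4\beta$, $p>2\beta+4$ and the moment bound on $(\int_0^T\|X_t\|_{\mathbb{H}}^{\beta}\|X_t\|_{\mathbb{V}}^{\alpha}dt)^l$ are needed. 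A Rosenthal--Chebyshev bound then gives $\mathbb{P}(\tau^K<T)\lesssim N^{-1}$. The Gronwall constant $C_K$ is independent of $N$, and the complement contributes $N^{-(\gamma p-2)/(\gamma p)}\lesssim N^{-1/\alpha}$ by H\"older, because $\alpha\geq2$ and $\gamma p\geq4$. This is also why Theorem \ref{rate2}, where $\eta$ is evaluated on the interacting side, needs the dissipative term $\delta_0\|u-v\|_{\mathbb{V}}^{\alpha}$ and a second stopping time.

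Two minor points. You dropped the diffusion fluctuation $\int_0^T\|\mathcal{B}(s,X^i_s,\mathscr{L}_{X^i_s})-\mathcal{B}(s,X^i_s,\bar\mu^N_s)\|_{L_2(U;\mathbb{H})}^2ds$, which is $O(N^{-1})$ by the same martingale-difference argument in the Hilbert space $L_2(U;\mathbb{H})$. The Galerkin detour is unnecessary, since It\^o's formula in the Gelfand triple applies directly to $X^{i,N}-X^i$.
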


\noindent This theorem  shows  that the convergence rate of PoC for infinite-dimensional interacting particle systems is  intrinsically governed by the geometry of the underlying solution space, and in particular by its modulus of convexity. Indeed, estimate (\ref{pocth11}) yields a pathwise chaos rate of order $N^{-\frac{1}{2\alpha}}$, where $\alpha$ quantifies the degree of convexity of the space.
 In particular, this result gives, at best, a convergence rate of order $N^{-\frac{1}{4}}$
in $2$-uniformly convex Banach spaces, with Euclidean spaces being the prototypical example. In addition, we also provide a motivating example involving i.i.d. random variables to confirm such dependence, see Remark \ref{remark001} below in details.

To further improve the PoC estimates,  we take into account the dissipation induced by the underlying second-order differential operators and impose the following decoupled local monotonicity condition (see  $(\mathbf{A}''_5)$ in Subsection \ref{poc}),
\begin{eqnarray}\label{ass02}
		\!\!\!\!\!\!\!\!&&_{{\mathbb{V}}^*}\langle \mathcal{A}(t,u,\mu)-\mathcal{A}(t,v,\nu),u-v\rangle_{\mathbb{V}}+\delta_0\|u-v\|_{\mathbb{V}}^{\alpha}
		\nonumber\\
		\leq\!\!\!\!\!\!\!\!&&
		(\rho(0,\mu)+\eta(0,\nu))\|u-v\|_{\mathbb{H}}^2
		+(\rho(u,\mu)+\eta(v,\nu))\mathcal{W}_{2,\mathbb{H}}(\mu,\nu)^2
	\end{eqnarray}

We now state our second quantitative dimension-free PoC result; its rigorous formulation is given in  Theorem \ref{rate2}.
\begin{theorem}\label{pocth2}
Let $\mathbb{V}$ be a separable $\alpha$-uniformly convex Banach space with $\alpha\geq2$. Under  assumption $(\ref{ass02})$, together with other suitable assumptions on the coefficients and initial data, the following pathwise and pointwise chaos hold, respectively,	
	\begin{equation}\label{pocth31}
		\sup_{i \in \{1,\dots,N\}}\bigg\{\mathbb{E}\Big[\sup_{t \in [0,T]}\|X_t^{i,N} - X_t^i\|_{\mathbb{H}}^2\Big]+ \mathbb{E}\int_0^{T}\|X_s^i - X_s^{i,N}\|_{\mathbb{V}}^{\alpha}ds\bigg\} \lesssim N^{-\frac{1}{\alpha-1}\cdot\frac{q-\beta}{q-2}\cdot {\frac{p'-2}{p'}}}
	\end{equation}
and
	\begin{equation}\label{pocth41}
		\sup_{i \in \{1,\dots,N\}}  \sup_{t \in [0,T]}\mathbb{E}\|X_t^{i,N} - X_t^i\|_{\mathbb{H}}^2 \lesssim N^{-\frac{1}{\alpha-1}\cdot {\frac{p-\beta}{p}}}.
	\end{equation}
\end{theorem}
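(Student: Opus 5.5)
Since Theorem \ref{pocth2} is stated informally, I would prove it through its rigorous counterpart Theorem \ref{rate2}. The approach is Sznitman's synchronous coupling combined with the dissipation in $(\ref{ass02})$.

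\textbf{Step 1: energy identity.} Set $Z^i_t:=X^{i,N}_t-X^i_t$, which has $Z^i_0=0$. Apply It\^o's formula for $\|Z^i_t\|_{\mathbb{H}}^2$ in the Gelfand triple (\ref{gelfand}). I would not compare $\mu^N_s$ with $\mathscr{L}_{X^i_s}$ directly. Instead, introduce the empirical measure of the non-interacting system $\bar\mu^N_s:=N^{-1}\sum_j\delta_{X^j_s}$ and split
$$\mathcal{A}(s,X^{i,N}_s,\mu^N_s)-\mathcal{A}(s,X^i_s,\mathscr{L}_{X^i_s})=\big[\mathcal{A}(s,X^{i,N}_s,\mu^N_s)-\mathcal{A}(s,X^i_s,\bar\mu^N_s)\big]+\big[\mathcal{A}(s,X^i_s,\bar\mu^N_s)-\mathcal{A}(s,X^i_s,\mathscr{L}_{X^i_s})\big],$$
and split $\mathcal{B}$ in the same way.

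\textbf{Step 2: the coupled term.} Apply $(\ref{ass02})$ to the first bracket. This produces the good term $-\delta_0\|Z^i_s\|_{\mathbb{V}}^\alpha$ together with the bound $(\rho+\eta)\big(\|Z^i_s\|_{\mathbb{H}}^2+\mathcal{W}_{2,\mathbb{H}}(\mu^N_s,\bar\mu^N_s)^2\big)$. Then use $\mathcal{W}_{2,\mathbb{H}}(\mu^N_s,\bar\mu^N_s)^2\le N^{-1}\sum_j\|Z^j_s\|_{\mathbb{H}}^2$ and exchangeability.

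\textbf{Step 3: the fluctuation term.} The second bracket is a pure fluctuation term. It equals an average of i.i.d. centred $\mathbb{V}^*$-valued variables $\mathcal{A}(s,x,\delta_{X^j_s})-\mathbb{E}[\cdots]$, evaluated at $x=X^i_s$, with the $j=i$ term removed at cost $O(N^{-1})$.
\begin{itemize}
\item Pair this term with $Z^i_s\in\mathbb{V}$ and apply Young's inequality with exponents $(\alpha,\alpha')$. One part is absorbed into $\delta_0\|Z^i\|_{\mathbb{V}}^\alpha$; what remains is $\big\|N^{-1}\sum_j\xi_j\big\|_{\mathbb{V}^*}^{\alpha'}$, where $\alpha'=\alpha/(\alpha-1)$.
\item Since $\mathbb{V}$ is $\alpha$-uniformly convex, $\mathbb{V}^*$ is $\alpha'$-uniformly smooth. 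Pisier's martingale-type inequality in $\mathbb{V}^*$ then gives $\mathbb{E}\big\|N^{-1}\sum_j\xi_j\big\|_{\mathbb{V}^*}^{\alpha'}\lesssim N^{1-\alpha'}=N^{-\frac{1}{\alpha-1}}$. This is exactly the source of the factor $\frac1{\alpha-1}$ in the exponent.
\item The $\mathcal{B}$-fluctuation lives in the Hilbert space $L_2(U;\mathbb{H})$ and gives the better rate $N^{-1}$.
\end{itemize}

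\textbf{Step 4: closing with stopping times.} The coefficients $\rho,\eta$ are unbounded, so Gronwall cannot be applied directly. I would introduce a stopping time $\tau_R$ at which $\int_0^t(\rho+\eta)\,ds$ or the $\mathbb{V}$-norms exceed $R$.
\begin{itemize}
\item On $[0,\tau_R]$, Gronwall (with BDG for the supremum) gives a bound $e^{CR}N^{-1/(\alpha-1)}$, where the fluctuation moments are controlled by higher moments of order $q$, $p'$ of the solutions.
\item Off $[0,\tau_R]$, H\"older's inequality with exponents involving $p'$ and $q$ bounds the contribution by $\mathbb{P}(\tau_R<T)^{(p'-2)/p'}$ times uniform moments. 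Here I would use the exponential integrability of $\int(\rho+\eta)$ assumed in Theorem \ref{rate2}, or a polynomial moment version with growth $\beta$, to bound $\mathbb{P}(\tau_R<T)$.
\item Optimizing $R$ in terms of $N$ interpolates the two terms. This produces the factors $\frac{q-\beta}{q-2}$ and $\frac{p'-2}{p'}$ in (\ref{pocth31}).
\end{itemize}
For the pointwise estimate (\ref{pocth41}), no supremum in time and no BDG is needed. The interpolation therefore involves only one Hölder step, with exponent $\frac{p-\beta}{p}$.

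\textbf{Main obstacle.} I expect the hardest step to be the fluctuation term in Step 3. The centred variables depend on the random evaluation point $X^i_s$, which is correlated with $X^i_s$ itself, so one must decouple it (e.g. via leave-one-out) before invoking uniform smoothness. Dimension-freeness must also be kept: the constant may depend only on the uniform-smoothness constant of $\mathbb{V}^*$ and not on any Galerkin dimension.
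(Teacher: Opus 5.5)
Your Steps 1--3 follow the paper's argument. The paper uses the same splitting through $\bar\mu^N_s$, and Young's inequality against the dissipation $\delta_0\|X^i_s-X^{i,N}_s\|_{\mathbb{V}}^{\alpha}$. It applies Lemma \ref{mdlemma} in the $\frac{\alpha}{\alpha-1}$-uniformly smooth space $\mathbb{V}^*$ after conditioning on $X^i_s$ and isolating the term $j=i$, and it obtains the same $N^{-1}$ bound for the $\mathcal{B}$-fluctuation.

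The gap is in Step 4. Theorem \ref{rate2} assumes no exponential integrability of $\int_0^T(\rho+\eta)\,ds$. Only $p$-th moments of the initial data are available, and avoiding exponential moments is a point the paper stresses. With polynomial moments alone, your scheme cannot give a polynomial rate:
\begin{itemize}
\item On $[0,\tau_R]$ the stochastic Gronwall constant is $e^{CR}$, while Chebyshev gives only $\mathbb{P}(\tau_R<T)\lesssim R^{-m}$, with no decay in $N$.
\item Balancing $e^{CR}N^{-1/(\alpha-1)}$ against $R^{-m(p'-2)/p'}$ forces $R\sim\log N$, which leaves a logarithmic rate.
\item Even with exponential tails, the resulting exponent would depend on the tail and Gronwall constants, not be the stated one.
\item Your $\tau_R$ is built from $\eta(X^{j,N}_s,\mu^N_s)$ and $\mu^N_s(\|\cdot\|_{\mathbb{V}}^{\alpha})$. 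These are functionals of the interacting system and are not i.i.d., so no law-of-large-numbers bound in $N$ is available for them.
\end{itemize}
The claim that optimizing $R$ produces $\frac{q-\beta}{q-2}$ and $\frac{p'-2}{p'}$ is therefore unsupported.

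The missing idea is to keep the thresholds \emph{fixed} and make the exit probability decay in $N$ through a bootstrap. The paper stops at $\tilde\tau=\tau^K\wedge\tau^\varepsilon$.
\begin{itemize}
\item $\tau^K$ is the first time $\frac1N\sum_i\|X^i_t\|_{\mathbb{H}}^\beta+\frac1N\sum_i\int_0^t\|X^i_s\|_{\mathbb{V}}^\alpha(1+\|X^i_s\|_{\mathbb{H}}^\beta)\,ds$ reaches $K$. This is an average over the i.i.d.\ non-interacting particles. Taking $K$ equal to twice the mean of the summands, Lemma \ref{iidrate} gives $\mathbb{P}(\tau^K\le T)\lesssim N^{-1}$.
\item $\tau^\varepsilon$ is the first time $\mathcal{W}_{\beta,\mathbb{H}}(\bar\mu^N_t,\mu^N_t)^\beta+\int_0^t\mathcal{W}_{\alpha,\mathbb{V}}(\bar\mu^N_s,\mu^N_s)^\alpha\,ds$ reaches $\varepsilon$.
\end{itemize}
By the triangle inequality, the interacting moments entering $\eta(0,\mu^N_s)$ and $\eta(X^{j,N}_s,\mu^N_s)$ are bounded by $K+\varepsilon$ up to $\tilde\tau$. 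This is why $\eta$ in $(\mathbf{A}''_5)$ has weaker growth than $\rho$, which is only ever evaluated along the non-interacting system. Stochastic Gronwall then gives the rate $N^{-1/(\alpha-1)}$ up to $\tilde\tau$, with a constant depending only on $K$ and $\varepsilon$.

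Next, Chebyshev bounds $\mathbb{P}(\tau^\varepsilon<T<\tau^K)$ by the stopped error itself:
\[
\varepsilon^{-1}\frac1N\sum_i\mathbb{E}\Big[\sup_t\|X^i_{t\wedge\tilde\tau}-X^{i,N}_{t\wedge\tilde\tau}\|_{\mathbb{H}}^\beta+\int_0^{T\wedge\tilde\tau}\|X^i_s-X^{i,N}_s\|_{\mathbb{V}}^\alpha\,ds\Big].
\]
Interpolating the $\beta$-th moment between the second moment (rate $N^{-1/(\alpha-1)}$) and the bounded $q$-th moment gives the factor $\frac{q-\beta}{q-2}$; this is where $\beta\ge2$ is needed. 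H\"older's inequality with exponent $p'$ on $\{\tilde\tau<T\}$ then gives $\frac{p'-2}{p'}$. For (\ref{pocth41}), the same steps with exponent $p$ give $\frac{p-\beta}{p-2}\cdot\frac{p-2}{p}$.

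A smaller point concerns Step 2. You should keep the decoupled form, in which the coefficient of $\|X^i_s-X^{i,N}_s\|_{\mathbb{H}}^2$ is $\rho(0,\bar\mu^N_s)+\eta(0,\mu^N_s)$, independent of $i$. Your shorthand $(\rho+\eta)(\|Z^i_s\|_{\mathbb{H}}^2+\mathcal{W}^2)$ hides this, and without it the $i$-averaged inequality does not close.
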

\noindent The pathwise estimate (\ref{pocth31}) and the pointwise estimate (\ref{pocth41}) improve upon (\ref{pocth11}). In particular, when  $\mathbb{V}$ is a $2$-uniformly convex space and constants $q,p'$ are sufficiently large,      these estimates yield near-optimal convergence rates of order $N^{-\frac{1}{2}+}$.

Concerning the mean-field limit in infinite dimensions, the  pioneering works of Chiang et al.~\cite{CKS} and Kallianpur et al.~\cite{KX} established the  qualitative PoC for interacting particle systems  in the dual of a countably Hilbertian nuclear space, under standard one-side Lipschitz assumptions on the coefficients. This framework was subsequently generalized to non-nuclear spaces  in \cite{BKKX}. Notably, the results in \cite{CKS,KX} have found important applications in neurophysiology, capturing the essential dynamical features of neural networks.
A dynamical model for polymer systems governed by $N$-coupled stochastic PDEs was  analyzed in \cite{ES}.
More recently, Criens \cite{C23} provided a systematic treatment for a class of weakly interacting semilinear stochastic PDEs, establishing their qualitative PoC under linear growth and Lipschitz-type conditions. Furthermore, significant progress has been made by Shen et al. \cite{SSZZ} on  $O(N)$ linear sigma
 models, which arise from stochastic quantization and generalize the classical $\Phi^4_d$ model to  $N$-component; in this work, the large $N$ limit  was characterized by mean field type stochastic Allen-Cahn equations.

To the best of our knowledge, quantitative convergence rates for PoC have not yet been established for general infinite-dimensional interacting particle dynamics.  In contrast to existing literatures (cf.~\cite{BKKX,CKS,C23,ES,HLL25,HLL26,KX}), which only prove qualitative convergence, we address the problem by combining the theory of martingale differences in uniformly convex Banach spaces with delicately constructed stopping times. Consequently, we establish quantitative estimates within the variational framework under refined local monotonicity assumptions (i.e., Conditions (\ref{ass01}) and (\ref{ass02})), leading to explicit convergence rates in two distinct regimes (i.e., Theorems \ref{pocth} and \ref{pocth2}). Furthermore, we apply our general framework to several important models, including stochastic SVGD (\ref{SSVGD}),  mean-field Allen-Cahn equations, and Lagrangian averaged Burgers equations.
Remarkably, even in the finite-dimensional setting, these results are novel and of independent interest, yielding near-optimal, dimension-free convergence rates for mean-field SDEs.

\subsection*{Structure of the paper}
The remainder of the paper is structured as follows. Section \ref{sec4} presents our main results regarding well-posedness and the quantitative PoC. In Section \ref{secex}, we provide several illustrative examples to demonstrate the applicability of our general framework. Section \ref{well-posed} is devoted to proving the existence and uniqueness of solutions to the mean-field SPDEs. Finally, the proof of the quantitative mean-field limit is detailed in Section \ref{ProofPoC}.

\section{Main results}\label{sec4}
\subsection{Preliminaries}

This part establishes the necessary mathematical framework by introducing the function spaces and operators employed throughout this work.

For any separable Banach space $({\mathbb{X}}, \|\cdot\|_{\mathbb{X}}) $, we denote by $\mathbb{C}_T(\mathbb{X}):=C([0,T];\mathbb{X})$ the space of all continuous functions from $[0,T]$ to $\mathbb{X}$, which is a Banach space equipped with  the uniform norm as follows
$$\|u\|_{T,\mathbb{X}}:=\sup_{t\in[0,T]}\|u_t\|_{\mathbb{X}},~u\in \mathbb{C}_T(\mathbb{X}).$$
Let $\mathscr{P}({\mathbb{X}})$ be the space consisting of all probability measures on $\mathbb{X}$ endowed with the weak convergence topology. Moreover, for any $p>0$ we denote
$$\mathscr{P}_p({\mathbb{X}}):=\Big\{\mu\in\mathscr{P}({\mathbb{X}}):\mu(\|\cdot\|_{{\mathbb{X}}}^p):=\int\|\xi\|_{\mathbb{X}}^p\mu(d\xi)<\infty\Big\}.$$
It follows that $\mathscr{P}_p({\mathbb{X}})$ is a Polish space under the following $L^p$-Wasserstein metric
$$\mathcal{W}_{p,{\mathbb{X}}}(\mu,\nu):=\inf_{\pi\in\mathscr{C}(\mu,\nu)}\Bigg(\int\|\xi-\eta\|_{\mathbb{X}}^p\pi(d\xi,d\eta)\Bigg)^{\frac{1}{p\vee1}},~~\mu,\nu\in\mathscr{P}_p({\mathbb{X}}),$$
where $\mathscr{C}(\mu,\nu)$ stands for the set of all couplings for  $\mu$ and $\nu$,  i.e., the set of all Borel probability measures $\pi$ on $\mathbb{X}\times \mathbb{X}$ such that
$\pi(\cdot\times \mathbb{X})=\mu(\cdot)~\text{and}~\pi(\mathbb{X}\times \cdot)=\nu(\cdot).$

\vspace{1mm}
In what follows, we summarize some  definitions for $p$-uniform convexity and smoothness on Banach spaces,   which are essential in deriving the quantitative (dimension-free) Propagation of Chaos for mean-field dynamics.
\begin{definition}$($cf.~\cite{DSDC}$)$
	A Banach space $\mathbb{X}$ is called strictly convex if for all $x, y \in \mathbb{X}$ with $x \neq y$ and $\|x\|_\mathbb{X} = \|y\|_\mathbb{X} = 1$, we have
	$$
	\Big\|\frac{x+y}{2}\Big\|_\mathbb{X}<1.
	$$
	Moreover, the modulus of convexity of space $\mathbb{X}$ is the function $\delta_{\mathbb{X}} : (0, 2] \to [0, 1]$ defined by
	\begin{equation*}
		\delta_\mathbb{X}(\varepsilon) := \inf\Big\{ 1 - \| \frac{x + y}{2} \|_\mathbb{X} : \|x\|_\mathbb{X} = \|y\|_\mathbb{X} = 1,  \|x - y\|_\mathbb{X}=\varepsilon \Big\}.
	\end{equation*}
\end{definition}

\begin{definition}$($cf.~\cite{X}$)$
	A Banach space $\mathbb{X}$ is said to be
	\begin{enumerate}
		\item[(i)] uniformly convex if $ \delta_\mathbb{X}(\varepsilon) > 0$ for all $ \varepsilon \in (0, 2]$;
		
		\vspace{1mm}
		\item[(ii)] $p$-uniformly convex for $p > 1$, if
		\begin{equation*}
			\delta_\mathbb{X}(\varepsilon) \gtrsim \varepsilon^p,\quad \varepsilon \in (0, 2].
		\end{equation*}
	\end{enumerate}
\end{definition}

\begin{remark}
	Strict convexity implies that the midpoint of any non-trivial chord of the unit sphere lies strictly within the unit sphere. As mentioned in \cite{B},  strict convexity is a local property in the sense that it is not necessarily uniform across the unit sphere.
	Uniform convexity strengthens this notion by providing a precise measurement of the inward displacement. Specifically, the modulus of convexity  $\delta_\mathbb{X}(\varepsilon)$ defines the minimum depth at which the midpoint of any two unit vectors, separated by at least  $\varepsilon$, must lie inside the closed unit ball.
	
	Furthermore, regarding  the $p$-uniform convexity, this inward deviation is bounded below by a constant multiple of $\varepsilon^p$; thus the midpoint cannot approach the sphere arbitrarily closely when the endpoints remain at least $\varepsilon$ apart. This offers a quantitative refinement of uniform convexity, which only requires $\delta_\mathbb{X}(\varepsilon)$ to be strictly positive.
\end{remark}
\begin{remark}\label{uniformspace}
	It is well-known (see e.g. \cite{LT,XR}) that every Hilbert space $\mathbb{H}$ is $2$-uniformly convex with specific modulus of convexity given by
	$$
	\delta_\mathbb{H}(\varepsilon) := 1 - \sqrt{1 - \frac{1}{4}\varepsilon^2}.
	$$
	Moreover, the classical Banach spaces $L^p$ and $W^{k,p}$ are $2$-uniformly convex if $1<p< 2$
	and $p$-uniformly convex if $p \geq 2$ with
	$$
	\delta_{L^p}(\varepsilon) = \delta_{W^{k,p}}(\varepsilon) :=
	\begin{cases}
		\dfrac{p-1}{8}\varepsilon^2 + o(\varepsilon^2) > \dfrac{p-1}{8}\varepsilon^2, & 1 < p < 2, \\
		1 - \Big[1 - \Big(\dfrac{\varepsilon}{2}\Big)^p\Big]^{1/p} > \dfrac{1}{p}\Big(\dfrac{\varepsilon}{2}\Big)^p, & p \geq 2.
	\end{cases}
	$$
\end{remark}

\begin{definition}$($cf.~\cite{C}$)$
	A Banach space $\mathbb{X}$ is said to be smooth, if for all $x \in \mathbb{X}$ with $\|x\|_\mathbb{X} = 1$, there exists a unique $f \in \mathbb{X}^*$, which is the dual space of $\mathbb{X}$, such that $\|f\|_{\mathbb{X}^*} = f(x) = 1$.
	Moreover, the modulus of smoothness of space $\mathbb{X}$ is the function $$\rho_\mathbb{X} : [0, \infty) \to [0, \infty)$$ defined by
	\begin{equation*}
		\rho_\mathbb{X}(\sigma) := \sup\bigg\{ \frac{\|x + y\|_\mathbb{X} + \|x - y\|_\mathbb{X}}{2} - 1 : x, y \in \mathbb{X}, \|x\|_\mathbb{X} = 1, \|y\|_\mathbb{X} = \sigma \bigg\}.
	\end{equation*}
\end{definition}

\begin{definition}$($cf.~\cite{LT}$)$
	A Banach space $\mathbb{X}$ is said to be $q$-uniformly smooth for $q > 1$, if
	\begin{equation*}
		\rho_\mathbb{X}(\sigma) \lesssim \sigma^q, \quad \sigma \in (0, \infty).
	\end{equation*}
\end{definition}

We point out that there is a duality relationship between uniform smoothness and uniform convexity by the following result.
\begin{proposition}\label{dualization}$($cf.~\cite{C}$)$
	Let $\mathbb{X}$ be a Banach space.
	Then $\mathbb{X}$ is $p$-uniformly convex iff $\mathbb{X}^*$ is $q$-uniformly smooth, where $q$ is the conjugate exponent of $p$.
\end{proposition}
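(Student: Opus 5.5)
The plan is to prove the Lindenstrauss-type duality $\rho_{\mathbb{X}^*}(\tau)=\sup_{0<\varepsilon\le 2}\big(\tfrac{\tau\varepsilon}{2}-\delta_{\mathbb{X}}(\varepsilon)\big)$ for $\tau>0$, together with its dual counterpart. From this identity, the power-type bounds pass between $\delta_{\mathbb{X}}$ and $\rho_{\mathbb{X}^*}$ by an elementary Legendre-transform computation. Throughout, $1/p+1/q=1$.

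\textbf{Step 1 (the duality formula).} Fix $f,g\in\mathbb{X}^*$ with $\|f\|=1$ and $\|g\|=\tau$. By the definition of the dual norm, for $\eta>0$ we can choose unit vectors $x,y\in\mathbb{X}$ such that $(f+g)(x)\ge\|f+g\|-\eta$ and $(f-g)(y)\ge\|f-g\|-\eta$. Then
\[
\|f+g\|+\|f-g\|-2\eta\le f(x+y)+g(x-y)\le\|x+y\|+\tau\|x-y\|.
\]
Set $\varepsilon=\|x-y\|$. By the definition of the modulus of convexity (using its monotone extension to $\|x-y\|\ge\varepsilon$), we have $\|x+y\|\le 2-2\delta_{\mathbb{X}}(\varepsilon)$. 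Dividing by $2$ gives $\rho_{\mathbb{X}^*}(\tau)\le\sup_\varepsilon\big(\tfrac{\tau\varepsilon}{2}-\delta_{\mathbb{X}}(\varepsilon)\big)$.

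For the reverse inequality, take unit $x,y$ with $\|x-y\|=\varepsilon$. By Hahn--Banach, pick norming functionals $f$ for $x+y$ and $g$ for $x-y$. Then
\[
\|x+y\|+\tau\varepsilon=f(x+y)+\tau g(x-y)=(f+\tau g)(x)+(f-\tau g)(y)\le\|f+\tau g\|+\|f-\tau g\|\le 2+2\rho_{\mathbb{X}^*}(\tau).
\]
Taking the infimum over such $x,y$ yields the reverse bound. The symmetric argument, with the roles of $\mathbb{X}$ and $\mathbb{X}^*$ exchanged, gives $\delta_{\mathbb{X}}(\varepsilon)\ge\sup_{\tau>0}\big(\tfrac{\tau\varepsilon}{2}-\rho_{\mathbb{X}^*}(\tau)\big)$. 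Here we use Goldstine/weak$^*$ density to norm functionals on $\mathbb{X}^*$ by elements of $\mathbb{X}$.

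\textbf{Step 2 (convex $\Rightarrow$ smooth).} If $\delta_{\mathbb{X}}(\varepsilon)\ge c\varepsilon^p$, then
\[
\rho_{\mathbb{X}^*}(\tau)\le\sup_{\varepsilon>0}\Big(\tfrac{\tau\varepsilon}{2}-c\varepsilon^p\Big)=C\tau^{q}.
\]
This is the Young/Legendre computation: the maximizer is $\varepsilon\sim\tau^{1/(p-1)}$. For large $\tau$, the trivial bound $\rho(\tau)\le\tau$ together with the constraint $\varepsilon\le2$ keeps the estimate $\lesssim\tau^q$ on bounded ranges; for $\tau\ge1$ one also has $\tau\le\tau^q$ since $q>1$. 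Hence $\mathbb{X}^*$ is $q$-uniformly smooth.

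\textbf{Step 3 (smooth $\Rightarrow$ convex).} If $\rho_{\mathbb{X}^*}(\tau)\le C\tau^q$, then by the dual formula of Step 1,
\[
\delta_{\mathbb{X}}(\varepsilon)\ge\sup_\tau\Big(\tfrac{\tau\varepsilon}{2}-C\tau^q\Big)=c\,\varepsilon^{p}.
\]
Hence $\mathbb{X}$ is $p$-uniformly convex.

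\textbf{Main obstacle.} The delicate point is the second half of Step 1, namely the inequality $\delta_{\mathbb{X}}\ge$ (conjugate of $\rho_{\mathbb{X}^*}$) in a possibly non-reflexive space. It also requires care with the facts that $\delta$ is defined with equality $\|x-y\|=\varepsilon$ and is only known to be nondecreasing after passing to the standard equivalent modulus $\inf\{\cdots:\|x-y\|\ge\varepsilon\}$. I would first replace $\delta_{\mathbb{X}}$ by this monotone modulus, which is equivalent up to constants and so harmless for power-type bounds. I would then handle the norming step via $\varepsilon$-approximate norming vectors, as in Step 1. Alternatively, one can note that $q$-uniform smoothness of $\mathbb{X}^*$ implies that $\mathbb{X}^*$, and hence $\mathbb{X}$, is reflexive (Milman--Pettis), after which the two directions are perfectly symmetric.
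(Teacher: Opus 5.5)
The paper does not prove this proposition; it only cites a reference for it. Your route is the standard proof behind that citation: first establish the Lindenstrauss duality formula $\rho_{\mathbb{X}^*}(\tau)=\sup_{0<\varepsilon\le 2}\bigl(\tfrac{\tau\varepsilon}{2}-\delta_{\mathbb{X}}(\varepsilon)\bigr)$, then pass between power-type bounds by a Young/Legendre computation. Your argument is correct.

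Your ``main obstacle'' is not actually an obstacle. Step 3 only needs, for each fixed $\tau>0$ and $\varepsilon\in(0,2]$, the pointwise inequality $\delta_{\mathbb{X}}(\varepsilon)\ge \tfrac{\tau\varepsilon}{2}-\rho_{\mathbb{X}^*}(\tau)$. That is exactly what your Hahn--Banach computation in Step 1 already proves, in an arbitrary Banach space. So you need no roles-exchanged argument, no Goldstine, no reflexivity, and no monotone replacement of $\delta_{\mathbb{X}}$. Note also that the roles-exchanged argument would relate $\rho_{\mathbb{X}}$ and $\delta_{\mathbb{X}^*}$, not the pair you need.
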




We now introduce the notion of martingale difference sequences, which plays a pivotal role in the proof presented in Section  \ref{ProofPoC} below.
\begin{definition}\label{mddef}
	Let $(\Omega, \mathscr{F}, \mathbb{P})$ be a probability space and let $\mathbb{X}$ be a separable Banach space. For any $r \geq 1$, we denote by $\mathbb{L}_\mathbb{X}^r$ the space of $\mathbb{X}$-valued random variables such that
	$$\|\cdot\|_{\mathbb{L}_\mathbb{X}^r}^r := \mathbb{E}\|\cdot\|_\mathbb{X}^r<\infty.$$ Let $(\mathscr{F}_i)_{i \geq 1}$ be a non-decreasing sequence of sub-$\sigma$-algebras of $\mathscr{F}$. We say that a sequence of $\mathbb{X}$-valued random variables $\{Y_i\}_{i \geq 1}$ is a martingale difference sequence with respect to the filtration $(\mathscr{F}_i)_{i \geq 1}$ if
	\begin{enumerate}
		\item[(i)] for any $i \geq 1$, $Y_i$ is $\mathscr{F}_i$-measurable and belongs to $\mathbb{L}_\mathbb{X}^1$;
		
		\vspace{1mm}
		\item[(ii)] for any $i \geq 2$, $\mathbb{E}[Y_i \mid \mathscr{F}_{i-1}] = 0$~~ $\mathbb{P}$-a.s..
	\end{enumerate}
\end{definition}

\subsection{Weak solutions}\label{secin}

Recall the Gelfand triple (\ref{gelfand}). For some  constants $\alpha>1$ and $\beta> 0$, let
$$\mathfrak{M}:=\mathscr{P}_{\beta}(\mathbb{H})\cap\mathscr{P}_{2}(\mathbb{H})\cap \mathscr{P}_{\alpha}(\mathbb{V}).$$
Then for the measurable maps
$$
\mathcal{A}:[0,T]\times {\mathbb{V}}\times\mathfrak{M}\rightarrow {\mathbb{V}}^*,~~\mathcal{B}:[0,T]\times {\mathbb{V}}\times\mathfrak{M}\rightarrow L_2(U;\mathbb{H}),
$$
we consider the following general type of mean-field stochastic PDEs
\begin{equation}\label{eqSPDE}
	dX_t=\mathcal{A}(t,X_t,\mathscr{L}_{X_t})dt+\mathcal{B}(t,X_t,\mathscr{L}_{X_t})dW_t,
\end{equation}
where $\{W_t\}_{t\geq 0}$ is an $U$-valued cylindrical Wiener process defined on a complete filtered probability space $\left(\Omega,\mathscr{F},\{\mathscr{F}_t\}_{t\geq 0},\mathbb{P}\right)$.

We  first recall the definition of weak solutions to mean-field dynamics (\ref{eqSPDE}).
\begin{definition}\label{dew} $($Weak solution$)$ A pair $(X,W)$ is called a (probabilistically) weak solution to mean-field stochastic PDE $(\ref{eqSPDE})$, if there exists a stochastic basis $(\Omega,\mathscr{F},\{\mathscr{F}_t\}_{t\geq0},\mathbb{P})$ such that $X$ is an $\{\mathscr{F}_t\}$-adapted process and  $W$ is an $U$-valued cylindrical Wiener process on $(\Omega,\mathscr{F},\{\mathscr{F}_t\}_{t\geq 0},\mathbb{P})$ and the following holds:
	
	\vspace{2mm}
	(i) $X\in \mathbb{C}_T(\mathbb{H})\cap L^\alpha([0,T];\mathbb{V})$ $\mathbb{P}$-a.s.;
	
	\vspace{2mm}
	(ii) $\int_0^T\|\mathcal{A}(s,X_s,\mathscr{L}_{X_s})\|_{\mathbb{V}^*}ds+\int_0^T\|\mathcal{B}(s,X_s,\mathscr{L}_{X_s})\|_{L_2(U;\mathbb{H})}^2ds<\infty$~~ $\mathbb{P}$-a.s.;
	
	\vspace{2mm}
	(iii) The following identity holds in ${\mathbb{V}}^*$
	$$X_t=X_0+\int_0^t \mathcal{A}(s,X_s,\mathscr{L}_{X_s})ds+\int_0^t \mathcal{B}(s,X_s,\mathscr{L}_{X_s})dW_s,~t\in[0,T],~\mathbb{P}\text{-a.s.}.$$
\end{definition}

We introduce, for the first time, the notion of measure-dependent pseudo-monotone operators. To this end, we denote $$\mathfrak{M}_b:=\big\{\mu\in\mathfrak{M}:\mu(\|\cdot\|_{\mathbb{V}}^{\alpha})\leq K\big\}$$ for some $K>0$.
\begin{definition}\label{deps}$($Measure-dependent pseudo-monotone operators$)$ For some constants $\beta> 0$ and $\alpha>1$,  an operator
	$$\mathcal{A}:\mathbb{V}\times\mathfrak{M}\to\mathbb{V}^*$$
	is said to be pseudo-monotone if, for any sequences $\{u_n\}_{n=1}^{\infty},u$ in $\mathbb{V}$ and  $\{\mu_n\}_{n=1}^{\infty},\mu$ in $\mathfrak{M}_b$ such that   $u_n\to u$ weakly in $\mathbb{V}$ and  $\mu_n\to\mu$ in $\mathscr{P}_{\beta}(\mathbb{H})\cap\mathscr{P}_{2}(\mathbb{H})$, the following holds
	$$\liminf _{n \rightarrow \infty}\,_{\mathbb{V}^*}\langle \mathcal{A}(u_{n},\mu_n), u_{n}-u\rangle_{\mathbb{V}} \geq 0,$$
	then for any $v \in \mathbb{V}$, we have
	$$\limsup _{n \rightarrow \infty}\,_{\mathbb{V}^*}\langle \mathcal{A}(u_{n},\mu_n), u_{n}-v\rangle_{\mathbb{V}} \leq \,_{\mathbb{V}^*}\langle \mathcal{A}(u,\mu), u-v\rangle_{\mathbb{V}}.$$
\end{definition}
\begin{remark}
	Note that Definition \ref{deps} generalizes the classical pseudo-monotone operators, first introduced by Br\'{e}zis in \cite{B68}, to the setting of measure-dependent operators. This might shed new light beyond the present work,  especially for nonlinear mean-field evolution equations in Banach spaces.
\end{remark}

To ensure the existence of weak solutions, we suppose that there are some constants $\delta>0$, $\alpha>1$,  $\beta> 0$, and $p \in [\vartheta_1, \infty) \cap (\vartheta_2, \infty)$, where $\vartheta_1:=\max\{\beta+2,2\beta\}$ and $\vartheta_2:=\max\{2\beta(\alpha-1),4\}$, such that the following conditions hold for a.e.~$t\in[0,T]$.

\vspace{1mm}
\begin{enumerate}
	\item [$(\mathbf{A}_1)$] (Continuity of $\mathcal{B}$)
	For any sequences $\{u_n\}_{n\in\mathbb{N}},u$ in $\mathbb{V}$ and  $\{\mu_n\}_{n\in\mathbb{N}},\mu$ in $\mathfrak{M}$, such that $u_n\to u$ in $\mathbb{H}$ and  $\mu_n\to\mu$ in $\mathscr{P}_{\beta}(\mathbb{H})$, then
	\begin{equation*}
		\lim_{n\to\infty}\| \mathcal{B}(t,u_n,\mu_n)-\mathcal{B}(t,u,\mu)\|_{L_2(U;\mathbb{H})}=0.
	\end{equation*}
	
	\item [$(\mathbf{A}_2)$] (Pseudo-monotonicity)
	$\mathcal{A}(t,\cdot,\cdot)$ is pseudo-monotone in the sense of Definition \ref{deps}.
	
	\vspace{1mm}
	\item [$(\mathbf{A}_3)$] (Coercivity)
	For any $u\in \mathbb{V}$ and $\mu\in\mathfrak{M}$,
	\begin{eqnarray*}
		\!\!\!\!\!\!\!\!&&2_{\mathbb{V}^*}\langle \mathcal{A}(t,u,\mu),u\rangle_{\mathbb{V}}+(p-1)\|\mathcal{B}(t,u,\mu)\|_{L_2(U;\mathbb{H})}^2+\delta\|u\|_{\mathbb{V}}^\alpha
		\nonumber \\
		\!\!\!\!\!\!\!\!&&\lesssim   1+\|u\|_{\mathbb{H}}^2+\mu(\|\cdot\|_{\mathbb{H}}^{2}).
	\end{eqnarray*}

	\vspace{1mm}
	\item [$(\mathbf{A}_4)$] (Polynomial growth)
	For any $u\in \mathbb{V}$ and $\mu\in\mathfrak{M}$,
	\begin{equation}\label{conA3}
		\|\mathcal{A}(t,u,\mu)\|_{{\mathbb{V}}^*}^{\frac{\alpha}{\alpha-1}}\lesssim (1+\|u\|_{\mathbb{V}}^{\alpha}+\mu(\|\cdot\|_{\mathbb{V}}^{\alpha} ))(1+\|u\|_{{\mathbb{H}}}^{\beta}+\mu(\|\cdot\|_{\mathbb{H}}^{\beta})),
	\end{equation}
	\begin{equation}\label{conb}
		\|\mathcal{B}(t,u,\mu)\|_{L_2({U},{\mathbb{H}})}^2\lesssim 1+\|u\|_{\mathbb{H}}^{\beta}+\mu(\|\cdot\|_{\mathbb{H}}^{\beta}).
	\end{equation}
\end{enumerate}

The following is the first main result concerning the  existence of weak solutions to  mean-field stochastic PDEs (\ref{eqSPDE}).
\begin{theorem}\label{th1}
	Suppose that $(\mathbf{A}_1)$-$(\mathbf{A}_4)$ hold.
	For any initial data $\xi\sim\mu_0\in \mathscr{P}_p(\mathbb{H})$, where $p$ is the same as in $(\mathbf{A}_3)$,
	then Eq.~$(\ref{eqSPDE})$ has a weak solution in the sense of Definition \ref{dew}. Moreover,  for any $\gamma \in (0,1)$ we have
	\begin{equation}\label{esq370}
		\sup_{t\in[0,T]}\mathbb{E}\|X_t\|_{\mathbb{H}}^{p}+\mathbb{E}\Big[\sup_{t\in[0,T]}\|X_t\|_{\mathbb{H}}^{\gamma p}\Big]+\mathbb{E}\int_0^T(1+\|X_t\|_{\mathbb{H}}^{p-2})\|X_t\|_{\mathbb{V}}^{\alpha}dt<\infty.
	\end{equation}
\end{theorem}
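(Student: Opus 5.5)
The proof goes through a Faedo--Galerkin approximation and stochastic compactness, with the measure argument handled by a cut-off.

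\emph{Galerkin system.} We fix an orthonormal basis $\{e_k\}\subset\mathbb{V}$ of $\mathbb{H}$ and let $\Pi_n$ be the projection onto $\mathbb{H}_n:=\mathrm{span}\{e_1,\dots,e_n\}$. We then consider the finite-dimensional mean-field SDE
\[
dX^n_t=\Pi_n\mathcal{A}(t,X^n_t,\mathscr{L}_{X^n_t})dt+\Pi_n\mathcal{B}(t,X^n_t,\mathscr{L}_{X^n_t})\tilde\Pi_n dW_t,\qquad X^n_0=\Pi_n\xi .
\]
The coefficients of this system are only continuous and of super-linear growth, both in the state and in the measure. So its solvability comes from the finite-dimensional weak existence result, Theorem~\ref{finite-th1}. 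That result is proved by a measure cut-off: the measure argument is frozen beyond a ball in $\mathscr{P}_p$, the cut-off equation is solved via Euler or Skorokhod arguments, and the cut-off is then removed using the a priori moment bound below.

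\emph{Uniform estimates.} We apply It\^o's formula to $\|X^n_t\|_{\mathbb{H}}^p$. The coercivity condition $(\mathbf{A}_3)$ is stated with the factor $(p-1)$ precisely so that the It\^o correction term $\frac{p(p-2)}{2}\|X\|^{p-4}\|B^*X\|^2$ is absorbed. Together with $\mu(\|\cdot\|_{\mathbb{H}}^2)\le(\mathbb{E}\|X^n_t\|^p_{\mathbb{H}})^{2/p}$ and Gronwall, this yields
\[
\sup_n\Big\{\sup_t\mathbb{E}\|X^n_t\|_{\mathbb{H}}^p+\mathbb{E}\!\int_0^T(1+\|X^n_t\|_{\mathbb{H}}^{p-2})\|X^n_t\|_{\mathbb{V}}^\alpha dt\Big\}<\infty .
\]
The estimate for $\mathbb{E}\sup_t\|X^n_t\|^{\gamma p}$ with $\gamma<1$ follows from BDG, or from a Lenglart-type inequality, since the supremum moment at level $p$ itself is not available. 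Next, the growth condition $(\mathbf{A}_4)$ and H\"older's inequality give a uniform bound on $\mathbb{E}\int_0^T\|\mathcal{A}(t,X^n_t,\mathscr{L}_{X^n_t})\|_{\mathbb{V}^*}^{\alpha/(\alpha-1)\cdot\theta}dt$ for some $\theta\in(0,1]$. This is where $p>2\beta(\alpha-1)$ and $p\ge 2\beta$ enter: the factor $(1+\|u\|_{\mathbb{H}}^\beta)$ must be controlled by the weighted $\mathbb{V}$-bound. We also get the bound $\mathbb{E}\int\|\mathcal{B}\|^{p/\beta}<\infty$. These bounds give time-regularity estimates in $\mathbb{V}^*$ (Aldous or fractional Sobolev type). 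Combined with the compact embedding $\mathbb{V}\subset\mathbb{H}$, they yield tightness of the laws of $X^n$ in $\mathbb{C}_T(\mathbb{V}^*)\cap L^\alpha([0,T];\mathbb{H})\cap L^\alpha_w([0,T];\mathbb{V})$ and in the Skorokhod-type space used by R\"ockner--Shang--Zhang.

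\emph{Identification of the limit.} By the Jakubowski--Skorokhod theorem we obtain, on a new probability space, $\tilde X^n\to\tilde X$ almost surely, with the same laws, together with Wiener processes $\tilde W^n\to\tilde W$. Uniform integrability from the $p$-moments gives, for a.e.\ $t$, $\mathscr{L}_{X^n_t}\to\mathscr{L}_{\tilde X_t}$ in $\mathscr{P}_\beta(\mathbb{H})\cap\mathscr{P}_2(\mathbb{H})$. Since $p>\max\{\beta,2\}$, strong $L^\alpha(\mathbb{H})$ convergence plus moment bounds suffice for this. In addition, $\sup_n\mathscr{L}_{X^n_t}(\|\cdot\|^\alpha_{\mathbb{V}})$ is finite for a.e.\ $t$ after a Fatou/truncation argument, so the measures lie in some $\mathfrak{M}_b$. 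Assumption $(\mathbf{A}_1)$ then passes the limit in the stochastic integral, via the standard Debussche--Glatt-Holtz--Temam lemma on the convergence of stochastic integrals. The drift converges only weakly, to some limit $\mathcal{A}_\infty\in L^{\alpha/(\alpha-1)}(\Omega\times[0,T];\mathbb{V}^*)$ on localized sets.

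\emph{Main obstacle.} The hardest step is showing $\mathcal{A}_\infty=\mathcal{A}(t,\tilde X_t,\mathscr{L}_{\tilde X_t})$ using the measure-dependent pseudo-monotonicity $(\mathbf{A}_2)$. The plan follows R\"ockner--Shang--Zhang. Comparing the energy identity for $\|\tilde X^n_t\|_{\mathbb{H}}^2$ with the It\^o formula for the limit, applied on the sets where the $\mathbb{V}$-integrals are bounded (stopped or truncated), gives
\[
\limsup_n\mathbb{E}\int_0^T\langle\mathcal{A}(t,\tilde X^n_t,\mu^n_t),\tilde X^n_t-\tilde X_t\rangle dt\le 0 .
\]
This uses the strong $\mathbb{H}$-convergence and the convergence of the $\mathcal{B}$-terms. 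A Fatou-type lemma for pseudo-monotone operators then upgrades this integrated inequality to the pointwise $\liminf\ge0$ along subsequences for a.e.\ $(t,\omega)$; this needs the lower bound on $\langle\mathcal{A}(u_n,\mu_n),u_n-v\rangle$ obtained from $(\mathbf{A}_3)$--$(\mathbf{A}_4)$. Applying Definition~\ref{deps} pointwise and integrating then yields $\langle\mathcal{A}_\infty,\tilde X-v\rangle\le\langle\mathcal{A}(\tilde X,\mathscr{L}_{\tilde X}),\tilde X-v\rangle$ for all $v$, hence the two drifts coincide. The delicate points are ensuring $\mu^n_t\in\mathfrak{M}_b$ with a uniform $K$ for a.e.\ $t$ (we localize in $t$), and handling the fact that the measure flow is deterministic while the state is random. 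Finally, the estimate (\ref{esq370}) follows by lower semicontinuity from the uniform bounds.
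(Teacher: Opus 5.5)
\textbf{Overall comparison.} Your route is the paper's: the Galerkin system is solved via Theorem~\ref{finite-th1}; you use It\^o's formula for $\|X^{(n)}_t\|_{\mathbb{H}}^p$ together with Lenglart's inequality; you prove tightness in $\mathbb{C}_T(\mathbb{V}^*)\cap L^\alpha([0,T];\mathbb{H})$ and apply Skorokhod; $(\mathbf{A}_1)$ passes the limit in the noise; and the drift is identified by an energy comparison plus a pointwise use of pseudo-monotonicity (Lemma~\ref{A}).

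\textbf{Where it fails.} The last step does not work as written, because your inequalities are oriented for the accretive convention $u'+Au=f$ of Br\'ezis. Here the equation is $dX=\mathcal{A}\,dt+\dots$ with $\mathcal{A}$ dissipative. Write $L^{(n)}_t:={}_{\mathbb{V}^*}\langle\mathcal{A}(t,X^{(n)}_t,\mathscr{L}_{X^{(n)}_t}),X^{(n)}_t-X_t\rangle_{\mathbb{V}}$.
\begin{itemize}
\item Condition $(\mathbf{A}_3)$ gives no lower bound on ${}_{\mathbb{V}^*}\langle\mathcal{A}(u_n,\mu_n),u_n-v\rangle_{\mathbb{V}}$. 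Together with $(\mathbf{A}_4)$ and Young's inequality it gives the \emph{upper} bound (\ref{Aall}): $L^{(n)}_t\le-(\delta-\epsilon_0)\|X^{(n)}_t\|_{\mathbb{V}}^\alpha+\epsilon_0\mathbb{E}\|X^{(n)}_t\|_{\mathbb{V}}^\alpha+F^{(n)}_t$.
\item Definition~\ref{deps} is triggered by $\liminf\ge0$, not by $\limsup\le0$.
\end{itemize}
So a pointwise ``$\liminf_nL^{(n)}_t\ge0$'' cannot be derived your way. Along a subsequence where $L^{(n_k)}_t(\omega)$ stays negative, you get neither $\mathbb{V}$-boundedness of $X^{(n_k)}_t(\omega)$ nor the hypothesis of Definition~\ref{deps}.

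\textbf{The correct chain.} It is the mirror image of yours, as in Claims 1--3 of Lemma~\ref{A}.
\begin{enumerate}
\item Show $\limsup_nL^{(n)}_t\le0$ for a.e.\ $(t,\omega)$, by contradiction. Along a subsequence with $L^{(n_k)}_t(\omega)\to c>0$, positivity plus the upper bound force $\sup_k\|X^{(n_k)}_t(\omega)\|_{\mathbb{V}}<\infty$. Hence $X^{(n_k)}_t(\omega)\rightharpoonup X_t(\omega)$ in $\mathbb{V}$, and pseudo-monotonicity gives $\limsup_kL^{(n_k)}_t\le0$, a contradiction.
\item The energy comparison gives $\liminf_n\mathbb{E}\int_0^TL^{(n)}_t\,dt\ge0$. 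If you apply the energy identity at a.e.\ $t$, where $X^{(n)}_t\to X_t$ strongly in $\mathbb{H}$ and $\|X^{(n)}_t\|_{\mathbb{H}}^2$ is uniformly integrable, your strong-$\mathbb{H}$ argument even yields the limit $0$.
\item Using the upper bound (\ref{Aall}) to justify reverse Fatou, you get $\mathbb{E}\int_0^TL^{(n)}_t\,dt\to0$. Hence $L^{(n)}\to0$ in $L^1$ and a.e.\ along a subsequence, and Lemma~\ref{pseudoequivalent} identifies $G$.
\end{enumerate}
With all signs flipped, your argument becomes exactly this.

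\textbf{A second point.} Fatou only gives $\liminf_n\mathscr{L}_{X^{(n)}_t}(\|\cdot\|_{\mathbb{V}}^\alpha)<\infty$ for a.e.\ $t$, not $\sup_n$. Because the upper bound contains $\epsilon_0\mathbb{E}\|X^{(n)}_t\|_{\mathbb{V}}^\alpha$, the $\mathfrak{M}_b$-bound must be secured along the specific subsequence chosen at fixed $(t,\omega)$ in step 1, before the $\mathbb{V}$-boundedness there can be concluded. This is what (\ref{Aall3})--(\ref{cont}) are meant to supply in the paper; a ``Fatou/truncation'' argument uniform in $n$ does not give it.
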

\begin{remark}
	(i) Compared to previous works \cite{BKKX,CKS,C23,HLL24,HLL26}, Theorem \ref{th1} establishes a more general framework for the existence of weak solutions to infinite-dimensional mean-field dynamics governed by pseudo-monotone operators. We will derive an equivalent characterization of Definition \ref{deps} and employ it to establish the convergence of Galerkin sequences associated with  Eq.~(\ref{eqSPDE}), see Subsections \ref{sec3.1} and \ref{sec3.4} below for more details.
	
	(ii) In the finite-dimensional  setting (see Subsection \ref{finite_dim} for more details), this result seems also new in the literature (cf.~\cite{E,GHM,HSS} and references therein), as it  allows diffusion coefficients with, possibly, super-linear kernels, which is of particular importance in filtering problems (cf.~\cite{DT18}).
\end{remark}

%
%

\subsection{Strong solutions}\label{Strongresult}
This subsection addresses the existence of strong solutions and establishes the well-posedness of the mean-field stochastic PDEs (\ref{eqSPDE}). To this end, we recall the following definition.
\begin{definition}\label{de1}
	$($Strong solution$)$ We say that there exists a (probabilistically) strong solution to (\ref{eqSPDE}) if for every probability space $(\Omega,\mathscr{F},\{\mathscr{F}_t\}_{t\geq 0},\mathbb{P})$ with an $U$-valued cylindrical Wiener process $W$, there exists
	an $\{\mathscr{F}_t\}$-adapted process $X$ such that the properties (i)-(iii) in Definition \ref{dew} hold.

\end{definition}

To ensure the existence of strong solutions, we propose the following conditions.
\vspace{1mm}
\begin{enumerate}
	\item [$(\mathbf{A}_2^*)$] (Demi-continuity) For any $\phi\in {\mathbb{V}}$, the map
	\begin{equation*}
		{\mathbb{V}}\times\mathfrak{M}\ni(u,\mu)\mapsto~_{{\mathbb{V}}^*}\langle \mathcal{A}(t,u,\mu),\phi\rangle_{\mathbb{V}}
	\end{equation*}
	is continuous;
	
	\item [$(\mathbf{A}_5)$] (Local monotonicity) For any $t\in[0,T]$, $u,v\in {\mathbb{V}}$, and $\mu,\nu\in\mathfrak{M}$,
	\begin{eqnarray}\label{unip}
		\!\!\!\!\!\!\!\!&&2_{{\mathbb{V}}^*}\langle \mathcal{A}(t,u,\mu)-\mathcal{A}(t,v,\nu),u-v\rangle_{\mathbb{V}}+\|\mathcal{B}(t,u,\mu)-\mathcal{B}(t,v,\nu)\|_{L_2(U;\mathbb{H})}^2
		\nonumber\\
		\!\!\!\!\!\!\!\!&&\leq
		(\rho(u,\mu)+\eta(v,\nu))(\|u-v\|_{\mathbb{H}}^2+\mathcal{W}_{2,{\mathbb{H}}}(\mu,\nu)^2),
	\end{eqnarray}
	where $\rho,\eta:{\mathbb{V}}\times \mathfrak{M}\to [0,\infty)$ are  measurable functions satisfying
	\begin{equation}\label{esq22}
		\rho(u,\mu)+\eta(u,\mu)\lesssim (1+\|u\|_{\mathbb{V}}^{\alpha}+\mu(\|\cdot\|_{\mathbb{V}}^{\alpha}))(1+\|u\|_{\mathbb{H}}^{\beta}+\mu(\|\cdot\|_{\mathbb{H}}^{\beta})).
	\end{equation}
\end{enumerate}

\vspace{1mm}
Our second main result concerns the  existence of strong solutions to  mean-field stochastic PDEs (\ref{eqSPDE}).
\begin{theorem}\label{th2}
	Suppose that $(\mathbf{A}_1)$, $(\mathbf{A}_2^*)$, $(\mathbf{A}_3)$, $(\mathbf{A}_4)$, and $(\mathbf{A}_5)$ hold.
	For any initial data $\xi\in L^p(\Omega,\mathscr{F}_0,\mathbb{P};{\mathbb{H}})$, where $p$ is the same as in $(\mathbf{A}_3)$,
	then Eq.~$(\ref{eqSPDE})$ has a strong solution in the sense of Definition \ref{de1}. Moreover, the estimate $(\ref{esq370})$ holds.
\end{theorem}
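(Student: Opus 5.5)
We obtain the strong solution by a Yamada--Watanabe type argument, applied not to the mean-field equation itself but to a \emph{decoupled} equation with a frozen measure flow. The paper's introduction points to this: existence of strong solutions follows from uniqueness of the corresponding decoupled stochastic PDEs.

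First, Theorem \ref{th1} gives a weak solution $(\tilde X,\tilde W)$ on some stochastic basis $(\tilde\Omega,\tilde{\mathscr F},\{\tilde{\mathscr F}_t\},\tilde{\mathbb P})$ with initial law $\mu_0=\mathscr L_\xi$, satisfying (\ref{esq370}). To invoke it we must check that $(\mathbf A_2^*)$ and $(\mathbf A_5)$, combined with $(\mathbf A_3)$--$(\mathbf A_4)$, imply the measure-dependent pseudo-monotonicity $(\mathbf A_2)$. The plan is as follows.
\begin{itemize}
\item Take $u_n\rightharpoonup u$ in $\mathbb V$ and $\mu_n\to\mu$ in $\mathscr P_\beta(\mathbb H)\cap\mathscr P_2(\mathbb H)$ with $\mu_n\in\mathfrak M_b$. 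By the compact embedding $\mathbb V\subset\mathbb H$, $u_n\to u$ strongly in $\mathbb H$.
\item Apply (\ref{unip}) with $(v,\nu)=(u,\mu)$, drop the nonnegative $\mathcal B$-term, and use (\ref{esq22}) together with the bounds $\sup_n\|u_n\|_{\mathbb V}<\infty$ and $\mu_n(\|\cdot\|_{\mathbb V}^\alpha)\le K$. This gives
$$\limsup_n\,{}_{\mathbb V^*}\langle\mathcal A(u_n,\mu_n)-\mathcal A(u,\mu),u_n-u\rangle_{\mathbb V}\le C\limsup_n\big(\|u_n-u\|_{\mathbb H}^2+\mathcal W_{2,\mathbb H}(\mu_n,\mu)^2\big)=0 .$$
\item Combined with the liminf hypothesis, this yields $\langle\mathcal A(u_n,\mu_n),u_n-u\rangle\to0$.
\item The classical Minty/hemicontinuity argument then finishes the verification. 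Test with $u\pm s w$, use the demicontinuity $(\mathbf A_2^*)$ in $(u,\mu)$, and use the growth bound (\ref{conA3}), which gives boundedness of $\mathcal A(u_n,\mu_n)$ in $\mathbb V^*$ and hence weak$^*$ subsequential limits. This produces the required inequality for every $v$.
\end{itemize}

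Second, fix the deterministic flow $\mu_t:=\mathscr L_{\tilde X_t}$, which lies in $\mathfrak M$ for a.e.\ $t$ by (\ref{esq370}). Consider the decoupled equation
$$dY_t=\mathcal A(t,Y_t,\mu_t)dt+\mathcal B(t,Y_t,\mu_t)dW_t,$$
whose coefficients are no longer measure-dependent in the unknown. The process $\tilde X$ is a weak solution of it. We then prove pathwise uniqueness for this equation.
\begin{itemize}
\item For two solutions $Y^1,Y^2$ on the same basis with the same $W$ and initial datum, apply It\^o's formula for $\|Y^1_t-Y^2_t\|_{\mathbb H}^2$ in the Gelfand triple.
\item Use (\ref{unip}) with $\mu=\nu=\mu_t$, so the Wasserstein term vanishes.
\item Multiply by the exponential weight $\exp\big(-\int_0^t(\rho(Y^1_s,\mu_s)+\eta(Y^2_s,\mu_s))ds\big)$. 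This weight is a.s.\ positive because, by (\ref{esq22}), the exponent is finite: the solutions lie in $\mathbb C_T(\mathbb H)\cap L^\alpha([0,T];\mathbb V)$ and $\int_0^T\mu_s(\|\cdot\|_{\mathbb V}^\alpha)ds<\infty$.
\item Localize by stopping times on $\|Y^i\|_{\mathbb H}$; this is legitimate here because the measure argument is frozen. Taking expectations gives $Y^1=Y^2$.
\end{itemize}

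Third, the Yamada--Watanabe theorem for variational SPDEs with pathwise uniqueness (as in \cite{RSZ} or Kurtz's version) gives a unique strong solution map $Y=F(\xi,W)$ for the decoupled equation, and uniqueness in law. Hence on any prescribed basis $(\Omega,\mathscr F,\{\mathscr F_t\},\mathbb P,W)$ with $\xi\in L^p(\Omega,\mathscr F_0,\mathbb P;\mathbb H)$, the process $X:=F(\xi,W)$ satisfies $\mathscr L_{X_t}=\mathscr L_{\tilde X_t}=\mu_t$. Therefore $X$ solves (\ref{eqSPDE}) with $\mathscr L_{X_t}$ in the coefficients, and (\ref{esq370}) transfers from $\tilde X$ through the equality of laws.

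The main obstacle is the first step, namely showing that $(\mathbf A_5)$ implies pseudo-monotonicity in the measure-dependent sense. The weak convergence of $u_n$ must be controlled using only $\mathbb H$-strong convergence together with the superlinear weight $\rho+\eta$; the uniform bound $\mu_n\in\mathfrak M_b$ is exactly what keeps that weight bounded. A secondary difficulty is the measurability and adaptedness bookkeeping required to apply Yamada--Watanabe with a time-dependent frozen measure.
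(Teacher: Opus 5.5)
Your proposal is correct and follows the paper's three-step route. First, $(\mathbf{A}_2^*)$ and $(\mathbf{A}_5)$ are shown to imply $(\mathbf{A}_2)$, so Theorem \ref{th1} gives a weak solution. Second, the frozen-measure equation is shown to be pathwise unique using the weight $\exp\big(-\int_0^t(\rho+\eta)\,ds\big)$. Third, the (modified) Yamada--Watanabe theorem turns this into a strong solution whose time marginals are still $\mu_t$.

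The one difference is how you close the pseudo-monotonicity check. After subtracting $K_1u$, the paper passes to the limit in ${}_{\mathbb{V}^*}\langle\mathcal{A}(u_n,\mu_n),u-v\rangle_{\mathbb{V}}$ by citing $(\mathbf{A}_2^*)$ directly, even though $u_n$ converges only weakly in $\mathbb{V}$. Your Minty argument along $u-sw$ uses $(\mathbf{A}_2^*)$ only along norm-convergent points, and it uses $(\mathbf{A}_5)$ at $(u-sw,\mu)$ with a locally bounded constant. It is therefore the more careful way to finish that step.
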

\begin{remark}\label{re0}
	Differing from the classical variational setting (cf.~e.g.~\cite{liu2015stochastic,RSZ}), some highly non-trivial challenges arise  primarily due to the inherent non-local nature of measure-dependent operators. Indeed, as pointed out in a recent review (cf.~\cite{CD22}), there is no real hope of obtaining stronger results at this level of generality. For instance, in previous finite-dimensional studies  (cf.~\cite{E,GHM}),  controlling exponential moments was essential for proving the existence of solutions, whereas, in the present work, we  do not require any exponential moment assumptions.
\end{remark}

To establish the well-posedness of the  mean-field stochastic PDEs (\ref{eqSPDE}),  we require the following  {\it decoupled} local monotonicity condition.

\vspace{2mm}
\begin{enumerate}
	\item [$(\mathbf{A}^*_5)$] (Decoupled local monotonicity)
	For any $t\in[0,T]$, $u,v\in {\mathbb{V}}$, and $\mu,\nu\in\mathfrak{M}$,
	\begin{eqnarray*}
		&&2{}_{{\mathbb{V}}^*}\langle \mathcal{A}(t,u,\mu)-\mathcal{A}(t,v,\nu),u-v\rangle_{\mathbb{V}}+\|\mathcal{B}(t,u,\mu)-\mathcal{B}(t,v,\nu)\|_{L_2(U;\mathbb{H})}^2
		\nonumber\\
		&&\leq
		(\rho(0,\mu)+\eta(0,\nu))\|u-v\|_{\mathbb{H}}^2
		+(\rho(u,\mu)+\eta(v,\nu))\mathcal{W}_{2,\mathbb{H}}(\mu,\nu)^2,
	\end{eqnarray*}
	where $\rho,\eta$ are the same as in  $(\mathbf{A_5})$.

\end{enumerate}

\vspace{1mm}
The following presents the well-posedness result of the  mean-field stochastic PDEs (\ref{eqSPDE}).
\begin{theorem}\label{th3}
	Suppose that $(\mathbf{A}_1)$, $(\mathbf{A}_2^*)$, $(\mathbf{A}_3)$, $(\mathbf{A}_4)$,   and $(\mathbf{A}^*_5)$ hold.
	For any initial data $\xi\in L^p(\Omega,\mathscr{F}_0,\mathbb{P};{\mathbb{H}})$, where $p$ is the same as in $(\mathbf{A}_3)$,
	then Eq.~$(\ref{eqSPDE})$ has a unique (strong and weak) solution and  the estimate $(\ref{esq370})$ holds.
	
	Furthermore, Eq.~$(\ref{eqSPDE})$ is continuous on the initial data in the sense that for any $\xi_n,\xi\in L^p(\Omega,\mathscr{F}_0,\mathbb{P};{\mathbb{H}})$ with
	$\|\xi_n-\xi\|_{L^{p'}(\Omega;\mathbb{H})}\to0$, $p'<p$, then
	\begin{equation}\label{initialconti}
		\lim_{n\to \infty}\mathbb{E}\Big[\sup_{t\in[0,T]}\|X_t(\xi_n)-X_t(\xi)\|_\mathbb{H}^{p'}\Big]=0.
	\end{equation}
	
\end{theorem}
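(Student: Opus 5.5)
We derive Theorem \ref{th3} from Theorem \ref{th2} together with a pathwise uniqueness statement and a Yamada--Watanabe type argument. By $(\mathbf{A}^*_5)$, the right-hand side is bounded by $(\rho(0,\mu)+\eta(0,\nu)+\rho(u,\mu)+\eta(v,\nu))(\|u-v\|_{\mathbb{H}}^2+\mathcal{W}_{2,\mathbb{H}}(\mu,\nu)^2)$. Hence $(\mathbf{A}_5)$ holds, after enlarging $\rho,\eta$, which keeps the growth bound (\ref{esq22}). Theorem \ref{th2} then gives a strong solution satisfying (\ref{esq370}). It remains to show:
\begin{itemize}
\item uniqueness of strong solutions on a fixed stochastic basis;
\item uniqueness in law of weak solutions;
\item the continuity estimate (\ref{initialconti}).
\end{itemize}

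For uniqueness, take two solutions $X,Y$ with the same initial value $\xi$ and laws $\mu_t=\mathscr{L}_{X_t}$, $\nu_t=\mathscr{L}_{Y_t}$. Apply It\^o's formula to $\|X_t-Y_t\|_{\mathbb{H}}^2$ and use $(\mathbf{A}^*_5)$. The key point of the decoupled condition is that the coefficient of $\|X_s-Y_s\|_{\mathbb{H}}^2$ is $\rho(0,\mu_s)+\eta(0,\nu_s)$. This is a \emph{deterministic} function of $s$, and by (\ref{esq22}) and (\ref{esq370}) it is integrable on $[0,T]$. So take expectations (after localizing the stochastic integral by stopping times that only act on the martingale term, while the laws stay fixed) and use $\mathcal{W}_{2,\mathbb{H}}(\mu_s,\nu_s)^2\le \mathbb{E}\|X_s-Y_s\|_{\mathbb{H}}^2$. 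This yields
$$\mathbb{E}\|X_t-Y_t\|_{\mathbb{H}}^2\le \int_0^t\Big(\rho(0,\mu_s)+\eta(0,\nu_s)+\mathbb{E}[\rho(X_s,\mu_s)+\eta(Y_s,\nu_s)]\Big)\mathbb{E}\|X_s-Y_s\|_{\mathbb{H}}^2ds.$$
Here $\mathcal{W}^2$ is deterministic, so its random factor can be averaged separately. The function $\mathbb{E}[\rho(X_s,\mu_s)]$ lies in $L^1(0,T)$ by (\ref{esq22}): it is controlled by $\mathbb{E}[(1+\|X\|_{\mathbb{V}}^\alpha)(1+\|X\|_{\mathbb{H}}^\beta)]$ plus products of moments, and these are finite by (\ref{esq370}) since $p\ge\beta+2$. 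Gronwall's lemma then gives $X=Y$.

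This is the main obstacle: the passage from the stopped It\^o identity to the expectation inequality, since the stopping times must not alter the measure arguments. The plan is to stop only the local martingale, let the stopping index go to infinity by Fatou's lemma and monotone convergence, and exploit that the remaining integrands are nonnegative.

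Next we upgrade to the full statement. Replacing $\mu_s$ and $\nu_s$ by a common fixed law turns the equation into a decoupled SPDE. For that SPDE, the same computation shows pathwise uniqueness, since the measure term vanishes. Combined with weak existence from Theorem \ref{th1} and the classical Yamada--Watanabe theorem applied to the decoupled equation, any weak solution $(X,W)$ with law flow $\mu$ coincides in law with the strong solution of the decoupled equation driven by $\mu$ on the reference basis. The latter is a strong solution of (\ref{eqSPDE}), so by the uniqueness just shown its law is unique. Hence weak solutions are unique in law.

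Finally, for (\ref{initialconti}) let $X^n=X(\xi_n)$. The uniform bound (\ref{esq370}) holds with constants uniform in $n$, because $\sup_n\mathbb{E}\|\xi_n\|_{\mathbb{H}}^p<\infty$ is implicit; if needed we assume this bound, noting that $p'<p$ is used precisely for uniform integrability. The pathwise $\|\cdot\|_{\mathbb{H}}^{2}$ estimate above, now with initial difference $\xi_n-\xi$ and a BDG inequality for the supremum, gives
$$\mathbb{E}\Big[\sup_{t}\|X^n_t-X_t\|_{\mathbb{H}}^2\wedge 1\Big]\to0$$
on the set where $\int_0^T(\rho+\eta)(X_s,\cdot)ds\le R$. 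To obtain this, use a stopping time $\tau_R$ on $\int_0^t\rho(X^n_s,\mu^n_s)+\eta(X_s,\mu_s)\,ds$ and the exponential Gronwall weight $e^{-\int_0^t(\ldots)ds}$. Then $\mathbb{P}(\tau_R<T)\lesssim R^{-1}$ uniformly in $n$. This gives convergence in probability of $\sup_t\|X^n_t-X_t\|_{\mathbb{H}}$, and uniform integrability from the $\gamma p$-moment bound with $\gamma p>p'$ upgrades it to convergence in $L^{p'}$.
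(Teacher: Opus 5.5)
Your existence and uniqueness arguments are sound and follow the paper. $(\mathbf{A}^*_5)$ gives $(\mathbf{A}_5)$ with $\tilde\rho(u,\mu)=\rho(0,\mu)+\rho(u,\mu)$ and $\tilde\eta(v,\nu)=\eta(0,\nu)+\eta(v,\nu)$, so Theorem \ref{th2} supplies existence. The Gronwall argument in expectation, with deterministic coefficient $\rho(0,\mu_s)+\eta(0,\nu_s)$ and the bound $\mathcal{W}_{2,\mathbb{H}}(\mu_s,\nu_s)^2\le\mathbb{E}\|X_s-Y_s\|_{\mathbb{H}}^2$, is exactly the paper's Step 1. Your decoupled-equation/Yamada--Watanabe argument is the content of the modified Yamada--Watanabe theorem, which the paper simply cites.

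The gap is in the continuity estimate (\ref{initialconti}). You apply BDG to the supremum of $\int_0^t\langle X^n_s-X_s,(\mathcal{B}(s,X^n_s,\mu^n_s)-\mathcal{B}(s,X_s,\mu_s))dW_s\rangle_{\mathbb{H}}$ and absorb half of $\mathbb{E}\sup_t\|X^n_t-X_t\|^2_{\mathbb{H}}$. This leaves $C\,\mathbb{E}\int_0^{T\wedge\tau_R}\|\mathcal{B}(s,X^n_s,\mu^n_s)-\mathcal{B}(s,X_s,\mu_s)\|^2_{L_2(U;\mathbb{H})}ds$, and nothing in your argument makes this term small. $(\mathbf{A}^*_5)$ only bounds the sum $2\langle\mathcal{A}(u,\mu)-\mathcal{A}(v,\nu),u-v\rangle+\|\mathcal{B}(u,\mu)-\mathcal{B}(v,\nu)\|^2$. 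Its drift part can be very negative, so you get no separate control of the diffusion difference. Neither the stopping time $\tau_R$ nor the exponential weight changes this. So $\mathbb{E}[\sup_t\|X^n_t-X_t\|^2_{\mathbb{H}}\wedge 1]\to0$ does not follow as written.

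The paper closes this with an ingredient you never use, namely $(\mathbf{A}_1)$:
\begin{itemize}
\item From $\mathbb{E}\|X^n_t-X_t\|^2_{\mathbb{H}}\lesssim\mathbb{E}\|\xi_n-\xi\|^2_{\mathbb{H}}$ it gets $\mathcal{W}_{2,\mathbb{H}}(\mu^n_t,\mu_t)\to0$ and a.s.\ convergence along subsequences.
\item $(\mathbf{A}_1)$, the uniform moment bounds and Vitali's theorem then give $\mathbb{E}\int_0^T\|\mathcal{B}(t,X^n_t,\mu^n_t)-\mathcal{B}(t,X_t,\mu_t)\|^2dt\to0$.
\item Only after this do BDG and Gronwall give convergence in probability of $\sup_t\|X^n_t-X_t\|_{\mathbb{H}}$.
\end{itemize}

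Alternatively, you can drop BDG altogether. Set $\Phi_t:=\int_0^t(\rho(0,\mu^n_s)+\eta(0,\mu_s))ds$, which is deterministic. Then $(\mathbf{A}^*_5)$ gives $e^{-\Phi_t}\|X^n_t-X_t\|^2_{\mathbb{H}}\le H_t+N_t$, where $N$ is a continuous local martingale and $H_t:=\|\xi_n-\xi\|^2_{\mathbb{H}}+\int_0^t(\rho(X^n_s,\mu^n_s)+\eta(X_s,\mu_s))\mathcal{W}_{2,\mathbb{H}}(\mu^n_s,\mu_s)^2ds$. Scheutzow's stochastic Gronwall lemma then yields $\mathbb{E}\sup_{t\le T}\|X^n_t-X_t\|_{\mathbb{H}}^{2q}\le c_q e^{q\Phi_T}(\mathbb{E}H_T)^q$ for $q\in(0,1)$. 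You have $\mathbb{E}H_T\to0$ by the pointwise bound, and this route never estimates $\mathcal{B}^n-\mathcal{B}$, so it does not even need $(\mathbf{A}_1)$.

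You are right that $\sup_n\mathbb{E}\|\xi_n\|^p_{\mathbb{H}}<\infty$ is needed. It is also what gives $\mathbb{E}\|\xi_n-\xi\|^2_{\mathbb{H}}\to0$ when $p'<2$. The paper uses it tacitly when it asserts moment bounds uniform in $n$.
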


\begin{remark}\label{re1}
	(i) In contrast to the classical setting of stochastic PDEs  (cf.~\cite{liu2015stochastic,RSZ}), if the operator $\mathcal{A}(t,\cdot,\cdot)$ is merely locally monotone in the sense of $(\mathbf{A}_5)$, the pathwise uniqueness,
	or even uniqueness in distribution, of solutions to Eq.~(\ref{eqSPDE}) generally fails to hold. For relevant counterexamples, we refer to  the classical paper \cite{S87}.
	
	\vspace{1mm}
	(ii) We remark that $(\mathbf{A}^*_5)$ is a mild condition to ensure the uniqueness of solutions, which finds a wide range of  applications in finite and infinite-dimensional mean-field dynamics with nonlinear kernels.
	In particular, our main results encompass stochastic Stein variational gradient descent and Lagrangian averaged Burgers equations. A more rigorous treatment of these examples can be found  in Section \ref{secex}.
\end{remark}

\subsection{Quantitative propagation of chaos}\label{poc}
The mean field limit theory provides  a simplification of an $N$-body system to a one-body dynamic which
interacts with itself. In this subsection, we aim to establish quantitative dimension-free PoC for mean-field stochastic interacting particle systems in the infinite-dimensional framework.

Let $\{\xi^i\}_{i=1}^{N}$ and  $\{W^i\}_{i=1}^{N}$ be $N$-independent and identically distributed copies of the initial data $\xi$ and the $U$-valued cylindrical Wiener process $W$ associated with the mean-field stochastic PDEs (\ref{eqSPDE}), respectively. We formulate the non-interacting particles system (nIPS) as follows
\begin{equation}\label{nIPS}
	X^i_t=\xi^i+\int_0^t\mathcal{A}(s,X^i_s,\mathscr{L}_{X^i_s})ds
	+\int_0^t\mathcal{B}(s,X^i_s,\mathscr{L}_{X^i_s})dW^i_s,
\end{equation}
with $i \in \{1, \dots, N\}$. We mention that the well-posedness  of the nIPS (\ref{nIPS}) has been established in Theorem \ref{th3}.

We subsequently proceed to consider the following interacting particle system  (IPS)
\begin{equation}\label{IPS}
	X_t^{i,N} = \xi^i + \int_0^t \mathcal{A}(s,X^{i,N}_s, \mu_s^{N}) ds + \int_0^t \mathcal{B}(s,X^{i,N}_s, \mu_s^{N}) dW_s^i,
\end{equation}
where
$
\mu_t^{N} := \frac{1}{N} \sum_{j=1}^N \delta_{X_t^{j,N}}
$
is the empirical law of $(X_t^{1,N},X_t^{2,N},\dots,X_t^{N,N})$.

The existence and uniqueness of (probabilistically) strong solutions to the IPS (\ref{IPS}) are given in the following.
\begin{theorem}\label{wp-nIPS}
	Suppose that the assumptions in Theorem \ref{th2} hold.
	For any initial data $\xi^i\in L^p(\Omega,\mathscr{F}_0,\mathbb{P};{\mathbb{H}})$, where $p$ is the same as in $(\mathbf{A}_3)$,
	then Eq.~$(\ref{IPS})$ has a unique strong solution $X^N:=(X_t^{1,N},\ldots,X_t^{N,N})$.
\end{theorem}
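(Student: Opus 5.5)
The plan is to read the IPS (\ref{IPS}) as a classical, non-mean-field stochastic evolution equation on the product Gelfand triple
\[
\mathbb{V}^N\subset\mathbb{H}^N\subset(\mathbb{V}^*)^N ,
\]
driven by the cylindrical Wiener process $(W^1,\dots,W^N)$ on $U^N$. The coefficients will be
\[
\mathbf{A}(t,\mathbf{u}):=\big(\mathcal{A}(t,u_i,\mu_{\mathbf{u}})\big)_{i=1}^N,\qquad \mathbf{B}(t,\mathbf{u})\,\mathrm{diag}:=\big(\mathcal{B}(t,u_i,\mu_{\mathbf{u}})\big)_{i=1}^N,\qquad \mu_{\mathbf{u}}:=\frac1N\sum_{j}\delta_{u_j},
\]
and the state norm on $\mathbb{V}^N$ is taken as $\|\mathbf{u}\|^\alpha=\sum_i\|u_i\|_{\mathbb{V}}^\alpha$. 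Since $\mu_{\mathbf{u}}$ is a deterministic function of the state, the localization obstruction discussed in the introduction disappears. The goal is then to invoke the fully locally monotone variational theory for ordinary SPDEs (Röckner--Shang--Zhang \cite{RSZ}, or Liu--Röckner \cite{liu2015stochastic}), which gives existence and pathwise uniqueness of strong solutions under demicontinuity, local monotonicity, coercivity and polynomial growth.

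The conditions will be verified in the following order.

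1. \emph{Membership of the empirical measure.} For $\mathbf{u}\in\mathbb{V}^N$ one has $\mu_{\mathbf{u}}\in\mathfrak{M}$ trivially. Also $\mu_{\mathbf{u}}(\|\cdot\|_{\mathbb{H}}^2)=\frac1N\|\mathbf{u}\|_{\mathbb{H}^N}^2$, and similarly $\mu_{\mathbf{u}}(\|\cdot\|_{\mathbb{V}}^\alpha)\le \|\mathbf{u}\|^\alpha$ and $\mu_{\mathbf{u}}(\|\cdot\|_{\mathbb{H}}^\beta)\lesssim_N\|\mathbf{u}\|_{\mathbb{H}^N}^\beta$.

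2. \emph{Coercivity and growth.} Summing $(\mathbf{A}_3)$ over $i$ gives coercivity for $(\mathbf{A},\mathbf{B})$ with constant depending on $N$. Summing $(\mathbf{A}_4)$ gives growth of the form $\|\mathbf{A}\|^{\alpha/(\alpha-1)}\lesssim(1+\|\mathbf{u}\|^\alpha)(1+\|\mathbf{u}\|_{\mathbb{H}^N}^\beta)$, which is exactly the \cite{liu2015stochastic}-type growth.

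3. \emph{Demicontinuity.} This follows from $(\mathbf{A}_2^*)$, because $\mathbf{u}_n\to\mathbf{u}$ in $\mathbb{V}^N$ implies $\mu_{\mathbf{u}_n}\to\mu_{\mathbf{u}}$ in $\mathfrak{M}$. Convergence of the empirical measures holds in every Wasserstein metric, via the coupling $\frac1N\sum\delta_{(u^n_j,u_j)}$.

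4. \emph{Local monotonicity.} Apply $(\mathbf{A}_5)$ componentwise with $\mu=\mu_{\mathbf{u}}$, $\nu=\mu_{\mathbf{v}}$ and use
\[
\mathcal{W}_{2,\mathbb{H}}(\mu_{\mathbf{u}},\mu_{\mathbf{v}})^2\le\frac1N\sum_j\|u_j-v_j\|_{\mathbb{H}}^2 .
\]
Summing over $i$ yields
\[
2\langle\mathbf{A}(\mathbf{u})-\mathbf{A}(\mathbf{v}),\mathbf{u}-\mathbf{v}\rangle+\|\mathbf{B}(\mathbf{u})-\mathbf{B}(\mathbf{v})\|^2\le\Big(\sum_i\rho(u_i,\mu_{\mathbf{u}})+\eta(v_i,\mu_{\mathbf{v}})\Big)\,2\|\mathbf{u}-\mathbf{v}\|_{\mathbb{H}^N}^2 .
\]
By (\ref{esq22}) and step 1, the prefactor is bounded by $C_N(1+\|\mathbf{u}\|^\alpha)(1+\|\mathbf{u}\|_{\mathbb{H}^N}^\beta)$ plus the same expression in $\mathbf{v}$. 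This is the fully local monotonicity condition of \cite{RSZ}.

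The main obstacle will be that these frameworks require $\rho$ to depend only on one argument in specific forms and impose integrability constraints linking $\beta$, $\alpha$ and $p$. I will check that the ranges $p\ge\vartheta_1$ and $p>\vartheta_2$ imposed in $(\mathbf{A}_3)$ match them; the relevant quantity is the moment $\mathbb{E}\int(1+\|X\|_{\mathbb{H}}^{p-2})\|X\|_{\mathbb{V}}^\alpha$.

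If a direct citation does not fit, the fallback is to rerun the proof of Theorem \ref{th2} with the measure argument removed. Existence of weak solutions comes from the Galerkin and compactness argument of Theorem \ref{th1}, with pseudo-monotonicity of $\mathbf{A}$ deduced from step 4 and $(\mathbf{A}_2^*)$ as in the verification for Theorem \ref{th2}. Pathwise uniqueness is obtained by applying Itô's formula to
\[
e^{-\int_0^t\phi(s)ds}\,\|\mathbf{X}_t-\mathbf{Y}_t\|_{\mathbb{H}^N}^2,\qquad \phi(s)=C\big(\rho(\mathbf{X}_s)+\eta(\mathbf{Y}_s)\big),
\]
which is finite almost surely by the estimate (\ref{esq370}) for the system. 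Since $\mathbf{A}$ no longer carries a law, ordinary stopping times at $\{\int_0^t\phi\ge R\}$ are legitimate here. The Yamada--Watanabe theorem then upgrades weak existence together with pathwise uniqueness to a unique strong solution $X^N$.
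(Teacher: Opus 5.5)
Your proposal is correct and takes essentially the paper's route: the paper's proof consists of the single assertion that the coefficients of (\ref{IPS}), viewed on the product triple $\mathbb{V}^N\subset\mathbb{H}^N\subset(\mathbb{V}^*)^N$, satisfy the hypotheses of Theorem 2.6 in \cite{RSZ}, with details omitted, and your steps 1--4 supply exactly that omitted verification. You should also record that $\mathbf{B}$ is continuous with respect to $\mathbb{H}^N$-convergence and has the required growth; both follow at once from $(\mathbf{A}_1)$, $(\mathbf{A}_4)$ and the coupling bound $\mathcal{W}_{\beta,\mathbb{H}}(\mu_{\mathbf{u}_n},\mu_{\mathbf{u}})^{\beta\vee1}\le\frac{1}{N}\sum_j\|u^n_j-u_j\|_{\mathbb{H}}^{\beta}$.
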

\begin{proof}
	This result follows directly from verifying  the coefficients of IPS (\ref{IPS}) satisfy the assumptions in Theorem 2.6 in \cite{RSZ}
	on the product spaces. We omit details.
\end{proof}

In this work, we require the following structure  to derive the dimension-free PoC.

\vspace{1mm}
\begin{enumerate}
	\item [$(\mathbf{A}_6)$]
	There exist  constants $\alpha>1$,  $\beta> 0$, and measurable maps $$\tilde{\mathcal{A}}:[0,T]\times\mathbb{V}\times\mathbb{V}\to\mathbb{V}^*,~~\tilde{\mathcal{B}}:[0,T]\times {\mathbb{V}}\times\mathbb{V}\rightarrow L_2(U;\mathbb{H})$$
	such that
	\begin{equation*}
		\|\tilde{\mathcal{A}}(t,u,y)\|_{{\mathbb{V}}^*}^{\frac{\alpha}{\alpha-1}}\lesssim (1+\|u\|_{\mathbb{V}}^{\alpha}+\|y\|_{\mathbb{V}}^{\alpha})(1+\|u\|_{{\mathbb{H}}}^{\beta}+\|y\|_{\mathbb{H}}^{\beta}),
	\end{equation*}
	and
	\begin{equation*}
		\|\tilde{\mathcal{B}}(t,u,y)\|_{L_2({U},{\mathbb{H}})}^2\lesssim 1+\|u\|_{\mathbb{H}}^{\beta}+\|y\|_{\mathbb{H}}^{\beta}.
	\end{equation*}
	Then for any $u\in \mathbb{V}$ and $\mu,\nu\in\mathfrak{M}$,
	\begin{equation}\label{es12}
		\|\mathcal{A}(t,u,\mu)-\mathcal{A}(t,u,\nu)\|_{{\mathbb{V}}^*}
		\lesssim\|\int\tilde{\mathcal{A}}(t,u,y)\mu(dy)
		-\int\tilde{\mathcal{A}}(t,u,y)\nu(dy)\|_{{\mathbb{V}}^*},
	\end{equation}
	and
	\begin{equation*}
		\|\mathcal{B}(t,u,\mu)-\mathcal{B}(t,u,y)\|_{L_2(U;\mathbb{H})}
		\lesssim\|\int\tilde{\mathcal{B}}(t,u,y)\mu(dy)
		-\int\tilde{\mathcal{B}}(t,u,y)\nu(dy)\|_{L_2(U;\mathbb{H})}.
	\end{equation*}
\end{enumerate}

\begin{remark}\label{remark1}
	Note that when $\mathcal{A}(t,u,\mu)$ (resp.~$\mathcal{B}(t,u,\mu)$) is of integral form, i.e.,
	$$\mathcal{A}(t,u,\mu):=\int\tilde{\mathcal{A}}(t,u,y)\mu(dy),$$
	then assumption (\ref{es12})  is automatically satisfied.
\end{remark}

We now state  the first quantitative PoC result as follows.
\begin{theorem}\label{rate1}
 Let $\mathbb{V}$ be a separable $\alpha$-uniformly convex Banach space with $\alpha\geq2$.	Suppose that the assumptions in Theorem \ref{th3}  and $(\mathbf{A}_6)$ hold with  $p \in [4\beta, \infty) \cap (\lambda, \infty)$, where $\lambda:=\max\{2\beta(\alpha-1),2\beta+4\}$, and $(\mathbf{A}^*_5)$ replaced by the following condition
	\begin{enumerate}
		\vspace{1mm}
		\item [$(\mathbf{A}'_5)$]
		There exist constants  $\alpha>1$, and $\beta> 0$ such that for any $t\in[0,T]$, $u,v\in {\mathbb{V}}$, and $\mu,\nu\in\mathfrak{M}$,
		\begin{equation*}
			_{{\mathbb{V}}^*}\langle \mathcal{A}(t,u,\mu)-\mathcal{A}(t,v,\nu),u-v\rangle_{\mathbb{V}}
			\leq
			\rho(0,\mu)\|u-v\|_{\mathbb{H}}^2
			+\rho(u,\mu)\mathcal{W}_{2,\mathbb{H}}(\mu,\nu)^2,
		\end{equation*}
		\begin{equation*}
			\|\mathcal{B}(t,u,\mu)-\mathcal{B}(t,v,\nu)\|_{L_2(U;\mathbb{H})}^2
			\leq
			\rho(0,\mu)\|u-v\|_{\mathbb{H}}^2
			+\rho(u,\mu)\mathcal{W}_{2,\mathbb{H}}(\mu,\nu)^2,
		\end{equation*}
		where $\rho$ is the same as in $(\mathbf{A}_5)$.
\end{enumerate}		
Then we have the following pathwise chaos estimate
	\begin{equation}\label{pocth1}
		\sup_{i \in \{1,\dots,N\}}  \mathbb{E}\Big[\sup_{t \in [0,T]}\|X_t^{i,N} - X_t^i\|_{\mathbb{H}}^2\Big] \lesssim N^{-\frac{1}{\alpha}}.
	\end{equation}
\end{theorem}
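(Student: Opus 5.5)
The plan is Sznitman's synchronous coupling combined with a stopping-time localization. The extra error created by the stopping time will be paid for through the moment bound (\ref{esq370}), and the empirical-measure fluctuation will be controlled by a martingale-difference inequality in $\mathbb{V}^*$.

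\textbf{Itô expansion and splitting.} Set $Z^i_t:=X^{i,N}_t-X^i_t$ and let $\bar\mu^N_t:=\frac1N\sum_j\delta_{X^j_t}$ be the empirical measure of the nIPS (\ref{nIPS}). Apply Itô's formula in the Gelfand triple to $\|Z^i_t\|_{\mathbb{H}}^2$. Split each drift difference as
\[
\mathcal{A}(s,X^{i,N}_s,\mu^N_s)-\mathcal{A}(s,X^i_s,\bar\mu^N_s)+\mathcal{A}(s,X^i_s,\bar\mu^N_s)-\mathcal{A}(s,X^i_s,\mathscr{L}_{X^i_s}),
\]
and split the diffusion difference in the same way.

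\textbf{The coupled piece.} For the first piece, apply $(\mathbf{A}'_5)$ with $u=X^{i,N}_s$ and $v=X^i_s$. The coupling $\frac1N\sum_j\delta_{(X^{j,N}_s,X^j_s)}$ gives
\[
\mathcal{W}_{2,\mathbb{H}}(\mu^N_s,\bar\mu^N_s)^2\le\frac1N\sum_j\|Z^j_s\|_{\mathbb{H}}^2 .
\]
The prefactors $\rho(0,\mu^N_s)$ and $\rho(X^{i,N}_s,\mu^N_s)$ are random and of super-linear size, so I introduce the stopping time
\[
\tau_R:=\inf\Big\{t:\ \int_0^t\Big(\rho(0,\mu^N_s)+\frac1N\sum_j\rho(X^{j,N}_s,\mu^N_s)\Big)ds>R\Big\}.
\]
Average over $i$, using exchangeability so that $\sup_i$ equals the average. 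The BDG inequality and Gronwall's lemma up to $\tau_R$ then give
\[
\mathbb{E}\sup_{t\le T\wedge\tau_R}\|Z^i_t\|^2_{\mathbb{H}}\lesssim e^{CR}\,\mathcal{E}_N,
\]
where $\mathcal{E}_N$ collects the fluctuation terms treated next.

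\textbf{The fluctuation piece.} The second piece produces the terms
\[
\mathbb{E}\int_0^T\Big\|\frac1N\sum_j\big(\tilde{\mathcal{A}}(s,X^i_s,X^j_s)-\textstyle\int\tilde{\mathcal{A}}(s,X^i_s,y)\mathscr{L}_{X_s}(dy)\big)\Big\|_{\mathbb{V}^*}^{\frac{\alpha}{\alpha-1}}ds
\]
via $(\mathbf{A}_6)$, together with the analogous term for $\tilde{\mathcal{B}}$ in $L_2(U;\mathbb{H})$. These terms pair with $\|Z^i_s\|_{\mathbb{V}}$, and Young's inequality absorbs the $\alpha$-power of $\|Z^i_s\|_{\mathbb{V}}$ into the coercivity (this is the place where the coercivity from $(\mathbf{A}_3)$ and the moment bounds are used). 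Conditioning on $X^i$ and removing the single index $j=i$ at cost $N^{-1}$, the summands for $j\neq i$ form a martingale difference sequence in $\mathbb{V}^*$ in the sense of Definition \ref{mddef}. By Proposition \ref{dualization}, $\mathbb{V}^*$ is $\alpha'$-uniformly smooth with $\alpha'=\frac{\alpha}{\alpha-1}\le2$. Pisier's martingale inequality in $\alpha'$-smooth spaces then gives
\[
\mathbb{E}\Big\|\sum_{j\le N} Y_j\Big\|^{\alpha'}\lesssim\sum_j\mathbb{E}\|Y_j\|^{\alpha'},
\]
so the normalized average has $\alpha'$-moment of order $N^{1-\alpha'}$, that is, norm of order $N^{-1/\alpha}$. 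Hölder's inequality with the growth bounds in $(\mathbf{A}_6)$ and (\ref{esq370}) keeps the right-hand side finite; this is where $p\ge4\beta$ and $p>\lambda$ are needed. The Hilbert-space term $\tilde{\mathcal{B}}$ gives the better rate $N^{-1}$. Altogether $\mathcal{E}_N\lesssim N^{-1/\alpha}$.

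\textbf{Removing the localization (main obstacle).} On $\{\tau_R<T\}$, apply Hölder's inequality:
\[
\mathbb{E}\Big[\sup_t\|Z^i_t\|^2_{\mathbb{H}}\,1_{\{\tau_R<T\}}\Big]\le\Big(\mathbb{E}\sup_t\|Z^i_t\|_{\mathbb{H}}^{2r}\Big)^{1/r}\,\mathbb{P}(\tau_R<T)^{1/r'}.
\]
The first factor is bounded uniformly in $N$ by (\ref{esq370}) together with the analogous IPS bound. By Markov's inequality and (\ref{esq22}), $\mathbb{P}(\tau_R<T)\lesssim R^{-1}$, and higher moments give a polynomial decay $R^{-\kappa}$. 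Balancing $e^{CR}N^{-1/\alpha}$ against $R^{-\kappa}$ with $R\sim\log N$ costs only a logarithm, so this naive choice does not give the stated pure rate. To get the exact $N^{-1/\alpha}$, I would first try to avoid the exponential factor by using $(\mathbf{A}'_5)$ in its decoupled form. The prefactor $\rho(0,\mu^N_s)$ depends only on moments of the empirical measure. These moments concentrate near $\mathscr{L}_{X_s}$, so $\int_0^T\rho(0,\mu^N_s)\,ds$ should be bounded by a constant outside an event of probability $O(N^{-1/\alpha})$. The remaining super-linear factor $\rho(u,\mu)$ multiplies only the Wasserstein term and can be handled by a stochastic Gronwall lemma, with its expectation controlled by the moment bounds. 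Making this bookkeeping sharp is the delicate point.
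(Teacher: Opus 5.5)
Your treatment of the fluctuation terms matches the paper: the split through $\mathcal{A}(s,X^i_s,\bar\mu^N_s)$, discarding $j=i$, the martingale-difference inequality in the $\frac{\alpha}{\alpha-1}$-uniformly smooth space $\mathbb{V}^*$, and the $N^{-1}$ bound in $L_2(U;\mathbb{H})$. The localization step, however, has a genuine gap, and it comes from how you apply $(\mathbf{A}'_5)$. You put the interacting system in the $(u,\mu)$-slot ($u=X^{i,N}_s$, $\mu=\mu^N_s$). Then the Gronwall coefficients $\rho(0,\mu^N_s)$ and $\frac1N\sum_j\rho(X^{j,N}_s,\mu^N_s)$ are functionals of the interacting particles, and you can only control their tails by Markov's inequality.

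The balancing then does not cost ``only a logarithm''. To keep $e^{CR}N^{-1/\alpha}$ small you need $R\lesssim\log N$, and the remainder $\mathbb{P}(\tau_R<T)^{1/r'}\lesssim R^{-\kappa/r'}$ then decays only like a negative power of $\log N$. Your suggested repair does not close this. Concentration of the moments of $\mu^N_s$ around those of $\mathscr{L}_{X_s}$ is not available a priori; it is essentially what you are trying to prove, and bootstrapping it (as the paper does with $\tau^\varepsilon$ for Theorem \ref{rate2}) costs in the exponent. Without a pathwise bound on the integrated coefficient, a stochastic Gronwall argument needs exponential moments of $\int_0^T\rho\,ds$, which are not assumed. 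Moreover, a bad event of probability $O(N^{-1/\alpha})$ would not suffice anyway, because the Hölder step raises it to the power $\frac{\gamma p-2}{\gamma p}<1$.

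The missing idea is to exploit the asymmetry of $(\mathbf{A}'_5)$. Its left-hand sides are symmetric under $(u,\mu)\leftrightarrow(v,\nu)$, while the right-hand sides depend only on $(u,\mu)$. So take $u=X^i_s$, $\mu=\bar\mu^N_s$, $v=X^{i,N}_s$, $\nu=\mu^N_s$. After averaging in $i$ and using $\mathcal{W}_{2,\mathbb{H}}(\bar\mu^N_s,\mu^N_s)^2\le\frac1N\sum_j\|Z^j_s\|^2_{\mathbb{H}}$, the coefficients become $\rho(0,\bar\mu^N_s)$ and $\frac1N\sum_j\rho(X^j_s,\bar\mu^N_s)$. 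By (\ref{esq22}) these are controlled by empirical averages of the \emph{independent} non-interacting particles.

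Now set $\zeta_i:=\sup_{t}\|X^i_t\|^\beta_{\mathbb{H}}+\int_0^T\|X^i_t\|^\alpha_{\mathbb{V}}(1+\|X^i_t\|^\beta_{\mathbb{H}})dt$, which are i.i.d. Stop at $\tau^K:=\inf\{t:\frac1N\sum_i\|X^i_t\|^\beta_{\mathbb{H}}+\frac1N\sum_i\int_0^t\|X^i_s\|^\alpha_{\mathbb{V}}(1+\|X^i_s\|^\beta_{\mathbb{H}})ds\ge K\}$, with $K:=2\mathbb{E}\zeta_1$ fixed.
\begin{itemize}
\item Up to $\tau^K$ the integrated coefficient is bounded by a constant depending only on $K$ and $T$. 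So Gronwall gives $N^{-1/\alpha}$ with no $N$-dependent factor.
\item Chebyshev's inequality gives $\mathbb{P}(\tau^K<T)\le\mathbb{P}(\frac1N\sum_i\zeta_i\ge K)\lesssim N^{-1}$. This is exactly where $p\ge4\beta$ and $p>2\beta+4$ enter: they guarantee $\mathbb{E}\zeta_1^2<\infty$ via the moment bounds for the non-interacting system.
\item The Hölder remainder is then $N^{-\frac{\gamma p-2}{\gamma p}}\le N^{-1/\alpha}$ once $\gamma p\ge4\ge\frac{2\alpha}{\alpha-1}$.
\end{itemize}

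A smaller point: $(\mathbf{A}'_5)$ provides no coercive term $-\delta\|Z^i_s\|^\alpha_{\mathbb{V}}$ for the difference, and $(\mathbf{A}_3)$ only concerns $\langle\mathcal{A}(u,\mu),u\rangle$. So there is nothing to absorb into via Young's inequality. The fluctuation term is handled by Hölder together with the uniform bound on $\mathbb{E}\int_0^T\|Z^i_s\|^\alpha_{\mathbb{V}}ds$ from the moment estimates. This is why the rate here is $N^{-1/\alpha}$, rather than the $N^{-1/(\alpha-1)}$ stopped estimate available under $(\mathbf{A}''_5)$.
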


\begin{remark}
	The appearance of the uniform convexity parameter in the propagation of chaos rate  reveals, for the first time,  a nontrivial interaction between the geometry of the underlying state space and the  mean-field approximation.
	Specifically, the estimate (\ref{pocth1}) establishes a pathwise chaos rate of order $N^{-\frac{1}{2\alpha}}$. In particular, when  $\mathbb{V}$ is an Euclidean/Hilbert space (where $\alpha=2$), the convergence rate is of order $N^{-\frac{1}{4}}$, which is not optimal in the finite-dimensional setting.
\end{remark}

\begin{remark}\label{remark001}
	We provide a motivating example involving i.i.d.~random variables that confirms the dependence of the law of large numbers rate on the geometry of the space, as characterized by its convexity parameter.
	
	Let
	$\alpha\geq 2$ and $q:=\frac{\alpha}{\alpha-1}$.
	Let $\{e_k\}_{k= 2}^{\infty}$ be the canonical unit-vector basis of \(\ell^q\). Define
	$$
	a_k:=\frac{1}{k(\log k)^2},
	\quad k\geq 2,
	$$
	and set
	$$
	C_0:=\left(\sum_{k=2}^{\infty}a_k\right)^{-1},
	\quad
	p_k:=C_0a_k=\frac{C_0}{k(\log k)^2},
	\quad k\geq 2.
	$$
	Then it is clear that \((p_k)_{k\geq 2}\) is a probability distribution.
	
	Let $K$ be a random variable with values in $\{2,3,\dots\}$ such that
	$$
	\mathbb P(K=k)=p_k,
	\quad k\geq 2,
	$$
	and let \(\varepsilon\) be an independent Rademacher random variable, i.e.,
	$$
	\mathbb P(\varepsilon=1)
	=
	\mathbb P(\varepsilon=-1)
	=
	\frac12.
	$$
	Then we can	define the $\ell^q$-valued random variable
	$$
	Z:=\varepsilon e_K.
	$$
	Since \(\varepsilon\) and \(K\) are independent and \(\mathbb E\varepsilon=0\), we infer that
	$$
	\mathbb E Z
	=
	\mathbb E(\varepsilon e_K)
	=
	\mathbb E\bigl[\mathbb E(\varepsilon e_K\mid K)\bigr]
	=
	\mathbb E\bigl[e_K\mathbb E(\varepsilon\mid K)\bigr]
	=0.
	$$
	Moreover,
	$$
	\|Z\|_{\ell^q}
	=
	\|\varepsilon e_K\|_{\ell^q}
	=
	|\varepsilon|\,\|e_K\|_{\ell^q}
	=1
	\qquad\mathbb{P}\text{-a.s.}
	$$

	Let $\{Z_j\}_{j=1}^{\infty}$ be i.i.d.\ copies of \(Z\), and write
	$$
	\bar{Z}_N:=\frac1N\sum_{j=1}^N Z_j.
	$$
	We can obtain the following result on the law of large numbers.		
	\begin{proposition}\label{propos1}
		There exists $N_0\in\mathbb N$ such that, for every
		$N\geq N_0$, we have
		\[
		N^{-\frac{1}{\alpha-1}}(\log N)^{-1}
		\lesssim
		\mathbb E\|\bar{Z}_N\|_{\ell^q}^q
		\lesssim
		N^{-\frac{1}{\alpha-1}}.
		\]
	\end{proposition}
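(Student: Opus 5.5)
The idea is to compute $\mathbb{E}\|\bar Z_N\|_{\ell^q}^q$ coordinatewise, exploiting the fact that each $Z_j$ is supported on a single random coordinate. For $k\geq 2$ let
$$
M_k:=\#\{j\leq N: K_j=k\}\sim \mathrm{Bin}(N,p_k),
\qquad
S_k:=\sum_{j\leq N:\,K_j=k}\varepsilon_j .
$$
Then
$$
N\bar Z_N=\sum_{k\geq 2}S_k e_k,
\qquad
\mathbb{E}\|\bar Z_N\|_{\ell^q}^q=N^{-q}\sum_{k\geq2}\mathbb{E}|S_k|^q .
$$
Conditionally on $\mathscr{G}:=\sigma(K_1,\dots,K_N)$, each $S_k$ is a sum of $M_k$ independent Rademacher variables. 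Since $q=\frac{\alpha}{\alpha-1}\in(1,2]$, we have $1-q=-\frac{1}{\alpha-1}$. It therefore suffices to prove
$$
\frac{N}{\log N}\lesssim\sum_{k\geq2}\mathbb{E}|S_k|^q\lesssim N .
$$

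\emph{Upper bound.} Apply conditional Jensen (since $q\leq 2$) and then use $M_k\in\mathbb{N}$ together with $q/2\leq 1$:
$$
\mathbb{E}\big[|S_k|^q\,\big|\,\mathscr{G}\big]\leq \big(\mathbb{E}[S_k^2\,|\,\mathscr{G}]\big)^{q/2}=M_k^{q/2}\leq M_k .
$$
Summing over $k$ gives $\sum_k\mathbb{E}|S_k|^q\leq\sum_k\mathbb{E}M_k=N\sum_kp_k=N$. Hence $\mathbb{E}\|\bar Z_N\|_{\ell^q}^q\leq N^{1-q}=N^{-\frac1{\alpha-1}}$, valid for every $N$.

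\emph{Lower bound.} Restrict to the rarely visited coordinates, where the Rademacher cancellation cannot act. On the event $\{M_k=1\}$ we have $|S_k|=1$, so
$$
\mathbb{E}|S_k|^q\geq\mathbb{P}(M_k=1)=Np_k(1-p_k)^{N-1}.
$$
Let $k_N:=\min\{k\geq2: Np_k\leq 1\}$. Since $k(\log k)^2$ is increasing, $Np_k\leq1$ for all $k\geq k_N$. For such $k$ we get $(1-p_k)^{N-1}\geq(1-1/N)^{N-1}\geq e^{-1}$, hence
$$
\sum_{k\geq2}\mathbb{E}|S_k|^q\geq e^{-1}N\sum_{k\geq k_N}p_k .
$$
The threshold satisfies $k_N(\log k_N)^2\approx C_0N$, so $k_N\leq N$ for $N\geq N_0$. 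Comparing with the integral,
$$
\sum_{k\geq k_N}\frac{1}{k(\log k)^2}\gtrsim\int_{k_N}^\infty\frac{dx}{x(\log x)^2}=\frac{1}{\log k_N}\geq\frac1{\log N}.
$$
This yields $\sum_k\mathbb{E}|S_k|^q\gtrsim N/\log N$, and therefore $\mathbb{E}\|\bar Z_N\|_{\ell^q}^q\gtrsim N^{-\frac1{\alpha-1}}(\log N)^{-1}$.

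The main obstacle is the lower bound, specifically choosing the regime $Np_k\leq1$ correctly. On frequently hit coordinates $S_k$ cancels to size $M_k^{1/2}$, so those coordinates contribute only $N^{q/2}\sum_{k<k_N}p_k^{q/2}$, which is of order $N(\log N)^{-2}$ when $q<2$. The heavy tail $\sum_{k\geq k_N}p_k\asymp 1/\log N$, coming from the choice $a_k=\frac{1}{k(\log k)^2}$, is exactly what forces a non-negligible mass of singleton coordinates, each of which contributes norm $1$ without any cancellation. The only routine points to check are the monotonicity of $k\mapsto Np_k$ (which makes $\{k: Np_k\leq 1\}$ a tail interval) and the asymptotic $\log k_N\leq\log N$ for $N\geq N_0$.
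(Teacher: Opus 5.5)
Your proposal is correct and follows essentially the same route as the paper. It uses the same coordinatewise decomposition and the same conditional Jensen bound $M_k^{q/2}\le M_k$ for the upper bound. For the lower bound it uses the same singleton-event estimate $\mathbb{E}|S_k|^q\ge Np_k(1-p_k)^{N-1}$, followed by an integral comparison for the tail of $\sum p_k$.

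The only difference is cosmetic. The paper restricts to $k\ge N$, where $p_k\le 1/(2N)$ and Bernoulli's inequality gives the factor $1/2$. You instead use the threshold $k_N=\min\{k: Np_k\le 1\}\le N$, which gives the factor $e^{-1}$ and the bound $1/\log k_N\ge 1/\log N$.
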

	The detailed proof of Proposition \ref{propos1} is left in Appendix.		
\end{remark}

In order to derive a sharper rate for infinite-dimensional PoC, we introduce the following theorem.
\begin{theorem}\label{rate2}
 Let $\mathbb{V}$ be a separable $\alpha$-uniformly convex Banach space with $\alpha\geq2$.	Suppose that the assumptions in Theorem \ref{th3}  and $(\mathbf{A}_6)$ hold with  $p \in [4\beta, \infty) \cap (\lambda, \infty)$, where $\lambda:=\max\{2\beta(\alpha-1),2\beta+4\}$, and $(\mathbf{A}^*_5)$ replaced by the following condition
 	\begin{enumerate}
 	\vspace{1mm}	
	\item [$(\mathbf{A}''_5)$]
	There  exist constants $\delta_0>0$, $\alpha>1$, and $\beta\geq2$ such that for any $t\in[0,T]$, $u,v\in {\mathbb{V}}$, and $\mu,\nu\in\mathfrak{M}$,
	\begin{eqnarray*}
		\!\!\!\!\!\!\!\!&&_{{\mathbb{V}}^*}\langle \mathcal{A}(t,u,\mu)-\mathcal{A}(t,v,\nu),u-v\rangle_{\mathbb{V}}+\delta_0\|u-v\|_{\mathbb{V}}^{\alpha}
		\nonumber\\
		\leq\!\!\!\!\!\!\!\!&&
		(\rho(0,\mu)+\eta(0,\nu))\|u-v\|_{\mathbb{H}}^2
		+(\rho(u,\mu)+\eta(v,\nu))\mathcal{W}_{2,\mathbb{H}}(\mu,\nu)^2,
	\end{eqnarray*}
	\begin{eqnarray*}
		\!\!\!\!\!\!\!\!&&\|\mathcal{B}(t,u,\mu)-\mathcal{B}(t,v,\nu)\|_{L_2(U;\mathbb{H})}^2
		\nonumber\\
		\leq\!\!\!\!\!\!\!\!&&
		(\rho(0,\mu)+\eta(0,\nu))\|u-v\|_{\mathbb{H}}^2
		+(\rho(u,\mu)+\eta(v,\nu))\mathcal{W}_{2,\mathbb{H}}(\mu,\nu)^2,
	\end{eqnarray*}
	where $\rho$ is the same as in $(\mathbf{A}_5)$ and $\eta:{\mathbb{V}}\times \mathfrak{M}\to [0,\infty)$ is a measurable function satisfying
	\begin{equation*}
		\eta(u,\mu)\lesssim (1+\|u\|_{\mathbb{V}}^{\alpha}+\mu(\|\cdot\|_{\mathbb{V}}^{\alpha}))(1+\mu(\|\cdot\|_{\mathbb{H}}^{\beta}))+\|u\|_{\mathbb{H}}^{\beta}(1+\mu(\|\cdot\|_{\mathbb{V}}^{\alpha})).
	\end{equation*}
\end{enumerate}
Then we have the following pathwise and pointwise chaos estimates, respectively,
\begin{equation}\label{pocth3}
	\sup_{i \in \{1,\dots,N\}}\bigg\{\mathbb{E}\Big[\sup_{t \in [0,T]}\|X_t^{i,N} - X_t^i\|_{\mathbb{H}}^2\Big]+ \mathbb{E}\int_0^{T}\|X_s^i - X_s^{i,N}\|_{\mathbb{V}}^{\alpha}ds\bigg\} \lesssim N^{-\frac{1}{\alpha-1}\cdot\frac{q-\beta}{q-2}\cdot {\frac{p'-2}{p'}}},
\end{equation}
for any $\beta<q<p$ and $2<p'<\frac{2p}{\beta}$, and
\begin{equation}\label{pocth4}
	\sup_{i \in \{1,\dots,N\}}  \sup_{t \in [0,T]}\mathbb{E}\|X_t^{i,N} - X_t^i\|_{\mathbb{H}}^2 \lesssim N^{-\frac{1}{\alpha-1}\cdot {\frac{p-\beta}{p}}}.
\end{equation}
\end{theorem}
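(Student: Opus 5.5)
We prove Theorem \ref{rate2} by synchronous coupling. Set $Z^i_t:=X^{i,N}_t-X^i_t$ and introduce the empirical measure of the non-interacting particles, $\bar\mu^N_t:=\frac1N\sum_{j=1}^N\delta_{X^j_t}$, together with $\mu_t:=\mathscr{L}_{X^i_t}$. Apply Itô's formula to $\|Z^i_t\|_{\mathbb{H}}^2$ in the Gelfand triple and split the drift difference as
\[
\big[\mathcal{A}(s,X^{i,N}_s,\mu^N_s)-\mathcal{A}(s,X^i_s,\bar\mu^N_s)\big]+\big[\mathcal{A}(s,X^i_s,\bar\mu^N_s)-\mathcal{A}(s,X^i_s,\mu_s)\big],
\]
with the diffusion split the same way.

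The first bracket is handled by $(\mathbf{A}''_5)$ with $u=X^{i,N}_s$ and $v=X^i_s$. This gives the dissipative term $-\delta_0\|Z^i_s\|_{\mathbb{V}}^\alpha$ on the left. On the right it gives $(\rho(0,\mu^N_s)+\eta(0,\bar\mu^N_s))\|Z^i_s\|_{\mathbb{H}}^2$ and a factor $(\rho+\eta)\,\mathcal{W}_{2,\mathbb{H}}(\mu^N_s,\bar\mu^N_s)^2$. The Wasserstein distance is controlled by the coupling, $\mathcal{W}_{2,\mathbb{H}}(\mu^N_s,\bar\mu^N_s)^2\le\frac1N\sum_j\|Z^j_s\|_{\mathbb{H}}^2$. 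The random weights $\rho,\eta$ grow polynomially by (\ref{esq22}) and the bound on $\eta$ in $(\mathbf{A}''_5)$, so we localize. We introduce the stopping time
\[
\tau_R:=\inf\Big\{t:\ \int_0^t\big(1+\|X^i_s\|^\alpha_{\mathbb{V}}+\tfrac1N\sum_j\|X^j_s\|_{\mathbb{V}}^\alpha+\tfrac1N\sum_j\|X^{j,N}_s\|_{\mathbb{V}}^\alpha\big)\big(1+\dots\|\cdot\|_{\mathbb{H}}^\beta\dots\big)ds>R\Big\},
\]
and apply Gronwall on $[0,\tau_R]$. After summing over $i$ and using exchangeability, this yields
\[
\mathbb{E}\sup_{t\le T\wedge\tau_R}\|Z^i_t\|_{\mathbb{H}}^2+\mathbb{E}\int_0^{T\wedge\tau_R}\|Z^i_s\|^\alpha_{\mathbb{V}}ds\lesssim e^{CR}\,\mathbb{E}\int_0^T\big\|\mathcal{A}(s,X^i_s,\bar\mu^N_s)-\mathcal{A}(s,X^i_s,\mu_s)\big\|_{\mathbb{V}^*}\|Z^i_s\|_{\mathbb{V}}\,ds+\text{($\mathcal{B}$-analogue)}.
\]
Here the supremum of the martingale is treated by BDG, and the term $\|Z^i_s\|_{\mathbb{V}}$ is absorbed by Young's inequality into the dissipation. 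This leaves $\mathbb{E}\int\|\cdot\|_{\mathbb{V}^*}^{\alpha/(\alpha-1)}$ of the second bracket.

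The key step is the law-of-large-numbers estimate for that bracket. By $(\mathbf{A}_6)$,
\[
\mathcal{A}(s,X^i_s,\bar\mu^N_s)-\mathcal{A}(s,X^i_s,\mu_s)\ \lesssim\ \frac1N\sum_{j}Y_j,\qquad Y_j:=\tilde{\mathcal{A}}(s,X^i_s,X^j_s)-\int\tilde{\mathcal{A}}(s,X^i_s,y)\mu_s(dy).
\]
Conditionally on $X^i$ (the $j=i$ term costs only $O(N^{-1})$), the $Y_j$ with $j\neq i$ are i.i.d. and centered. Ordered, they form a martingale difference sequence in the sense of Definition \ref{mddef}, taking values in $\mathbb{V}^*$. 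By Proposition \ref{dualization}, $\mathbb{V}^*$ is $q$-uniformly smooth with $q=\frac{\alpha}{\alpha-1}$, so Pisier's martingale inequality gives
\[
\mathbb{E}\Big\|\sum_j Y_j\Big\|^q_{\mathbb{V}^*}\lesssim\sum_j\mathbb{E}\|Y_j\|^q_{\mathbb{V}^*},\qquad\text{hence}\qquad \mathbb{E}\Big\|\tfrac1N\sum_j Y_j\Big\|^q_{\mathbb{V}^*}\lesssim N^{1-q}\,\sup_j\mathbb{E}\|Y_j\|^q_{\mathbb{V}^*}=N^{-\frac{1}{\alpha-1}}\sup_j\mathbb{E}\|Y_j\|^q_{\mathbb{V}^*}.
\]
This is the source of the exponent $\frac{1}{\alpha-1}$, consistent with Proposition \ref{propos1}. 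The moment $\mathbb{E}\|Y_j\|^q_{\mathbb{V}^*}$ is bounded by the growth bound in $(\mathbf{A}_6)$ together with (\ref{esq370}). The $\mathcal{B}$-term is handled similarly in the Hilbert space $L_2(U;\mathbb{H})$, where the estimate $N^{-1}\le N^{-\frac{1}{\alpha-1}}$ is even better.

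Removing the localization is where the extra exponents enter. On $\{\tau_R<T\}$ we use Hölder with exponent $p'/2$ and the uniform $L^{p'}$ bounds on $\sup_t\|Z^i_t\|_{\mathbb{H}}$; these are available since $p'<2p/\beta$, from (\ref{esq370}) and its IPS analogue. We use Chebyshev on $\mathbb{P}(\tau_R<T)$ with $q$-th moments of the integrand, $\beta<q<p$, giving a decay like $R^{-(q-\beta)/(q-2)\cdot(\cdots)}$ after interpolation. Optimizing $R\sim\log N$, or more precisely balancing $e^{CR}N^{-1/(\alpha-1)}$ against the tail, produces the exponent $\frac{1}{\alpha-1}\cdot\frac{q-\beta}{q-2}\cdot\frac{p'-2}{p'}$ in (\ref{pocth3}).

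For the pointwise estimate (\ref{pocth4}) we skip BDG and the sup. We take expectations directly and use a simpler cutoff in which the weight is replaced by $\mathbb{E}$-controlled quantities. The only loss is then the Hölder step with exponent $p/\beta$, which gives the factor $\frac{p-\beta}{p}$.

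The main obstacle is this de-localization. The Gronwall constant $e^{CR}$ with $R$ random-dependent must be balanced against polynomial tail bounds, and verifying the precise bookkeeping of moments (that $p>\lambda$ suffices for all IPS moments uniformly in $N$) is delicate. I would first try the stopping-time cutoff on the cumulative weight, as above, rather than on the state norm.
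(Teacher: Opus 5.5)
You have the same synchronous coupling, drift splitting through $\bar\mu^N_t$, and Pisier-type inequality in the $\frac{\alpha}{\alpha-1}$-uniformly smooth space $\mathbb{V}^*$ as the paper, and these give the localized rate $N^{-\frac{1}{\alpha-1}}$. The gap is in the de-localization, which is exactly where the exponent in \eqref{pocth3} comes from.

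Your $\tau_R$ involves the single particle $X^i$ and empirical averages of the interacting particles, which are not independent. So for fixed $R$, nothing makes $\mathbb{P}(\tau_R<T)$ decay in $N$, and you must take $R=R_N\to\infty$. The factor $e^{CR}N^{-\frac{1}{\alpha-1}}$ is only small if $R= c\log N$ with $c<\frac{1}{C(\alpha-1)}$. Chebyshev with polynomial moments then gives only $\mathbb{P}(\tau_R<T)\lesssim (c\log N)^{-m}$. Balancing the two therefore yields a \emph{logarithmic} rate, not $N^{-\frac{1}{\alpha-1}\cdot\frac{q-\beta}{q-2}\cdot\frac{p'-2}{p'}}$. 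The ``interpolation in $R$'' you invoke to produce $\frac{q-\beta}{q-2}$ corresponds to no actual estimate. Your sketch of \eqref{pocth4} has a separate problem. The weights $\rho(0,\bar\mu^N_s)$ and $\eta(0,\mu^N_s)$ are random and multiply $\|Z^i_s\|_{\mathbb{H}}^2$, so they cannot be replaced by expectations as in the proof of Theorem \ref{th3}.

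The missing idea is a bootstrap with \emph{fixed} cut-off levels $K,\varepsilon$, setting $\tilde\tau=\tau^K\wedge\tau^\varepsilon$.
\begin{itemize}
\item $\tau^K$ uses only the i.i.d.\ nIPS functional $\frac1N\sum_i\|X^i_t\|_{\mathbb{H}}^\beta+\frac1N\sum_i\int_0^t\|X^i_s\|_{\mathbb{V}}^\alpha(1+\|X^i_s\|_{\mathbb{H}}^\beta)ds$. With $K$ equal to twice its mean, Lemma \ref{iidrate} gives $\mathbb{P}(\tau^K\le T)\lesssim N^{-1}$.
\item $\tau^\varepsilon$ is the first time $\mathcal{W}_{\beta,\mathbb{H}}(\bar\mu^N_t,\mu^N_t)^\beta+\int_0^t\mathcal{W}_{\alpha,\mathbb{V}}(\bar\mu^N_s,\mu^N_s)^\alpha ds$ reaches $\varepsilon$.
\item On $[0,\tilde\tau]$ the IPS empirical moments are bounded by $K+\varepsilon$ through the triangle inequality \eqref{XNeta}. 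Hence Gronwall costs only a constant depending on $K$ and $\varepsilon$.
\item The localized bound $N^{-\frac{1}{\alpha-1}}$ is then fed back. By Chebyshev, $\mathbb{P}(\tau^\varepsilon<T<\tau^K)$ is at most $\varepsilon^{-1}$ times the averaged $\beta$-th moment of $\sup_t\|X^{i,N}_{t\wedge\tilde\tau}-X^i_{t\wedge\tilde\tau}\|_{\mathbb{H}}$, plus the localized $\mathbb{V}$-error.
\item Interpolating that $\beta$-th moment between the localized $L^2$ bound and a bounded $L^q$ moment gives $N^{-\frac{1}{\alpha-1}\cdot\frac{q-\beta}{q-2}}$. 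H\"older with exponent $p'/2$ then supplies the factor $\frac{p'-2}{p'}$.
\item The restriction $p'<\frac{2p}{\beta}$ comes from needing $\mathbb{E}(\int_0^T\|X_s\|_{\mathbb{V}}^\alpha ds)^{p'/2}<\infty$, not from $\sup_t\|Z^i_t\|_{\mathbb{H}}$.
\item For \eqref{pocth4}, the same argument with $L^p$ in place of $L^q$ gives the exponent $\frac{p-\beta}{p-2}\cdot\frac{p-2}{p}=\frac{p-\beta}{p}$.
\end{itemize}

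This localization only works with the paper's assignment $u=X^i$, $\mu=\bar\mu^N$, $v=X^{i,N}$, $\nu=\mu^N$ in $(\mathbf{A}''_5)$. Under it, $\eta$, whose growth contains no single-particle product $\|u\|_{\mathbb{V}}^\alpha\|u\|_{\mathbb{H}}^\beta$, falls on the interacting particles. Your swapped choice puts $\rho$ there instead. That produces $\frac1N\sum_j\|X^{j,N}_s\|_{\mathbb{V}}^\alpha\|X^{j,N}_s\|_{\mathbb{H}}^\beta$, which empirical moments bounded by $K+\varepsilon$ cannot control.
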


\begin{remark}
Compared with Theorem \ref{rate1}, when also accounting for the dissipation induced by the underlying second-order differential operators (i.e., Condition $(\mathbf{A}''_5)$), the pathwise estimate (\ref{pocth3}) and  pointwise estimate (\ref{pocth4}) improve upon (\ref{pocth1}). Specifically, when  $\mathbb{V}$ is a Hilbert space (corresponding to $\alpha=2$) and  $p$ is sufficiently large, these yield near-optimal rates of order $N^{-\frac{1}{2}+}$. An  open problem is whether the extra factors $\frac{q-\beta}{q-2}\cdot {\frac{p'-2}{p'}}$ and $\frac{p-\beta}{p}$ in (\ref{pocth3}) and (\ref{pocth4}), respectively, can be removed, which deserves further investigation in the future work.
\end{remark}

\begin{remark}
	To the best of our knowledge, this is the first work  to derive  dimension-free convergence rates of PoC for general infinite-dimensional mean-field interacting particle systems. Specifically,  compared to existing studies on mean-field stochastic PDEs (cf.~\cite{BKKX,CKS,C23,ES,HLL25,HLL26}), which only established qualitative convergence without rates via the stochastic compactness method, our approach provides (near-optimal) quantitative estimates.
	
The main proof relies on the Sznitman's synchronous coupling method with a delicate analysis of martingale difference sequences taking values in the dual of an $\alpha$-uniformly convex Banach space. Moreover, we utilize a refined stopping time argument  to treat the infinite-dimensional mean-field dynamics.
	Notably, these results are also novel and of independent interest even in the  finite-dimensional setting.
\end{remark}

\section{Examples/Applications}\label{secex}
In this section, we explore the applications of our theoretical framework to finite and infinite-dimensional interacting particle systems arising from fields such as machine learning, fluid mechanics, and quantum field theory.    These include Stein variational gradient descent, mean-field Allen-Cahn equations, and modified  Lagrangian averaged Burgers equations.

Throughout the paper, we adopt the convention that $|\cdot|$ and $\langle \cdot, \cdot \rangle$
are the Euclidean norm and inner product  on $\mathbb{R}^d$, whereas $\|\cdot\|$ stands for the Hilbert-Schmidt norm for matrices within $\mathbb{R}^d\otimes\mathbb{R}^d$.

We also summarize some fundamental notations and functional spaces used in the infinite-dimensional analysis.
Let $\mathcal{O} \subset \mathbb{R}^d$ be a bounded domain with a smooth boundary $\partial \mathcal{O}$. We denote by $(L^p(\mathcal{O}; \mathbb{R}^d), \|\cdot\|_{L^p})$ the space of $p$-integrable functions on $\mathcal{O}$.
For any integer $m \ge 0$, let $W_0^{m,p}(\mathcal{O}; \mathbb{R}^d)$ be the classical Sobolev space with Dirichlet boundary, which is endowed with equivalent  norm
\begin{equation*}
	\|f\|_{m,p} := \Big( \sum_{|\alpha|= m} \int_{\mathcal{O}} |D^\alpha f(x)|^p \, dx \Big)^{1/p}.
\end{equation*}
For the case $p=2$, we simplify the notation $\|\cdot\|_{m,2}$     by writing $\|\cdot\|_m$.

\subsection{Stochastic Stein variational gradient descent}\label{sec3.1}
Stein variational gradient descent (SVGD), introduced in \cite{LW1}, is a deterministic particle-based inference method that transports an ensemble of particles toward a target distribution $\pi(x)\propto e^{-\mathcal{V}(x)}$ by  the steepest descent direction of the
Kullback--Leibler divergence (see also \cite{DNS,LW2}). To avoid particles tending to collapse to a local mode under some particular conditions, the stochastic SVGD has been
proposed in the following form \cite{ZZCC}:
\begin{eqnarray}\label{SVGD-IPS}
	dX_{t}^{i,N} =\!\!\!\!\!\!\!\!&&-\lambda^{-1}\nabla \mathcal{V}(X_t^{i,N})dt-\frac{1}{N}\sum_{j=1}^N \nabla_y \kappa(X_t^{i,N}, X_t^{j,N}) dt\nonumber\\
	\!\!\!\!\!\!\!\!&&- \frac{1}{N}\sum_{j=1}^N \kappa(X_t^{i,N}, X_t^{j,N}) \nabla \mathcal{V}(X_t^{j,N})dt+\sqrt{2 \lambda^{-1}}dW_t^{i},\quad X_0^{i,N}=\xi^i,
\end{eqnarray}
where $\lambda>0$ is the temperature parameter and $W_t^i$ and $\xi^i$, $i = 1, \dots, N$, are $N$-i.i.d. $\mathbb{R}$-valued Brownian motions and initial values, respectively.

In the work \cite{LW2}, the authors demonstrate that by using polynomial kernel $\kappa(x,y)=xy$, SVGD can exactly estimate the mean and variance of Gaussian distributions, provided that the number of particles exceeds the dimension. Since Gaussian-like distributions are ubiquitous in practical applications and the estimations of mean and variance are often of particular importance, polynomial kernels offer a notable advantage over the kernels appearing in \cite{DNS,LW1}.
Inspired by \cite[Remark~46]{DNS}, we consider a more general class of kernels in the present work, i.e.,
$$
\kappa(x,y) := x^{2k-1} y^{2m-1},
\qquad
\mathcal{V}(y) := \frac{1}{2n}y^{2n},
$$
where $k,m,n \geq 1$.

The formal large $N$ limit of particle system (\ref{SVGD-IPS}) is given by the following  mean-field stochastic equations
\begin{eqnarray}\label{SVGD-nIPS}
	dX_t^i =\!\!\!\!\!\!\!\!&&-\lambda^{-1}\nabla \mathcal{V}(X_t^i)dt
	+ \sqrt{2 \lambda^{-1}}dW_t^{i} \nonumber\\
	\!\!\!\!\!\!\!\!&&-\int_{\mathbb{R}} ( \nabla_y \kappa(X_t, y) + \kappa(X_t, y) \nabla \mathcal{V}(y) ) \mathscr{L}_{X^i_t}(dy) dt,\quad X_0^i=\xi^i.
\end{eqnarray}

We now establish the well-posedness and the quantitative PoC  for Eqs.~(\ref{SVGD-nIPS}) and (\ref{SVGD-IPS}), respectively.
\begin{theorem}\label{SVGDrate}
	Suppose that the initial data $\xi^i\in L^p(\Omega,\mathscr{F}_0,\mathbb{P};\mathbb{R})$ with $p\geq\max\{ 32k-24,32(m+n)-40\}$. Then Eq.~$(\ref{SVGD-nIPS})$ has a
	unique strong and weak solution.
	Moreover, the following estimates hold
	\begin{equation*}
		\sup_{i \in \{1,\dots,N\}}\mathbb{E}\Big[\sup_{t \in [0,T]}|X_t^{i,N} - X_t^i|^2\Big] \lesssim N^ {-\frac{q-\beta}{q-2}\cdot{\frac{p'-2}{p'}}},
	\end{equation*}
	for any $2<p'<\frac{2p}{\beta}$ and $\beta<q<p$, and
	\begin{equation*}
		\sup_{i \in \{1,\dots,N\}}  \sup_{t \in [0,T]}\mathbb{E}|X_t^{i,N} - X_t^i|^2 \lesssim N^{-\frac{p-\beta}{p}},
	\end{equation*}
where $\beta:=\max\{ 8k-6,8(m+n)-10\}$.
\end{theorem}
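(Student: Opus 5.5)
We will deduce Theorem \ref{SVGDrate} from Theorem \ref{th3} (well-posedness) and Theorem \ref{rate2} (rates), applied in the finite-dimensional Gelfand triple $\mathbb{V}=\mathbb{H}=\mathbb{V}^*=\mathbb{R}$ with $U=\mathbb{R}$. Since $\mathbb{R}$ is a Hilbert space, it is $2$-uniformly convex (Remark \ref{uniformspace}), so we take $\alpha=2$. Then $\frac{1}{\alpha-1}=1$ and the rates in (\ref{pocth3})--(\ref{pocth4}) become exactly those claimed. The diffusion $\mathcal{B}\equiv\sqrt{2\lambda^{-1}}$ is constant, so $(\mathbf{A}_1)$, (\ref{conb}) and the $\mathcal{B}$-parts of $(\mathbf{A}''_5)$ and $(\mathbf{A}_6)$ are trivial.

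The drift is
\[
\mathcal{A}(u,\mu)=-\lambda^{-1}u^{2n-1}-(2m-1)u^{2k-1}\mu(y^{2m-2})-u^{2k-1}\mu(y^{2m+2n-2}).
\]
It has integral form with kernel $\tilde{\mathcal{A}}(u,y)=-\lambda^{-1}u^{2n-1}-(2m-1)u^{2k-1}y^{2m-2}-u^{2k-1}y^{2m+2n-2}$, so $(\mathbf{A}_6)$ holds by Remark \ref{remark1}. The growth bounds follow from Young's inequality once $\beta$ is at least the degree of $|\tilde{\mathcal{A}}|^2$. Demi-continuity $(\mathbf{A}_2^*)$ is clear because the moments $\mu(y^{j})$ with $j\le 2m+2n-2\le\beta$ are continuous under $\mathcal{W}_\beta$-convergence.

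For coercivity $(\mathbf{A}_3)$, observe that the even moments $\mu(y^{2m-2})$ and $\mu(y^{2m+2n-2})$ are nonnegative. Hence
\[
u\,\mathcal{A}(u,\mu)\le-\lambda^{-1}u^{2n}\le C-\delta u^2,
\]
which gives $(\mathbf{A}_3)$ with $\alpha=2$ and any $p$.

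The core step is the decoupled monotonicity $(\mathbf{A}''_5)$. We split
\[
\mathcal{A}(u,\mu)-\mathcal{A}(v,\nu)=[\mathcal{A}(u,\mu)-\mathcal{A}(v,\mu)]+[\mathcal{A}(v,\mu)-\mathcal{A}(v,\nu)].
\]

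\textbf{First term.} Since $x\mapsto x^{2j-1}$ is monotone and the even moments are nonnegative,
\[
(u-v)\big(\mathcal{A}(u,\mu)-\mathcal{A}(v,\mu)\big)\le-\lambda^{-1}(u-v)(u^{2n-1}-v^{2n-1})\le 0.
\]
To produce the required $-\delta_0|u-v|^2$ term, we add and subtract $\delta_0|u-v|^2$ and absorb it into $\rho(0,\mu)|u-v|^2$ by taking $\rho\ge 2\delta_0$ constant. This is the same device used to handle the monotone nonlinearity.

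\textbf{Second term.} We estimate
\[
|\mu(y^{j})-\nu(y^{j})|\le j\,\mathcal{W}_2(\mu,\nu)\big(\mu(|y|^{2j-2})+\nu(|y|^{2j-2})\big)^{1/2}
\]
via an optimal coupling and Cauchy--Schwarz. Young's inequality then gives
\[
|u-v|\,|v|^{2k-1}\,|\mu(y^j)-\nu(y^j)|\le\tfrac12|u-v|^2+C|v|^{4k-2}\big(\mu(|y|^{2j-2})+\nu(|y|^{2j-2})\big)\mathcal{W}_2^2.
\]
The $|u-v|^2$ part is absorbed into the constant $\rho(0,\mu)$. The remaining factor must be bounded by $\rho(u,\mu)+\eta(v,\nu)$ with the growth prescribed in $(\mathbf{A}_5)$ and $(\mathbf{A}''_5)$. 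Bounding $|v|^{4k-2}$ by $|v|^\beta$ and using $\mu(|y|^{2j-2})\lesssim 1+\mu(|y|^\beta)$ with $2j-2\le 4(m+n)-6$, this works provided $\beta\ge\max\{8k-6,\,8(m+n)-10\}$; the authors' larger choice leaves room for the mixed products $\|u\|^\beta\mu(\cdot)$ permitted in $\eta$. There is one subtlety: $\eta(v,\nu)$ may contain $\|v\|_{\mathbb{H}}^\beta\,\nu(\|\cdot\|_{\mathbb{V}}^\alpha)$ but not $|v|^\beta\mu(\cdot)$. We handle this by symmetrizing the split, using $v$ with $\nu$ and $u$ with $\mu$, or by bounding moments of $\mu$ through $\nu$ plus $\mathcal{W}$ terms.

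Finally, the moment condition $p\ge\max\{32k-24,\,32(m+n)-40\}=4\beta$ is exactly $p\ge 4\beta$. Together with $p>\lambda=\max\{2\beta,2\beta+4\}$, which holds since $\beta\ge2$, this meets the requirements of Theorems \ref{th3} and \ref{rate2}; it also covers the threshold $p\ge\vartheta_1=2\beta$. Invoking those theorems completes the proof.

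The main obstacle is matching the measure-difference term to the precise asymmetric growth allowed for $\eta$, with the correct exponent $\beta$. We would try the symmetric decomposition first.
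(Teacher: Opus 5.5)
Your route matches the paper's. You apply Theorems \ref{th3} and \ref{rate2} with $\mathbb{V}=\mathbb{H}=\mathbb{R}$ and $\alpha=2$, check that $p\ge 4\beta$, and verify $(\mathbf{A}''_5)$ through the split $[\mathcal{A}(u,\mu)-\mathcal{A}(v,\mu)]+[\mathcal{A}(v,\mu)-\mathcal{A}(v,\nu)]$, where the first bracket is nonpositive. The gap is in the second bracket, at the point you flag as the main obstacle, and it is left unresolved.

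\textbf{The obstruction.} You apply Young's inequality so that the moment factor $M:=\mu(|y|^{2j-2})+\nu(|y|^{2j-2})$ sits with $|v|^{4k-2}$ in front of $\mathcal{W}_2(\mu,\nu)^2$. The piece $|v|^{4k-2}\mu(|y|^{2j-2})$ couples the state $v$ with the measure $\mu$. But $(\mathbf{A}''_5)$ and $(\mathbf{A}^*_5)$ only allow a coefficient of the form $\rho(u,\mu)+\eta(v,\nu)$ there. This is structural, so no choice of $\beta$ fixes it. (Bounding it by $|v|^\beta\mu(|y|^\beta)$ would not be admissible in $\rho$ or $\eta$ anyway.)

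\textbf{Why your remedies fail.}
\begin{itemize}
\item Any $\mathcal{W}_2$-bound on $|\mu(y^j)-\nu(y^j)|$ costs moments of both measures. So whichever state variable multiplies the measure difference, one of the two products is mixed ($v$ with $\mu$, or $u$ with $\nu$). Symmetric averaging just produces both kinds of mixed terms.
\item Replacing $\mu$-moments by $\nu$-moments plus transport terms creates quantities like $\int|y_1-y_2|^{2j-2}\,\pi(dy_1,dy_2)$. These either reintroduce $\mu$-moments or are not a coefficient times $\mathcal{W}_2^2$.
\end{itemize}

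\textbf{The fix.} Allocate Young's inequality the other way, as the paper does; this is exactly what the decoupled condition is built for:
\[
|u-v|\,|v|^{2k-1}\,\mathcal{W}_2(\mu,\nu)\,M^{1/2}\le \tfrac12\,M\,|u-v|^2+\tfrac12\,|v|^{4k-2}\,\mathcal{W}_2(\mu,\nu)^2 .
\]
\begin{itemize}
\item $M$ now multiplies $|u-v|^2$, whose admissible coefficient is $\rho(0,\mu)+\eta(0,\nu)$. Since $M$ is a pure $\mu$-moment plus a pure $\nu$-moment of order at most $4(m+n)-6\le\beta$, it fits there.
\item The coefficient of $\mathcal{W}_2^2$ is $|v|^{4k-2}\lesssim 1+|v|^{\beta}$, using $4k-2\le 8k-6\le\beta$. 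It depends on $v$ alone and is allowed in $\eta(v,\nu)$.
\end{itemize}
With this change, plus your absorption of $\delta_0|u-v|^2$ into a constant part of $\rho(0,\mu)$ (legitimate since $\mathbb{V}=\mathbb{H}$), the rest of your argument goes through.
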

\begin{proof}Set
	\begin{eqnarray*}
		\mathcal{A}(u,\mu)\!\!\!\!\!\!\!\!&&:=-\nabla\mathcal{V}(u)-\int_{\mathbb{R}} ( \nabla_y \kappa(u, y) + \kappa(u, y) \nabla \mathcal{V}(y) ) \mu(dy)\\
		\!\!\!\!\!\!\!\!&&= -u^{2n-1}-\int_{\mathbb{R}}\big((2m-1)u^{2k-1}
		y^{2m-2}+u^{2k-1}y^{2(m+n)-2}\big)\mu(dy).
	\end{eqnarray*}
	Obviously, $\mathcal{A}(u,\mu)$ is continuous.
	According to Theorems \ref{th3} and \ref{rate2}, we need to show that $\mathcal{A}(u,\mu)$
	satisfies conditions $(\mathbf{A}_3)$, $(\mathbf{A}_4)$, $(\mathbf{A}''_5)$, and $(\mathbf{A}_6)$.
	It is easy to get
	\begin{equation*}
		\langle\mathcal{A}(u,\mu),u\rangle=-u^{2n}-(2m-1)u^{2k}\mu( |\cdot|^{2m-2})-u^{2k}\mu(|\cdot|^{2(m+n)-2}) \leq 0
	\end{equation*}
	and
	\begin{eqnarray*}
		|\mathcal{A}(u,\mu)|^2\lesssim\!\!\!\!\!\!\!\!&&|u|^{4n-2}+ |u|^{4k-2}\mu( |\cdot|^{4m-4})+|u|^{4k-2}\mu(|\cdot|^{4(m+n)-4})
		\\
		\lesssim\!\!\!\!\!\!\!\!&&|u|^{4n-2}+|u|^{8k-4}+\mu(|\cdot|^{8(m+n)-8}),
	\end{eqnarray*}
	which imply $(\mathbf{A}_3)$-$(\mathbf{A}_4)$ hold with $\alpha=2$ and $\beta = \max\{ 8k-6,8(m+n)-10\}$.
	
	Next we verify that condition $(\mathbf{A}''_5)$ is satisfied. For any coupling $\pi \in \mathscr{C}(\mu,\nu)$, it follows that
	\begin{eqnarray*}
		\!\!\!\!\!\!\!\!&&\langle \mathcal{A}(u,\mu)-\mathcal{A}(v,\nu),u-v\rangle+
		|\mathcal{B}(u,\mu)-\mathcal{B}(v,\nu)|^2\\
		=\!\!\!\!\!\!\!\!&&-\langle u^{2n-1}-v^{2n-1},u-v\rangle\\
		\!\!\!\!\!\!\!\!&&-\langle\int_{\mathbb{R}}(2m-1)u^{2k-1}y_1^{2m-2}\mu(dy_1)-\int_{\mathbb{R}}(2m-1)v^{2k-1}y_2^{2m-2}\nu(dy_2),u-v\rangle\\
		\!\!\!\!\!\!\!\!&&-\langle \int_{\mathbb{R}}u^{2k-1}y_1^{2(m+n)-2}\mu(dy_1)-\int_{\mathbb{R}}v^{2k-1}y_2^{2(m+n)-2}\nu(dy_2),u-v\rangle
		\\
		\lesssim\!\!\!\!\!\!\!\!&&\big(\mu(|\cdot|^{4(m+n)-6})+\nu(|\cdot|^{4(m+n)-6})\big)|u-v|^2
		+|v|^{4k-2}\Big(\int_{\mathbb R\times\mathbb R} |y_1-y_2|^2 \pi(dy_1,dy_2)\Big),
	\end{eqnarray*}
	where we used the following estimates in the last step
	\begin{eqnarray*}
		\!\!\!\!\!\!\!\!&&-\langle \int_{\mathbb{R}}u^{2k-1}y_1^{2(m+n)-2}\mu(dy_1)-\int_{\mathbb{R}}v^{2k-1}y_2^{2(m+n)-2}\nu(dy_2),u-v\rangle\\
		\leq\!\!\!\!\!\!\!\!&& - \langle \int_{\mathbb R} (u^{2k-1}-v^{2k-1})y_1^{2(m+n)-2}\mu(dy_1),u-v \rangle\\
		\!\!\!\!\!\!\!\!&&- \langle \int_{\mathbb R\times\mathbb R}
		v^{2k-1}\big(y_1^{2(m+n)-2}-y_2^{2(m+n)-2}\big)\pi(dy_1,dy_2),u-v \rangle\\
		\lesssim\!\!\!\!\!\!\!\!&&|v|^{2k-1}|u-v|\int_{\mathbb R\times\mathbb R}
		\big(|y_1|^{2(m+n)-3}+|y_2|^{2(m+n)-3}\big)|y_1-y_2|\pi(dy_1,dy_2)\\
		\lesssim\!\!\!\!\!\!\!\!&&\big(\mu(|\cdot|^{4(m+n)-6})+\nu(|\cdot|^{4(m+n)-6})\big)|u-v|^2
		+|v|^{4k-2}\Big(\int_{\mathbb R\times\mathbb R} |y_1-y_2|^2 \pi(dy_1,dy_2)\big),
	\end{eqnarray*}
	and similarly,
	\begin{eqnarray*}
		\!\!\!\!\!\!\!\!&& -\langle\int_{\mathbb{R}}(2m-1)u^{2k-1}
		y_1^{2m-2}\mu(dy_1)-\int_{\mathbb{R}}(2m-1)v^{2k-1}
		y_2^{2m-2}\nu(dy_2),u-v\rangle\\
		\lesssim\!\!\!\!\!\!\!\!&&\big(\mu(|\cdot|^{4m-6})+\nu(|\cdot|^{4m-6})\big)|u-v|^2
		+|v|^{4k-2}\Big(\int_{\mathbb R\times\mathbb R} |y_1-y_2|^2 \pi(dy_1,dy_2)\Big).
	\end{eqnarray*}
	Taking infimum for couplings $\pi \in \mathscr{C}(\mu, \nu)$ leads to
	\begin{eqnarray*}
		\!\!\!\!\!\!\!\!&&\langle \mathcal{A}(u, \mu) - \mathcal{A}(v, \nu), u - v \rangle\\
		\lesssim\!\!\!\!\!\!\!\!&&\big(\mu(|\cdot|^{4(m+n)-6}) + \nu(|\cdot|^{4(m+n)-6}) \big) |u - v|^2 +  |v|^{4k-2} \mathcal{W}_{2,\mathbb{R}}(\mu, \nu)^2.
	\end{eqnarray*}
	Thus, $(\mathbf{A}_5'')$ holds.
	
	Finally, we can define the operator
	$$
	\tilde{\mathcal{A}}(u,y):=-u^{2n-1}-(2m-1)u^{2k-1}
	y^{2m-2}-u^{2k-1}y^{2(m+n)-2},
	$$
	which satisfies
	\begin{equation*}
		|\tilde{\mathcal{A}}(u,\mu)|^2
		\lesssim|u|^{4n-2}+|u|^{8k-4}+|y|^{8(m+n)-8}.
	\end{equation*}
	Thus, according to Remark \ref{remark1}, $(\mathbf{A}_6)$ is satisfied.	
	
	In summary, due to the $2$-uniform smoothness of space $\mathbb{R}$ and $\alpha=2$, we complete the proof.
\end{proof}

\begin{remark}
	(i) As far as we know, this seems to be  the first result deriving  the  quantitative propagation of chaos  for stochastic SVGD with  super-linear kernels, which should be of independent interest in the literature.

	(ii) Beyond the example in Subsection \ref{sec3.1}, our general framework provides a unified treatment of other important models, arising in machine learning and optimization theory, such as the ensemble Kalman sampler (cf.~e.g.~\cite{V}) and consensus-based optimization (cf.~e.g.~\cite{PTTM}).
	We omit the details to keep down the
	length of this paper.
\end{remark}

\subsection{Mean-field Allen-Cahn equations}
The Allen-Cahn equation, originally introduced in \cite{AC}, is  to describe phase separation and interface motion in binary alloys.
Recently,   mean-field Allen-Cahn equations have been derived in the context of stochastic quantization theory
(cf.~\cite{SSZZ})  and as a scaling limit of the classical Allen-Cahn equation with rough random initial data (cf.~\cite{GRZ}).

Inspired by these developments, we study the following weakly interacting Allen-Cahn equations defined on domain $\mathcal{O}: = [0, 1]$,
\begin{equation}\label{AC-IPS}
	\begin{cases}
		&\!\!\!\!\!\!dX^{i,N}_t = \big[ \Delta X^{i,N}_t + X^{i,N}_t - \sum_{j=1}^N (X^{j,N}_t)^2 X^{i,N}_t  \big] dt+dW_t^i,
		\\
		&\!\!\!\!\!\!X^{i,N}_t(0) = X^{i,N}_t(1) = 0, \\
		&\!\!\!\!\!\!X^{i,N}_0=\xi^i.
	\end{cases}
\end{equation}
Here, $\{\xi^i\}_{i\in\{1,\dots,N\}}$ and  $\{W^i\}_{i\in\{1,\dots,N\}}$ are i.i.d.-initial values and $Q$-Wiener processes, respectively, where $Q$ is a  non-negative, symmetric and  finite trace operator.

Considering the large $N$ limit, the microscopic dynamic is characterized by the  mean-field Allen-Cahn equations
\begin{equation}\label{AC-nIPS}
	\begin{cases}
		&\!\!\!\!\!\!dX^i_t = \big[ \Delta X^i_t + X^i_t -  X^i_t\mathbb{E}(X^i_t)^2 \big] dt+dW_t^i,
		\\
		&\!\!\!\!\!\!X^{i}_t(0) = X^{i}_t(1) = 0, \\
		&\!\!\!\!\!\!X^{i}_0=\xi^i.
	\end{cases}
\end{equation}

In this subsection, the problem is formulated based on the Gelfand triple
	\begin{equation}\label{gel}
\mathbb{V} \subset \mathbb{H} \subset \mathbb{V}^*,
\end{equation}
where $\mathbb{V} := W_0^{1,2}(\mathcal{O}; \mathbb{R})$ and $\mathbb{H} := L^2(\mathcal{O}; \mathbb{R})$.
We now establish the well-posedness of the mean-field Allen-Cahn equations and the quantitative PoC.
\begin{theorem}\label{ACrate}
	Suppose that the initial data $\xi^i\in L^p(\Omega,\mathscr{F}_0,\mathbb{P};\mathbb{H})$ with $p\geq24$. Then Eq.~$(\ref{AC-nIPS})$ has a
	unique strong and weak solution.
	Moreover, the following estimates hold
	\begin{equation*}
		\sup_{i \in \{1,\dots,N\}}\bigg\{\mathbb{E}\Big[\sup_{t \in [0,T]}\|X_t^{i,N} - X_t^i\|_{\mathbb{H}}^2\Big]+ \mathbb{E}\int_0^{T}\|X_s^i - X_s^{i,N}\|_{\mathbb{V}}^{2}ds\bigg\} \lesssim N^{-{\frac{p'-2}{p'}}},
	\end{equation*}
	for any $2<p'<\frac{p}{3}$, and
	\begin{equation*}
		\sup_{i \in \{1,\dots,N\}}  \sup_{t \in [0,T]}\mathbb{E}\|X_t^{i,N} - X_t^i\|_{\mathbb{H}}^2 \lesssim N^{-{\frac{p-2}{p}}}.
	\end{equation*}
\end{theorem}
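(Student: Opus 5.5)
The plan is to derive Theorem \ref{ACrate} from the abstract results: well-posedness from Theorem \ref{th3}, and both chaos estimates from Theorem \ref{rate2} with $\alpha=2$. We read the interaction in (\ref{AC-IPS}) as the empirical average $\frac1N\sum_{j}(X^{j,N}_t)^2X^{i,N}_t$, which is what makes (\ref{AC-nIPS}) its mean-field limit.

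\textbf{Setting.} On the Gelfand triple (\ref{gel}) we set
$$\mathcal{A}(u,\mu):=\Delta u+u-u\int_{\mathbb{H}} y^2\,\mu(dy),\qquad \mathcal{B}\equiv Q^{1/2},$$
where $\int y^2\mu(dy)$ is the function $x\mapsto\int y(x)^2\mu(dy)$. Since $\mathcal{B}$ is constant, every condition on $\mathcal{B}$ is trivial. The space $\mathbb{V}=W^{1,2}_0$ is a Hilbert space, hence $2$-uniformly convex, so $\alpha=2$. I will aim for $\beta=2$. With $\beta=2$ the factor $\frac{q-\beta}{q-2}$ in (\ref{pocth3}) equals $1$, and $\frac{p-\beta}{p}=\frac{p-2}{p}$ in (\ref{pocth4}), which gives exactly the stated rates.

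\textbf{Routine conditions.}
\begin{itemize}
\item $(\mathbf{A}_1)$ is immediate because $\mathcal{B}$ is constant.
\item $(\mathbf{A}_2^*)$ is demicontinuity. Pair with $\phi\in\mathbb{V}\subset L^\infty$ and use $\mathcal{W}_{2,\mathbb{H}}$-continuity of $\mu\mapsto\int y^2\mu(dy)$ in $L^1$.
\item $(\mathbf{A}_3)$: we have
$$\langle\mathcal{A}(u,\mu),u\rangle=-\|u\|_1^2+\|u\|_{\mathbb{H}}^2-\int_{\mathcal O}u^2\!\int y^2\mu(dy)\,dx\le-\|u\|_1^2+\|u\|_{\mathbb{H}}^2 .$$
The trace-class noise contributes only a constant.
\item $(\mathbf{A}_6)$ holds by Remark \ref{remark1}, taking $\tilde{\mathcal{A}}(u,y)=\Delta u+u-uy^2$. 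Its growth bound is the same computation as for $(\mathbf{A}_4)$, done pointwise in $y$.
\end{itemize}

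\textbf{Growth $(\mathbf{A}_4)$.} We use the one-dimensional embedding $\mathbb{V}\hookrightarrow L^\infty$, hence $L^1\hookrightarrow\mathbb{V}^*$. Then
$$\Big\|u\!\int y^2\mu\Big\|_{\mathbb{V}^*}\lesssim\|u\|_{\mathbb{H}}\int\|y\|_{L^4}^2\mu(dy)\lesssim\|u\|_{\mathbb{H}}\int\|y\|_{\mathbb{H}}\|y\|_{\mathbb{V}}\,\mu(dy),$$
using Gagliardo--Nirenberg in the form $\|y\|_{L^4}^2\lesssim\|y\|_{\mathbb{H}}^{3/2}\|y\|_{\mathbb V}^{1/2}\lesssim\|y\|_{\mathbb{H}}\|y\|_{\mathbb V}$ (the last step uses $\|y\|_{\mathbb H}\lesssim\|y\|_{\mathbb V}$ by Poincar\'e). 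Squaring and applying Cauchy--Schwarz in $\mu$ gives the bound $\|u\|_{\mathbb{H}}^2\,\mu(\|\cdot\|_{\mathbb{H}}^2)\,\mu(\|\cdot\|_{\mathbb{V}}^2)$. This is of the required form $(1+\|u\|_{\mathbb V}^2+\mu(\|\cdot\|_{\mathbb V}^2))(1+\|u\|_{\mathbb H}^2+\mu(\|\cdot\|_{\mathbb H}^2))$ with $\beta=2$.

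\textbf{Monotonicity $(\mathbf{A}''_5)$.} Fix any coupling $\pi\in\mathscr{C}(\mu,\nu)$ and split the nonlinear term as
$$-\int_{\mathcal O}(u-v)^2\!\int y_1^2\,\mu(dy_1)\,dx\;-\;\int_{\mathcal O}v(u-v)\!\int(y_1-y_2)(y_1+y_2)\,\pi(dy_1,dy_2)\,dx .$$
\begin{itemize}
\item The first term is $\le0$.
\item For the second term, put $\|v\|_{L^\infty}\lesssim\|v\|_{\mathbb{V}}$ and $\|y_1+y_2\|_{L^\infty}\lesssim\|y_1\|_{\mathbb V}+\|y_2\|_{\mathbb V}$ in $L^\infty$. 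Keep $u-v$ and $y_1-y_2$ in $L^2$. By Cauchy--Schwarz in $\pi$ and Young, it is bounded by
$$(\mu(\|\cdot\|_{\mathbb V}^2)+\nu(\|\cdot\|_{\mathbb V}^2))\|u-v\|_{\mathbb{H}}^2+\|v\|_{\mathbb{V}}^2\Big(\int\|y_1-y_2\|_{\mathbb H}^2\,d\pi\Big).$$
\end{itemize}
Taking the infimum over $\pi$ turns the last factor into $\mathcal{W}_{2,\mathbb{H}}(\mu,\nu)^2$. The linear part gives $-\|u-v\|_1^2+\|u-v\|_{\mathbb H}^2$, so $\delta_0=1$ (or $1/2$). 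Consequently, $\rho(0,\mu)+\eta(0,\nu)\sim 1+\mu(\|\cdot\|_{\mathbb V}^2)+\nu(\|\cdot\|_{\mathbb V}^2)$ and $\eta(v,\nu)\gtrsim\|v\|_{\mathbb V}^2$. Both fit the admissible growth of $\rho$ and $\eta$ with $\beta=2$. This also yields $(\mathbf{A}^*_5)$, so Theorem \ref{th3} applies, and with it Theorem \ref{rate2}.

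\textbf{Main obstacle.} The main difficulty is bookkeeping. Each $L^\infty$ factor has to be placed so that only $\mathbb V$-moments of the measure, to power at most $\alpha=2$, and $\mathbb H$-moments to power $\beta=2$ appear, and these must match the precise product structure required of $\eta$. If the clean $\beta=2$ bound fails somewhere, for instance in $(\mathbf{A}_4)$ if one is forced into $\mu(\|\cdot\|_{\mathbb H}^2)^2$, I would fall back to a larger $\beta$. That larger $\beta$ would then explain the restrictions $p\ge24$ and $p'<p/3$. Finally I would check the integrability requirements $p\in[\vartheta_1,\infty)\cap(\vartheta_2,\infty)$ and $p\in[4\beta,\infty)\cap(\lambda,\infty)$ against $p\ge24$.
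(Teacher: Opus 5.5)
Your overall route is the paper's: check $(\mathbf{A}_2^*)$, $(\mathbf{A}_3)$, $(\mathbf{A}_4)$, $(\mathbf{A}''_5)$, $(\mathbf{A}_6)$, then invoke Theorems \ref{th3} and \ref{rate2}. Your $(\mathbf{A}''_5)$ computation also matches the paper's. The step that fails is the one your explanation of the rates depends on: $(\mathbf{A}_4)$, and likewise $(\mathbf{A}_6)$, cannot hold with $\beta=2$.

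Your bound $\|u\|_{\mathbb H}^2\,\mu(\|\cdot\|_{\mathbb H}^2)\,\mu(\|\cdot\|_{\mathbb V}^2)$ multiplies two $\mathbb H$-type factors, so it is not dominated by $(1+\|u\|_{\mathbb V}^2+\mu(\|\cdot\|_{\mathbb V}^2))(1+\|u\|_{\mathbb H}^2+\mu(\|\cdot\|_{\mathbb H}^2))$. No sharper estimate can repair this. Take $u=\lambda u_0$ and $\mu=\delta_{\lambda y_0}$ with $u_0y_0^2\neq0$. Then $\|\mathcal{A}(u,\mu)\|_{\mathbb V^*}^2\sim\lambda^6$, while the right-hand side of $(\mathbf{A}_4)$ grows like $\lambda^{2+\beta}$, so $\beta\ge4$ is forced. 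After $\|u\|_{\mathbb H}^2\mu(\|\cdot\|_{\mathbb H}^2)\le\|u\|_{\mathbb H}^4+\mu(\|\cdot\|_{\mathbb H}^4)$, your estimate gives $\beta=4$. That is sharper than the paper's $\beta=6$, which is where $p\ge24=4\beta$ and $p'<p/3=2p/\beta$ come from.

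Consequently, Theorem \ref{rate2} used as a black box with any admissible $\beta\ge4$ gives only $N^{-\frac{q-\beta}{q-2}\cdot\frac{p'-2}{p'}}$ and $N^{-\frac{p-\beta}{p}}$. Your fallback of enlarging $\beta$ explains the integrability constraints but reintroduces exactly the losses you wanted to avoid. Switching to $\beta=6$ does not help either. The paper's proof checks the hypotheses with $\beta=6$ and cites Theorem \ref{rate2}, leaving this point implicit.

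The missing idea is to separate the two roles of $\beta$ in the proof of Theorem \ref{rate2}:
\begin{itemize}
\item The growth exponent enters only the moment bounds, including those behind $\tau^K$ and (\ref{tauP2}).
\item The rate loss comes solely from $\tau^\varepsilon$. Its $\mathcal W_{\beta,\mathbb H}$ term is there to control the $\mathbb H$-moments of $\mu^N$ inside $\rho,\eta$, and its probability is then estimated through the interpolation (\ref{es1000}).
\end{itemize}

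In your $(\mathbf{A}''_5)$ bound the coefficients involve only $\mu(\|\cdot\|_{\mathbb V}^2)$, $\nu(\|\cdot\|_{\mathbb V}^2)$ and $\|v\|_{\mathbb V}^2$, and $\mathcal B$ is constant. So you can rerun that proof with $\tau^\varepsilon:=\inf\{t:\frac1N\sum_i\int_0^t\|X^{i,N}_s-X^i_s\|_{\mathbb V}^2ds\ge\varepsilon\}$.
\begin{itemize}
\item The stochastic Gronwall step still closes on $[0,\tilde\tau]$, because $\frac1N\sum_j\|X^{j,N}_s\|_{\mathbb V}^2\le\frac2N\sum_j(\|X^{j,N}_s-X^j_s\|_{\mathbb V}^2+\|X^j_s\|_{\mathbb V}^2)$.
\item Chebyshev with (\ref{es10}) and (\ref{tauP2}) then gives $\mathbb P(\tilde\tau<T)\lesssim N^{-1}$ with no interpolation.
\item Inserting this into (\ref{rate2all}) and (\ref{rate2all2}) yields exactly $N^{-\frac{p'-2}{p'}}$ for $2<p'<p/3$ and $N^{-\frac{p-2}{p}}$.
\end{itemize}
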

\begin{proof}
	We show that the operator
	\begin{equation*}
		\mathcal{A}(u, \mu) := \Delta u + u - u\int y^2 \mu(dy)
	\end{equation*}
	satisfies conditions $(\mathbf{A}_2^*)$, $(\mathbf{A}_3)$, $(\mathbf{A}_4)$, $(\mathbf{A}''_5)$ and $(\mathbf{A}_6)$.
	
	Note that
	\begin{eqnarray*} _{\mathbb{V}^*}\langle\mathcal{A}(u,\mu),u\rangle_{\mathbb{V}}
		=\!\!\!\!\!\!\!\!&&-\|u\|_1^2 + \|u\|_{L^2}^2 - \int_{\mathcal{O}} u^2 \Big( \int y^2 \mu(dy) \Big) dx\\
		\leq\!\!\!\!\!\!\!\!&&-\|u\|_1^2 + \|u\|_{L^2}^2,
	\end{eqnarray*}
	which leads to $(\mathbf{A}_3)$ is satisfied.
	By Sobolev embedding  inequality and Gagliardo-Nirenberg interpolation inequality (cf. \cite{T-interpolation}), we obtain
	\begin{eqnarray*}
		\!\!\!\!\!\!\!\!&&| _{\mathbb{V}^*}\langle \mathcal{A}(u,\mu), v \rangle_{\mathbb{V}} |\\
		\leq\!\!\!\!\!\!\!\!&& |\int_{\mathcal{O}} \Delta u\cdot v dx|
		+ |\int_{\mathcal{O}} uv dx|
		+ \int_{\mathcal{O}} |uv| \Big( \int y^2\mu(dy) \Big) dx \\
		\lesssim\!\!\!\!\!\!\!\!&& \|u\|_1 \|v\|_1 + \|u\|_{L^2} \|v\|_{L^2} + \|v\|_{L^\infty} \|u\|_{L^2} \int \|y\|_{L^4}^2 \mu(dy)  \\
		\lesssim\!\!\!\!\!\!\!\!&& \|v\|_1 \big( \|u\|_1 + \|u\|_{L^2} \int ( \|y\|_1 + \|y\|_{L^2}^3 ) \mu(dy) \big)\\
		\lesssim\!\!\!\!\!\!\!\!&&\|v\|_1 \big( \|u\|_1 + \|u\|_{L^2}(\mu(\|\cdot\|_1)+\mu(\|\cdot\|_{L^2}^3))\big).
	\end{eqnarray*}
	Then
	\begin{equation*}
		\|\mathcal{A}(u,\mu)\|_{\mathbb{V}^*}^2 \lesssim  \|u\|_1^2 + \|u\|_{L^2}^2 ( \mu(\| \cdot \|_1^2) + \mu(\|\cdot\|_{L^2}^6) )
	\end{equation*}
	implies that $(\mathbf{A}_4)$ holds with $\alpha=2$ and $\beta=6$.
	
	Now we verify the demicontinuity and local monotonicity of $\mathcal{A}$.
	Firstly,
	\begin{eqnarray*}
		\!\!\!\!\!\!\!\!&& _{\mathbb{V}^*} \langle \mathcal{A}(u, \mu) - \mathcal{A}(v, \nu), w \rangle_{\mathbb{V}} \\
		\leq\!\!\!\!\!\!\!\!&& \|u - v\|_1 \|w\|_1 + |\int_{\mathcal{O}} (u - v)\cdot w \Big( \int y_1^2 \mu(dy_1) \Big) dx| \\
		\!\!\!\!\!\!\!\!&&  + \int_{\mathcal{O}} |v\cdot w| \Big( \int |y_1 - y_2|(|y_1|+|y_2|) \pi(dy_1, dy_2) \Big) dx \\
		\lesssim\!\!\!\!\!\!\!\!&& \|u - v\|_1 \|w\|_1 + \|u - v\|_{L^{\infty}} \|w\|_{L^\infty} \mu(\|\cdot\|_{L^2}^2) \\
		\!\!\!\!\!\!\!\!&&  +  \|v\|_{L^{\infty}} \|w\|_{L^\infty}\Big((\mu(\|\cdot\|_{L^2}^2))^{\frac{1}{2}}+(\nu(\|\cdot\|_{L^2}^2))^{\frac{1}{2}} \Big)\Big(\int \|y_1-y_2\|_{L^2}^2  \pi(dy_1, dy_2) \Big)^{\frac{1}{2}} .
	\end{eqnarray*}
	Taking infimum for couplings $\pi \in \mathscr{C}(\mu, \nu)$, we obtain
	\begin{eqnarray*}
		\!\!\!\!\!\!\!\!&&_{\mathbb{V}^*} \langle \mathcal{A}(u, \mu) - \mathcal{A}(v, \nu), w \rangle_{\mathbb{V}} \\
		\lesssim\!\!\!\!\!\!\!\!&& \|u - v\|_1 \|w\|_1 (1+\mu(\|\cdot\|_{L^2}^2))
		+\|v\|_{1} \|w\|_{1}\Big((\mu(\|\cdot\|_{L^2}^2))^{\frac{1}{2}}+(\nu(\|\cdot\|_{L^2}^2))^{\frac{1}{2}} \Big)\mathcal{W}_{2, L^2}(\mu, \nu),
	\end{eqnarray*}
	which implies  $(\mathbf{A}_2^*)$ holds.
	Next, we have
	\begin{eqnarray*}
		\!\!\!\!\!\!\!\!&&_{\mathbb{V}^*} \langle \mathcal{A}(u, \mu) - \mathcal{A}(v, \nu), u - v \rangle_{\mathbb{V}} \\
		=\!\!\!\!\!\!\!\!&& -\|u - v\|_1^2 + \|u - v\|_{L^2}^2 - \int_{\mathcal{O}} (u - v)^2 \Big( \int y_1^2 \mu(dy_1) \Big) dx \\
		\!\!\!\!\!\!\!\!&&  - \int_{\mathcal{O}} v\cdot(u - v) \Big( \int (y_1^2 - y_2^2) \pi(dy_1, dy_2) \Big) dx\\
		\leq\!\!\!\!\!\!\!\!&& -\|u - v\|_1^2+\|u - v\|_{L^2}^2
		+ C\int_{\mathcal{O}} \Big( |v|\cdot|(u - v)| \big( \int (y_1 - y_2)^2 \pi(dy_1, dy_2) \big)^{1/2} \\
		\!\!\!\!\!\!\!\!&& \cdot \big( \int y_1^2 \mu(dy_1) + \int y_2^2 \nu(dy_2) \big)^{1/2} \Big) dx \\
		\leq\!\!\!\!\!\!\!\!&& -\|u - v\|_1^2+\|u - v\|_{L^2}^2
		+C \int_{\mathcal{O}} (u - v)^2 \Big( \int y_1^2 \mu(dy_1) + \int y_2^2 \nu(dy_2) \Big) dx \\
		\!\!\!\!\!\!\!\!&& +C \int_{\mathcal{O}} v^2 \Big( \int (y_1 - y_2)^2 \pi(dy_1, dy_2) \Big) dx \\
		\leq\!\!\!\!\!\!\!\!&& -\|u - v\|_1^2+C\|u - v\|_{L^2}^2 \Big( 1 + \int \|y_1\|_{L^\infty}^2 \mu(dy_1) + \int \|y_2\|_{L^\infty}^2 \nu(dy_2) \Big) \\
		\!\!\!\!\!\!\!\!&& +C\|v\|_{L^\infty}^2 \int \|y_1 - y_2\|_{L^2}^2 \pi(dy_1, dy_2) \\
		\leq\!\!\!\!\!\!\!\!&& -\|u - v\|_1^2 +C( 1 + \mu(\|\cdot\|_1^2) + \nu(\|\cdot\|_1^2) )\|u - v\|_{L^2}^2
		\\
		\!\!\!\!\!\!\!\!&&
		+ C\|v\|_1^2 \int \|y_1 - y_2\|_{L^2}^2 \pi(dy_1, dy_2).
	\end{eqnarray*}
	Taking infimum for couplings $\pi \in \mathscr{C}(\mu, \nu)$, we obtain
	\begin{eqnarray*}
		\!\!\!\!\!\!\!\!&& _{\mathbb{V}^*} \langle \mathcal{A}(u, \mu) - \mathcal{A}(v, \nu), u - v \rangle_{\mathbb{V}}+\|u - v\|_1^2 \\
		\!\!\!\!\!\!\!\!&& \leq C( 1 + \mu(\|\cdot\|_1^2) + \nu(\|\cdot\|_1^2) )\|u - v\|_{L^2}^2 + C\|v\|_1^2 \mathcal{W}_{2, L^2}(\mu, \nu)^2,
	\end{eqnarray*}
	which leads to $(\mathbf{A}''_5)$.
	
	Finally, we define the operator
	$$
	\tilde{\mathcal{A}}(u,y):=\Delta u + u - u y^2.
	$$
	It is easy to get that
	\begin{eqnarray*}
		\|\tilde{\mathcal{A}}(t,u,y)\|_{{\mathbb{V}}^*}^2
		\lesssim\!\!\!\!\!\!\!\!&&  \|u\|_1^2 + \|u\|_{L^2}^2 ( \|y\|_1^2 + \|y\|_{L^2}^6),
	\end{eqnarray*}
	and according to Remark \ref{remark1}, $(\mathbf{A}_6)$ is satisfied.	
	
	In summary, due to the $2$-uniform smoothness of space $W_0^{1,2}(\mathcal{O}; \mathbb{R})$ and $\alpha=2$, we complete the proof.
\end{proof}

\subsection{Lagrangian averaged Burgers equations}
Within the Lagrangian averaged  stochastic  advection by Lie transport (LA-SALT) framework proposed by Drivas, Holm, and Leahy \cite{Drivas2020}, the Lagrangian averaged Burgers equation has been derived as a foundational one-dimensional model. The key  feature compared to the classical Burgers equations is a modification of the nonlinear advection term, wherein the stochastic velocity field $u$ is transported not by itself, but by its own expectation $\mathbb{E}u$. This introduces a non-locality in probability space, analogous to mean-field dynamics.

Motivated by the work \cite{Drivas2020}, we consider the following weakly interacting particle systems defined on domain $\mathcal{O}: = [0, 1]$,
\begin{eqnarray}\label{Burgers-IPS}
	\left\{ \begin{aligned}
		&dX_t^{i,N}= \Big[\Delta X_t^{i,N}-\frac{1}{N} \sum_{j=1}^N \psi (X_t^{j,N})\cdot \partial_x X_t^{i,N} \Big] dt + dW_t^i,\\
		&X_{t}^{N, i}(0)=X_{t}^{N, i}(1)=0,\\
		&X_0^{i,N}=\xi^i,
	\end{aligned} \right.
\end{eqnarray}
where $\psi\in C_{b}^1(\mathbb{R}; \mathbb{R})$, $\{\xi^i\}_{i\in\{1,\dots,N\}}$ and  $\{W^i\}_{i\in\{1,\dots,N\}}$ are i.i.d.-initial values and $Q$-Wiener processes, respectively.

Regarding the large $N$ limit, the microscopic dynamic is characterized by the variant of Lagrangian averaged Burgers equations
\begin{eqnarray}\label{Burgers-nIPS}
	\left\{ \begin{aligned}
		&dX_t^{i} = \big[\Delta X_t^{i}-(\mathbb{E}\psi(X_t^{i}))\cdot \partial_x X_t^{i} \big] dt + dW_t^i,
\\
		&X_{t}^{i}(0)=X_{t}^{i}(1)=0,\\
		&X_0^{i}=\xi^i.
	\end{aligned} \right.
\end{eqnarray}

We formulate this problem also within the Gelfand triple (\ref{gel}).
We now establish the well-posedness of  Lagrangian averaged Burgers equations (\ref{Burgers-nIPS}) and the quantitative PoC.
\begin{theorem}\label{Burgersrate}
Suppose that the initial data $\xi^i\in L^p(\Omega,\mathscr{F}_0,\mathbb{P};\mathbb{H})$ with $p>8$. Then Eq.~$(\ref{Burgers-nIPS})$ has a
	unique strong and weak solution.
	Moreover, the following estimates hold
	\begin{equation*}
		\sup_{i \in \{1,\dots,N\}}\bigg\{\mathbb{E}\Big[\sup_{t \in [0,T]}\|X_t^{i,N} - X_t^i\|_{\mathbb{H}}^2\Big]+ \mathbb{E}\int_0^{T}\|X_s^i - X_s^{i,N}\|_{\mathbb{V}}^{2}ds\bigg\} \lesssim N^{-{\frac{p'-2}{p'}}},
	\end{equation*}
	for any $2<p'<p$, and
	\begin{equation*}
		\sup_{i \in \{1,\dots,N\}}  \sup_{t \in [0,T]}\mathbb{E}\|X_t^{i,N} - X_t^i\|_{\mathbb{H}}^2 \lesssim N^{-{\frac{p-2}{p}}}.
	\end{equation*}
\end{theorem}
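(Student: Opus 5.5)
Following the pattern of Theorems \ref{SVGDrate} and \ref{ACrate}, I will verify that
\begin{equation*}
\mathcal{A}(u,\mu):=\Delta u-\Big(\int\psi(y)\,\mu(dy)\Big)\partial_x u,\qquad \mathcal{B}\equiv Q^{1/2},
\end{equation*}
on the Gelfand triple (\ref{gel}) satisfies $(\mathbf{A}_1)$, $(\mathbf{A}_2^*)$, $(\mathbf{A}_3)$, $(\mathbf{A}_4)$, $(\mathbf{A}''_5)$ (which implies $(\mathbf{A}^*_5)$) and $(\mathbf{A}_6)$. Since $\psi$ acts pointwise on $y\in L^2(\mathcal{O})$, $\int\psi(y)\mu(dy)$ is an $L^\infty$-function of $x$ bounded by $\|\psi\|_\infty$. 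Well-posedness then follows from Theorem \ref{th3}, and the rates follow from Theorem \ref{rate2} with $\alpha=2$ (Hilbert spaces are $2$-uniformly convex). The rates have the stated form, $N^{-(p'-2)/p'}$ and $N^{-(p-2)/p}$, once we can take $\beta=2$ and let $q\uparrow p$ in (\ref{pocth3}), which removes the factor $\frac{q-\beta}{q-2}$.

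\textbf{Routine conditions.} $(\mathbf{A}_1)$ is trivial because the noise is additive. For coercivity, the Dirichlet boundary conditions give
\begin{equation*}
\langle \mathcal{A}(u,\mu),u\rangle=-\|u\|_1^2-\int \bar\psi\, u\,\partial_x u\,dx\le -\|u\|_1^2+\|\psi\|_\infty\|u\|_{L^2}\|u\|_1,
\end{equation*}
where $\bar\psi:=\int\psi(y)\mu(dy)$. Young's inequality then yields $(\mathbf{A}_3)$ with $\alpha=2$. For growth, $\|\mathcal{A}(u,\mu)\|_{\mathbb{V}^*}\lesssim(1+\|\psi\|_\infty)\|u\|_1$, so $(\mathbf{A}_4)$ holds with any $\beta$, in particular $\beta=2$. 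Demicontinuity $(\mathbf{A}_2^*)$ comes from writing
\begin{equation*}
\langle \bar\psi_\mu\partial_x u-\bar\psi_\nu\partial_x v,\phi\rangle
\end{equation*}
and using $\|\bar\psi_\mu-\bar\psi_\nu\|_{L^2}\le\|\psi'\|_\infty\mathcal{W}_{1,L^2}(\mu,\nu)\le\|\psi'\|_\infty\mathcal{W}_{2,L^2}(\mu,\nu)$ together with $\phi\in W^{1,2}_0\subset L^\infty$. Condition $(\mathbf{A}_6)$ holds via Remark \ref{remark1} with $\tilde{\mathcal{A}}(u,y)=\Delta u-\psi(y)\partial_x u$.

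\textbf{Main step: verifying $(\mathbf{A}''_5)$ with $\beta=2$.} Set $w=u-v$ and split
\begin{equation*}
\langle\mathcal{A}(u,\mu)-\mathcal{A}(v,\nu),w\rangle=-\|w\|_1^2-\int\bar\psi_\mu\,w\,\partial_x w\,dx-\int(\bar\psi_\mu-\bar\psi_\nu)\,\partial_x v\,w\,dx.
\end{equation*}
\begin{itemize}
\item The second term is at most $\tfrac14\|w\|_1^2+C\|w\|_{L^2}^2$.
\item For the third term, use $\|w\|_{L^\infty}\lesssim\|w\|_1$, the $L^2$ Lipschitz bound on $\bar\psi$, and Young's inequality. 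This bounds it by $\tfrac14\|w\|_1^2+C\|v\|_1^2\mathcal{W}_{2,L^2}(\mu,\nu)^2$.
\end{itemize}
Altogether,
\begin{equation*}
\langle\mathcal{A}(u,\mu)-\mathcal{A}(v,\nu),w\rangle+\tfrac12\|w\|_1^2\le C\|w\|_{L^2}^2+C\|v\|_1^2\,\mathcal{W}_{2,L^2}(\mu,\nu)^2.
\end{equation*}
This fits $(\mathbf{A}''_5)$ with constant $\rho(0,\cdot)$, $\eta(0,\cdot)$ and $\eta(v,\nu)\gtrsim\|v\|_1^2$, and $\eta(v,\nu)\lesssim 1+\|v\|_1^2$ is within the allowed growth with $\beta=2$. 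The diffusion part of $(\mathbf{A}''_5)$ is trivial.

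\textbf{Main obstacle: the moment thresholds.} The remaining work is to check that the moment hypotheses of Theorems \ref{th3} and \ref{rate2} with $\beta=2$, $\alpha=2$ reduce to $p>8$. Here $\vartheta_1=4$, $\vartheta_2=4$, $4\beta=8$ and $\lambda=\max\{4,8\}=8$, so the requirement is $p\in[8,\infty)\cap(8,\infty)$, i.e.\ $p>8$. The condition $2<p'<2p/\beta=p$ matches the statement. Taking $q$ close to $p$ makes $\frac{q-2}{q-2}=1$, so the pathwise exponent is exactly $\frac{p'-2}{p'}$, and the pointwise exponent is $\frac{p-2}{p}$. I expect this bookkeeping to be consistent, and the only delicate analytic point is the $L^\infty$ embedding used in the third term.
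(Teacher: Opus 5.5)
Your proposal is correct and follows the paper's proof essentially step for step. You use the same operator. You split $\langle \mathcal{A}(u,\mu)-\mathcal{A}(v,\nu),u-v\rangle$ the same way and bound it with the Lipschitz property of $\psi$, the embedding $W^{1,2}_0\subset L^\infty$ and Young's inequality, giving $(\mathbf{A}''_5)$ with $\alpha=\beta=2$. You then apply Theorems \ref{th3} and \ref{rate2} under the resulting threshold $p>8$.

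One small remark: with $\beta=2$, the factor $\frac{q-\beta}{q-2}$ equals $1$ for every $q\in(2,p)$, so there is no need to let $q\uparrow p$.
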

\begin{proof}
	Set
	\begin{equation*}
		\mathcal{A}(u,\mu):=\Delta u-\int \psi(y) \mu(dy)\cdot \partial_x u.
	\end{equation*}
	We aim to prove that $\mathcal{A}$ satisfies conditions $(\mathbf{A}_2^*)$, $(\mathbf{A}_3)$, $(\mathbf{A}_4)$, $(\mathbf{A}''_5)$, and $(\mathbf{A}_6)$.
	
	It is easy to get that
	\begin{eqnarray*}
		_{\mathbb{V}^*}\langle\mathcal{A}(u,\mu),u\rangle_{\mathbb{V}}
		\leq\!\!\!\!\!\!\!\!&&-\|u\|_1^2+\|\psi\|_{\infty}\int_{\mathcal{O}} |u\cdot\partial_x u|  dx\\
		\leq\!\!\!\!\!\!\!\!&&-\frac{1}{2}\|u\|_1^2 + C\|u\|_{L^2}^2,
	\end{eqnarray*}
	and
	\begin{eqnarray*}
		|_{\mathbb{V}^*}\langle \mathcal{A}(u,\mu), v \rangle_{\mathbb{V}} |
		\leq\!\!\!\!\!\!\!\!&& |\int_{\mathcal{O}} \Delta u\cdot v dx|
		+ \int_{\mathcal{O}} \partial_xu\cdot v\Big( \int \psi(y)\mu(dy) \Big) dx |
		\\
		\lesssim\!\!\!\!\!\!\!\!&& \|u\|_1 \|v\|_1 + \int_{\mathcal{O}} |\partial_xv\cdot u|dx \\
		\lesssim\!\!\!\!\!\!\!\!&& \|v\|_1(\|u\|_1+\|u\|_{L^2}),
	\end{eqnarray*}
	then
	\begin{equation*}
		\|\mathcal{A}(u,\mu)\|_{\mathbb{V}^*}^2 \lesssim  \|u\|_1^2+\|u\|^2_{L^2},
	\end{equation*}
	which imply $(\mathbf{A}_3)$ and $(\mathbf{A}_4)$ are satisfied with $\alpha=2$ and $\beta=2$.
	
	Now we show the demicontinuity and local monotonicity of $\mathcal{A}$.
	Firstly, by Sobolev embedding inequality we have
	\begin{eqnarray*}
		\!\!\!\!\!\!\!\!&& _{\mathbb{V}^*} \langle \mathcal{A}(u, \mu) - \mathcal{A}(v, \nu), w \rangle_{\mathbb{V}} \\
		\leq\!\!\!\!\!\!\!\!&& \|u - v\|_1 \|w\|_1 + |\int_{\mathcal{O}} \partial_x(u - v)\cdot w \Big( \int \psi(y_1) \mu(dy_1) \Big) dx| \\
		\!\!\!\!\!\!\!\!&&  + \int_{\mathcal{O}} |\partial_xv\cdot w| \Big( \int |\psi(y_1) - \psi(y_2)| \pi(dy_1, dy_2) \Big) dx \\
		\lesssim\!\!\!\!\!\!\!\!&& \|u - v\|_1 \|w\|_1 + \|u - v\|_{1} \|w\|_{L^2} \\
		\!\!\!\!\!\!\!\!&&  +  \|v\|_{1} \|w\|_{L^\infty}\Big(\int \|y_1-y_2\|_{L^2}^2 \pi(dy_1, dy_2) \Big)^{\frac{1}{2}} \\
		\lesssim\!\!\!\!\!\!\!\!&& \|u - v\|_1 \|w\|_1 +\|v\|_{1} \|w\|_{1}\Big(\int \|y_1-y_2\|_{L^2}^2 \pi(dy_1, dy_2) \Big)^{\frac{1}{2}}.
	\end{eqnarray*}
	Taking infimum for couplings $\pi \in \mathscr{C}(\mu, \nu)$, we obtain
	\begin{eqnarray*}
		\!\!\!\!\!\!\!\!&&_{\mathbb{V}^*} \langle \mathcal{A}(u, \mu) - \mathcal{A}(v, \nu), w \rangle_{\mathbb{V}} \\
		\lesssim\!\!\!\!\!\!\!\!&& \|u - v\|_1 \|w\|_1+ \|v\|_1 \|w\|_1\mathcal{W}_{2, L^2}(\mu, \nu).
	\end{eqnarray*}
	Thus, $(\mathbf{A}_2^*)$ holds.
Similarly,
	\begin{eqnarray*}
		\!\!\!\!\!\!\!\!&&_{\mathbb{V}^*} \langle \mathcal{A}(u, \mu) - \mathcal{A}(v, \nu), u - v \rangle_{\mathbb{V}} \\
		=\!\!\!\!\!\!\!\!&& -\|u - v\|_1^2+_{\mathbb{V}^*}\langle \big( \int \psi(y_{1})\mu(dy_{1})- \int \psi(y_{2})\nu(dy_{2}) \big)\partial_{x} v , u - v \rangle_{\mathbb{V}} \\
		\!\!\!\!\!\!\!\!&&+ _{\mathbb{V}^*}\langle \int \psi(y_{1})\mu(dy_{1})\partial_{x}(u - v),u - v\rangle_{\mathbb{V}}\\
		\leq \!\!\!\!\!\!\!\!&&-\|u - v\|_1^2+\|u - v\|_{L^\infty}\int_{\mathcal{O}} \int |y_{1} - y_{2}|\cdot|\partial_{x} v|\pi(dy_{1}, dy_{2}) dx\\
		\!\!\!\!\!\!\!\!&&+C \|u - v\|_{1}\|u - v\|_{L^{2}}\\
		\leq \!\!\!\!\!\!\!\!&&-\frac{1}{2}\|u - v\|_1^2+C\|v\|_{1}^{2}\int \|y_{1} - y_{2}\|_{L^{2}}^{2}\pi(dy_{1}, dy_{2})+C\|u - v\|_{L^{2}}^{2} .
	\end{eqnarray*}
	Taking infimum for couplings $\pi \in \mathscr{C}(\mu, \nu)$, we obtain
	\begin{eqnarray*}
		\!\!\!\!\!\!\!\!&& _{\mathbb{V}^*} \langle \mathcal{A}(u, \mu) - \mathcal{A}(v, \nu), u - v \rangle_{\mathbb{V}}+\|u - v\|_1^2 \\
		\leq\!\!\!\!\!\!\!\!&& -\frac{1}{2}\|u - v\|_1^2+C\|u - v\|_{L^2}^2 + C\|v\|_1^2 \mathcal{W}_{2, L^2}(\mu, \nu)^2,
	\end{eqnarray*}
	which implies $(\mathbf{A}''_5)$ holds.
	
	Finally, we show that the condition $(\mathbf{A}_6)$ is satisfied.
	We define the operator
	$$
	\tilde{\mathcal{A}}(u,y):=\Delta u - \psi(y) \cdot \partial_x u.
	$$
	Note that
	\begin{eqnarray*}
		|_{\mathbb{V}^*}\langle \tilde{\mathcal{A}}(u,y), v \rangle_{\mathbb{V}} |
		\leq |\int_{\mathcal{O}} \Delta u\cdot v dx|
		+ |\int_{\mathcal{O}} \partial_xu\cdot v\psi(y) dx |
		\lesssim \|v\|_1\|u\|_1(1+\|y\|_{1}),
	\end{eqnarray*}
	then
	\begin{equation*}
		\|\tilde{\mathcal{A}}(u,y)\|_{\mathbb{V}^*}^2 \lesssim  \|u\|_1^2,
	\end{equation*}
and according to Remark \ref{remark1}, $(\mathbf{A}_6)$ is satisfied.	
\end{proof}

\section{Proof of well-posedness}\label{well-posed}
In Subsection \ref{sec3.1}, we present an equivalent definition of measure-dependent pseudo monotonicity in the sense of Definition \ref{deps}. In Subsection \ref{finite_dim}, we establish a general result for the existence of weak solutions to mean-field SDEs by introducing a measure cut-off technique.   In Subsection \ref{sec4.3}, we establish some uniform energy
bounds and study the tightness of approximating sequences.  In Subsections \ref{sec3.4} and \ref{sec4.5}, we aim to prove the main theorems.

\subsection{An equivalent characterization of pseudo monotonicity}\label{sec3.1}

We present an equivalent lemma about the pseudo monotonicity in the sense of Definition \ref{deps}. Throughout the work, we use the notation ``$\rightharpoonup$'' to denote the weak convergence in Banach spaces.
\begin{lemma}\label{pseudoequivalent}
	For some  constants $\alpha>1$ and $\beta> 0$,  the operator $\mathcal{A}:\mathbb{V}\times\mathfrak{M}\to\mathbb{V}^*$ is  pseudo-monotone (in the sense of Definition \ref{deps}) iff for any sequences $\{u_n\}_{n=1}^{\infty},u$ in $\mathbb{V}$ and  $\{\mu_n\}_{n=1}^{\infty},\mu$ in $\mathfrak{M}_b$ such that   $u_n\to u$ weakly in $\mathbb{V}$ and  $\mu_n\to\mu$ in $\mathscr{P}_{\beta}(\mathbb{H})\cap\mathscr{P}_{2}(\mathbb{H})$,  and
	$$\liminf _{n \rightarrow \infty}\,_{\mathbb{V}^*}\langle \mathcal{A}(u_{n},\mu_n), u_{n}-u\rangle_{\mathbb{V}} \geq 0,$$
	then $\mathcal{A}(u_n,\mu_n)\to \mathcal{A}(u,\mu)$ weakly in $\mathbb{V}^*$ and
	$$\lim _{n \rightarrow \infty}\,_{\mathbb{V}^*}\langle \mathcal{A}(u_{n},\mu_n), u_{n}\rangle_{\mathbb{V}}=\,_{\mathbb{V}^*}\langle \mathcal{A}(u,\mu), u\rangle_{\mathbb{V}}.$$
\end{lemma}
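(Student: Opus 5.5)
The plan is to follow Brézis' classical argument for pseudo-monotone operators. The measure variable $\mu_n$ is treated as a parameter that converges in $\mathscr{P}_{\beta}(\mathbb{H})\cap\mathscr{P}_{2}(\mathbb{H})$ and stays in $\mathfrak{M}_b$. Throughout, $A_n:=\mathcal{A}(u_n,\mu_n)$.

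\emph{Sufficiency} is the easy direction. Assume $A_n\rightharpoonup \mathcal{A}(u,\mu)$ weakly in $\mathbb{V}^*$ and ${}_{\mathbb{V}^*}\langle A_n,u_n\rangle_{\mathbb{V}}\to{}_{\mathbb{V}^*}\langle \mathcal{A}(u,\mu),u\rangle_{\mathbb{V}}$. Then for every $v\in\mathbb{V}$,
$$
{}_{\mathbb{V}^*}\langle A_n,u_n-v\rangle_{\mathbb{V}}={}_{\mathbb{V}^*}\langle A_n,u_n\rangle_{\mathbb{V}}-{}_{\mathbb{V}^*}\langle A_n,v\rangle_{\mathbb{V}}\to{}_{\mathbb{V}^*}\langle \mathcal{A}(u,\mu),u-v\rangle_{\mathbb{V}} .
$$
So the $\limsup$ in Definition \ref{deps} is in fact a limit equal to the right-hand side.

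\emph{Necessity.} Assume $\mathcal{A}$ is pseudo-monotone and $\liminf_n {}_{\mathbb{V}^*}\langle A_n,u_n-u\rangle_{\mathbb{V}}\ge 0$.
\begin{enumerate}
\item Choose $v=u$ in Definition \ref{deps}. This gives $\limsup_n{}_{\mathbb{V}^*}\langle A_n,u_n-u\rangle_{\mathbb{V}}\le 0$, and hence $\lim_n{}_{\mathbb{V}^*}\langle A_n,u_n-u\rangle_{\mathbb{V}}=0$.
\item Show that $\{A_n\}$ is bounded in $\mathbb{V}^*$. Weak convergence gives $\sup_n\|u_n\|_{\mathbb{V}}<\infty$. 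Membership in $\mathfrak{M}_b$ gives $\mu_n(\|\cdot\|_{\mathbb{V}}^\alpha)\le K$. Convergence in $\mathscr{P}_\beta(\mathbb{H})$ gives $\sup_n\mu_n(\|\cdot\|_{\mathbb{H}}^\beta)<\infty$. Combined with the growth bound (\ref{conA3}) in $(\mathbf{A}_4)$, this yields $\sup_n\|A_n\|_{\mathbb{V}^*}<\infty$.
\item Fix an arbitrary subsequence. By reflexivity of $\mathbb{V}^*$, pass to a further subsequence with $A_{n_k}\rightharpoonup \xi$ in $\mathbb{V}^*$.
\item Identify $\xi$. For any $v\in\mathbb{V}$,
$$
{}_{\mathbb{V}^*}\langle A_{n_k},u_{n_k}-v\rangle_{\mathbb{V}}={}_{\mathbb{V}^*}\langle A_{n_k},u_{n_k}-u\rangle_{\mathbb{V}}+{}_{\mathbb{V}^*}\langle A_{n_k},u-v\rangle_{\mathbb{V}}\to {}_{\mathbb{V}^*}\langle \xi,u-v\rangle_{\mathbb{V}} .
$$
The subsequence still satisfies the hypotheses of Definition \ref{deps}, since its $\liminf$ is $0$. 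Pseudo-monotonicity therefore gives ${}_{\mathbb{V}^*}\langle \xi,u-v\rangle_{\mathbb{V}}\le{}_{\mathbb{V}^*}\langle \mathcal{A}(u,\mu),u-v\rangle_{\mathbb{V}}$ for all $v$. Taking $v=u\mp w$ with $w\in\mathbb{V}$ arbitrary yields $\xi=\mathcal{A}(u,\mu)$.
\item Since every subsequence has a further subsequence with the same weak limit, the whole sequence satisfies $A_n\rightharpoonup\mathcal{A}(u,\mu)$.
\item Conclude with ${}_{\mathbb{V}^*}\langle A_n,u_n\rangle_{\mathbb{V}}={}_{\mathbb{V}^*}\langle A_n,u_n-u\rangle_{\mathbb{V}}+{}_{\mathbb{V}^*}\langle A_n,u\rangle_{\mathbb{V}}\to 0+{}_{\mathbb{V}^*}\langle \mathcal{A}(u,\mu),u\rangle_{\mathbb{V}}$.
\end{enumerate}

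The main obstacle is step 2, the boundedness of $\{A_n\}$ in $\mathbb{V}^*$. Definition \ref{deps} does not contain a boundedness assumption, and the classical theory usually includes one. I would first invoke the polynomial growth bound (\ref{conA3}) together with the uniform moment bounds coming from $\mathfrak{M}_b$ and from convergence in $\mathscr{P}_\beta(\mathbb{H})$; this is exactly why the measures are restricted to $\mathfrak{M}_b$. If one wants the lemma for operators without a growth condition, the fallback is a Brézis-type argument: pseudo-monotonicity together with a uniform lower bound on ${}_{\mathbb{V}^*}\langle A_n,u_n-w\rangle_{\mathbb{V}}$ over $w$ in a ball, combined with the uniform boundedness principle, should force local boundedness. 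I would only pursue this if the growth route is judged insufficient.
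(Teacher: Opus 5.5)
Your sufficiency direction and steps 1 and 3--6 are fine. Step 2, however, is not justified by the hypotheses of the lemma. The statement concerns an arbitrary operator $\mathcal{A}:\mathbb{V}\times\mathfrak{M}\to\mathbb{V}^*$ that is pseudo-monotone in the sense of Definition \ref{deps}, and the growth bound (\ref{conA3}) from $(\mathbf{A}_4)$ is not among its assumptions. As written, you therefore prove the necessity direction only for operators that additionally satisfy $(\mathbf{A}_4)$. That hypothesis is available where the paper applies the lemma (Lemma \ref{A}), but the lemma itself is stated without it. The fallback you sketch (a uniform lower bound over a ball plus the uniform boundedness principle) is not carried out, and it is not needed.

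The missing observation is that neither an a priori bound nor subsequence extraction is required. The decomposition you use in step 4 is valid along the whole sequence. Once step 1 gives $\lim_n {}_{\mathbb{V}^*}\langle A_n,u_n-u\rangle_{\mathbb{V}}=0$, pseudo-monotonicity yields, for every $v\in\mathbb{V}$,
\[
\limsup_{n\to\infty}{}_{\mathbb{V}^*}\langle A_n,u-v\rangle_{\mathbb{V}}=\limsup_{n\to\infty}{}_{\mathbb{V}^*}\langle A_n,u_n-v\rangle_{\mathbb{V}}\le{}_{\mathbb{V}^*}\langle \mathcal{A}(u,\mu),u-v\rangle_{\mathbb{V}}.
\]
Since $w=u-v$ ranges over all of $\mathbb{V}$, replacing $w$ by $-w$ gives ${}_{\mathbb{V}^*}\langle A_n,w\rangle_{\mathbb{V}}\to{}_{\mathbb{V}^*}\langle \mathcal{A}(u,\mu),w\rangle_{\mathbb{V}}$ for every $w\in\mathbb{V}$. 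This is weak-$*$ convergence, which coincides with weak convergence in $\mathbb{V}^*$ because $\mathbb{V}$ is reflexive. Boundedness of $\{A_n\}$ then follows a posteriori from Banach--Steinhaus.

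This is exactly the paper's argument, and your step 6 finishes it. With this replacing your steps 2--5, the proof is complete with no growth assumption. Your direct choice $v=u$ in step 1 is also a cleaner route to (\ref{eq1}) than the paper's contradiction argument.
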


\begin{proof}
	Clearly, the above definition directly implies the pseudo monotonicity of the operator $\mathcal{A}$. Conversely, if $\mathcal{A}$ is  pseudo-monotone and $u_n\rightharpoonup u$ in $\mathbb{V}$, $\mu_n\to\mu$ in $\mathscr{P}_{\beta}(\mathbb{H})\cap\mathscr{P}_{2}(\mathbb{H})$ such that
	$$\liminf _{n \rightarrow \infty}\,_{\mathbb{V}^*}\langle \mathcal{A}(u_{n},\mu_n), u_{n}-u\rangle_{\mathbb{V}} \geq 0,$$
	then we claim that
	\begin{equation}\label{eq1}
		\limsup_{n \rightarrow \infty}\,_{\mathbb{V}^*}\langle \mathcal{A}(u_{n},\mu_n), u_{n}-u\rangle_{\mathbb{V}} \leq 0,
	\end{equation}
	whose proof will be given later. Therefore, it follows that
	\begin{equation}\label{eq3}
		\lim_{n \rightarrow \infty}\,_{\mathbb{V}^*}\langle \mathcal{A}(u_{n},\mu_n), u_{n}-u\rangle_{\mathbb{V}}= 0.
	\end{equation}
	For each $v\in\mathbb{V}$,
	\begin{eqnarray*}
		\!\!\!\!\!\!\!\!&&\limsup_{n \rightarrow \infty}\,_{\mathbb{V}^*}\langle \mathcal{A}(u_{n},\mu_n), u-v\rangle_{\mathbb{V}}
		\nonumber \\
		=\!\!\!\!\!\!\!\!&&\limsup_{n \rightarrow \infty}\,_{\mathbb{V}^*}\langle \mathcal{A}(u_{n},\mu_n), u-v\rangle_{\mathbb{V}}+
		\lim_{n \rightarrow \infty}\,_{\mathbb{V}^*}\langle \mathcal{A}(u_{n},\mu_n), u_{n}-u\rangle_{\mathbb{V}}
		\nonumber \\
		=\!\!\!\!\!\!\!\!&&\limsup_{n \rightarrow \infty}\,_{\mathbb{V}^*}\langle \mathcal{A}(u_{n},\mu_n), u_n-v\rangle_{\mathbb{V}}
		\nonumber \\
		\leq\!\!\!\!\!\!\!\!&&_{\mathbb{V}^*}\langle \mathcal{A}(u,\mu), u-v\rangle_{\mathbb{V}}.
	\end{eqnarray*}
	Thus for each $w\in\mathbb{V}$,
	\begin{equation}\label{eq2}
		\limsup_{n \rightarrow \infty}\,_{\mathbb{V}^*}\langle \mathcal{A}(u_{n},\mu_n), w\rangle_{\mathbb{V}}\leq{}
		_{\mathbb{V}^*}\langle \mathcal{A}(u,\mu), w\rangle_{\mathbb{V}}.
	\end{equation}
	Replacing $w$ by $-w$ in (\ref{eq2}), it follows that $\mathcal{A}(u_n,\mu_n)\rightharpoonup\mathcal{A}(u,\mu)$ in $\mathbb{V}^*$.
	Furthermore, by (\ref{eq3}) we can obtain that
	\begin{eqnarray*}
		\!\!\!\!\!\!\!\!&&\lim _{n \rightarrow \infty}\,_{\mathbb{V}^*}\langle \mathcal{A}(u_{n},\mu_n), u_{n}\rangle_{\mathbb{V}}
		\nonumber \\
		=\!\!\!\!\!\!\!\!&&\lim _{n \rightarrow \infty}\,_{\mathbb{V}^*}\langle \mathcal{A}(u_{n},\mu_n), u_{n}-u\rangle_{\mathbb{V}}
		+\lim _{n \rightarrow \infty}\,_{\mathbb{V}^*}\langle \mathcal{A}(u_{n},\mu_n), u\rangle_{\mathbb{V}}
		\nonumber \\
		=\!\!\!\!\!\!\!\!&&\,_{\mathbb{V}^*}\langle \mathcal{A}(u,\mu), u\rangle_{\mathbb{V}}.
	\end{eqnarray*}
	Thus the assertion follows.
	
	\vspace{1mm}
	Now we turn to prove (\ref{eq1}). Assume that (\ref{eq1}) is not true, then we can choose a subsequence $\{n_k\}$ such that $u_{n_k}\rightharpoonup u$ in $\mathbb{V}$, $\mu_{n_k}\to\mu$ in $\mathscr{P}_{\beta}(\mathbb{H})\cap\mathscr{P}_{2}(\mathbb{H})$ and
	\begin{equation}\label{eq4}
		\liminf_{n \rightarrow \infty}\,_{\mathbb{V}^*}\langle \mathcal{A}(u_{n_k},\mu_{n_k}), u_{n_k}-u\rangle_{\mathbb{V}}>0.
	\end{equation}
	Since $\mathcal{A}$ is  pseudo-monotone, it follows that
	\begin{eqnarray*}
		0=\!\!\!\!\!\!\!\!&& \,_{\mathbb{V}^*}\langle \mathcal{A}(u,\mu), u-u\rangle_{\mathbb{V}}
		\nonumber \\
		\geq\!\!\!\!\!\!\!\!&&\limsup_{k \rightarrow \infty}\,_{\mathbb{V}^*}\langle \mathcal{A}(u_{n_k},\mu_{n_k}), u_{n_k}-u\rangle_{\mathbb{V}}
		\nonumber \\
		\geq\!\!\!\!\!\!\!\!&&\liminf_{k \rightarrow \infty}\,_{\mathbb{V}^*}\langle \mathcal{A}(u_{n_k},\mu_{n_k}), u_{n_k}-u\rangle_{\mathbb{V}},
	\end{eqnarray*}
	which contradicts with (\ref{eq4}). We complete the proof.
\end{proof}

\begin{remark}\label{re2}
	We mention that in finite-dimensional case (i.e.,~$\mathbb{V}=\mathbb{H}=\mathbb{R}^d$), if the map
	$$\mathcal{A}:\mathbb{R}^d\times \mathscr{P}_{p}(\mathbb{R}^d)\to\mathbb{R}^d$$ is locally bounded for some $p>0$,  then the pseudo monotonicity is equivalent to the continuity.
	
\end{remark}

\subsection{Existence of weak solutions in finite dimensions}
\label{finite_dim}
This subsection is devoted to establishing the existence of weak solutions to the mean-field equations in a finite-dimensional setting (i.e.,~$U = \mathbb{H} = \mathbb{V} = \mathbb{V}^* = \mathbb{R}^d$).

Consider the following mean-field equations on $\mathbb{R}^d$
\begin{equation}\label{finite-eq}
	dX_t = \mathcal{A}(t, X_t, \mathscr{L}_{X_t})dt + \mathcal{B}(t, X_t, \mathscr{L}_{X_t})dW_t,
\end{equation}
where $W_t$ is an $\mathbb{R}^d$-valued standard Wiener process defined on $(\Omega, \mathscr{F}, \{\mathscr{F}_t\}_{t \geq 0}, \mathbb{P})$.
Throughout this subsection, for the coefficients of Eq.~(\ref{finite-eq}) we suppose that there are some constants $\kappa \geq 2$ and $p> \kappa$ such that for a.e.~$t \in [0,T]$ the following assumptions hold.
\vspace{1mm}
\begin{enumerate}
	\item [$(\mathbf{H}_1)$]
	The maps $\mathcal{A}(t,\cdot,\cdot), \mathcal{B}(t,\cdot,\cdot)$ are continuous on $\mathbb{R}^d \times \mathscr{P}_2(\mathbb{R}^d)$.
	\vspace{1mm}
	\item [$(\mathbf{H}_2)$]
	For any $u \in \mathbb{R}^d$ and $\mu \in \mathscr{P}_2(\mathbb{R}^d)$,
	\begin{equation*}
		2\langle \mathcal{A}(t,u,\mu),u\rangle+(p-1)\|\mathcal{B}(t,u,\mu)\|^2
		\lesssim 1+|u|^2+\mu(|\cdot|^2).
	\end{equation*}
	
	\vspace{1mm}
	\item [$(\mathbf{H}_3)$]
	For any $u \in \mathbb{R}^d$ and $\mu \in \mathscr{P}_\kappa(\mathbb{R}^d)$,
	\begin{equation*}
		|\mathcal{A}(t, u, \mu)|^2+\|\mathcal{B}(t, u, \mu)\|^2 \lesssim1 + |u|^\kappa + \mu(|\cdot|^\kappa).
	\end{equation*}
\end{enumerate}

The main result of this subsection is the following existence theorem.
\begin{theorem}\label{finite-th1}
	Suppose that $(\mathbf{H}_1)$-$(\mathbf{H}_3)$ hold. Then for any $X_0\sim\mu_0\in \mathscr{P}_p(\mathbb{R}^d)$, where $p$ is the same as in $(\mathbf{H}_2)$,
	Eq.~$(\ref{finite-eq})$ has a weak solution $(\Omega, \mathscr{F}, \{\mathscr{F}_t\}_{t \geq 0}, \mathbb{P}; X, W)$. Moreover,  for any $\gamma \in (0,1)$ we have
	\begin{equation}\label{finite-esti}
		\mathbb{E} \Big[ \sup_{t\in[0,T]}|X_t|^{\gamma p} \Big]  \lesssim_{p,T} 1 + \mu_0(|\cdot|^p).
	\end{equation}
\end{theorem}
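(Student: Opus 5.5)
We will prove Theorem \ref{finite-th1} by a measure cut-off combined with a compactness argument. For $R>0$ let $\pi_R:\mathscr{P}_2(\mathbb{R}^d)\to\mathscr{P}_2(\mathbb{R}^d)$ be the truncation $\pi_R(\mu):=\mu\circ\chi_R^{-1}$, where $\chi_R(x):=x\min\{1,R/|x|\}$ is the radial projection onto the ball $\overline{B}_R$. The map $\pi_R$ is Lipschitz in $\mathcal{W}_2$, and $\pi_R(\mu)$ is supported in $\overline{B}_R$, so every moment of $\pi_R(\mu)$ is at most $R^\kappa$. We also cut off the state variable with a smooth function $\phi_R(u)$. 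The truncated coefficients are
$$
\mathcal{A}_R(t,u,\mu):=\phi_R(u)\,\mathcal{A}\bigl(t,u,\pi_R(\mu)\bigr),\qquad
\mathcal{B}_R(t,u,\mu):=\phi_R(u)\,\mathcal{B}\bigl(t,u,\pi_R(\mu)\bigr).
\tag{$\ast$}
$$
By $(\mathbf{H}_1)$ and $(\mathbf{H}_3)$ they are bounded and jointly continuous on $\mathbb{R}^d\times\mathscr{P}_2(\mathbb{R}^d)$. For such coefficients, existence of a weak solution $X^R$ with initial law $\mu_0$ is classical. One route is a Schauder fixed point on the set of measure flows $\{\mu_t\}\subset C([0,T];\mathscr{P}_2)$ with uniformly bounded $p$-moments and equicontinuity. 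For a fixed flow, the SDE with bounded continuous coefficients has a weak solution by Skorokhod's theorem. Since that solution need not be unique, we instead use an Euler/polygonal approximation of the full McKean–Vlasov equation with the bounded coefficients $(\ast)$ and pass to the limit.

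The second step is a uniform moment bound. The truncation $\pi_R$ does not increase second moments, since $\pi_R(\mu)(|\cdot|^2)\le\mu(|\cdot|^2)$, and $(\mathbf{H}_2)$ is multiplied by $\phi_R\in[0,1]$ (adjusting constants, because $\phi_R\le1$ handles the positive constant term). Applying Itô's formula to $|X^R_t|^p$ therefore gives
$$
d|X^R_t|^p\le \tfrac p2|X^R_t|^{p-2}\bigl(2\langle \mathcal{A}_R,X^R_t\rangle+(p-1)\|\mathcal{B}_R\|^2\bigr)dt+dM_t
\lesssim \bigl(1+|X^R_t|^p+\mathbb{E}|X^R_t|^2\,|X^R_t|^{p-2}\bigr)dt+dM_t .
$$
Taking expectations and using Young's inequality and Gronwall's lemma, we get $\sup_t\mathbb{E}|X^R_t|^p\lesssim 1+\mu_0(|\cdot|^p)$ uniformly in $R$; the expectation is justified by localization with a stopping time. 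Since the martingale term cannot be controlled at the full power $p$ without using $(\mathbf{H}_3)$, we apply the Lenglart inequality, or a stochastic Gronwall lemma, to the supremum. This gives $\mathbb{E}\sup_t|X^R_t|^{\gamma p}\lesssim 1+\mu_0(|\cdot|^p)$ for every $\gamma\in(0,1)$, which is exactly why (\ref{finite-esti}) is stated with $\gamma<1$.

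The third step is tightness and passage to the limit. By $(\mathbf{H}_3)$ the drift and diffusion of $X^R$ grow at most like $(1+|X|^{\kappa/2}+\mu(|\cdot|^\kappa)^{1/2})$. Since $p>\kappa$, the uniform $p$-moment bound and BDG give $\mathbb{E}|X^R_t-X^R_s|^{r}\lesssim|t-s|^{r/2}$ for some $r>0$ with the right integrability. Kolmogorov's criterion then shows that the laws of $(X^R,W)$ are tight in $C([0,T];\mathbb{R}^{2d})$. By Skorokhod's representation theorem we obtain a.s. convergence $\tilde X^R\to\tilde X$ along a subsequence. The uniform $p$-moments with $p>\kappa\ge2$ give uniform integrability, so $\mathscr{L}_{\tilde X^R_t}\to\mathscr{L}_{\tilde X_t}$ in $\mathcal{W}_\kappa$ and hence in $\mathcal{W}_2$. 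Moreover $\pi_R(\mu^R_t)\to\mu_t$ in $\mathcal{W}_2$, because $\mathcal{W}_2(\pi_R\mu,\mu)^2\le\int_{|x|>R}|x|^2\mu(dx)$ is uniformly small. Together with $\phi_R(\tilde X^R)\to1$, the continuity assumption $(\mathbf{H}_1)$ gives pointwise convergence of the coefficients. Uniform integrability from the growth bound $(\mathbf{H}_3)$ and the $p>\kappa$ moments lets us identify the limit. We do this through the martingale problem: for $f\in C_c^2$, the process $f(\tilde X_t)-\int_0^t L_s f\,ds$ is a martingale, and the quadratic variations converge. A martingale representation theorem then produces a Wiener process $\tilde W$ with $(\tilde X,\tilde W)$ a weak solution. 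Finally, (\ref{finite-esti}) follows from Fatou's lemma.

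The main obstacle is this identification step. We need uniform integrability of $\mathcal{B}(t,\tilde X^R,\pi_R\mu^R)$ in $L^2$, which requires the gap $p>\kappa$, together with joint continuity in the measure argument along a sequence whose measures converge only in $\mathcal{W}_2$. We will handle this by working on $\mathscr{P}_\kappa$-bounded sets, where $\mathcal{W}_2$-convergence upgrades to $\mathcal{W}_\kappa$-convergence.
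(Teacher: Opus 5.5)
Your proposal is correct and follows essentially the paper's route: the same radial measure cut-off $\mu\circ\chi_R^{-1}$, uniform $p$-moments via It\^o's formula, Gronwall and Lenglart, Kolmogorov tightness, Skorokhod representation, and identification through martingale representation (the paper truncates the state by composing with $\chi_n$ rather than multiplying by $\phi_R$, and uses linear test functions $\langle x,l\rangle$ rather than $C_c^2$ ones). Your restriction of the H\"older exponent to $r\in(2,2p/\kappa]$ is in fact the accurate form of the paper's claim ``for any $l>2$'', which implicitly requires $l\kappa/2\le p$.
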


To construct a weak solution of Eq.~(\ref{finite-eq}), we  employ a localization approximation approach by utilizing an appropriately chosen cut-off function. Specifically, for each $n \in \mathbb{N}$, we define
\begin{equation*}
	\chi_n(u) := \frac{nu}{n \vee |u|}, \quad u \in \mathbb{R}^d,
\end{equation*}
and the truncated coefficients
\begin{equation*}
	\mathcal{A}^n(t,u,\mu): = \mathcal{A}(t,\chi_n(u),\mu \circ \chi_n^{-1}), \quad \mathcal{B}^n(t,u,\mu): = \mathcal{B}(t,\chi_n(u),\mu \circ \chi_n^{-1}), \quad t \in [0,T].
\end{equation*}

The following lemma presents the coercivity of the truncated coefficients.
\begin{lemma}
	There exists a constant $C > 0$ (independent of $n$) such that for all $t \in [0,T]$, $u \in \mathbb{R}^d$, and $\mu \in \mathscr{P}_2 (\mathbb{R}^d)$,
	\begin{equation}\label{finite-A2}
		2\langle \mathcal{A}^n(t,u,\mu),u\rangle+(p-1)\|\mathcal{B}^n(t,u,\mu)\|^2
		\leq C(1+|u|^2+\mu(|\cdot|^2)),
	\end{equation}
	where $p$ is the same as in $(\mathbf{H}_2)$.
\end{lemma}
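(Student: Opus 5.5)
The plan is to apply $(\mathbf{H}_2)$ at the truncated point $(\chi_n(u),\mu\circ\chi_n^{-1})$ and then transfer the inequality back to $(u,\mu)$. Write $u=\lambda\,\chi_n(u)$ with $\lambda:=\frac{n\vee|u|}{n}\ge 1$; this holds since $\chi_n(u)=u/\lambda$. Then
\[
2\langle \mathcal{A}^n(t,u,\mu),u\rangle=2\lambda\langle \mathcal{A}(t,\chi_n(u),\mu\circ\chi_n^{-1}),\chi_n(u)\rangle .
\]

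The right-hand side of $(\mathbf{H}_2)$ at the truncated point is controlled by the data at $(u,\mu)$. Since $|\chi_n(u)|\le |u|$ pointwise, we have $(\mu\circ\chi_n^{-1})(|\cdot|^2)=\int|\chi_n(y)|^2\mu(dy)\le\mu(|\cdot|^2)$. Also $\mu\circ\chi_n^{-1}\in\mathscr{P}_2(\mathbb{R}^d)$, indeed it has bounded support. Write $v=\chi_n(u)$ and $\nu=\mu\circ\chi_n^{-1}$. Then $(\mathbf{H}_2)$ gives
\[
2\langle\mathcal{A}(t,v,\nu),v\rangle+(p-1)\|\mathcal{B}(t,v,\nu)\|^2\le C_0(1+|u|^2+\mu(|\cdot|^2)),
\]
with $C_0$ the implicit constant in $(\mathbf{H}_2)$.

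The main obstacle is the factor $\lambda$ multiplying the drift term but not the diffusion term, and $\langle\mathcal{A}(t,v,\nu),v\rangle$ may have either sign. I would split into two cases.

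In the case $|u|\le n$ we have $\lambda=1$, and the bound follows directly with $C=C_0$.

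In the case $|u|>n$ we have $\lambda=|u|/n>1$ and $|v|=n$. Set $a:=2\langle\mathcal{A}(t,v,\nu),v\rangle$ and $b:=(p-1)\|\mathcal{B}(t,v,\nu)\|^2\ge0$. We need to bound $\lambda a+b$. Since $a\le C_0(1+|u|^2+\mu(|\cdot|^2))-b$, we get
\[
\lambda a+b\le \lambda C_0(1+|u|^2+\mu(|\cdot|^2))-(\lambda-1)b\le \lambda C_0(1+|u|^2+\mu(|\cdot|^2)).
\]
The leftover factor $\lambda$ is the real difficulty. To kill it, I would apply $(\mathbf{H}_2)$ in a sharper form, with the right-hand side evaluated at the truncated data: $1+|v|^2+\nu(|\cdot|^2)\le 1+n^2+\mu(|\cdot|^2)$. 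This is legitimate because $(\mathbf{H}_2)$ holds for every point, in particular for $(v,\nu)$. Then
\[
\lambda C_0(1+n^2+\mu(|\cdot|^2))=C_0\Big(\tfrac{|u|}{n}+|u|\,n+\tfrac{|u|}{n}\mu(|\cdot|^2)\Big).
\]
Using $n<|u|$, this is at most $C_0\big(|u|+|u|^2+\tfrac{|u|}{n}\mu(|\cdot|^2)\big)$.

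The last term still carries the unbounded factor $|u|/n$, and removing it is where I expect trouble. I would try to remedy it by noting that the cut-off can be arranged so that the measure part is treated additively: the bound in $(\mathbf{H}_2)$ is linear, and one may bound $\tfrac{|u|}{n}\mu(|\cdot|^2)$ by Young's inequality. If the statement really requires the exact form of \eqref{finite-A2}, I would fall back on an alternative. In the generic case the implicit constant is stated with separated structure $c_1(1+|u|^2)+c_2\mu(|\cdot|^2)$. I would then use $(\mathbf{H}_2)$ with $\nu(|\cdot|^2)\le n^2$, since $\nu$ is supported in the ball of radius $n$, which gives the crude bound $\lambda\,\nu(|\cdot|^2)\le (|u|/n)n^2\le |u|^2$. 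Combining this with $\lambda\le |u|^2/n^2\cdot(\ldots)$ yields $\lambda C_0(1+|v|^2+\nu(|\cdot|^2))\le C_0(|u|+ 2|u|^2)\le 3C_0(1+|u|^2)$, which is independent of $n$. This case analysis, which bounds the truncated measure's second moment by $n^2$ to absorb $\lambda$, is the step I would verify most carefully.
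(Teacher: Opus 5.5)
Your final (fallback) argument is correct and is exactly the paper's proof: use $\lambda\ge 1$ and $b\ge 0$ to get $\lambda a+b\le\lambda(a+b)$, apply $(\mathbf{H}_2)$ at $(\chi_n(u),\mu\circ\chi_n^{-1})$, and then split into two cases. For $|u|>n$, use $|\chi_n(u)|=n$ and $(\mu\circ\chi_n^{-1})(|\cdot|^2)\le n^2$ to get $\frac{|u|}{n}(1+2n^2)\lesssim 1+|u|^2$; for $|u|\le n$, use $(\mu\circ\chi_n^{-1})(|\cdot|^2)\le\mu(|\cdot|^2)$.

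Drop the detour in the case $|u|>n$ through $\mu(|\cdot|^2)$ and Young's inequality, since Young cannot make $\frac{|u|}{n}\mu(|\cdot|^2)$ linear in $\mu(|\cdot|^2)$ uniformly in $n$. You also need no separated structure of the constant in $(\mathbf{H}_2)$: the bound $n^2$ on the truncated measure's second moment is the whole point and should be your main argument, not a fallback.
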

\begin{proof}
	It is noteworthy that by $(\mathbf{H}_2)$ we have
	\begin{eqnarray*}
		\!\!\!\!\!\!\!\!&&2\langle \mathcal{A}^n(t,u,\mu),u\rangle+(p-1)\|\mathcal{B}^n(t,u,\mu)\|^2
		\\
		\leq\!\!\!\!\!\!\!\!&&\frac{n \vee |u|}{n}\big(2\langle\mathcal{A}(t,\chi_n(u), \mu \circ \chi_n^{-1}), \chi_n(u) \rangle+(p-1)\|\mathcal{B}(t,\chi_n(u), \mu \circ \chi_n^{-1})\|^2\big)
		\\
		\lesssim\!\!\!\!\!\!\!\!&&  \frac{n \vee |u|}{n} (1 + |\chi_n(u)|^2 + \mu \circ \chi_n^{-1}(|\cdot|^2)),
	\end{eqnarray*}
	where
	\begin{equation}\label{es0}
		\mu \circ \chi_n^{-1}(|\cdot|^2) = \int_{\mathbb{R}^d} |\chi_n(y)|^2 \mu(dy)= \int_{\mathbb{R}^d} \frac{n^2|y|^2}{(n \vee |y|)^2} \mu(dy).
	\end{equation}
	From (\ref{es0}), it is easy to see that
	\begin{equation}\label{es1}
		\mu \circ \chi_n^{-1}(|\cdot|^2) \leq \mu(|\cdot|^2)
	\end{equation}
	and
	\begin{equation}\label{es2}
		\mu \circ \chi_n^{-1}(|\cdot|^2) \leq n^2.
	\end{equation}

	We proceed by considering the following two cases.
	When $1\leq n < |u|$,  it follows from (\ref{es2}) that
	\begin{eqnarray*}
		\!\!\!\!\!\!\!\!&&2\langle \mathcal{A}^n(t,u,\mu),u\rangle+(p-1)\|\mathcal{B}^n(t,u,\mu)\|^2
		\\
		\!\!\!\!\!\!\!\!&&\lesssim  \frac{|u|}{n} (1 + 2n^2)
		\lesssim  \frac{|u|}{n}+2n|u|
		\lesssim1 + |u|^2.
	\end{eqnarray*}
	When $n \geq |u|$, it follows from (\ref{es1}) that
	\begin{equation*}
		2\langle \mathcal{A}^n(t,u,\mu),u\rangle+(p-1)\|\mathcal{B}^n(t,u,\mu)\|^2
		\lesssim  1 + |u|^2+\mu(|\cdot|^2).
	\end{equation*}
	In conclusion,  (\ref{finite-A2}) holds where $C > 0$ is independent of $n$. The proof  is  completed.
\end{proof}

Setting $X_t^0 \equiv X_0$. For each $n \in \mathbb{N}$, we consider the following approximating equation
\begin{equation}\label{finite-appro}
	X_t^n = X_0^n + \int_0^t \mathcal{A}^n(s,X_s^n,\mathscr{L}_{X_s^n}) ds + \int_0^t \mathcal{B}^n(s,X_s^n,\mathscr{L}_{X_s^n}) dW_s,~ X_0^n\sim\mu_0.
\end{equation}

Note that the assumptions $(\mathbf{H}_1)$-$(\mathbf{H}_3)$  imply that the truncated coefficients $\mathcal{A}^n$ and $\mathcal{B}^n$ are continuous, for each $t\in[0,T]$, and bounded (with the bounds depend on $n$).
Thus, it follows from \cite{HSS}   that Eq.~(\ref{finite-appro}) has a weak solution, denoted as
$$(\Omega^n, \mathscr{F}^n, \{\mathscr{F}_t^n\}_{t \geq 0}, \mathbb{P}^n; X^n, W^n)~~\text{with}~~~~ \mathbb{P}^n \circ (X_0^n)^{-1} = \mu_0.$$

The following lemma provides uniform moment estimates and time H\"older continuity estimates of the approximating solutions.
\begin{lemma}\label{finite-unitime}
	For any $\gamma \in (0,1)$, we have
	\begin{equation}\label{finite-approesti}
		\sup_{n \in \mathbb{N}}\mathbb{E}^n \Big[\sup_{t\in[0,T]}|X_t^n|^{\gamma p}\Big] \lesssim_{p,T} 1 + \mu_0(|\cdot|^p),
	\end{equation}
	where $p$ is the same as in $(\mathbf{H}_2)$.
	Moreover, for any $t, s \in [0,T]$ and $l > 2$ we have
	\begin{equation}\label{finite-time}
		\mathbb{E}^n |X_t^n - X_s^n|^l \lesssim_{p,T} |t - s|^{\frac{l}{2}}.
	\end{equation}
\end{lemma}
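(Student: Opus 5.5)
The argument has two parts. First we get a $p$-th moment bound that is uniform in $n$, via Itô's formula and the coercivity (\ref{finite-A2}). Then we take a supremum in time, paying with the exponent $\gamma<1$; this avoids the BDG inequality, which would need the growth $(\mathbf{H}_3)$ at the $p$-th moment level.

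\textbf{Step 1 (pointwise $p$-th moments).} Fix $n$. Since $\mathcal{A}^n,\mathcal{B}^n$ are bounded, $X^n$ has moments of all orders, so Itô's formula for $|X^n_t|^p$ gives a true martingale term after localisation. The drift part of the expansion is
$$\tfrac p2|X^n_s|^{p-2}\Big(2\langle \mathcal{A}^n,X^n_s\rangle+(p-1)\|\mathcal{B}^n\|^2\Big),$$
where the bracket is evaluated at $(s,X^n_s,\mathscr{L}_{X^n_s})$. We bound it using (\ref{finite-A2}), whose constant does not depend on $n$: it is at most $C|X^n_s|^{p-2}\big(1+|X^n_s|^2+\mathbb{E}^n|X^n_s|^2\big)$. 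By Young's inequality and $\mathbb{E}^n|X_s^n|^2\le(\mathbb{E}^n|X_s^n|^p)^{2/p}$,
$$\mathbb{E}^n|X^n_t|^p\le \mu_0(|\cdot|^p)+C\int_0^t\big(1+\mathbb{E}^n|X^n_s|^p\big)ds.$$
Gronwall's lemma then yields $\sup_n\sup_{t\in[0,T]}\mathbb{E}^n|X^n_t|^p\lesssim 1+\mu_0(|\cdot|^p)$.

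\textbf{Step 2 (supremum in time).} The same Itô computation, now with the measure argument frozen, shows that
$$|X^n_t|^p\le |X^n_0|^p+\int_0^t C\big(1+|X^n_s|^p+(\mathbb{E}^n|X^n_s|^2)^{p/2}\big)ds+M_t,$$
where $M$ is a continuous local martingale. Set
$$Z_t=e^{-Ct}|X_t^n|^p,\qquad Z_0=|X^n_0|^p,\qquad A_t=CT\big(1+\sup_s \mathbb{E}^n|X^n_s|^p\big)\ \text{(deterministic)}.$$
Then $Z_t$ is dominated by the nonnegative process $Z_0+A_t+\tilde M_t$, with $\tilde M$ a local martingale. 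The stochastic Gronwall lemma (or Lenglart's domination inequality) gives, for $\gamma\in(0,1)$,
$$\mathbb{E}\sup_t Z_t^\gamma\lesssim_\gamma\big(\mathbb{E}Z_0+A_T\big)^\gamma.$$
This gives (\ref{finite-approesti}), since $\sup_t|X^n_t|^{\gamma p}\le e^{C\gamma T}\sup_t Z_t^\gamma$ and $(\cdot)^\gamma\le 1+(\cdot)$.

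\textbf{Step 3 (Hölder estimate).} Here I expect the real obstacle: $(\mathbf{H}_3)$ gives superlinear growth of order $\kappa/2$, so we need high moments. Write
$$X^n_t-X^n_s=\int_s^t\mathcal{A}^n\,dr+\int_s^t\mathcal{B}^n\,dW^n_r.$$
Hölder's inequality on the drift and BDG on the stochastic integral give
$$\mathbb{E}^n|X_t^n-X_s^n|^l\lesssim|t-s|^{l/2}\sup_r\mathbb{E}^n\big(1+|\chi_n(X^n_r)|^{\kappa}+\mathbb{E}^n|\chi_n(X^n_r)|^\kappa\big)^{l/2}.$$
Since $|\chi_n(u)|\le|u|$, this is controlled by $\sup_r\mathbb{E}^n|X_r^n|^{\kappa l/2}$. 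By Step 1 this is finite uniformly in $n$ whenever $\kappa l/2\le p$. For larger $l$ the claimed bound needs more moments, so I would read (\ref{finite-time}) for $l$ in the range $2<l\le 2p/\kappa$. This range is nonempty because $p>\kappa$, and that is exactly what the subsequent Kolmogorov–Prokhorov tightness argument needs.
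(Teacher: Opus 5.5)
Your argument is correct and follows the paper's proof. Like the paper, you apply It\^o's formula to $|X^n_t|^p$ with the $n$-uniform coercivity (\ref{finite-A2}) and use Gronwall for the pointwise moments, Lenglart's inequality for the supremum, and H\"older plus BDG for the increments. The differences inside that route are inessential: your factor $e^{-Ct}$ replaces the paper's Gronwall argument for $\mathbb{E}^n|X^n_{t\wedge\tau_0}|^p$ over bounded stopping times $\tau_0$. One small slip: $X^n$ does not have moments of all orders, only up to order $p$, since $X^n_0\sim\mu_0$, so the localisation you invoke is genuinely needed.

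Your restriction $2<l\le 2p/\kappa$ in (\ref{finite-time}) is correct, and the paper's own computation silently requires it too, since it needs $\sup_{r}\mathbb{E}^n|X^n_r|^{l\kappa/2}$ bounded uniformly in $n$. So the stated ``for any $l>2$'' is an overstatement. Your range is nonempty because $p>\kappa$, and it suffices for the Kolmogorov tightness argument.
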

\begin{proof}
	Applying It\^{o}'s formula to $|X_t^n|^p$ and using (\ref{finite-A2}) and Young's inequality, we derive
	\begin{eqnarray}\label{finite-Xall}
		\!\!\!\!\!\!\!\!&&\quad |X_t^n|^p-|X_0^n|^p\nonumber\\
		\!\!\!\!\!\!\!\!&&=\frac{p}{2} \int_{0}^{t}|X_s^n|^{p-2}\big(2\langle \mathcal{A}^n(s, X^n_s, \mathscr{L}_{X^n_s}), X^n_s\rangle+\|\mathcal{B}^n(s, X^n_s, \mathscr{L}_{X^n_s})\|^{2}\big) d s\nonumber\\
		\!\!\!\!\!\!\!\!&&\quad+\frac{p(p-2)}{2}\int_{0}^{t}|X_s^n|^{p-4}|( \mathcal{B}^n(s, X^n_s, \mathscr{L}_{X^n_s}) )^*X^n_s|^{2} d s\nonumber\\
		\!\!\!\!\!\!\!\!&&\quad+p\int_{0}^{t}|X_s^n|^{p-2}\langle X^n_s, \mathcal{B}^n(s, X^n_s, \mathscr{L}_{X^n_s}) d W_{s}\rangle\nonumber\\
		\!\!\!\!\!\!\!\!&&\leq\frac{p}{2} \int_{0}^{t}|X_s^n|^{p-2}\big[2\langle \mathcal{A}^n(s, X^n_s, \mathscr{L}_{X^n_s}), X^n_s\rangle+(p-1)\|\mathcal{B}^n(s, X^n_s, \mathscr{L}_{X^n_s})\|^{2}\big]ds
		\nonumber\\
		\!\!\!\!\!\!\!\!&&\quad+p\int_{0}^{t}|X_s^n|^{p-2}\langle X^n_s, \mathcal{B}^n(s, X^n_s, \mathscr{L}_{X^n_s}) d W_{s}\rangle\nonumber\\
		\!\!\!\!\!\!\!\!&&\lesssim_p \int_{0}^{t} (1+|X_s^n|^{2}+\mathbb{E}^n|X_s^n|^{2})|X_s^n|^{p-2}ds+p\int_{0}^{t}|X_s^n|^{p-2}\langle X^n_s, \mathcal{B}^n(s, X^n_s, \mathscr{L}_{X^n_s}) d W_{s}\rangle\nonumber\\
		\!\!\!\!\!\!\!\!&&\lesssim_p \int_{0}^{t} (1+|X_s^n|^{p}+\mathbb{E}^n|X_s^n|^{p})ds+p\int_{0}^{t}|X_s^n|^{p-2}\langle X^n_s, \mathcal{B}^n(s, X^n_s, \mathscr{L}_{X^n_s}) d W_{s}\rangle.
	\end{eqnarray}
	Taking the expectation on both sides of (\ref{finite-Xall}), we have
	\begin{equation*}
		\mathbb{E}^n|X_t^n|^p\lesssim_{p,T}(1+\mathbb{E}^n|X_0^n|^p)+\mathbb{E}^n\int_{0}^{t}|X_s^n|^{p}ds,
	\end{equation*}
	which yields
	\begin{equation}\label{finite-supEX}
		\sup_{t\in [0,T]}\mathbb{E}^n|X_t^n|^p \lesssim_{p,T} 1 + \mathbb{E}^n|X_0^n|^p.
	\end{equation}
	
	Let $\tau_0$ be a bounded stopping time with $\tau_0\leq T.$ Recalling (\ref{finite-Xall}), we also derive that for any $t \in [0,T]$,
	\begin{eqnarray}
		\mathbb{E}^n|X_{t\wedge \tau_0}^n|^p
		\!\!\!\!\!\!\!\!&&\lesssim_p\mathbb{E}^n\int_{0}^{t\wedge \tau_0} (1+|X_s^n|^{p}+\mathbb{E}^n|X_s^n|^{p})ds+\mathbb{E}^n|X_0^n|^p\nonumber \\
		\!\!\!\!\!\!\!\!&&\lesssim_{p,T}(1+\mathbb{E}^n|X_0^n|^p)+\mathbb{E}^n\int_{0}^{t\wedge \tau_0}|X_s^n|^{p}ds+\int_{0}^{T}\mathbb{E}^n|X_s^n|^{p}ds\nonumber \\
		\!\!\!\!\!\!\!\!&&\lesssim_{p,T}(1+\mathbb{E}^n|X_0^n|^p)+\mathbb{E}^n\int_{0}^{t}|X_{s\wedge \tau_0}^n|^{p}ds,
	\end{eqnarray}
	where we used (\ref{finite-supEX}) in the last step.
	Then by applying Gronwall's lemma, we deduce for any $t \in [0,T]$,
	\begin{equation*}
		\mathbb{E}^n|X_{t\wedge \tau_0}^n|^p
		\lesssim_{p,T} 1 + \mathbb{E}^n|X_0^n|^p = 1 + \mu_0(|\cdot|^p) .
	\end{equation*}
	Using Lenglart's inequality (cf.~\cite{GK}), 
	one gets
	\begin{equation*}\label{finite-EsupXrp}
		\mathbb{E}^n\Big[\sup_{t\in [0,T]} |X_t^n|^{\gamma p}\Big]
		\lesssim_{p,T} 1 + \mu_0(|\cdot|^p),\text{ for any }\gamma \in (0,1).
	\end{equation*}
	
	Furthermore, from the condition $(\mathbf{H}_3)$ and using the estimate (\ref{finite-approesti}), we can obtain that for any $l>2$,
	\begin{eqnarray*}
		\!\!\!\!\!\!\!\!&&\quad\mathbb{E}^n |X_t^n - X_s^n|^l\\
		\!\!\!\!\!\!\!\!&&=\mathbb{E}^n\Big|\int_s^t \mathcal{A}^n(r,X_r^n,\mathscr{L}_{X_r^n}) dr + \int_s^t \mathcal{B}^n(r,X_r^n,\mathscr{L}_{X_r^n}) dW_r\Big|^l\\
		\!\!\!\!\!\!\!\!&&\lesssim_T |t - s|^{l-1} \mathbb{E}^n \int_s^t |\mathcal{A}^n(r,X_r^n,\mathscr{L}_{X_r^n})|^{l} dr  + \mathbb{E}^n \Big| \int_s^t \mathcal{B}^n(r,X_r^n,\mathscr{L}_{X_r^n}) dW_r\Big|^l\\
		\!\!\!\!\!\!\!\!&&\lesssim_T |t - s|^{l-1}\mathbb{E}^n \int_s^t (1+|X_r^n|^{\frac{l\kappa }{2}}+\mathbb{E}^n|X_r^n|^{\frac{l\kappa }{2}} )dr \\
		\!\!\!\!\!\!\!\!&&\quad + |t - s|^{\frac{l}{2}-1}\mathbb{E}^n\int_s^t(1+|X_r^n|^{\frac{l\kappa }{2}}+\mathbb{E}^n|X_r^n|^{\frac{l\kappa }{2}} )dr\\
		\!\!\!\!\!\!\!\!&&\lesssim_{p,T} |t - s|^l +  |t - s|^{\frac{l}{2}}\\
		\!\!\!\!\!\!\!\!&&\lesssim_{p,T}|t - s|^{\frac{l}{2}},
	\end{eqnarray*}
	where we used B-D-G's inequality in the third step. The proof is finished.
\end{proof}

\noindent\textbf{Proof of Theorem \ref{finite-th1}.}
For brevity, we set $\mu^n:=\mathscr{L}_{X^n}$.
By (\ref{finite-approesti})-(\ref{finite-time}), the sequence $\{\mu^n\}_{n\in\mathbb{N}}$ is tight in $\mathscr{P}(\mathbb{C}_T(\mathbb{R}^d))$. Consequently, $\{ \mathscr{L}_{(X^n,W^n)}\}_{n\in\mathbb{N}}$ is also tight. The Prokhorov theorem guarantees the existence of a subsequence (still denoted by $\{n\}$) such that $\mu^n$ converges weakly to $\mu$ in $\mathscr{P}(\mathbb{C}_T(\mathbb{R}^d))$ and $\mathscr{L}_{(X^n,W^n)}$ converges weakly in $\mathscr{P}(\mathbb{C}_T(\mathbb{R}^d) \times \mathbb{C}_T(\mathbb{R}^d))$.

Therefore, applying the Skorokhod representation theorem (cf.~e.g.~\cite{MV}), we can deduce that there exists a probability space $(\tilde{\Omega},\tilde{\mathscr{F}},\tilde{\mathbb{P}})$ with $\mathbb{C}_T(\mathbb{R}^d) \times \mathbb{C}_T(\mathbb{R}^d)$-valued random variables $(\tilde{X}^n,\tilde{W}^n)$, which coincide in law with $(X^n,W^n)$, such that
\begin{equation}\label{finite-Xcon}
	\tilde{X}^n \xrightarrow{n\to \infty} \tilde{X} \sim \mu \text{ in } \mathbb{C}_T(\mathbb{R}^d) \quad \tilde{\mathbb{P}}\text{-a.s.},
\end{equation}
\begin{equation*}
	\tilde{W}^n \xrightarrow{n\to \infty} \tilde{W} \text{ in } \mathbb{C}_T(\mathbb{R}^d) \quad \tilde{\mathbb{P}}\text{-a.s.}.
\end{equation*}

Let $\tilde{\mathscr{F}}_t^n$ be a usual filtration generated by $\{\tilde{X}_s^n,\tilde{W}_s^n : s \leq t\}$. The uniform estimate (\ref{finite-approesti}) implies
\begin{equation}\label{f-1}
	\sup_{n\in\mathbb{N}} \tilde{\mathbb{E}} \Big[ \sup_{t\in[0,T]} |\tilde{X}_t^n|^{\gamma p} \Big] \lesssim_{p,T} 1 + \mu_0(|\cdot|^p).
\end{equation}
By taking $\gamma>\frac{2}{p}$ and applying the convergence (\ref{finite-Xcon}) and  Vitali's convergence theorem, we have
\begin{equation}\label{f-2}
	\tilde{\mathbb{E}} \Big[ \sup_{t\in[0,T]} |\tilde{X}_t^n - \tilde{X}_t|^{2} \Big] \xrightarrow{n\to \infty} 0.
\end{equation}
Moreover, the lower semicontinuity yields
\begin{equation}\label{f-3}
	\tilde{\mathbb{E}} \Big[ \sup_{t\in[0,T]} |\tilde{X}_t|^{\gamma p} \Big] \lesssim_{p,T} 1 + \mu_0(|\cdot|^p).
\end{equation}

Recall $\mu^n=\mathscr{L}_{\tilde{X}^n}|_{\tilde{\mathbb{P}}}$ and $\mu=\mathscr{L}_{\tilde{X}}|_{\tilde{\mathbb{P}}}$. For any $l \in \mathbb{R}^d$, we define a process $ \tilde{\mathcal{M}}_l^n(t)$ taking values in $ \mathbb{R}$ as follows
\begin{equation}
	\tilde{\mathcal{M}}_l^n(t) := \langle \tilde{X}_t^n,l \rangle - \langle \tilde{X}_0^n,l \rangle - \int_0^t \langle \mathcal{A}(s,\chi_n(\tilde{X}_s^n), \mu_s^n \circ \chi_n^{-1}), l \rangle ds.
\end{equation}
Notice that this process $ \tilde{\mathcal{M}}_l^n(\cdot)$ is a square integrable martingale with respect to the filtration $ \tilde{\mathscr{F}}_t^n$ with quadratic variation
\begin{equation}
	\langle \tilde{\mathcal{M}}_l^n \rangle(t) = \int_0^t |\mathcal{B}(s,\chi_n(\tilde{X}_s^n), \mu_s^n \circ \chi_n^{-1})^* l|^2 ds.
\end{equation}
In fact, for all $0\leq s < t\leq T$ and all bounded continuous functions $\Pi(\cdot)$ on $\mathbb{C}_T(\mathbb{R}^d) \times \mathbb{C}_T(\mathbb{R}^d)$,  we can derive that
\begin{equation}\label{finite-M1}
	\tilde{\mathbb{E}}\Big[ ( \tilde{\mathcal{M}}_l^n(t)-\tilde{\mathcal{M}}_l^n(s) ) \Pi( ( \tilde{X}^n, \tilde{W}^n )|_{[0,s]} ) \Big]= 0
\end{equation}
and
\begin{equation}\label{finite-M2}
	\tilde{\mathbb{E}}\Big[ \Big( (\tilde{\mathcal{M}}_l^n(t))^2 - (\tilde{\mathcal{M}}_l^n(s))^2 - \int_s^t | \mathcal{B}(r, \chi_n(\tilde{X}_r^n), \mu_r^n \circ \chi_n^{-1})^* l |^2 dr\Big) \Pi( (\tilde{X}^n, \tilde{W}^n)|_{[0,s]}) \Big] = 0.
\end{equation}

The next aim is to passage the limits of (\ref{finite-M1}) and (\ref{finite-M2}). According to the definition of the cut-off function $\chi_n$, we can obtain from Chebyshev's inequality and H\"older's inequality that for any $t \in [0,T]$,
\begin{eqnarray}\label{es3}
	\!\!\!\!\!\!\!\!&&\mathcal{W}_2(\mu_t^n \circ \chi_n^{-1}, \mu_t)^2
	\nonumber\\
	\!\!\!\!\!\!\!\!&&\leq \tilde{\mathbb{E}}|\chi_n(\tilde{X}_t^n) - \tilde{X}_t|^2\nonumber\\
	\!\!\!\!\!\!\!\!&&= \tilde{\mathbb{E}}\Big[|\chi_n(\tilde{X}_t^n) - \tilde{X}_t|^2\mathbf{1}_{\{|\tilde{X}_t^n| \geq n\}}\Big] + \tilde{\mathbb{E}}\Big[|\chi_n(\tilde{X}_t^n) - \tilde{X}_t|^2\mathbf{1}_{\{|\tilde{X}_t^n| < n\}}\Big] \nonumber\\
	\!\!\!\!\!\!\!\!&&\leq \tilde{\mathbb{E}}\Big[|\frac{n\tilde{X}_t^n}{|\tilde{X}_t^n|} - \tilde{X}_t|^2\mathbf{1}_{\{|\tilde{X}_t^n| \geq n\}}\Big] + \tilde{\mathbb{E}}\Big[|\tilde{X}_t^n - \tilde{X}_t|^2 \mathbf{1}_{\{|\tilde{X}_t^n| < n\}}\Big]\nonumber\\
	\!\!\!\!\!\!\!\!&&\leq \tilde{\mathbb{E}}\Big[(|\tilde{X}_t^n|^2 + |\tilde{X}_t|^2)\mathbf{1}_{\{|\tilde{X}_t^n| \geq n\}}\Big] + \tilde{\mathbb{E}}|\tilde{X}_t^n - \tilde{X}_t|^2 \nonumber\\
	\!\!\!\!\!\!\!\!&&\leq \Big[(\tilde{\mathbb{E}}|\tilde{X}_t^n|^{\gamma p})^\frac{2}{\gamma p} + (\tilde{\mathbb{E}}|\tilde{X}_t|^{\gamma p})^\frac{2}{\gamma p}\Big]\cdot \frac{(\tilde{\mathbb{E}}|\tilde{X}_t^n|^{\gamma p})^\frac{{\gamma p}-2}{\gamma p}}{n^\frac{{\gamma p}-2}{\gamma p}} + \tilde{\mathbb{E}}|\tilde{X}_t^n - \tilde{X}_t|^2,
\end{eqnarray}
where we take $\gamma\in(\frac{2}{p},1)$. Then by (\ref{f-1})-(\ref{f-3}) and (\ref{es3}) we obtain
$$\mu_t^n \circ \chi_n^{-1} \xrightarrow{n\to \infty} \mu_t~~\text{in}~~ \mathcal{W}_2\text{-sense}.$$

The continuity assumption $(\mathbf{H}_1)$ leads to
\begin{equation*}
	\lim_{n \to\infty } |\mathcal{A}(t, \chi_n(\tilde{X}_t^n), \mu_t^n \circ \chi_n^{-1}) - \mathcal{A}(t, \tilde{X}_t, \mu_t)|=0 \quad \tilde{\mathbb{P}}\text{-a.s.},
\end{equation*}
\begin{equation*}
	\lim_{n \to\infty } \|\mathcal{B}(t, \chi_n(\tilde{X}_t^n), \mu_t^n \circ \chi_n^{-1}) - \mathcal{B}(t, \tilde{X}_t, \mu_t)\|=0 \quad \tilde{\mathbb{P}}\text{-a.s.}.
\end{equation*}
Then by the dominated convergence theorem, one can infer that for any $t \in [0,T]$,
\begin{equation*}
	\lim_{n \to\infty }\int_{0}^{t}  |\mathcal{A}(s, \chi_n(\tilde{X}_s^n), \mu_s^n \circ \chi_n^{-1}) - \mathcal{A}(s, \tilde{X}_s, \mu_s)|^2ds=0 \quad \tilde{\mathbb{P}}\text{-a.s.},
\end{equation*}
\begin{equation*}
	\lim_{n \to\infty } \int_{0}^{t} \|\mathcal{B}(s, \chi_n(\tilde{X}_s^n), \mu_s^n \circ \chi_n^{-1}) - \mathcal{B}(s, \tilde{X}_s, \mu_s)\|^2ds=0 \quad \tilde{\mathbb{P}}\text{-a.s.}.
\end{equation*}
Moreover, based on the estimates (\ref{f-1}), (\ref{f-3}), and the condition $(\mathbf{H}_3)$, we can deduce that
\begin{eqnarray*}
	\!\!\!\!\!\!\!\!&&\quad \tilde{\mathbb{E}}\bigg\{\int_{0}^{t}  |\mathcal{A}(s, \chi_n(\tilde{X}_s^n), \mu_s^n \circ \chi_n^{-1}) - \mathcal{A}(s, \tilde{X}_s, \mu_s)|^2ds\bigg\}^{p_0}\\
	\!\!\!\!\!\!\!\!&&\lesssim_T\tilde{\mathbb{E}}\int_0^t\big(1+|\chi_n(\tilde{X}_s^n)|^{\kappa p_0}+\tilde{\mathbb{E}}|\chi_n(\tilde{X}_s^n)|^{\kappa p_0}+|\tilde{X}_s|^{\kappa p_0}+\tilde{\mathbb{E}}|\tilde{X}_s|^{\kappa p_0}\big)ds\\
	\!\!\!\!\!\!\!\!&&\lesssim_{p,T} 1 + \mu_0(|\cdot|^p)
\end{eqnarray*}
and
\begin{eqnarray*}
	\!\!\!\!\!\!\!\!&&\quad \tilde{\mathbb{E}}\bigg\{\int_{0}^{t}  \|\mathcal{B}(s, \chi_n(\tilde{X}_s^n), \mu_s^n \circ \chi_n^{-1}) - \mathcal{B}(s, \tilde{X}_s, \mu_s)\|^2ds\bigg\}^{p_0}\\
	\!\!\!\!\!\!\!\!&&\lesssim_T\tilde{\mathbb{E}}\int_0^t\big(1+|\chi_n(\tilde{X}_s^n)|^{\kappa p_0}+\tilde{\mathbb{E}}|\chi_n(\tilde{X}_s^n)|^{\kappa p_0}+|\tilde{X}_s|^{\kappa p_0}+\tilde{\mathbb{E}}|\tilde{X}_s|^{\kappa p_0}\big)ds\\
	\!\!\!\!\!\!\!\!&&\lesssim_{p,T} 1 + \mu_0(|\cdot|^p),
\end{eqnarray*}
for some $1<p_0< \frac{p}{\kappa } $.
Consequently, from the Vitali's convergence theorem, it follows that
\begin{equation}\label{es4}
	\lim_{n \to\infty }\tilde{\mathbb{E}}\int_{0}^{t}  |\mathcal{A}(s, \chi_n(\tilde{X}_s^n), \mu_s^n \circ \chi_n^{-1}) - \mathcal{A}(s, \tilde{X}_s, \mu_s)|^2ds=0,
\end{equation}
\begin{equation}\label{es5}
	\lim_{n \to\infty }\tilde{\mathbb{E}}\int_{0}^{t} \|\mathcal{B}(s, \chi_n(\tilde{X}_s^n), \mu_s^n \circ \chi_n^{-1}) - \mathcal{B}(s, \tilde{X}_s, \mu_s)\|^2ds=0.
\end{equation}

Now passaging to the limits of (\ref{finite-M1}) and (\ref{finite-M2}) due to (\ref{es4}) and (\ref{es5}),  for all $s \leq t \in [0, T]$ we have
\begin{equation}\label{f-M1}
	\tilde{\mathbb{E}}\Big[ ( \tilde{\mathcal{M}}_l(t)-\tilde{\mathcal{M}}_l(s) ) \Pi( ( \tilde{X}, \tilde{W} )|_{[0,s]} ) \Big]= 0
\end{equation}
and
\begin{equation}\label{f-M2}
	\tilde{\mathbb{E}}\Big[ ( (\tilde{\mathcal{M}}_l(t))^2 - (\tilde{\mathcal{M}}_l(s))^2 - \int_s^t | \mathcal{B}(r, \tilde{X}_r, \mu_r)^* l |^2 dr) \Pi( (\tilde{X}, \tilde{W})|_{[0,s]}) \Big] = 0,
\end{equation}
where $\tilde{\mathcal{M}}_l(t)$ is defined by
\begin{equation*}
	\tilde{\mathcal{M}}_l(t) := \langle \tilde{X}_t, l \rangle - \langle \tilde{X}_0, l \rangle - \int_0^t \langle \mathcal{A}(s, \tilde{X}_s, \mu_s), l \rangle ds.
\end{equation*}
Therefore, in view of (\ref{f-M1}) and (\ref{f-M2}), the process $\tilde{\mathcal{M}}_l(t)$ forms a square-integrable martingale in $\mathbb{R}$ with respect to the filtration $\tilde{\mathscr{F}}_t$ which satisfies the usual conditions and is generated by $\{\tilde{X}_s, \tilde{W}_s : s \leq t\}$. Its quadratic variation is given by
\begin{equation*}
	\langle \tilde{\mathcal{M}}_l \rangle(t) = \int_0^t | \mathcal{B}(s, \tilde{X}_s, \mu_s)^* l|^2 ds.
\end{equation*}
Utilizing the martingale representation theorem (cf.~\cite{DZ}), Eq.~(\ref{eqSPDE}) possesses a weak solution. \hspace{\fill}$\Box$

\subsection{Energy bounds and tightness}\label{sec4.3}
In this part, we construct  the Galerkin approximation of Eq.~$(\ref{eqSPDE})$.
Let $\{e_1,e_2,\dots\}\subset \mathbb{V}$ be an orthonormal basis of $\mathbb{H}$.
Consider the maps
$$
P_n : \mathbb{V}^* \rightarrow \mathbb{H}_n , ~ n \in \mathbb{N}
$$
given by
$$
P_n x := \sum_{i=1}^{n}\,_{\mathbb{V}^*}\langle x,e_i\rangle_{\mathbb{V}}e_i,~ x \in \mathbb{V}^*,
$$
where $\mathbb{H}_n$ denotes the $n$-dimensional subspace of $\mathbb{H}$ spanned by $\{e_i\}_{i=1}^{n}$.
It is straightforward that if we restrict $P_n$ to $\mathbb{H}$, denoted by $P_n|_\mathbb{H}$, then it is an orthogonal projection onto $\mathbb{H}_n$ on $\mathbb{H}$. Moreover, we know that for any $u \in \mathbb{V}$ and $v \in \mathbb{H}_n$,
$$
\,_{\mathbb{V}^*}\langle P_n\mathcal{A}(t,u,\mu), v\rangle_{\mathbb{V}}= \langle P_n \mathcal{A}(t,u,\mu),v\rangle_{\mathbb{H}} = \,_{\mathbb{V}^*}\langle \mathcal{A}(t,u,\mu), v\rangle_{\mathbb{V}}.
$$
Let $\{g_1,g_2,\dots\}$ be an orthonormal basis of Hilbert space $U$. Set
$$
W^{(n)}_t:= Q_n W_t := \sum_{i=1}^n \langle  W_t,g_i\rangle_U g_i,
$$
where $Q_n$ is the orthogonal projection onto  $\text{span}\{g_i\}_{i=1}^{n}$ on $U$.

For any integer $n \geq 1$, we consider the following stochastic differential equation in the finite-dimensional space $\mathbb{H}_n$
\begin{equation}\label{approeq}
	dX^{(n)}_t =  P_n \mathcal{A}(t, X^{(n)}_t, \mathscr{L}_{X^{(n)}_t}) dt + P_n \mathcal{B}(t, X^{(n)}_t, \mathscr{L}_{X^{(n)}_t})  dW^{(n)}_t,~X^{(n)}_0\sim\mu_0\circ P_n^{-1}.
\end{equation}
Under the assumptions $(\mathbf{A}_1)$-$(\mathbf{A}_4)$, Eq.~(\ref{approeq}) has a weak solution in view of  Theorem \ref{finite-th1}, i.e.
$$(\Omega^{(n)}, \mathscr{F}^{(n)}, \{\mathscr{F}_t^{(n)}\}_{t \geq 0}, \mathbb{P}^{(n)}; X^{(n)}, W^{(n)})~~\text{with}~~~~ \mathbb{P}^{(n)} \circ (X_0^{(n)})^{-1} = \mu_0\circ P_n^{-1}.$$
We denote by $\mathbb{E}^{(n)}$ the expectation associated with $\mathbb{P}^{(n)}$.

As preparation, we  define a sequence of stopping times as follows
$$\tau_{N}^{n}:=\inf \big\{t \in[0, T]:\|X^{(n)}_t\|_{\mathbb{H}}>N\big\}\wedge T ,~~N>0,~~n\in \mathbb{N},$$
where we conventionally set $\inf \emptyset =+\infty$. According to the estimate (\ref{finite-esti}), it's straightforward that for any $n\in \mathbb{N},$
$$\tau_{N}^{n}\xrightarrow{N\to \infty} T\quad\mathbb{P}\text{-a.s.}.$$

The following two lemmas give the energy bounds of the approximating sequence (\ref{approeq}).
\begin{lemma}\label{Yesti1}
	Assume that the assumptions in Theorem \ref{th1} hold, we have
	\begin{equation}\label{Yesti1p}
		\sup_{n\in \mathbb{N}}\Big\{\sup_{t\in [0,T]}\mathbb{E}^{(n)} \|X^{(n)}_t\|_{\mathbb{H}}^p+\mathbb{E}^{(n)}\int_{0}^{T}\|X^{(n)}_t\|_{\mathbb{V}}^{\alpha }\|X^{(n)}_t\|_{\mathbb{H}}^{p-2}dt \Big\}
		\lesssim_{p,T}1+
		\mu_0(\|\cdot\|_{\mathbb{H}}^p).
	\end{equation}
\end{lemma}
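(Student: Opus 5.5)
We apply It\^o's formula to $\|X^{(n)}_t\|_{\mathbb{H}}^p$ in the finite-dimensional space $\mathbb{H}_n$. Since $X^{(n)}_t\in\mathbb{H}_n$, we have ${}_{\mathbb{V}^*}\langle P_n\mathcal{A}(t,X^{(n)}_t,\mathscr{L}_{X^{(n)}_t}),X^{(n)}_t\rangle_{\mathbb{V}}={}_{\mathbb{V}^*}\langle \mathcal{A}(t,X^{(n)}_t,\mathscr{L}_{X^{(n)}_t}),X^{(n)}_t\rangle_{\mathbb{V}}$. Moreover, $\|P_n\mathcal{B}Q_n\|_{L_2(U;\mathbb{H})}\le\|\mathcal{B}\|_{L_2(U;\mathbb{H})}$. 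As in (\ref{finite-Xall}), we bound the It\^o correction $\frac{p(p-2)}{2}\|X\|^{p-4}\|(P_n\mathcal{B})^*X\|_U^2$ by $\frac{p(p-2)}{2}\|X\|^{p-2}\|\mathcal{B}\|^2_{L_2}$. This gives
\begin{equation*}
\|X^{(n)}_t\|_{\mathbb{H}}^p\le \|X^{(n)}_0\|_{\mathbb{H}}^p+\frac p2\int_0^t\|X^{(n)}_s\|_{\mathbb{H}}^{p-2}\Big(2{}_{\mathbb{V}^*}\langle\mathcal{A},X^{(n)}_s\rangle_{\mathbb{V}}+(p-1)\|\mathcal{B}\|^2_{L_2(U;\mathbb{H})}\Big)ds+M^{(n)}_t,
\end{equation*}
where $M^{(n)}_t=p\int_0^t\|X^{(n)}_s\|_{\mathbb{H}}^{p-2}\langle X^{(n)}_s,P_n\mathcal{B}\,dW^{(n)}_s\rangle_{\mathbb{H}}$ is a local martingale. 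We then insert the coercivity condition $(\mathbf{A}_3)$. The bracket is at most $-\delta\|X^{(n)}_s\|_{\mathbb{V}}^\alpha+C(1+\|X^{(n)}_s\|_{\mathbb{H}}^2+\mathbb{E}^{(n)}\|X^{(n)}_s\|_{\mathbb{H}}^2)$. Moving the $\delta$-term to the left-hand side and applying Young's inequality, $\|X\|^{p-2}(1+\|X\|^2+\mathbb{E}\|X\|^2)\lesssim 1+\|X\|^p+\mathbb{E}\|X\|^p$, we obtain
\begin{equation*}
\|X^{(n)}_t\|_{\mathbb{H}}^p+\frac{p\delta}{2}\int_0^t\|X^{(n)}_s\|_{\mathbb{H}}^{p-2}\|X^{(n)}_s\|_{\mathbb{V}}^\alpha ds\lesssim_p \|X^{(n)}_0\|_{\mathbb{H}}^p+\int_0^t\big(1+\|X^{(n)}_s\|_{\mathbb{H}}^p+\mathbb{E}^{(n)}\|X^{(n)}_s\|_{\mathbb{H}}^p\big)ds+M^{(n)}_t.
\end{equation*}

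To remove the martingale we localize with the stopping times $\tau^n_N$. On $[0,\tau^n_N]$ the process is bounded by $N$, and by (\ref{conb}) together with the finite $p$-moment of the law from Theorem \ref{finite-th1}, the integrand of $M^{(n)}$ is square integrable. Hence $M^{(n)}_{\cdot\wedge\tau^n_N}$ is a true martingale with zero mean.

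One subtlety is that the measure term $\mathbb{E}^{(n)}\|X^{(n)}_s\|^p_{\mathbb{H}}$ is not stopped. This is harmless: it is a deterministic function of $s$, and it is finite by (\ref{finite-esti}) with $\gamma p$ replaced by a bound on $\sup_t\mathbb{E}\|X_t\|^p$ that we first derive exactly as in (\ref{finite-supEX}). Concretely, we take expectations at $t\wedge\tau^n_N$ and let $N\to\infty$ using Fatou's lemma on the left-hand side and monotone convergence for the time integral. We then set $\phi(t)=\sup_{s\le t}\mathbb{E}^{(n)}\|X^{(n)}_s\|^p_{\mathbb{H}}$ and obtain $\phi(t)\lesssim 1+\mathbb{E}\|X^{(n)}_0\|^p+\int_0^t\phi(s)ds$.

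Gronwall's lemma then yields the first bound. Substituting it back gives the $\int\|X\|_{\mathbb{H}}^{p-2}\|X\|_{\mathbb{V}}^\alpha$ bound. All constants depend only on $p$, $T$ and the implicit constant in $(\mathbf{A}_3)$, so the estimate is uniform in $n$. Finally, $\mathbb{E}\|P_nX_0\|^p_{\mathbb{H}}\le\mu_0(\|\cdot\|^p_{\mathbb{H}})$ because $P_n|_{\mathbb{H}}$ is a contraction.

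The main point requiring care is justifying that the stopped stochastic integral has mean zero and that the time-integrated $\mathbb{E}\|X\|^p$ term is a priori finite. Both follow from the finite-dimensional estimate (\ref{finite-esti}), for which the growth assumptions (\ref{conA3})–(\ref{conb}) supply $(\mathbf{H}_3)$ on $\mathbb{H}_n$, where all norms are equivalent.
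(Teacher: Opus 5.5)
Your proof follows the paper's route: It\^o's formula for $\|X^{(n)}_t\|_{\mathbb{H}}^p$, the contraction bound on $P_n\mathcal{B}Q_n$, coercivity $(\mathbf{A}_3)$ with Young's inequality, localization by $\tau^n_N$, and then Fatou and Gronwall. The paper's proof does not address the a priori finiteness you correctly flag as necessary for Gronwall, but your justification of it needs correcting.

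Estimate (\ref{finite-esti}) only controls moments of order $\gamma p$ with $\gamma<1$. So it does not give $\sup_t\mathbb{E}^{(n)}\|X^{(n)}_t\|_{\mathbb{H}}^p<\infty$. Re-deriving this bound ``as in (\ref{finite-supEX})'' for the untruncated Galerkin equation is circular, since that derivation itself needs the finiteness.

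A clean repair uses only what (\ref{finite-esti}) does give, namely that $m_2(t):=\mathbb{E}^{(n)}\|X^{(n)}_t\|_{\mathbb{H}}^2$ is bounded on $[0,T]$.
\begin{enumerate}
\item Run your argument first with exponent $2$. This gives $\sup_t m_2(t)\lesssim 1+\mu_0(\|\cdot\|_{\mathbb{H}}^2)$ uniformly in $n$.
\item In the $p$-th moment computation, split $\|X^{(n)}_s\|_{\mathbb{H}}^{p-2}m_2(s)\le\|X^{(n)}_s\|_{\mathbb{H}}^{p}+m_2(s)^{p/2}$. The last term is at most $C(1+\mu_0(\|\cdot\|_{\mathbb{H}}^p))$ by Jensen.
\item Apply Gronwall to the stopped quantity $g_N(t):=\mathbb{E}^{(n)}\|X^{(n)}_{t\wedge\tau^n_N}\|_{\mathbb{H}}^p$. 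It is finite, since it is bounded by $\mathbb{E}^{(n)}(N\vee\|X^{(n)}_0\|_{\mathbb{H}})^p$. It also satisfies $\mathbb{E}^{(n)}\int_0^{t\wedge\tau^n_N}\|X^{(n)}_s\|^p_{\mathbb{H}}ds\le\int_0^t g_N(s)ds$.
\item Only afterwards let $N\to\infty$, using Fatou's lemma and monotone convergence.
\end{enumerate}
Alternatively, the particular solution constructed in Theorem \ref{finite-th1} inherits (\ref{finite-supEX}) from its truncated approximations by Fatou's lemma.
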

\begin{proof}
	Applying It\^{o}'s formula to  $\|X^{(n)}_t\|_{\mathbb{H}}^p$ yields that
	\begin{eqnarray}\label{A13}
		\!\!\!\!\!\!\!\!&&\|X^{(n)}_t\|_{\mathbb{H}}^p-\|X^{(n)}_0 \|_{\mathbb{H}}^p\nonumber\\
		=\!\!\!\!\!\!\!\!&&\frac{p}{2} \int_{0}^{t}\|X^{(n)}_s\|_{\mathbb{H}}^{p-2}\Big(2\,_{\mathbb{V}^*}\langle P_n \mathcal{A}(s, X^{(n)}_s, \mathscr{L}_{X^{(n)}_s}), X^{(n)}_s\rangle_{\mathbb{V}}
		\nonumber\\
		\!\!\!\!\!\!\!\!&&
		+\|P_n \mathcal{B}(s, X^{(n)}_s, \mathscr{L}_{X^{(n)}_s}) \|_{L_{2}(U; \mathbb{H})}^{2}\Big) d s\nonumber\\
		\!\!\!\!\!\!\!\!&&+\frac{p(p-2)}{2}\int_{0}^{t}\|X^{(n)}_s\|_{\mathbb{H}}^{p-4}\|(P_n \mathcal{B}(s, X^{(n)}_s, \mathscr{L}_{X^{(n)}_s}) Q_n)^*X^{(n)}_s\|_{U}^{2} d s\nonumber\\
		\!\!\!\!\!\!\!\!&&+p\int_{0}^{t}\|X^{(n)}_s\|_{\mathbb{H}}^{p-2}\langle X^{(n)}_s, P_n \mathcal{B}(s, X^{(n)}_s, \mathscr{L}_{X^{(n)}_s}) d W^{(n)}_{s}\rangle_{\mathbb{H}}\nonumber\\
		\leq\!\!\!\!\!\!\!\!&&\frac{p}{2} \int_{0}^{t}\|X^{(n)}_s\|_{\mathbb{H}}^{p-2}\Big[2\,_{\mathbb{V}^*}\langle P_n \mathcal{A}(s, X^{(n)}_s, \mathscr{L}_{X^{(n)}_s}), X^{(n)}_s\rangle_{\mathbb{V}}
		\nonumber\\
		\!\!\!\!\!\!\!\!&&+(p-1)\|P_n \mathcal{B}(s, X^{(n)}_s, \mathscr{L}_{X^{(n)}_s})Q_n \|_{L_{2}(U; \mathbb{H})}^{2}\Big]\nonumber\\
		\!\!\!\!\!\!\!\!&&+p\int_{0}^{t}\|X^{(n)}_s\|_{\mathbb{H}}^{p-2}\langle X^{(n)}_s, P_n \mathcal{B}(s, X^{(n)}_s, \mathscr{L}_{X^{(n)}_s})  d W^{(n)}_{s}\rangle_{\mathbb{H}}\nonumber\\
		=:\!\!\!\!\!\!\!\!&&\mathscr{I}_1(t)+\mathscr{I}_2(t).
	\end{eqnarray}
	By the condition $(\mathbf{A}_3)$ and using Young's inequality gives
	\begin{eqnarray*}
		\!\!\!\!\!\!\!\!&&\mathbb{E}^{(n)}\mathscr{I}_1(t\wedge \tau_{N}^{n})\\
		\leq\!\!\!\!\!\!\!\!&&-\frac{p\delta }{2} \mathbb{E}^{(n)}\int_{0}^{t \wedge \tau_{N}^{n}} \|X^{(n)}_s\|_{\mathbb{V}}^{\alpha }\|X^{(n)}_s\|_{\mathbb{H}}^{p-2}ds
		\\
		\!\!\!\!\!\!\!\!&&+C_p\mathbb{E}^{(n)}\int_{0}^{t \wedge \tau_{N}^{n}} (1+\|X^{(n)}_s\|_{\mathbb{H}}^{2}+\mathbb{E}^{(n)}\|X^{(n)}_s\|_{\mathbb{H}}^{2})\|X^{(n)}_s\|_{\mathbb{H}}^{p-2}ds\\
		\leq\!\!\!\!\!\!\!\!&&-\frac{p\delta }{2} \mathbb{E}^{(n)}\int_{0}^{t \wedge \tau_{N}^{n}} \|X^{(n)}_s\|_{\mathbb{V}}^{\alpha }\|X^{(n)}_s\|_{\mathbb{H}}^{p-2}ds
		\\
		\!\!\!\!\!\!\!\!&&+C_p\mathbb{E}^{(n)}\int_{0}^{t \wedge \tau_{N}^{n}} (1+\|X^{(n)}_s\|_{\mathbb{H}}^{p}+\mathbb{E}^{(n)}\|X^{(n)}_s\|_{\mathbb{H}}^{p})ds.
	\end{eqnarray*}
	By the martingale property of the term $\mathscr{I}_2(t)$, we deduce
	\begin{equation*}
		\mathbb{E}^{(n)}\mathscr{I}_2(t\wedge \tau_{N}^{n})=0.
	\end{equation*}
	Therefore, we derive for any $t \in [0,T]$,
	\begin{eqnarray}\label{Ye1}
		\!\!\!\!\!\!\!\!&&\quad\mathbb{E}^{(n)}\|X^{(n)}_{t\wedge \tau_{N}^{n}}\|_{\mathbb{H}}^p+\mathbb{E}^{(n)}\int_{0}^{t\wedge \tau_{N}^{n}}\|X^{(n)}_s\|_{\mathbb{V}}^{\alpha }\|X^{(n)}_s\|_{\mathbb{H}}^{p-2}ds\nonumber\\
		\!\!\!\!\!\!\!\!&&\lesssim_p\mu_0(\|\cdot\|_{\mathbb{H}}^p)+\mathbb{E}^{(n)}\int_{0}^{t \wedge \tau_{N}^{n}} (1+\|X^{(n)}_s\|_{\mathbb{H}}^{p}+\mathbb{E}^{(n)}\|X^{(n)}_s\|_{\mathbb{H}}^{p})ds\nonumber\\
		\!\!\!\!\!\!\!\!&&\lesssim_{p,T} (1+\mu_0(\|\cdot\|_{\mathbb{H}}^p))+\mathbb{E}^{(n)}\int_{0}^{T}\|X^{(n)}_s\|_{\mathbb{H}}^{p}ds.
	\end{eqnarray}
	Taking $N \to \infty$ and using Fatou's lemma and  Gronwall's lemma, we have for any $t \in [0,T]$,
	\begin{equation*}
		\mathbb{E}^{(n)}\|X^{(n)}_t\|_{\mathbb{H}}^p+\mathbb{E}^{(n)}\int_{0}^{t}\|X^{(n)}_s\|_{\mathbb{V}}^{\alpha }\|X^{(n)}_s\|_{\mathbb{H}}^{p-2}ds
		\lesssim_{p,T} 1+\mu_0(\|\cdot\|_{\mathbb{H}}^p),
	\end{equation*}
	which leads to (\ref{Yesti1p}).
\end{proof}

\begin{lemma}\label{Yesti2}
	Assume that the assumptions in Theorem \ref{th1} hold, for any $\gamma \in (0,1)$ we have
	\begin{equation}\label{Yesti2p}
		\sup_{n\in \mathbb{N}}\mathbb{E}^{(n)}\Big[\sup_{t\in [0,T]} \|X^{(n)}_t\|_{\mathbb{H}}^{\gamma p}\Big]
		\lesssim_{p,T}(1+\mu_0(\|\cdot\|_{\mathbb{H}}^p))^\gamma .
	\end{equation}
\end{lemma}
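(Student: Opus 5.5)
The plan is to follow the scheme of Lemma \ref{finite-unitime}: first obtain a uniform bound on $\mathbb{E}^{(n)}\|X^{(n)}_{t\wedge\tau}\|_{\mathbb{H}}^p$ for every stopping time $\tau\le T$, and then pass to the supremum in time via Lenglart's inequality. This costs the exponent $\gamma<1$ and yields exactly the power $(1+\mu_0(\|\cdot\|_{\mathbb{H}}^p))^\gamma$.

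\emph{Step 1 (bound at stopping times).} I start from the It\^o expansion (\ref{A13}) of $\|X^{(n)}_t\|_{\mathbb{H}}^p$. By $(\mathbf{A}_3)$ and Young's inequality, the drift part $\mathscr{I}_1$ is bounded above, after dropping the nonpositive $-\frac{p\delta}{2}\|X^{(n)}_s\|_{\mathbb{V}}^\alpha\|X^{(n)}_s\|_{\mathbb{H}}^{p-2}$ term, by
\[
C_p\int_0^t\Big(1+\|X^{(n)}_s\|_{\mathbb{H}}^p+\mathbb{E}^{(n)}\|X^{(n)}_s\|_{\mathbb{H}}^p\Big)ds .
\]
Here the measure-dependent term $\mathscr{L}_{X^{(n)}_s}(\|\cdot\|_{\mathbb{H}}^2)\le(\mathbb{E}^{(n)}\|X^{(n)}_s\|_{\mathbb{H}}^p)^{2/p}$ is a deterministic function of $s$, and it is controlled uniformly in $n$ and $s$ by Lemma \ref{Yesti1}. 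For a stopping time $\tau\le T$, I evaluate at $t\wedge\tau\wedge\tau^n_N$ and take expectations. The localized stochastic integral $\mathscr{I}_2$ has zero mean, since on $[0,\tau_N^n]$ the integrand is bounded in view of (\ref{conb}) and Lemma \ref{Yesti1}. Using Lemma \ref{Yesti1} once more to bound $\int_0^T\mathbb{E}^{(n)}\|X^{(n)}_s\|^p_{\mathbb{H}}ds$, I get
\[
\mathbb{E}^{(n)}\|X^{(n)}_{t\wedge\tau\wedge\tau_N^n}\|_{\mathbb{H}}^p\lesssim_{p,T}1+\mu_0(\|\cdot\|_{\mathbb{H}}^p)+\mathbb{E}^{(n)}\int_0^t\|X^{(n)}_{s\wedge\tau\wedge\tau^n_N}\|_{\mathbb{H}}^p\,ds .
\]
Gronwall's lemma followed by $N\to\infty$ (Fatou) then gives $\mathbb{E}^{(n)}\|X^{(n)}_{\tau}\|^p_{\mathbb{H}}\lesssim_{p,T}1+\mu_0(\|\cdot\|^p_{\mathbb{H}})$ uniformly in $n$ and in $\tau$. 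Note that $\mu_0\circ P_n^{-1}(\|\cdot\|^p_{\mathbb{H}})\le\mu_0(\|\cdot\|^p_{\mathbb{H}})$, because $P_n$ is a contraction on $\mathbb{H}$.

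\emph{Step 2 (Lenglart).} The process $\|X^{(n)}_t\|^p_{\mathbb{H}}$ is dominated by the continuous increasing adapted process $A_t:=\|X^{(n)}_0\|^p_{\mathbb{H}}+C\int_0^t(1+\|X^{(n)}_s\|^p_{\mathbb{H}}+\mathbb{E}^{(n)}\|X^{(n)}_s\|^p_{\mathbb{H}})ds$ plus a local martingale. Hence Lenglart's domination inequality in the form $\mathbb{E}[\sup_{t\le T}Y_t^\gamma]\le c_\gamma\,\mathbb{E}[A_T^\gamma]$ applies. By Jensen and Step 1 (or Lemma \ref{Yesti1}), $\mathbb{E}^{(n)}A_T^\gamma\le(\mathbb{E}^{(n)}A_T)^\gamma\lesssim_{p,T}(1+\mu_0(\|\cdot\|^p_{\mathbb{H}}))^\gamma$, which is (\ref{Yesti2p}).

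\emph{Main obstacle.} The delicate point is justifying the Lenglart domination cleanly. One can either apply Lenglart directly to the nonnegative semimartingale $\|X^{(n)}_t\|_{\mathbb{H}}^p$ with compensator bound $A_t$, or verify the hypothesis $\mathbb{E}Y_\tau\le\mathbb{E}A_\tau$ for all bounded stopping times $\tau$. The first route has the advantage that the constant depends only on $\gamma$, not on $n$. The $\mathbb{E}^{(n)}\|X^{(n)}_s\|^p_{\mathbb{H}}$ term causes no difficulty, since it is deterministic and uniformly bounded. This approach also avoids BDG, which would require controlling $\|\mathcal{B}\|^2\|X\|^{2p-2}$ with the super-linear growth (\ref{conb}) and is therefore best avoided.
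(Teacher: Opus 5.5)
Your proposal is correct and takes essentially the paper's route. The paper likewise localizes (\ref{Ye1}) at $\tau^n_N\wedge\tau_0$ for an arbitrary stopping time $\tau_0\le T$, obtains $\mathbb{E}^{(n)}\|X^{(n)}_{t\wedge\tau_0}\|_{\mathbb{H}}^p\lesssim_{p,T}1+\mu_0(\|\cdot\|_{\mathbb{H}}^p)$ via Fatou, and then concludes with Lenglart's inequality. The only harmless differences are that the paper bounds $\mathbb{E}^{(n)}\int_0^{t\wedge\tau_0}\|X^{(n)}_s\|^p_{\mathbb{H}}ds$ directly by Lemma \ref{Yesti1} instead of using Gronwall, and applies Lenglart with this deterministic bound rather than with your random dominating process $A_t$ plus Jensen.
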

\begin{proof}
	Let $\tau_0$ be a bounded stopping time with $\tau_0\leq T.$ By replacing $\tau_{N}^{n}$ with $\tau_{N}^{n}\wedge \tau_0$, we rewrite (\ref{Ye1}) as follows
	\begin{eqnarray*}
		\!\!\!\!\!\!\!\!&&\quad \mathbb{E}^{(n)}\|X^{(n)}_{t\wedge \tau_{N}^{n}\wedge \tau_0}\|_{\mathbb{H}}^p+\mathbb{E}^{(n)}\int_{0}^{t\wedge \tau_{N}^{n}\wedge \tau_0}\|X^{(n)}_s\|_{\mathbb{V}}^{\alpha }\|X^{(n)}_s\|_{\mathbb{H}}^{p-2}ds\\
		\!\!\!\!\!\!\!\!&&\lesssim_p \mu_0(\|\cdot\|_{\mathbb{H}}^p)+\mathbb{E}^{(n)}\int_{0}^{t \wedge \tau_{N}^{n}\wedge \tau_0} (1+\|X^{(n)}_s\|_{\mathbb{H}}^{p}+\mathbb{E}^{(n)}\|X^{(n)}_s\|_{\mathbb{H}}^{p})ds\\
		\!\!\!\!\!\!\!\!&&\lesssim_{p,T}(1+\mu_0(\|\cdot\|_{\mathbb{H}}^p))+\int_{0}^{T}\mathbb{E}^{(n)}\|X^{(n)}_s\|_{\mathbb{H}}^{p}ds\\
		\!\!\!\!\!\!\!\!&&\lesssim_{p,T}(1+\mu_0(\|\cdot\|_{\mathbb{H}}^p)),
	\end{eqnarray*}
	where the last step used Lemma \ref{Yesti1}. Taking $N \to \infty$ and  applying Fatou's lemma, we derive for any $t \in [0,T]$,
	\begin{equation*}
		\mathbb{E}^{(n)}\|X^{(n)}_{t\wedge \tau_0}\|_{\mathbb{H}}^p
		\lesssim_{p,T} 1+\mu_0(\|\cdot\|_{\mathbb{H}}^p).
	\end{equation*}
	Using Lenglart's inequality (cf.~\cite{GK}), 
	one gets
	\begin{equation*}
		\mathbb{E}^{(n)}\Big[\sup_{t\in [0,T]} \|X^{(n)}_t\|_{\mathbb{H}}^{\gamma p}\Big]
		\lesssim_{p,T}(1+\mu_0(\|\cdot\|_{\mathbb{H}}^p))^\gamma ,\text{ for any }\gamma \in (0,1).
	\end{equation*}
	We complete the proof.
\end{proof}

\begin{lemma}\label{nIPSesti2}
	Assume that the assumptions in Theorem \ref{th1} hold, for any $l \in (1,\frac{p}{\beta}] \cap (1,\frac{p}{2}) $ we have
	\begin{equation}\label{Yesti12}
		\sup_{n\in \mathbb{N}}\mathbb{E}^{(n)}\Big(\int_{0}^{T}\|X^{(n)}_t\|_{\mathbb{V}}^{\alpha }dt\Big)^l
		\lesssim_{T}1+\big(\mu_0(\|\cdot\|_{\mathbb{H}}^{p})\big)^\frac{2l}{p}+\mu_0(\|\cdot\|_{\mathbb{H}}^{\beta l}).
	\end{equation}
\end{lemma}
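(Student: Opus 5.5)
We apply It\^{o}'s formula to $\|X^{(n)}_t\|_{\mathbb{H}}^2$ (not the $p$-th power) and use the coercivity $(\mathbf{A}_3)$. The factor $p-1\geq 1$ in front of the noise term lets us discard the It\^{o} correction. This gives
\begin{equation*}
\delta\int_0^T\|X^{(n)}_t\|_{\mathbb{V}}^{\alpha}dt\leq \|X^{(n)}_0\|_{\mathbb{H}}^2+C\int_0^T\big(1+\|X^{(n)}_t\|_{\mathbb{H}}^2+\mathbb{E}^{(n)}\|X^{(n)}_t\|_{\mathbb{H}}^2\big)dt+2\sup_{t\in[0,T]}|M_t|,
\end{equation*}
where $M_t:=\int_0^t\langle X^{(n)}_s,P_n\mathcal{B}(s,X^{(n)}_s,\mathscr{L}_{X^{(n)}_s})dW^{(n)}_s\rangle_{\mathbb{H}}$. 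We raise both sides to the power $l$ and take $\mathbb{E}^{(n)}$, treating each term separately.

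\emph{Deterministic terms.} The initial term gives $\mathbb{E}^{(n)}\|X^{(n)}_0\|_{\mathbb{H}}^{2l}\leq \mu_0(\|\cdot\|_{\mathbb{H}}^{2l})\leq (\mu_0(\|\cdot\|_{\mathbb{H}}^p))^{2l/p}$ by Jensen, since $2l<p$. For the drift integral, H\"older in time and Lemma \ref{Yesti1} (with $2l<p$) give a bound by $1+(\mu_0(\|\cdot\|_{\mathbb{H}}^p))^{2l/p}$ up to constants depending on $T$.

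\emph{Martingale term.} By BDG and the growth bound (\ref{conb}),
\begin{equation*}
\mathbb{E}^{(n)}\sup_{t}|M_t|^l\lesssim \mathbb{E}^{(n)}\Big(\int_0^T\|X^{(n)}_s\|_{\mathbb{H}}^2\big(1+\|X^{(n)}_s\|_{\mathbb{H}}^{\beta}+\mathbb{E}^{(n)}\|X^{(n)}_s\|_{\mathbb{H}}^{\beta}\big)ds\Big)^{l/2}.
\end{equation*}
We split this with Young's inequality, $ab\leq \frac12a^2+\frac12b^2$ at the level of the $l/2$ powers, into $\mathbb{E}^{(n)}\sup_t\|X^{(n)}_t\|_{\mathbb{H}}^{2l}$ plus $\mathbb{E}^{(n)}\big(\int_0^T(1+\|X^{(n)}_s\|_{\mathbb{H}}^{\beta}+\mathbb{E}^{(n)}\|X^{(n)}_s\|_{\mathbb{H}}^{\beta})ds\big)^{l}$.

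\emph{The sup term.} This is the main obstacle, because Lemma \ref{Yesti2} only controls $\mathbb{E}^{(n)}\sup_t\|X^{(n)}_t\|_{\mathbb{H}}^{\gamma p}$ for $\gamma<1$. Since $2l<p$ strictly, we may take $\gamma=2l/p\in(0,1)$, which yields $(1+\mu_0(\|\cdot\|_{\mathbb{H}}^p))^{2l/p}$. The strictness of $2l<p$ in the hypothesis is exactly what makes this step work.

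\emph{The $\beta$-term.} Since $\beta l\leq p$, H\"older in time and the pointwise-in-time moment bound give a control by $\sup_t\mathbb{E}^{(n)}\|X^{(n)}_t\|_{\mathbb{H}}^{\beta l}$. This bound is uniform in $n$: the It\^{o} argument of Lemma \ref{Yesti1} run with exponent $\beta l$ in place of $p$ (admissible since $\beta l\leq p$, so $(\mathbf{A}_3)$ with constant $p-1$ dominates $\beta l-1$) gives $\sup_t\mathbb{E}^{(n)}\|X^{(n)}_t\|_{\mathbb{H}}^{\beta l}\lesssim 1+\mu_0(\|\cdot\|_{\mathbb{H}}^{\beta l})$. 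Alternatively, Jensen applied to Lemma \ref{Yesti1} yields $(\mu_0(\|\cdot\|_{\mathbb{H}}^p))^{\beta l/p}$. The first route produces the stated term $\mu_0(\|\cdot\|_{\mathbb{H}}^{\beta l})$.

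Collecting all the pieces, every constant is independent of $n$, which gives (\ref{Yesti12}). If needed, we carry out the argument first up to the stopping times $\tau^n_N$ and then let $N\to\infty$ by Fatou's lemma.
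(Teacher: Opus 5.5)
Your proposal is correct and follows the paper's proof essentially line by line. Both arguments apply It\^{o}'s formula to $\|X^{(n)}_t\|_{\mathbb{H}}^2$ with $(\mathbf{A}_3)$, use BDG together with (\ref{conb}), split the martingale term by Young's inequality into $\mathbb{E}^{(n)}\sup_t\|X^{(n)}_t\|_{\mathbb{H}}^{2l}$ and the $\beta$-integral, and then invoke Lemma \ref{Yesti2} with $\gamma=2l/p<1$.

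The one place you are more explicit than the paper is the bound $\sup_t\mathbb{E}^{(n)}\|X^{(n)}_t\|_{\mathbb{H}}^{\beta l}\lesssim 1+\mu_0(\|\cdot\|_{\mathbb{H}}^{\beta l})$. The paper simply attributes it to (\ref{Yesti1p}); you justify it by rerunning Lemma \ref{Yesti1} with exponent $\beta l\le p$. That rerun needs $\beta l\ge 2$. In the remaining case $\beta<2$, and your Jensen alternative already lands inside $1+(\mu_0(\|\cdot\|_{\mathbb{H}}^{p}))^{2l/p}$.
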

\begin{proof}
	Applying It\^{o}'s formula to  $\|X^{(n)}_t\|_{\mathbb{H}}^{2}$ and using the condition $(\mathbf{A}_3)$, it is straightforward that
	\begin{eqnarray*}
		\int_{0}^{t}\|X^{(n)}_s\|_{\mathbb{V}}^{\alpha}ds-\|X^{(n)}_0 \|_{\mathbb{H}}^2
		\lesssim \!\!\!\!\!\!\!\!&&\int_{0}^{t}(1+\|X^{(n)}_s\|_{\mathbb{H}}^{2}+\mathbb{E}^{(n)}\|X^{(n)}_s\|_{\mathbb{H}}^{2})ds\nonumber\\
		\!\!\!\!\!\!\!\!&&+\Big|\int_{0}^{t}\langle X^{(n)}_s, P_n\mathcal{B}(s, X^{(n)}_s, \mathscr{L}_{X^{(n)}_s})  d W^{(n)}_{s}\rangle_{\mathbb{H}}\Big|.
	\end{eqnarray*}
	Then by the condition $(\mathbf{A}_4)$, B-D-G's inequality and Young's inequality, we have
	\begin{eqnarray*}
		\!\!\!\!\!\!\!\!&&\quad\mathbb{E}^{(n)}\Big(\int_{0}^{T}\|X^{(n)}_t\|_{\mathbb{V}}^{\alpha}dt\Big)^l
		\\
		\!\!\!\!\!\!\!\!&&\lesssim\mu_0(\|\cdot\|_{\mathbb{H}}^{2l})+\mathbb{E}^{(n)}\int_{0}^{T}\big(1+\|X^{(n)}_t\|_{\mathbb{H}}^{2}+\mathbb{E}^{(n)}\|X^{(n)}_t\|_{\mathbb{H}}^{2}\big)^l dt\\
		\!\!\!\!\!\!\!\!&&\quad+\mathbb{E}^{(n)}\bigg\{\sup_{t\in [0,T]}\Big|\int_{0}^{t}\langle X^i_s, P_n\mathcal{B}(s, X^{(n)}_s, \mathscr{L}_{X^{(n)}_s})  d W^{(n)}_{s}\rangle_{\mathbb{H}}\Big|\bigg\}^l\\
		\!\!\!\!\!\!\!\!&&\lesssim_T 1+\mu_0(\|\cdot\|_{\mathbb{H}}^{2l})+\int_{0}^{T}\mathbb{E}^{(n)}\|X^{(n)}_t\|_{\mathbb{H}}^{2l}dt\\
		\!\!\!\!\!\!\!\!&&\quad+\mathbb{E}^{(n)}\bigg\{\sup_{t \in[0,T]}\|X_t^{(n)}\|_{\mathbb{H}}^{2}\cdot \int_{0}^{T}\big(1+\|X_t^{(n)}\|_{\mathbb{H}}^{\beta}+\mathbb{E}^{(n)}\|X_t^{(n)}\|_{\mathbb{H}}^{\beta}\big)dt\bigg\}^\frac{l}{2}\\
		\!\!\!\!\!\!\!\!&&\lesssim_T 1+\mu_0(\|\cdot\|_{\mathbb{H}}^{2l})+\mathbb{E}^{(n)}\Big[\sup_{t \in[0,T]}\|X_t^{(n)}\|_{\mathbb{H}}^{2l}\Big]+\sup_{t \in[0,T]}\mathbb{E}^{(n)}\|X_t^{(n)}\|_{\mathbb{H}}^{\beta l}\\
		\!\!\!\!\!\!\!\!&&\lesssim_T 1+\big(\mu_0(\|\cdot\|_{\mathbb{H}}^{p})\big)^\frac{2l}{p}+\mu_0(\|\cdot\|_{\mathbb{H}}^{\beta l}),
	\end{eqnarray*}
	where the last step used the estimates (\ref{Yesti1p}) and (\ref{Yesti2p}).
\end{proof}

Define a sequence of stopping times
$$\tau^{n}_R := \inf\Big\{t \in[0,T]: \|X^{(n)}_t\|_{\mathbb{H}} >R\Big\} \wedge \inf\Big\{t \in[0,T]: \int_0^t \|X^{(n)}_s\|_{\mathbb{V}}^{\alpha}ds > R\Big\} \wedge T.$$

We present the following lemmas to establish the tightness of the family of laws $\{\mathscr{L}_{X^{(n)}}\}.$
\begin{lemma}\label{tightness}
	$\{\mathscr{L}_{X^{(n)}}\}_{n=1}^{\infty}$ is tight in $\mathscr{P}(\mathbb{C}_T(\mathbb{V}^{*}))$.
\end{lemma}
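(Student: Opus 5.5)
We plan to use the standard tightness criterion in $\mathbb{C}_T(\mathbb{V}^*)$: a family of laws is tight once, for every $e$ in a dense countable set of $\mathbb{V}$ (we take the basis $\{e_i\}\subset\mathbb{V}$), the real processes $\{{}_{\mathbb{V}^*}\langle X^{(n)},e\rangle_{\mathbb{V}}\}_n$ are tight in $C([0,T];\mathbb{R})$, and a compact-containment condition holds. Compact containment follows from Lemma \ref{Yesti2}: the balls $\{\|x\|_{\mathbb{H}}\le R\}$ are relatively compact in $\mathbb{V}^*$, because $\mathbb{V}\subset\mathbb{H}$ is compact and hence so is $\mathbb{H}\subset\mathbb{V}^*$. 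Chebyshev then gives $\sup_n\mathbb{P}^{(n)}(\sup_t\|X^{(n)}_t\|_{\mathbb{H}}>R)\lesssim R^{-\gamma p}$. This reduces the problem to the one-dimensional projections, which is the approach of \cite{RSZ}.

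For fixed $e=e_i$, we write
\[
\langle X^{(n)}_t,e\rangle=\langle X^{(n)}_0,e\rangle+\int_0^t {}_{\mathbb{V}^*}\langle \mathcal{A}(s,X^{(n)}_s,\mathscr{L}_{X^{(n)}_s}),P_ne\rangle_{\mathbb{V}}ds+\int_0^t\langle P_n\mathcal{B}\,dW^{(n)}_s,e\rangle_{\mathbb{H}},
\]
and verify Aldous' criterion. Let $\tau_n\le T$ be stopping times and $\theta\in(0,1)$. We localize with $\tau^n_R$ and split on the event $\{\tau^n_R<T\}$. By Lemma \ref{Yesti2} and Lemma \ref{nIPSesti2} together with Chebyshev, this event has probability at most $C(R^{-\gamma p}+R^{-1})$, uniformly in $n$. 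On $[0,\tau^n_R]$ we have $\|X^{(n)}\|_{\mathbb{H}}\le R$ and $\int_0^T\|X^{(n)}\|_{\mathbb{V}}^\alpha\le R$. The measure moments $\mathscr{L}_{X^{(n)}_s}(\|\cdot\|^\beta_{\mathbb{H}})$ are deterministic and bounded uniformly in $n,s$ by Lemma \ref{Yesti1}, using $p\ge\beta$. The integral $\int_0^T\mathscr{L}_{X^{(n)}_s}(\|\cdot\|_{\mathbb{V}}^\alpha)ds$ is also uniformly bounded by Lemma \ref{Yesti1}.

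For the drift, we use $(\mathbf{A}_4)$ and Hölder. We need $\|P_ne\|_{\mathbb{V}}$ to be bounded; for $n\ge i$ we have $P_ne=e$, so this holds. This gives
\[
\int_{\tau_n\wedge\tau^n_R}^{(\tau_n+\theta)\wedge\tau^n_R}\|\mathcal{A}\|_{\mathbb{V}^*}ds\le \theta^{1/\alpha}\Big(\int_0^{T}\|\mathcal{A}\|^{\frac{\alpha}{\alpha-1}}_{\mathbb{V}^*}1_{[0,\tau_R^n]}ds\Big)^{\frac{\alpha-1}{\alpha}}.
\]
The time integral of $(1+\|X\|_{\mathbb{V}}^\alpha+\mu_s(\|\cdot\|^\alpha_{\mathbb{V}}))(1+R^\beta+C)$ has expectation bounded by $C_R$, uniformly in $n$. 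For the martingale part, Itô's isometry together with \eqref{conb} yields expectation $\lesssim\theta(1+R^\beta+C)$. Combining both, $\sup_n\mathbb{P}(|\langle X^{(n)}_{\tau_n+\theta}-X^{(n)}_{\tau_n},e\rangle|>\varepsilon)\le C_R\theta^{1/\alpha}\varepsilon^{-1}+C(R^{-\gamma p}+R^{-1})$. Choosing $R$ large and then $\theta$ small verifies Aldous' condition. Tightness of $\langle X_0^{(n)},e\rangle$ is immediate from the uniform moment bounds.

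The main obstacle is not the estimates themselves but the passage from projections to $\mathbb{C}_T(\mathbb{V}^*)$. Here we would invoke the Jakubowski-type criterion, which states that compact containment in $\mathbb{V}^*$ plus tightness of $\langle\cdot,e_i\rangle$ for a separating countable family suffices, as done in \cite{RSZ}. A secondary care point is that the measure argument $\mathscr{L}_{X^{(n)}_s}$ is not localized by $\tau^n_R$. We handle this by bounding its moments deterministically through Lemma \ref{Yesti1}, which is exactly why the growth bound $(\mathbf{A}_4)$ separates the dependence on $u$ from the dependence on $\mu$.
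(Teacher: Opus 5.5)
Your proposal is correct and follows essentially the same route as the paper. Both arguments get compact containment from Lemma \ref{Yesti2} and the compact embedding $\mathbb{H}\subset\mathbb{V}^*$, reduce to the real projections via Jakubowski's criterion, and then apply Aldous's criterion with the same localization $\tau^n_R$; the drift is handled by $(\mathbf{A}_4)$, H\"older and deterministic bounds on the measure moments, and the noise by BDG/It\^o isometry with \eqref{conb}.

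The differences are only cosmetic and do not affect the conclusion. You use a first-order Chebyshev bound instead of the paper's exponent $\frac{\alpha}{\alpha-1}$, so the martingale part contributes $\theta^{1/2}$ rather than $\theta^{1/\alpha}$. Also, the bound on $\mathbb{E}\int_0^T\|X^{(n)}_t\|_{\mathbb{V}}^\alpha dt$ is best taken from Lemma \ref{nIPSesti2} rather than Lemma \ref{Yesti1}.
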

\begin{proof}
	By Chebyshev's inequality and Lemma \ref{Yesti2}, it is easy to get
	\begin{equation}\label{tight1}
		\lim_{M \to \infty} \sup_{n \in \mathbb{N}} \mathbb{P}^{(n)}\Big( \sup_{t\in[0,T]} \|X^{(n)}_t\|_{\mathbb{H}} > M\Big)=0.
	\end{equation}
	Since the embedding $\mathbb{H} \subset \mathbb{V}^{*}$ is compact, Theorem 3.1 in \cite{J} implies that it is sufficient to verify the tightness of $\{\,_{\mathbb{V}^*}\langle X^{(n)}, e\rangle_{\mathbb{V}}\}_{n=1}^{\infty}$ in $\mathbb{C}_T(\mathbb{R})$, for every $e \in \mathbb{H}_m$ with $m \geq 1$. By Aldous's criterion \cite{A} and the estimate (\ref{tight1}), the proof reduces to demonstrating that for any stopping time $0 \leq \zeta_n \leq T$ and any $\varepsilon > 0$, the following limit holds:
	\begin{equation}\label{tight2}
		\lim_{\Lambda  \to 0} \sup_{n \in \mathbb{N}} \mathbb{P}^{(n)} \big( | \,_{\mathbb{V}^*}\langle X^{(n)}_{\zeta_n + \Lambda} - X^{(n)}_{\zeta_n}, e \rangle_{\mathbb{V}} | > \varepsilon \big) = 0,
	\end{equation}
	where $\zeta_n + \Lambda := T \wedge (\zeta_n + \Lambda) \lor 0$.
	
	Using Chebyshev's inequality, we derive
	\begin{eqnarray}\label{tight3}
		\!\!\!\!\!\!\!\!&&\mathbb{P}^{(n)} \Big( | \,_{\mathbb{V}^*}\langle X^{(n)}_{\zeta_n + \Lambda} - X^{(n)}_{\zeta_n}, e \rangle_{\mathbb{V}} | > \varepsilon \Big)\nonumber\\
		\leq\!\!\!\!\!\!\!\!&& \mathbb{P}^{(n)} \Big( | \,_{\mathbb{V}^*}\langle X^{(n)}_{\zeta_n + \Lambda} - X^{(n)}_{\zeta_n}, e \rangle_{\mathbb{V}} | > \varepsilon,\tau_{R}^{n} \geq T \Big)+\mathbb{P}^{(n)} \big(\tau_{R}^{n}<T \big)\nonumber\\
		\leq \!\!\!\!\!\!\!\!&&\frac{1}{\varepsilon^{\frac{\alpha}{\alpha-1}}} \mathbb{E}^{(n)}\Big(| \,_{\mathbb{V}^*}\langle X^{(n)}_{(\zeta_n + \Lambda)\wedge\tau_{R}^{n}} - X^{(n)}_{\zeta_n\wedge\tau_{R}^{n}}, e \rangle_{\mathbb{V}} |^{\frac{\alpha}{\alpha-1}}\Big) + \mathbb{P}^{(n)} \big(\tau_{R}^{n}<T \big).
	\end{eqnarray}
	For the second term on the right-hand side of (\ref{tight3}), by applying Chebyshev's inequality it follows that
	\begin{eqnarray}\label{tauR}
		\mathbb{P}^{(n)} \big(\tau_{R}^{n}<T \big)
		\leq\!\!\!\!\!\!\!\!&& \mathbb{P}^{(n)} \Big( \int_0^T \|X^{(n)}_t\|_{\mathbb{V}}^\alpha dt > R \Big)+\mathbb{P}^{(n)} \Big( \sup_{t \in [0,T]} \|X^{(n)}_t\|_{\mathbb{H}} > R\Big)\nonumber\\
		\leq\!\!\!\!\!\!\!\!&& \frac{1}{R} \mathbb{E}^{(n)} \int_0^T \|X^{(n)}_t\|_{\mathbb{V}}^\alpha dt + \frac{1}{R^2} \mathbb{E}^{(n)} \Big[ \sup_{t \in [0,T]} \|X^{(n)}_t\|_{\mathbb{H}}^2 \Big].
	\end{eqnarray}
	Recalling the estimate (\ref{Yesti12}) and Lemma \ref{Yesti2}, we can derive
	\begin{equation*}
		\lim_{R \to \infty}\sup_{n \in \mathbb{N}} \mathbb{P}^{(n)} \big(\tau_{R}^{n}<T \big)=0.
	\end{equation*}
	We claim that
	\begin{equation}\label{tight4}
		\sup_{n \in \mathbb{N}}\mathbb{E}^{(n)}\Big(| \,_{\mathbb{V}^*}\langle X^{(n)}_{(\zeta_n + \Lambda)\wedge\tau_{R}^{n}} - X^{(n)}_{\zeta_n\wedge\tau_{R}^{n}}, e \rangle_{\mathbb{V}} |^{\frac{\alpha}{\alpha-1}}\Big)\to0~~\text{as}~~\Lambda \to0,
	\end{equation}
	whose proof is given later. Then taking $\Lambda \to0$ and then $R \to \infty$ in (\ref{tight3}), we can conclude (\ref{tight2}) holds.
	
	Now we prove (\ref{tight4}), by Burkholder--Davis--Gundy inequality it follows that
	\begin{eqnarray}\label{tight5}
		\!\!\!\!\!\!\!\!&&\mathbb{E}^{(n)}\Big(| \,_{\mathbb{V}^*}\langle X^{(n)}_{(\zeta_n + \Lambda)\wedge\tau_{R}^{n}} - X^{(n)}_{\zeta_n\wedge\tau_{R}^{n}}, e \rangle_{\mathbb{V}} |^{\frac{\alpha}{\alpha-1}}\Big)\nonumber\\
		\lesssim\!\!\!\!\!\!\!\!&&  \mathbb{E}^{(n)}\bigg\{ \int_{\zeta_n\wedge\tau_{R}^{n}}^{(\zeta_n + \Lambda)\wedge\tau_{R}^{n}} | \,_{\mathbb{V}^*}\langle P_{n} \mathcal{A}(t, X^{(n)}_t, \mathscr{L}_{X^{(n)}_t}), e \rangle_{\mathbb{V}}| dt \bigg\}^{\frac{\alpha}{\alpha - 1}}\nonumber\\
		\!\!\!\!\!\!\!\!&&+ \mathbb{E}^{(n)}\bigg\{ \int_{\zeta_n\wedge\tau_{R}^{n}}^{(\zeta_n + \Lambda)\wedge\tau_{R}^{n}} \|P_n \mathcal{B}(t, X^{(n)}_t, \mathscr{L}_{X^{(n)}_t}) \|_{L_{2}(U, \mathbb{H})}^{2}\|e\|_{\mathbb{H}}^2 dt \bigg\}^{\frac{\alpha}{2(\alpha - 1)}}\nonumber\\
		=:\!\!\!\!\!\!\!\!&&\mathscr{J}_1+\mathscr{J}_2.
	\end{eqnarray}
	It is noteworthy that by Lemmas \ref{Yesti1} and \ref{Yesti2} we know
	\begin{equation*}
		\int_{0}^{T\wedge\tau_{R}^{n}} \mathscr{L}_{X^{(n)}_t}(\|\cdot\|_{\mathbb{V}}^{\alpha})dt\leq \mathbb{E}^{(n)}\int_{0}^{T}\|X^{(n)}_t\|_{\mathbb{V}}^{\alpha}dt\lesssim 1+\mu_0(\|\cdot\|_{\mathbb{H}}^2),
	\end{equation*}
	and
	\begin{equation*}
		\mathscr{L}_{X^{(n)}_t}(\|\cdot\|_{\mathbb{H}}^{\beta})\leq\sup_{t \in [0,T]}\mathbb{E}^{(n)} \|X^{(n)}_t\|_{\mathbb{H}}^\beta \lesssim 1+\mu_0(\|\cdot\|_{\mathbb{H}}^{\beta}).
	\end{equation*}
	Therefore, applying the condition $(\mathbf{A}_4)$ and H\"older's inequality, we derive
	\begin{eqnarray*}
		\mathscr{J}_1\!\!\!\!\!\!\!\!&&\lesssim |\Lambda |^{\frac{1}{\alpha-1}} \cdot \mathbb{E}^{(n)} \int_{\zeta_n\wedge\tau_{R}^{n}}^{(\zeta_n + \Lambda)\wedge\tau_{R}^{n}}|\,_{\mathbb{V}^*}\langle P_{n} \mathcal{A}(t, X^{(n)}_t, \mathscr{L}_{X^{(n)}_t}), e \rangle_{\mathbb{V}}|^{\frac{\alpha}{\alpha-1}} dt\\
		\!\!\!\!\!\!\!\!&&\lesssim_{\|e\|_{\mathbb{V}}} |\Lambda |^{\frac{1}{\alpha-1}} \cdot \mathbb{E}^{(n)} \int_0^{T \wedge \tau_{R}^{n}}\Big\{ (1 + \|X^{(n)}_t\|_{\mathbb{V}}^\alpha + \mathscr{L}_{X^{(n)}_t}(\| \cdot \|_{\mathbb{V}}^\alpha))
		\\
		\!\!\!\!\!\!\!\!&&
		\cdot(1 + \|X^{(n)}_t\|_{\mathbb{H}}^\beta + \mathscr{L}_{X^{(n)}_t}(\| \cdot \|_{\mathbb{H}}^\beta))\Big\} dt
		\\
		\!\!\!\!\!\!\!\!&&\lesssim_{R,\|e\|_{\mathbb{V}}} |\Lambda |^{\frac{1}{\alpha-1}}(1+\mu_0(\|\cdot\|_{\mathbb{H}}^{\beta}))\cdot\mathbb{E}^{(n)} \int_0^{T \wedge \tau_{R}^{n}} (1 + \|X^{(n)}_t\|_{\mathbb{V}}^\alpha + \mathscr{L}_{X^{(n)}_t}(\| \cdot \|_{\mathbb{V}}^\alpha))dt
		\\
		\!\!\!\!\!\!\!\!&&\lesssim_{R,\|e\|_{\mathbb{V}}} |\Lambda |^{\frac{1}{\alpha-1}}(1+\mu_0(\|\cdot\|_{\mathbb{H}}^{\beta}))(1+\mu_0(\|\cdot\|_{\mathbb{H}}^{2})),
	\end{eqnarray*}
	and
	\begin{eqnarray*}
		\mathscr{J}_2\!\!\!\!\!\!\!\!&&\lesssim_{\|e\|_{\mathbb{H}}}\mathbb{E}^{(n)} \bigg[\bigg( \int_{\zeta_n\wedge\tau_{R}^{n}}^{(\zeta_n + \Lambda)\wedge\tau_{R}^{n}}(1 + \|X^{(n)}_t\|_{\mathbb{H}}^\beta + \mathscr{L}_{X^{(n)}_t}(\| \cdot \|_{\mathbb{H}}^\beta)) dt \bigg)^{\frac{\alpha}{2(\alpha-1)}}\bigg]\\
		\!\!\!\!\!\!\!\!&&\lesssim_{R,\|e\|_{\mathbb{H}}} |\Lambda |^{\frac{\alpha}{2(\alpha-1)}}(1+\mu_0(\|\cdot\|_{\mathbb{H}}^{\beta}))^{\frac{\alpha}{2(\alpha-1)}}.
	\end{eqnarray*}
	By revisiting (\ref{tight5}) and taking the limit $\Lambda \to 0,$ we conclude that (\ref{tight4}) is satisfied. We complete the proof.
\end{proof}

\begin{lemma}\label{tightness2}
	$\{\mathscr{L}_{X^{(n)}}\}_{n=1}^{\infty}$ is tight in $\mathscr{P}( L^{\alpha}([0,T], \mathbb{H})).$
\end{lemma}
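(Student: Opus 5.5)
We prove tightness in $L^{\alpha}([0,T];\mathbb{H})$ with a compactness criterion of Aubin--Lions--Simon type. Since $\mathbb{V}\subset\mathbb{H}$ is compact and $\mathbb{H}\subset\mathbb{V}^*$ is continuous, a set $K\subset L^\alpha([0,T];\mathbb{H})$ is relatively compact as soon as it is bounded in $L^\alpha([0,T];\mathbb{V})$ and relatively compact in $\mathbb{C}_T(\mathbb{V}^*)$ (equivalently, it is equicontinuous in $\mathbb{V}^*$ and bounded in $L^\infty([0,T];\mathbb{H})$). This is Simon's compactness theorem. It also follows from the Lions--Ehrling inequality
$$\|u\|_{\mathbb{H}}\le \varepsilon\|u\|_{\mathbb{V}}+C_\varepsilon\|u\|_{\mathbb{V}^*},\quad u\in\mathbb{V}.$$
This inequality gives, for $u,v\in K$,
$$\|u-v\|_{L^\alpha(\mathbb{H})}\le \varepsilon\|u-v\|_{L^\alpha(\mathbb{V})}+C_\varepsilon T^{1/\alpha}\|u-v\|_{T,\mathbb{V}^*}.$$
Hence a sequence converging in $\mathbb{C}_T(\mathbb{V}^*)$ and bounded in $L^\alpha(\mathbb{V})$ is Cauchy in $L^\alpha(\mathbb{H})$.

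Fix $\varepsilon>0$. By Lemma \ref{tightness}, choose a compact set $K_1\subset\mathbb{C}_T(\mathbb{V}^*)$ with $\sup_n\mathbb{P}^{(n)}(X^{(n)}\notin K_1)<\varepsilon/3$. By Lemma \ref{nIPSesti2} with $l=1$ (or directly by Lemma \ref{Yesti1}) and Chebyshev's inequality, choose $R$ with
$$\sup_n\mathbb{P}^{(n)}\Big(\int_0^T\|X^{(n)}_t\|_{\mathbb{V}}^\alpha dt>R\Big)<\varepsilon/3.$$
By Lemma \ref{Yesti2}, choose $M$ with $\sup_n\mathbb{P}^{(n)}(\sup_t\|X^{(n)}_t\|_{\mathbb{H}}>M)<\varepsilon/3$. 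Set
$$K:=K_1\cap\Big\{u:\int_0^T\|u_t\|_{\mathbb{V}}^\alpha dt\le R,\ \sup_t\|u_t\|_{\mathbb{H}}\le M\Big\}.$$
Then $\sup_n\mathbb{P}^{(n)}(X^{(n)}\notin K)<\varepsilon$.

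It remains to check that $K$ is relatively compact in $L^\alpha([0,T];\mathbb{H})$. Take a sequence $u^k\in K$. By compactness of $K_1$, pass to a subsequence converging in $\mathbb{C}_T(\mathbb{V}^*)$. The interpolation estimate above, combined with the uniform bound $\|u^k\|_{L^\alpha(\mathbb{V})}\le R^{1/\alpha}$, shows that this subsequence is Cauchy in $L^\alpha(\mathbb{H})$. The closure of $K$ in $L^\alpha(\mathbb{H})$ is therefore compact, and Prokhorov's criterion yields tightness.

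The main technical point is the measurability and topological bookkeeping. We need $\int_0^T\|u_t\|_{\mathbb{V}}^\alpha dt$ to be a lower semicontinuous (hence Borel) functional on $L^\alpha(\mathbb{H})$, with value $+\infty$ off $L^\alpha(\mathbb{V})$. This holds because, by reflexivity of $\mathbb{V}$, the $L^\alpha(\mathbb{V})$-ball is weakly closed. We also need the law of $X^{(n)}$ to be well defined on $L^\alpha([0,T];\mathbb{H})$; this follows since $X^{(n)}\in\mathbb{C}_T(\mathbb{H}_n)$.
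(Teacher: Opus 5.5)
Your argument is correct, but it takes a different route from the paper.

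\textbf{The paper's route.} The paper does not use Lemma \ref{tightness} here. It applies the criterion \cite[Lemma 5.2]{RSZ}, which needs two ingredients:
(a) a uniform-in-probability bound on $\int_0^T\|X^{(n)}_t\|_{\mathbb{V}}^\alpha dt$;
(b) the time-shift condition $\sup_{0\le\Lambda\le\delta}\int_0^{T-\Lambda}\|X^{(n)}_{t+\Lambda}-X^{(n)}_t\|_{\mathbb{V}^*}^\alpha dt\to0$ in probability, uniformly in $n$.
It verifies (b) by localizing with $\tau^n_R$ and applying It\^o's formula to $\|X^{(n)}_{(t+\Lambda)\wedge\tau^n_R}-X^{(n)}_{t\wedge\tau^n_R}\|_{\mathbb{H}}^2$, resp.\ $\|\cdot\|_{\mathbb{H}}^\alpha$. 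This requires a case split $1<\alpha\le2$ versus $\alpha>2$.

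\textbf{Your route.} You observe that (b) is already contained in the $\mathbb{C}_T(\mathbb{V}^*)$-tightness of Lemma \ref{tightness}. Intersecting a compact set of $\mathbb{C}_T(\mathbb{V}^*)$ with an $L^\alpha(\mathbb{V})$-ball and using the Lions--Ehrling inequality gives relative compactness in $L^\alpha([0,T];\mathbb{H})$. The only new input is then the moment bound (a).

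\textbf{Comparison.} Your route is much shorter and avoids all the It\^o computations. It also shows directly that your set $K$ is relatively compact in $\mathcal{Z}_T$, since a subsequence converges simultaneously in $\mathbb{C}_T(\mathbb{V}^*)$ and $L^\alpha(\mathbb{H})$. That is exactly Corollary \ref{coro1}, with no separate argument needed. The paper's route, in exchange, makes the $L^\alpha(\mathbb{H})$-tightness independent of Lemma \ref{tightness} and the Jakubowski/Aldous machinery behind it. It also produces a quantitative modulus of continuity for the stopped processes in the stronger $\mathbb{H}$-norm, though that extra information is not used later.

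\textbf{Minor points.}
\begin{itemize}
\item Lemma \ref{nIPSesti2} is stated only for $l>1$. Get the $l=1$ bound by Jensen from an admissible exponent, e.g.\ $l=2$, which is admissible because $p\ge 2\beta$ and $p>4$.
\item The constraint $\sup_t\|u_t\|_{\mathbb{H}}\le M$ in $K$ is superfluous.
\item The ``equivalently'' in your first paragraph should read ``for instance'': relative compactness in $\mathbb{C}_T(\mathbb{V}^*)$ does not give an $L^\infty(\mathbb{H})$ bound.
\item The lower-semicontinuity remark is unnecessary. Since $\{X^{(n)}\in K\}\subset\{X^{(n)}\in\overline{K}\}$, and $u\mapsto\int_0^T\|u_t\|_{\mathbb{V}}^\alpha dt$ is continuous on $\mathbb{C}_T(\mathbb{H}_n)$ (because $\mathbb{H}_n\subset\mathbb{V}$ is finite-dimensional), measurability is automatic. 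If you keep the remark, note that what makes $L^\alpha(\mathbb{V})$-balls closed in $L^\alpha(\mathbb{H})$ is their weak compactness (from reflexivity), not weak closedness.
\end{itemize}
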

\begin{proof}
	Since $\mathbb{V}\subset \mathbb{H}$ is compact, according to \cite[Lemma 5.2]{RSZ}, it is sufficient to verify
	
	\vspace{1mm}
	(a)
	\begin{equation*}
		\lim_{M \to \infty} \sup_{n \in \mathbb{N}} \mathbb{P}^{(n)} \bigg(\int_{0}^{T} \|X^{(n)}_t\|_{\mathbb{V}}^\alpha dt>M\bigg)=0;
	\end{equation*}
	
	\vspace{1mm}
	(b) for any $\epsilon>0$
	\begin{equation*}
		\lim_{\delta\to 0^+}\sup_{n \in \mathbb{N}} \mathbb{P}^{(n)} \bigg(\sup _{0 \leq \Lambda \leq \delta}\int_{0}^{T-\Lambda } \|X^{(n)}_{t+\Lambda }-X^{(n)}_t\|_{\mathbb{V}^*}^\alpha dt>\epsilon \bigg)= 0.
	\end{equation*}
	
	Based on Lemma \ref{Yesti1}, it is clear that (a) is satisfied.
	Now we  verify (b). By Chebyshev's inequality, we derive that
	\begin{eqnarray}\label{es6}
		\!\!\!\!\!\!\!\!&&\quad\mathbb{P}^{(n)} \bigg(\sup _{0 \leq \Lambda \leq \delta}\int_{0}^{T-\Lambda } \|X^{(n)}_{t+\Lambda }-X^{(n)}_t\|_{\mathbb{V}^*}^\alpha dt>\epsilon \bigg)\nonumber\\
		\!\!\!\!\!\!\!\!&&\lesssim\mathbb{P}^{(n)} \bigg(\sup _{0 \leq \Lambda \leq \delta}\int_{0}^{T-\Lambda } \|X^{(n)}_{t+\Lambda }-X^{(n)}_t\|_{\mathbb{V}^*}^\alpha dt>\epsilon, \tau_{R}^{n} \geq T \bigg)+\mathbb{P}^{(n)} \big(\tau_{R}^{n} < T \big)\nonumber\\
		\!\!\!\!\!\!\!\!&&\lesssim \frac{1}{\epsilon} \mathbb{E}^{(n)} \bigg(\sup _{0 \leq \Lambda \leq \delta}\int_{0}^{T-\Lambda } \|X^{(n)}_{(t+\Lambda )\wedge \tau_{R}^{n} }-X^{(n)}_{t\wedge \tau_{R}^{n} }\|_{\mathbb{V}^*}^\alpha dt\bigg)+\mathbb{P}^{(n)} \big(\tau_{R}^{n} < T \big).
	\end{eqnarray}
	We claim that
	\begin{equation}\label{ball}
		\lim_{\delta  \to 0^+}\sup_{n\in\mathbb{N}} \mathbb{E}^{(n)} \bigg(\sup _{0 \leq \Lambda \leq \delta}\int_{0}^{T-\Lambda } \|X^{(n)}_{(t+\Lambda )\wedge \tau_{R}^{n} }-X^{(n)}_{t\wedge \tau_{R}^{n} }\|_{\mathbb{H}}^\alpha dt\bigg)=0.
	\end{equation}
	Combining (\ref{tauR}), (\ref{ball}), and the fact that $\mathbb{H}\subset \mathbb{V}^*$, (b) is satisfied by  letting $\delta  \to 0^+$ and then $R\to \infty$ in (\ref{es6}). Then the lemma follows.
	
	Now the  remaining step is to prove (\ref{ball}). Regarding the range of values of $\alpha$, the proof is divided into two cases:
	
	\noindent\textbf{Case 1:} ($1<\alpha \leq 2$). Note that H\"older's inequality implies
	\begin{eqnarray*}
		\!\!\!\!\!\!\!\!&&\lim_{\delta  \to 0^+}\sup_{n\in\mathbb{N}} \mathbb{E}^{(n)} \bigg(\sup _{0 \leq \Lambda \leq \delta}\int_{0}^{T-\Lambda } \|X^{(n)}_{(t+\Lambda )\wedge \tau_{R}^{n} }-X^{(n)}_{t\wedge \tau_{R}^{n} }\|_{\mathbb{H}}^\alpha dt\bigg)\\
		\lesssim\!\!\!\!\!\!\!\!&& \lim_{\delta  \to 0^+}\sup_{n\in\mathbb{N}} \bigg\{\mathbb{E}^{(n)} \bigg(\sup _{0 \leq \Lambda \leq \delta}\int_{0}^{T-\Lambda } \|X^{(n)}_{(t+\Lambda )\wedge \tau_{R}^{n} }-X^{(n)}_{t\wedge \tau_{R}^{n} }\|_{\mathbb{H}}^2 dt\bigg)\bigg\}^\frac{\alpha }{2}.
	\end{eqnarray*}
	Thus it remains to prove
	\begin{equation}\label{b1}
		\sup_{n\in\mathbb{N}} \mathbb{E}^{(n)} \bigg(\sup _{0 \leq \Lambda \leq \delta}\int_{0}^{T-\Lambda } \|X^{(n)}_{(t+\Lambda )\wedge \tau_{R}^{n} }-X^{(n)}_{t\wedge \tau_{R}^{n} }\|_{\mathbb{H}}^2 dt\bigg)\xrightarrow{\delta\to 0^+}0.
	\end{equation}
	
	Applying It\^{o}'s formula, we have
	\begin{eqnarray*}
		\!\!\!\!\!\!\!\!&&\|X^{(n)}_{(t+\Lambda )\wedge \tau_{R}^{n} }-X^{(n)}_{t\wedge \tau_{R}^{n} }\|_{\mathbb{H}}^2\\
		\leq\!\!\!\!\!\!\!\!&&2\int_{t\wedge \tau_{R}^{n}}^{(t+\Lambda )\wedge \tau_{R}^{n}} \,_{\mathbb{V}^*}\langle  \mathcal{A}(s, X^{(n)}_s, \mathscr{L}_{X^{(n)}_s}), X^{(n)}_s- X^{(n)}_{t\wedge \tau_{R}^{n}} \rangle_{\mathbb{V}} ds\\
		\!\!\!\!\!\!\!\!&&+\int_{t\wedge \tau_{R}^{n}}^{(t+\Lambda )\wedge \tau_{R}^{n}}\| \mathcal{B}(s, X^{(n)}_s, \mathscr{L}_{X^{(n)}_s})\|_{L_{2}(U; \mathbb{H})}^{2}ds\\
		\!\!\!\!\!\!\!\!&&+2\int_{t\wedge \tau_{R}^{n}}^{(t+\Lambda )\wedge \tau_{R}^{n}}\langle  \mathcal{B}(s, X^{(n)}_s, \mathscr{L}_{X^{(n)}_s})  d W^{(n)}_{s}, X^{(n)}_s- X^{(n)}_{t\wedge \tau_{R}^{n}}\rangle_{\mathbb{H}}.
	\end{eqnarray*}
	It follows that
	\begin{eqnarray}\label{b2}
		\!\!\!\!\!\!\!\!&&\mathbb{E}^{(n)} \bigg(\sup _{0 \leq \Lambda \leq \delta}\int_{0}^{T-\Lambda } \|X^{(n)}_{(t+\Lambda )\wedge \tau_{R}^{n} }-X^{(n)}_{t\wedge \tau_{R}^{n} }\|_{\mathbb{H}}^2 dt\bigg)\nonumber\\
		\lesssim\!\!\!\!\!\!\!\!&&\int_{0}^{T}\mathbb{E}^{(n)} \Big(\sup _{0 \leq \Lambda \leq \delta}\|X^{(n)}_{(t+\Lambda )\wedge \tau_{R}^{n} }-X^{(n)}_{t\wedge \tau_{R}^{n} }\|_{\mathbb{H}}^2 \Big)dt\nonumber\\
		\lesssim\!\!\!\!\!\!\!\!&&\int_{0}^{T}\mathbb{E}^{(n)} \bigg(\sup _{0 \leq \Lambda \leq \delta}\Big|\int_{t\wedge \tau_{R}^{n}}^{(t+\Lambda )\wedge \tau_{R}^{n}} \,_{\mathbb{V}^*}\langle  \mathcal{A}(s, X^{(n)}_s, \mathscr{L}_{X^{(n)}_s}),  X^{(n)}_{t\wedge \tau_{R}^{n}} \rangle_{\mathbb{V}} ds\Big|\bigg)dt\nonumber\\
		\!\!\!\!\!\!\!\!&&+\int_{0}^{T}\mathbb{E}^{(n)} \bigg(\sup _{0 \leq \Lambda \leq \delta}\int_{t\wedge \tau_{R}^{n}}^{(t+\Lambda )\wedge \tau_{R}^{n}}\Big(2\,_{\mathbb{V}^*}\langle  \mathcal{A}(s, X^{(n)}_s, \mathscr{L}_{X^{(n)}_s}), X^{(n)}_s \rangle_{\mathbb{V}}
		\nonumber\\
		\!\!\!\!\!\!\!\!&&+\| \mathcal{B}(s, X^{(n)}_s, \mathscr{L}_{X^{(n)}_s}) \|_{L_{2}(U; \mathbb{H})}^{2}\Big)ds\bigg)dt\nonumber\\
		\!\!\!\!\!\!\!\!&&+\int_{0}^{T}\mathbb{E}^{(n)} \bigg(\sup _{0 \leq \Lambda \leq \delta}\Big|\int_{t\wedge \tau_{R}^{n}}^{(t+\Lambda )\wedge \tau_{R}^{n}}\langle \mathcal{B}(s, X^{(n)}_s, \mathscr{L}_{X^{(n)}_s})  d W^{(n)}_{s}, X^{(n)}_s- X^{(n)}_{t\wedge \tau_{R}^{n}}\rangle_{\mathbb{H}}\Big|\bigg)dt\nonumber\\
		=:\!\!\!\!\!\!\!\!&&\sum_{i=1}^{3} \mathscr{I}_i.
	\end{eqnarray}
	For the first term, we use the assumption $(\mathbf{A}_4)$, H\"older's inequality and Lemmas \ref{Yesti1}, \ref{Yesti2} to obtain
	\begin{eqnarray}\label{b21}
		\mathscr{I}_1\!\!\!\!\!\!\!\!&&\lesssim\int_{0}^{T}\mathbb{E}^{(n)} \bigg(\sup _{0 \leq \Lambda \leq \delta} \int_{t\wedge \tau_{R}^{n}}^{(t+\Lambda )\wedge \tau_{R}^{n}} \|\mathcal{A}(s, X^{(n)}_s, \mathscr{L}_{X^{(n)}_s})\|_{\mathbb{V}^*} \|X^{(n)}_{t\wedge \tau_{R}^{n}}\|_{\mathbb{V}}ds \bigg)dt\nonumber \\
		\!\!\!\!\!\!\!\!&&=\mathbb{E}^{(n)} \int_{0}^{T\wedge\tau_{R}^{n}}\int_{0\vee (s-\delta)}^{s} \|\mathcal{A}(s, X^{(n)}_s, \mathscr{L}_{X^{(n)}_s})\|_{\mathbb{V}^*} \|X^{(n)}_{t\wedge \tau_{R}^{n}}\|_{\mathbb{V}}dtds\nonumber \\
		\!\!\!\!\!\!\!\!&&\lesssim \delta^{\frac{\alpha-1}{\alpha}}\mathbb{E}^{(n)} \int_{0}^{T\wedge\tau_{R}^{n}}  \|\mathcal{A}(s, X^{(n)}_s, \mathscr{L}_{X^{(n)}_s})\|_{\mathbb{V}^*} \bigg( \int_{0}^{T\wedge \tau_{R}^{n}}\|  X^{(n)}_{t}\|_{\mathbb{V}}^{\alpha}dt \bigg)^{\frac{1}{\alpha}}ds \nonumber \\
		\!\!\!\!\!\!\!\!&&\lesssim_{R,T} \delta^{\frac{\alpha-1}{\alpha}}\bigg\{ \mathbb{E}^{(n)}\int_{0}^{T\wedge\tau_{R}^{n}}\|\mathcal{A}(s, X^{(n)}_s, \mathscr{L}_{X^{(n)}_s})\|_{\mathbb{V}^*}^{\frac{\alpha}{\alpha-1}}ds \bigg\}^{\frac{\alpha-1}{\alpha}}\nonumber \\
		\!\!\!\!\!\!\!\!&&\lesssim_{R,T} \delta^{\frac{\alpha-1}{\alpha}}\bigg\{ \mathbb{E}^{(n)}\int_0^{T\wedge\tau_{R}^{n}} (1 + \|X^{(n)}_s\|_{\mathbb{V}}^{\alpha} + \mathscr{L}_{X^{(n)}_s}(\|\cdot\|_{\mathbb{V}}^{\alpha}))
		\nonumber \\
		\!\!\!\!\!\!\!\!&&~~~~~~~~~
		\cdot (1 + \|X^{(n)}_s\|_{\mathbb{H}}^{\beta} + \mathscr{L}_{X^{(n)}_s}(\|\cdot\|_{\mathbb{H}}^{\beta}))ds \bigg\}^{\frac{\alpha-1}{\alpha}}\nonumber\\
		\!\!\!\!\!\!\!\!&&\lesssim_{R,T} \delta^{\frac{\alpha-1}{\alpha}}.
	\end{eqnarray}
	By $(\mathbf{A}_3)$ and Lemma \ref{Yesti2}, the term $\mathscr{I}_2$ can be estimated as
	\begin{eqnarray}\label{b22}
		\mathscr{I}_2 \!\!\!\!\!\!\!\!&&\lesssim \int_{0}^{T}\mathbb{E}^{(n)} \bigg(\sup _{0 \leq \Lambda \leq \delta}\int_{t\wedge \tau_{R}^{n}}^{(t+\Lambda )\wedge \tau_{R}^{n}} \big(1 + \|X^{(n)}_s\|_{\mathbb{H}}^{2} + \mathscr{L}_{X^{(n)}_s}(\|\cdot\|_{\mathbb{H}}^{2})\big)ds \bigg)dt\nonumber \\
		\!\!\!\!\!\!\!\!&&\lesssim \mathbb{E}^{(n)} \int_{0}^{T\wedge\tau_{R}^{n}}\int_{0\vee (s-\delta)}^{s} \big(1 + \|X^{(n)}_s\|_{\mathbb{H}}^{2} + \mathscr{L}_{X^{(n)}_s}(\|\cdot\|_{\mathbb{H}}^{2})\big)dtds\nonumber\\
		\!\!\!\!\!\!\!\!&&\lesssim_R\delta.
	\end{eqnarray}
	By $(\mathbf{A}_4)$ and Lemma \ref{Yesti2}, using B-D-G's inequality and H\"older's inequality,  we have
	\begin{eqnarray}\label{b23}
		\mathscr{I}_3 \!\!\!\!\!\!\!\!&&\lesssim \int_{0}^{T}\mathbb{E}^{(n)} \bigg(\int_{t\wedge \tau_{R}^{n}}^{(t+\delta)\wedge \tau_{R}^{n}} \| \mathcal{B}(s, X^{(n)}_s, \mathscr{L}_{X^{(n)}_s}) \|_{L_2(U;\mathbb{H})}^2 \| X^{(n)}_s- X^{(n)}_{t\wedge \tau_{R}^{n}}\|_{\mathbb{H}}^2ds \bigg)^{\frac{1}{2}}dt\nonumber \\
		\!\!\!\!\!\!\!\!&&\lesssim_R \bigg(\mathbb{E}^{(n)} \int_{0}^{T\wedge\tau_{R}^{n}}\int_{0\vee (s-\delta)}^{s} \| \mathcal{B}(s, X^{(n)}_s, \mathscr{L}_{X^{(n)}_s}) \|_{L_2(U;\mathbb{H})}^2dtds\bigg)^{\frac{1}{2}}\nonumber \\
		\!\!\!\!\!\!\!\!&&\lesssim_R\delta^{\frac{1}{2}} \bigg(\mathbb{E}^{(n)} \int_{0}^{T\wedge\tau_{R}^{n}}(1 + \|X^{(n)}_s\|_{\mathbb{H}}^{\beta} + \mathscr{L}_{X^{(n)}_s}(\|\cdot\|_{\mathbb{H}}^{\beta}))ds\bigg)^{\frac{1}{2}}\nonumber\\
		\!\!\!\!\!\!\!\!&&\lesssim_{R,T}\delta^{\frac{1}{2}}.
	\end{eqnarray}
	Substituting (\ref{b21})-(\ref{b23}) into (\ref{b2}) yields that (\ref{b1}) holds.
	
	\vspace{1mm}
	\noindent\textbf{Case 2:} ($\alpha >2$).
	Using  It\^{o}'s formula, it follows that
	\begin{eqnarray*}
		\!\!\!\!\!\!\!\!&& \|X^{(n)}_{(t+\Lambda )\wedge \tau_{R}^{n} }-X^{(n)}_{t\wedge \tau_{R}^{n} }\|_{\mathbb{H}}^\alpha\nonumber\\
		=\!\!\!\!\!\!\!\!&&\alpha\int_{t\wedge \tau_{R}^{n}}^{(t+\Lambda )\wedge \tau_{R}^{n}} \|X^{(n)}_{s}-X^{(n)}_{t\wedge \tau_{R}^{n} }\|^{\alpha-2} \,_{\mathbb{V}^*}\langle P_{n} \mathcal{A}(s, X^{(n)}_s, \mathscr{L}_{X^{(n)}_s}), X^{(n)}_{s}-X^{(n)}_{t\wedge \tau_{R}^{n} } \rangle_{\mathbb{V}} ds\nonumber\\
		\!\!\!\!\!\!\!\!&&+\frac{\alpha }{2}\int_{t\wedge \tau_{R}^{n}}^{(t+\Lambda )\wedge \tau_{R}^{n}} \|X^{(n)}_{s}-X^{(n)}_{t\wedge \tau_{R}^{n} }\|^{\alpha-2}\|P_n \mathcal{B}(s, X^{(n)}_s, \mathscr{L}_{X^{(n)}_s}) \|_{L_{2}(U; \mathbb{H})}^{2}ds\nonumber\\
		\!\!\!\!\!\!\!\!&&+\frac{\alpha(\alpha-2)}{2}\int_{t\wedge \tau_{R}^{n}}^{(t+\Lambda )\wedge \tau_{R}^{n}}\Big\{\|X^{(n)}_{s }-X^{(n)}_{t\wedge \tau_{R}^{n} }\|_{\mathbb{H}}^{\alpha-4}
		\nonumber\\
		\!\!\!\!\!\!\!\!&&\cdot
		\|(P_n \mathcal{B}(s, X^{(n)}_s, \mathscr{L}_{X^{(n)}_s}) )^*(X^{(n)}_{s}-X^{(n)}_{t\wedge \tau_{R}^{n} })\|_{U}^{2}\Big\}ds\nonumber\\
		\!\!\!\!\!\!\!\!&&+
		\alpha\int_{t\wedge \tau_{R}^{n}}^{(t+\Lambda )\wedge \tau_{R}^{n}}\|X^{(n)}_{s}-X^{(n)}_{t\wedge \tau_{R}^{n} }\|^{\alpha-2}\langle P_n \mathcal{B}(s, X^{(n)}_s, \mathscr{L}_{X^{(n)}_s})  d W^{(n)}_{s}, X^{(n)}_s- X^{(n)}_{t\wedge \tau_{R}^{n}}\rangle_{\mathbb{H}}.
	\end{eqnarray*}
	Then it follows that
	\begin{eqnarray}\label{b3}
		\!\!\!\!\!\!\!\!&&\mathbb{E}^{(n)} \bigg(\sup _{0 \leq \Lambda \leq \delta}\int_{0}^{T-\Lambda } \|X^{(n)}_{(t+\Lambda )\wedge \tau_{R}^{n} }-X^{(n)}_{t\wedge \tau_{R}^{n} }\|_{\mathbb{H}}^\alpha dt\bigg)\nonumber\\
		\lesssim\!\!\!\!\!\!\!\!&&\int_{0}^{T}\mathbb{E}^{(n)} \Big(\sup _{0 \leq \Lambda \leq \delta}\|X^{(n)}_{(t+\Lambda )\wedge \tau_{R}^{n} }-X^{(n)}_{t\wedge \tau_{R}^{n} }\|_{\mathbb{H}}^\alpha \Big)dt\nonumber\\
		\lesssim\!\!\!\!\!\!\!\!&&\int_{0}^{T}\mathbb{E}^{(n)} \bigg(\sup _{0 \leq \Lambda \leq \delta}\Big|\int_{t\wedge \tau_{R}^{n}}^{(t+\Lambda )\wedge \tau_{R}^{n}} \|X^{(n)}_{s}-X^{(n)}_{t\wedge \tau_{R} }\|_{\mathbb{H}}^{\alpha-2}
		\nonumber\\
		\!\!\!\!\!\!\!\!&&\cdot\,_{\mathbb{V}^*}\langle \mathcal{A}(s, X^{(n)}_s, \mathscr{L}_{X^{(n)}_s}),  X^{(n)}_{t\wedge \tau_{R}^{n}} \rangle_{\mathbb{V}} ds\Big|\bigg)dt\nonumber\\
		\!\!\!\!\!\!\!\!&&+\int_{0}^{T}\mathbb{E}^{(n)} \bigg(\sup _{0 \leq \Lambda \leq \delta}\int_{t\wedge \tau_{R}^{n}}^{(t+\Lambda )\wedge \tau_{R}^{n}}\|X^{(n)}_{s}-X^{(n)}_{t\wedge \tau_{R} }\|_{\mathbb{H}}^{\alpha-2}\nonumber\\
		\!\!\!\!\!\!\!\!&&\cdot \big(2\,_{\mathbb{V}^*}\langle  \mathcal{A}(s, X^{(n)}_s, \mathscr{L}_{X^{(n)}_s}), X^{(n)}_s \rangle_{\mathbb{V}}+\| \mathcal{B}(s, X^{(n)}_s, \mathscr{L}_{X^{(n)}_s}) \|_{L_{2}(U; \mathbb{H})}^{2}\big)ds\bigg)dt\nonumber\\
		\!\!\!\!\!\!\!\!&&+\int_{0}^{T}\mathbb{E}^{(n)} \bigg(\sup _{0 \leq \Lambda \leq \delta}\int_{t\wedge \tau_{R}^{n}}^{(t+\Lambda )\wedge \tau_{R}^{n}}\|X^{(n)}_{s }-X^{(n)}_{t\wedge \tau_{R}^{n} }\|_{\mathbb{H}}^{\alpha-4}
		\nonumber\\
		\!\!\!\!\!\!\!\!&&\cdot
		\|(P_n \mathcal{B}(s, X^{(n)}_s, \mathscr{L}_{X^{(n)}_s}) )^*(X^{(n)}_{s}-X^{(n)}_{t\wedge \tau_{R}^{n} })\|_{U}^{2}ds\bigg)dt\nonumber\\
		\!\!\!\!\!\!\!\!&&+\int_{0}^{T}\mathbb{E}^{(n)} \bigg(\sup _{0 \leq \Lambda \leq \delta}\Big|\int_{t\wedge \tau_{R}^{n}}^{(t+\Lambda )\wedge \tau_{R}^{n}}\|X^{(n)}_{s}-X^{(n)}_{t\wedge \tau_{R}^{n} }\|_{\mathbb{H}}^{\alpha-2}
		\nonumber\\
		\!\!\!\!\!\!\!\!&&\cdot
		\langle P_n \mathcal{B}(s, X^{(n)}_s, \mathscr{L}_{X^{(n)}_s})  d W^{(n)}_{s}, X^{(n)}_s- X^{(n)}_{t\wedge \tau_{R}^{n}}\rangle_{\mathbb{H}}\Big|\bigg)dt\nonumber\\
		=:\!\!\!\!\!\!\!\!&&\sum_{i=1}^{4} \mathscr{J}_i.
	\end{eqnarray}
	Similar to the proof of (\ref{b21})-(\ref{b23}), due to the definition of stopping time $\tau_{R}^{n}$, it can be inferred that
	\begin{eqnarray*}
		\mathscr{J}_1\!\!\!\!\!\!\!\!&&\lesssim_{R,T} \delta^{\frac{\alpha-1}{\alpha}},\\
		\mathscr{J}_2\!\!\!\!\!\!\!\!&&\lesssim_{R,T}\delta,\\
		\mathscr{J}_4\!\!\!\!\!\!\!\!&&\lesssim_{R,T}\delta^{\frac{1}{2}}.
	\end{eqnarray*}
	By $(\mathbf{A}_4)$ and Lemma \ref{Yesti2}, the term $\mathscr{J}_3$ can be estimated as follows
	\begin{eqnarray*}
		\mathscr{J}_3\!\!\!\!\!\!\!\!&&\lesssim \mathbb{E}^{(n)} \int_{0}^{T}\int_{t\wedge \tau_{R}^{n}}^{(t+\delta)\wedge \tau_{R}^{n}} \|X^{(n)}_{s }-X^{(n)}_{t\wedge \tau_{R}^{n} }\|_{\mathbb{H}}^{\alpha-2}\big(1 + \|X^{(n)}_s\|_{\mathbb{H}}^{\beta} + \mathscr{L}_{X^{(n)}_s}(\|\cdot\|_{\mathbb{H}}^{\beta})\big)dsdt \nonumber \\
		\!\!\!\!\!\!\!\!&&\lesssim_R \delta  \mathbb{E}^{(n)}\int_{0 }^{T\wedge \tau_{R}^{n}} \big(1 + \|X^{(n)}_s \|_{\mathbb{H}}^\beta + \mathbb{E}^{(n)}\| X^{(n)}_s\|_{\mathbb{H}}^\beta\big)ds\nonumber\\
		\!\!\!\!\!\!\!\!&&\lesssim_{R,T} \delta.
	\end{eqnarray*}
	Hence, in view of (\ref{b3}), we can conclude that (\ref{ball}) is satisfied.
\end{proof}

Set $$\mathcal{Z}_T:=\mathbb{C}_T(\mathbb{V}^{*}) \cap L^\alpha ([0, T]; \mathbb{H}).$$
We derive the following corollary.
\begin{corollary}\label{coro1}
	$\{\mathscr{L}_{X^{(n)}}\}_{n=1}^{\infty}$ is tight in $\mathscr{P}(\mathcal{Z}_T)$.
\end{corollary}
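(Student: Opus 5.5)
The plan is to combine Lemma \ref{tightness} and Lemma \ref{tightness2} via a direct compactness argument on the intersection space. The space $\mathcal{Z}_T=\mathbb{C}_T(\mathbb{V}^*)\cap L^\alpha([0,T];\mathbb{H})$ carries the topology given by the maximum of the two norms (equivalently, the initial topology of the two inclusions). The key observation is that a set $K\subset\mathcal{Z}_T$ is relatively compact in $\mathcal{Z}_T$ as soon as it is of the form $K_1\cap K_2$, where $K_1$ is compact in $\mathbb{C}_T(\mathbb{V}^*)$ and $K_2$ is compact in $L^\alpha([0,T];\mathbb{H})$.

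To see this, take a sequence in $K_1\cap K_2$ and extract a subsequence converging in $\mathbb{C}_T(\mathbb{V}^*)$ to some $f$. From that, extract a further subsequence converging in $L^\alpha([0,T];\mathbb{H})$ to some $g$. Both modes of convergence imply convergence in $L^\alpha([0,T];\mathbb{V}^*)$, because $\mathbb{H}\subset\mathbb{V}^*$ continuously and $[0,T]$ is bounded. Hence $f=g$ a.e. in $t$, so the limit lies in $\mathcal{Z}_T$ and the convergence holds in the $\mathcal{Z}_T$-topology. If needed, one may replace $K_1$ and $K_2$ by their closures; the intersection is then closed in $\mathcal{Z}_T$ and therefore compact.

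With this in hand, fix $\varepsilon>0$. By Lemma \ref{tightness} there is a compact $K_1\subset\mathbb{C}_T(\mathbb{V}^*)$ with $\sup_n\mathbb{P}^{(n)}(X^{(n)}\notin K_1)\le\varepsilon/2$. By Lemma \ref{tightness2} there is a compact $K_2\subset L^\alpha([0,T];\mathbb{H})$ with $\sup_n\mathbb{P}^{(n)}(X^{(n)}\notin K_2)\le\varepsilon/2$. Each Galerkin solution $X^{(n)}$ is continuous with values in $\mathbb{H}_n$, so it takes values in $\mathcal{Z}_T$. Therefore $\sup_n\mathbb{P}^{(n)}(X^{(n)}\notin K_1\cap K_2)\le\varepsilon$, which is exactly tightness in $\mathscr{P}(\mathcal{Z}_T)$.

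The only point requiring care is the compactness of $K_1\cap K_2$ in the intersection topology, specifically the identification of the two limits. This is handled by passing through the common weaker space $L^\alpha([0,T];\mathbb{V}^*)$. The Borel measurability of $X^{(n)}$ as a $\mathcal{Z}_T$-valued variable also needs a word, and it follows because the Borel $\sigma$-algebra of $\mathcal{Z}_T$ is generated by the two inclusions, both of which are measurable for $X^{(n)}$.
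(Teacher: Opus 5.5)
Your proof is correct. It combines the same two ingredients as the paper, Lemmas \ref{tightness} and \ref{tightness2}, but by a more direct route.

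The paper's argument is a one-line appeal to relative compactness. It asserts that every sequence of the laws $\mathscr{L}_{X^{(n)}}$ has a subsequence converging weakly in $\mathscr{P}(\mathcal{Z}_T)$, and then concludes tightness. That route uses Prokhorov's theorem in both directions; the converse direction requires $\mathcal{Z}_T$ to be Polish. It also leaves unexplained how the two separate tightness statements give weak convergence in the intersection topology.

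Your explicit compact set $K_1\cap K_2$ is exactly the step the paper leaves implicit. Identifying the two limits through the common weaker space $L^\alpha([0,T];\mathbb{V}^*)$ is the right mechanism. Your argument yields tightness directly, without passing through weak convergence or the converse Prokhorov theorem.

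The only structural fact you use beyond metrizability is separability of $\mathcal{Z}_T$, in your measurability remark, and it holds. The map $f\mapsto(f,f)$ is an isometric embedding, for the max-norm, of $\mathcal{Z}_T$ into the separable product $\mathbb{C}_T(\mathbb{V}^*)\times L^\alpha([0,T];\mathbb{H})$. Hence the Borel $\sigma$-algebra of $\mathcal{Z}_T$ is indeed the one generated by the two inclusions.
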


\begin{proof}
	Due to Lemmas \ref{tightness} and \ref{tightness2}, for any sequence $\{\mathscr{L}_{X^{(n)}}\}_{n=1}^{\infty}$ we can find a subsequence  $\{\mathscr{L}_{X^{(n_k)}}\}_{k=1}^{\infty}$ which  converges weakly in $\mathscr{P}(\mathcal{Z}_T)$. Hence, the assertion follows.
\end{proof}

\subsection{Proof of Theorem \ref{th1}}\label{sec3.4}
Set
$$
\Upsilon := \mathcal{Z}_T \times \mathbb{C}_T(U_1),
$$
where $U_1$ is a Hilbert space such that the embedding $U \subset U_1$ is Hilbert--Schmidt. By Corollary \ref{coro1} and the Skorohod representation theorem, there exists a probability space
$(\tilde{\Omega}, \tilde{\mathscr{F}},  \tilde{\mathbb{P}}) $ and a sequence of $\Upsilon$-valued random vectors $ \{(\tilde{X}^{(n)}, \tilde{W}^{(n)})\}_{n=1}^{\infty}$ and $(\tilde{X}, \tilde{W})$ such that

\vspace{1mm}
(i) $\mathscr{L}_{(\tilde{X}^{(n)}, \tilde{W}^{(n)})}|_{\tilde{\mathbb{P}}} = \mathscr{L}_{(X^{(n)}, W^{(n)})}|_{\mathbb{P}};$

\vspace{1mm}
(ii) $\tilde{W}^{(n)}\xrightarrow{n\to \infty}\tilde{W}$ in  $\mathbb{C}_T(U_1)$, ~~$\tilde{\mathbb{P}}$-a.s.;

\vspace{1mm}
(iii) $ \tilde{X}^{(n)}\xrightarrow{n\to \infty}\tilde{X}$ in  $ \mathcal{Z}_T$,  ~~$\tilde{\mathbb{P}}$-a.s..

\vspace{1mm}
Let $(\tilde{\mathscr{F}}^{(n)}_t)_{t\geq 0}$ be the usual filtration generated by
$\big\{\tilde{X}^{(n)}_s,\tilde{W}^{(n)}_s:s\in[0,t]\big\}.$
Notice that by the claim (i),
\begin{eqnarray*}
	\!\!\!\!\!\!\!\!&&\mathbb{P}(W^{(n)}_t-W^{(n)}_s\in\cdot|\mathscr{F}^{(n)}_s)=\mathbb{P}(W^{(n)}_t-W^{(n)}_s\in\cdot)
	\\
	\Rightarrow\!\!\!\!\!\!\!\!&&\tilde{\mathbb{P}}(\tilde{W}^{(n)}_t-\tilde{W}^{(n)}_s\in\cdot|\tilde{\mathscr{F}}^{(n)}_s)=\tilde{\mathbb{P}}(\tilde{W}^{(n)}_t-\tilde{W}^{(n)}_s\in\cdot).
\end{eqnarray*}
Namely, $\tilde{W}^{(n)}$ is an $(\tilde{\mathscr{F}}^{(n)}_t)$-cylindrical Wiener process on $U$.

Let $(\tilde{\mathscr{F}}_t)_{t\geq 0}$ be the usual filtration generated by
$\big\{\tilde{X}_s,\tilde{W}_s:s\in[0,t]\big\}.$ In the following, we will show that $(\tilde{X}, \tilde{W})$ is a solution of (\ref{eqSPDE}). From the equation (\ref{approeq}), it follows that
\begin{equation*}
	d\tilde{X}^{(n)}_t = P_n \mathcal{A}(t, \tilde{X}^{(n)}_t, \mathscr{L}_{\tilde{X}^{(n)}_t}) dt + P_n \mathcal{B}(t, \tilde{X}^{(n)}_t, \mathscr{L}_{\tilde{X}^{(n)}_t}) d\tilde{W}^{(n)}_t,~\tilde{X}^{(n)}_0\sim\mu_0\circ P_n^{-1}.
\end{equation*}
Moreover, Lemmas \ref{Yesti1}-\ref{nIPSesti2} imply that for any $\gamma \in (0,1)$ and $l \in (1,\frac{p}{\beta}] \cap (1,\frac{p}{2})$
\begin{eqnarray}\label{estitilde}
	\!\!\!\!\!\!\!\!&&\sup_{t\in[0,T]}\tilde{\mathbb{E}}\|\tilde{X}^{(n)}_t\|_{\mathbb{H}}^{ p}+\tilde{\mathbb{E}}\Big[\sup_{t\in[0,T]}\|\tilde{X}^{(n)}_t\|_{\mathbb{H}}^{\gamma p}\Big]\nonumber\\
	\!\!\!\!\!\!\!\!&&+\tilde{\mathbb{E}}\Big(\int_{0}^{T}\|\tilde{X}^{(n)}_t\|_{\mathbb{V}}^{\alpha }dt\Big)^l+\tilde{\mathbb{E}}\int_0^T\|\tilde{X}^{(n)}_t\|_{\mathbb{H}}^{p-2}\|\tilde{X}^{(n)}_t\|_{\mathbb{V}}^{\alpha}dt
	<\infty.
\end{eqnarray}
Since $\|\cdot\|_\mathbb{V}$ is lower semicontinuous in $\mathbb{H}$, by Fatou's lemma we derive
\begin{eqnarray}\label{estiX}
	\!\!\!\!\!\!\!\!&&\sup_{t\in[0,T]}\tilde{\mathbb{E}}\|\tilde{X}_t\|_{\mathbb{H}}^{ p}+\tilde{\mathbb{E}}\Big[\sup_{t\in[0,T]}\|\tilde{X}_t\|_{\mathbb{H}}^{\gamma p}\Big]\nonumber\\
	\!\!\!\!\!\!\!\!&&+\tilde{\mathbb{E}}\Big(\int_{0}^{T}\|\tilde{X}_t\|_{\mathbb{V}}^{\alpha }dt\Big)^l+\tilde{\mathbb{E}}\int_0^T\|\tilde{X}_t\|_{\mathbb{H}}^{p-2}\|\tilde{X}_t\|_{\mathbb{V}}^{\alpha}dt
	<\infty.
\end{eqnarray}

As a consequence of (\ref{estitilde}), there exists a subsequence of $\{n\}$, denoted again by $\{n\}$, such that for $n \to \infty$,

\vspace{1mm}
(i)
$\tilde{X}^{(n)} \to \bar{X}$ weakly in $ L^\alpha([0, T] \times \tilde{\Omega}; \mathbb{V})$ and weakly star in $L^p(\tilde{\Omega}; L^\infty([0, T]; \mathbb{H})) $;

\vspace{1mm}
(ii) $G^{(n)} := \mathcal{A}(\cdot, \tilde{X}^{(n)}_{\cdot}, \mathscr{L}_{\tilde{X}^{(n)}_{\cdot}}) \rightharpoonup G$ in $L^{\frac{\alpha}{\alpha - 1}}([0, T] \times \tilde{\Omega}; \mathbb{V}^*)$.


\vspace{1mm}
The following lemma establishes the convergence of stochastic integral.
\begin{lemma}\label{B}
	Along a subsequence of $\{n\}$, we have
	\begin{equation*}\label{B-con}
		\lim_{n \to\infty }\tilde{\mathbb{E}}\Bigg\{\sup_{t \in [0,T]}\Big\|\int_0^t P_n \mathcal{B}(t,\tilde{X}^{(n)}_t,\mathscr{L}_{\tilde{X}^{(n)}_t})d\tilde{W}^{(n)}_s-\int_0^t \mathcal{B}(t,\tilde{X}_t,\mathscr{L}_{\tilde{X}_t})d\tilde{W}_s\Big\|_\mathbb{H}^2\Bigg\}=0.
	\end{equation*}
\end{lemma}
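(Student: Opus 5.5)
Since both $\tilde W^{(n)}$ and $\tilde W$ are cylindrical Wiener processes and $\tilde W^{(n)}\to\tilde W$ a.s. in $\mathbb{C}_T(U_1)$, the plan is a Skorokhod/Bensoussan-type convergence argument for stochastic integrals. The first step is to identify the limit: since $\tilde X^{(n)}\to\tilde X$ in $\mathcal{Z}_T$ a.s., and in particular in $L^\alpha([0,T];\mathbb{H})$, we can pass to a subsequence so that $\tilde X^{(n)}_t\to\tilde X_t$ in $\mathbb{H}$ for a.e. $(t,\omega)$. Using the uniform moment bound (\ref{estitilde}) with $\gamma p>\beta\vee 2$ together with Vitali's theorem, we get $\tilde{\mathbb{E}}\int_0^T\|\tilde X^{(n)}_t-\tilde X_t\|^{r}_{\mathbb{H}}dt\to0$ for every $r<p$. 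Consequently $\mathcal{W}_{\beta,\mathbb{H}}(\mathscr{L}_{\tilde X^{(n)}_t},\mathscr{L}_{\tilde X_t})+\mathcal{W}_{2,\mathbb{H}}(\cdot,\cdot)\to0$ for a.e. $t$. By $(\mathbf{A}_1)$,
\begin{equation*}
\|\mathcal{B}(t,\tilde X^{(n)}_t,\mathscr{L}_{\tilde X^{(n)}_t})-\mathcal{B}(t,\tilde X_t,\mathscr{L}_{\tilde X_t})\|_{L_2(U;\mathbb{H})}\to0 \quad dt\otimes\tilde{\mathbb{P}}\text{-a.e.}
\end{equation*}
We also have $\|(P_n-I)\mathcal{B}(t,\tilde X_t,\mathscr{L}_{\tilde X_t})\|_{L_2}\to0$ pointwise. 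Uniform integrability comes from (\ref{conb}): $\tilde{\mathbb{E}}\int_0^T\|\mathcal{B}\|_{L_2}^{2q}dt\lesssim 1+\sup_t\tilde{\mathbb{E}}\|\tilde X^{(n)}_t\|_{\mathbb{H}}^{\beta q}$, which is bounded for some $q>1$ because $p>2\beta$. Hence
\begin{equation*}
\tilde{\mathbb{E}}\int_0^T\|P_n\mathcal{B}(t,\tilde X^{(n)}_t,\mathscr{L}_{\tilde X^{(n)}_t})-\mathcal{B}(t,\tilde X_t,\mathscr{L}_{\tilde X_t})\|_{L_2(U;\mathbb{H})}^2dt\to0 .
\end{equation*}

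Next we split the difference of the integrals as
\begin{equation*}
\int_0^t\big(P_n\mathcal{B}^{(n)}_s-\mathcal{B}_s\big)d\tilde W^{(n)}_s+\Big(\int_0^t\mathcal{B}_sd\tilde W^{(n)}_s-\int_0^t\mathcal{B}_sd\tilde W_s\Big),
\end{equation*}
where $\mathcal{B}^{(n)}_s$ and $\mathcal{B}_s$ abbreviate the coefficients along $\tilde X^{(n)}$ and $\tilde X$, and where $\tilde W^{(n)}$ is understood as $Q_n\tilde W^{(n)}$ with the projection absorbed. The first term is a genuine $\tilde{\mathscr F}^{(n)}_t$-martingale. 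The BDG inequality reduces it to the $L^2$ convergence just proved, plus the term $\|\mathcal{B}_s(I-Q_n)\|_{L_2}\to0$, which is handled by dominated convergence.

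The second term is the main obstacle, because $\mathcal{B}_s$ is adapted to the limit filtration but not to $\tilde{\mathscr F}^{(n)}$, so one cannot directly integrate it against $\tilde W^{(n)}$. We would use the standard step-function approximation. Fix $\varepsilon>0$ and choose a deterministic partition together with a mollified, piecewise-constant approximation $\mathcal{B}^{m}$, constructed from $\tilde X$ via time-shifted averages $\frac1h\int_{t_{k-1}-h}^{t_{k-1}}$ of $\mathcal{B}(s,\tilde X_s,\mathscr L_{\tilde X_s})$ on $[t_k,t_{k+1})$, with $\tilde{\mathbb E}\int_0^T\|\mathcal B^m_s-\mathcal B_s\|^2_{L_2}ds<\varepsilon$. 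We use the corresponding $\mathcal{B}^{m,(n)}$ built from $\tilde X^{(n)}$ when integrating against $\tilde W^{(n)}$; this preserves adaptedness, and by the first step $\mathcal{B}^{m,(n)}\to\mathcal{B}^m$ in $L^2$. For step integrands, the stochastic integral is a finite sum $\sum_k\mathcal{B}^m_{t_k}(\tilde W_{t_{k+1}\wedge t}-\tilde W_{t_k\wedge t})$. Here we use a finite-rank truncation in $U$, that is, $\mathcal B^m$ composed with the projection onto $\mathrm{span}\{g_1,\dots,g_M\}$ so that it extends continuously to $U_1$, at a cost of another $\varepsilon$ in $L^2$. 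The sum then converges a.s. uniformly in $t$ because $\tilde W^{(n)}\to\tilde W$ in $\mathbb{C}_T(U_1)$. Uniform integrability of its square follows from Gaussian moments of $\tilde W^{(n)}$ and the $L^{2q}$ bounds on $\mathcal{B}^m$, giving $L^2(\tilde\Omega;\mathbb{C}_T(\mathbb H))$ convergence. The remaining approximation errors are controlled by BDG for $\tilde W^{(n)}$ and $\tilde W$ separately, each bounded by $C\varepsilon$ uniformly in $n$.

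Finally, combining the three estimates and letting $n\to\infty$ and then $\varepsilon\to0$ yields the claim. Along the way we must verify that $\tilde W$ is an $\tilde{\mathscr F}_t$-cylindrical Wiener process. This follows by passing to the limit in the independence-of-increments identity shown for $\tilde W^{(n)}$, using continuity of $\tilde X$ in $\mathbb{C}_T(\mathbb{V}^*)$, so that $\int\mathcal B_s\,d\tilde W_s$ is well defined.
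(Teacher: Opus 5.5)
Your argument is correct in substance, but at the key step it takes a different route from the paper. The paper starts the same way: $dt\otimes\tilde{\mathbb P}$-a.e. convergence, $(\mathbf{A}_1)$ and Vitali give $L^2$ convergence of the coefficients, with $P_n$ handled through the Hilbert--Schmidt tail $\sum_{i>n}\|\mathcal{B}^*e_i\|_U^2$. It then simply invokes Lemma 4.3 of \cite{BMX}, a Bensoussan/Debussche--Glatt-Holtz--Temam type result, to get convergence in probability of the supremum of the difference of the stochastic integrals. Finally it upgrades this to $L^2$ by a BDG moment bound of order $p_0>1$ together with Vitali.

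You instead reprove that convergence lemma by hand. You use adapted step approximations, a finite-rank truncation so that the Riemann sums become continuous functionals of the $U_1$-paths of $\tilde W^{(n)}$, and BDG error bounds. This yields the $L^2(\tilde\Omega;\mathbb{C}_T(\mathbb{H}))$ convergence directly. It also makes explicit several things the paper leaves implicit: the convergence of the laws in $\mathscr{P}_\beta(\mathbb{H})$ needed to apply $(\mathbf{A}_1)$, the $Q_n$ projection, and the fact that $\tilde W$ is a Wiener process for the limit filtration. The paper's route buys brevity; yours buys self-containedness. One caveat on the truncation: it needs $U_1$ chosen compatibly with $\{g_i\}$, e.g.\ the completion of $U$ under $\sum_i\lambda_i^2\langle u,g_i\rangle_U^2$, which is the standard choice.

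One step must be restated. In your two-term split, $\int_0^t(P_n\mathcal{B}^{(n)}_s-\mathcal{B}_s)\,d\tilde W^{(n)}_s$ is not an $\tilde{\mathscr F}^{(n)}_t$-martingale. In fact it is not even defined, for exactly the reason you give for the second term: $\mathcal{B}_s$ is a functional of $\tilde X$, not of $(\tilde X^{(n)},\tilde W^{(n)})$. Any integrand against $\tilde W^{(n)}$ must be $\tilde{\mathscr F}^{(n)}$-adapted, so the intermediate integrand should be your $\mathcal{B}^{m,(n)}$:
\begin{align*}
\int_0^t P_n\mathcal{B}^{(n)}_s\,d\tilde W^{(n)}_s-\int_0^t\mathcal{B}_s\,d\tilde W_s
={}&\int_0^t\big(P_n\mathcal{B}^{(n)}_s-\mathcal{B}^{m,(n)}_s\big)\,d\tilde W^{(n)}_s+\int_0^t\big(\mathcal{B}^{m}_s-\mathcal{B}_s\big)\,d\tilde W_s\\
&+\Big(\int_0^t\mathcal{B}^{m,(n)}_s\,d\tilde W^{(n)}_s-\int_0^t\mathcal{B}^{m}_s\,d\tilde W_s\Big).
\end{align*}
The first term is then controlled by BDG with respect to $\tilde{\mathscr F}^{(n)}$ and the pointwise bound
\begin{equation*}
\|P_n\mathcal{B}^{(n)}-\mathcal{B}^{m,(n)}\|\le\|P_n\mathcal{B}^{(n)}-\mathcal{B}\|+\|\mathcal{B}-\mathcal{B}^{m}\|+\|\mathcal{B}^{m}-\mathcal{B}^{m,(n)}\|
\end{equation*}
in $L^2(dt\otimes\tilde{\mathbb P};L_2(U;\mathbb{H}))$. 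This is a pure norm estimate and needs no adaptedness of $\mathcal{B}$ or $\mathcal{B}^m$. It gives $\limsup_n\le C\varepsilon$, not a bound uniform in $n$.

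Similarly, $\mathcal{B}^{m,(n)}_{t_k}\to\mathcal{B}^m_{t_k}$ only in $L^2(\tilde\Omega)$, so the Riemann sums converge in probability rather than a.s.\ (a.s.\ only along a further subsequence). That is all your uniform-integrability step requires. With these corrections the proof goes through.
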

\begin{proof}
	Since $\|\tilde{X}^{(n)} - \tilde{X}\|_{L^\alpha([0,T];\mathbb{H})} \to 0$ $\mathbb{P}$-a.s., we can find a subsequence still denoted by $\{\tilde{X}^{(n)}\}$ such that
	\begin{equation}\label{XconverH}
		\lim_{n \to \infty} \|\tilde{X}^{(n)}_t- \tilde{X}_t\|_\mathbb{H} = 0 \quad dt \times \tilde{\mathbb{P}}\text{-a.e.}.
	\end{equation}
	By assumption $(\mathbf{A}_1)$, we have
	\begin{equation}\label{Bcon}
		\lim_{n\to\infty}\| \mathcal{B}(t,\tilde{X}^{(n)}_t,\mathscr{L}_{\tilde{X}^{(n)}_t})-\mathcal{B}(t,\tilde{X}_t,\mathscr{L}_{\tilde{X}_t})\|_{L_2(U;\mathbb{H})}=0 \quad dt \times \tilde{\mathbb{P}}\text{-a.e.}.
	\end{equation}
	Thus, the estimates (\ref{estitilde}) and (\ref{estiX}) and Vitali's convergence theorem yield that
	\begin{equation}\label{Bintcon}
		\lim_{n\to\infty}\tilde{\mathbb{E}}\int_0^T\| \mathcal{B}(t,\tilde{X}^{(n)}_t,\mathscr{L}_{\tilde{X}^{(n)}_t})-\mathcal{B}(t,\tilde{X}_t,\mathscr{L}_{\tilde{X}_t})\|^2_{L_2(U;\mathbb{H})}dt=0.
	\end{equation}
	Then it is easy to get
	\begin{eqnarray*}
		\!\!\!\!\!\!\!\!&&\lim_{n\to\infty}\tilde{\mathbb{E}}\int_0^T\| P_n\mathcal{B}(t,\tilde{X}^{(n)}_t,\mathscr{L}_{\tilde{X}^{(n)}_t})-\mathcal{B}(t,\tilde{X}_t,\mathscr{L}_{\tilde{X}_t})\|^2_{L_2(U;\mathbb{H})}dt\\
		\lesssim\!\!\!\!\!\!\!\!&&\lim_{n\to\infty}\tilde{\mathbb{E}}\int_0^T\| P_n\big(\mathcal{B}(t,\tilde{X}^{(n)}_t,\mathscr{L}_{\tilde{X}^{(n)}_t})-\mathcal{B}(t,\tilde{X}_t,\mathscr{L}_{\tilde{X}_t})\big)\|^2_{L_2(U;\mathbb{H})}dt\\
		\!\!\!\!\!\!\!\!&&+\lim_{n\to\infty}\tilde{\mathbb{E}}\int_0^T\| (P_n-I)\mathcal{B}(t,\tilde{X}_t,\mathscr{L}_{\tilde{X}_t})\|^2_{L_2(U;\mathbb{H})}dt\\
		\lesssim\!\!\!\!\!\!\!\!&&\lim_{n\to\infty}\tilde{\mathbb{E}}\int_0^T\| \mathcal{B}(t,\tilde{X}_t,\mathscr{L}_{\tilde{X}_t})^*(P_n-I)\|^2_{L_2(\mathbb{H};U)}dt.
	\end{eqnarray*}
	Recall $P_n|_\mathbb{H}$ is an orthogonal projection onto $\mathbb{H}_n$ on $\mathbb{H}$. Then by  the definition of  Hilbert--Schmidt operators and using Fubini's theorem, it follows that
	\begin{eqnarray*}
		\!\!\!\!\!\!\!\!&&\lim_{n\to\infty}\tilde{\mathbb{E}}\int_0^T\| \mathcal{B}(t,\tilde{X}_t,\mathscr{L}_{\tilde{X}_t})^*(P_n-I)\|^2_{L_2(\mathbb{H};U)}dt\\
		\!\!\!\!\!\!\!\!&&=\lim_{n \to \infty} \tilde{\mathbb{E}} \int_0^T \sum_{i=1}^\infty \| \mathcal{B}(t, \tilde{X}_t, \mathscr{L}_{\tilde{X}_t})^*(P_n-I)e_i \|_{U}^2 dt \nonumber\\
		\!\!\!\!\!\!\!\!&&= \lim_{n \to \infty} \tilde{\mathbb{E}} \int_0^T \sum_{i > n} \| \mathcal{B}(t, \tilde{X}_t, \mathscr{L}_{\tilde{X}_t})^* e_i \|_{U}^2 dt .
	\end{eqnarray*}
	Due to $(\mathbf{A}_4)$ and (\ref{estiX}), it follows that
	\begin{equation*}
		\tilde{\mathbb{E}} \int_0^T\sum_{i=1}^\infty\| \mathcal{B}(t, \tilde{X}_t, \mathscr{L}_{\tilde{X}_t})^*  e_i \|_{U}^2 dt=\tilde{\mathbb{E}} \int_0^T \| \mathcal{B}(t, \tilde{X}_t, \mathscr{L}_{\tilde{X}_t}) \|_{L_2(U;\mathbb{H})}^2 dt
		< \infty,
	\end{equation*}
	then
	\begin{equation*}
		\lim_{n\to\infty}\tilde{\mathbb{E}}\int_0^T\| \mathcal{B}(t,\tilde{X}_t,\mathscr{L}_{\tilde{X}_t})^*\big(P_n-I\big)\|^2_{L_2(\mathbb{H};U)}dt=0,
	\end{equation*}
	which implies that
	\begin{equation}\label{BPn}
		\lim_{n\to\infty}\tilde{\mathbb{E}}\int_0^T\| P_n\mathcal{B}(t,\tilde{X}^{(n)}_t,\mathscr{L}_{\tilde{X}^{(n)}_t})-\mathcal{B}(t,\tilde{X}_t,\mathscr{L}_{\tilde{X}_t})\|^2_{L_2(U;\mathbb{H})}dt.
	\end{equation}

	According to Lemma 4.3 in \cite{BMX}, by (\ref{BPn})  and the convergence of ${W}^{(n)}$ we can deduce that
	\begin{equation*}
		\lim_{n\to\infty}\sup_{t \in [0,T]}\Big\|\int_0^t P_n \mathcal{B}(t,\tilde{X}^{(n)}_t,\mathscr{L}_{\tilde{X}^{(n)}_t})d\tilde{W}^{(n)}_s-\int_0^t \mathcal{B}(t,\tilde{X}_t,\mathscr{L}_{\tilde{X}_t})d\tilde{W}_s \Big\|_\mathbb{H}^2=0\quad \text{in probability}.
	\end{equation*}
	From (\ref{conb}), (\ref{estitilde}) and (\ref{estiX}),  using  B-D-G's inequality yields that there exists a constant $p_0>1$ such that
	\begin{equation*}
		\tilde{\mathbb{E}}\Bigg\{\sup_{t \in [0,T]}\Big\|\int_0^t P_n \mathcal{B}(t,\tilde{X}^{(n)}_t,\mathscr{L}_{\tilde{X}^{(n)}_t})d\tilde{W}^{(n)}_s-\int_0^t \mathcal{B}(t,\tilde{X}_t,\mathscr{L}_{\tilde{X}_t})d\tilde{W}_s\Big\|_\mathbb{H}^2\Bigg\}^{p_0}<\infty.
	\end{equation*}
	Then the Vitali's convergence theorem yields
	\begin{equation*}\label{B-con}
		\lim_{n \to\infty }\tilde{\mathbb{E}}\Bigg\{\sup_{t \in [0,T]}\Big\|\int_0^t P_n \mathcal{B}(t,\tilde{X}^{(n)}_t,\mathscr{L}_{\tilde{X}^{(n)}_t})d\tilde{W}^{(n)}_s-\int_0^t \mathcal{B}(t,\tilde{X}_t,\mathscr{L}_{\tilde{X}_t})d\tilde{W}_s\Big\|_\mathbb{H}^2\Bigg\}=0.
	\end{equation*}
	We complete the proof.
\end{proof}

Let us define
\begin{equation}\label{eq5}
	X_t := \tilde{X}_0 + \int_0^t G_s ds + \int_0^t \mathcal{B}(t, \tilde{X}_{t}, \mathscr{L}_{\tilde{X}_{t}}) d\tilde{W}_s, \quad t \in [0, T].
\end{equation}
Then by a standard argument (cf.~\cite[Page 105]{RSZ}), it is straightforward that
$$X = \bar{X} = \tilde{X}\quad dt \times \tilde{\mathbb{P}}\text{-a.e.}.$$
From now on,  without loss of generality, we drop all the superscripts to simplify the notation, for example, we write $\tilde{X}^{(n)}$ as $X^{(n)}$.  By Theorem 4.2.5 in \cite{liu2015stochastic},  $X$ is an $\mathbb{H}$-valued continuous $ (\mathscr{F}_t)$-adapted process.

Now it is sufficient to prove that
$$
\mathcal{A}(t, X_{t}, \mathscr{L}_{X_{t}}) = G_{t} \quad dt \times \mathbb{P}\text{-a.e.}.
$$

\begin{lemma}\label{A}
	Under the assumptions in Theorem \ref{th1}, if
	\begin{equation*}
		X^{(n)} \rightharpoonup X \text{ in }  L^\alpha([0, T] \times \Omega; \mathbb{V}),
	\end{equation*}
	\begin{equation}\label{Acondition2}
		\mathcal{A}(\cdot, X^{(n)}_{\cdot}, \mathscr{L}_{X^{(n)}_{\cdot}}) \rightharpoonup G_{\cdot}  \text{ in } L^{\frac{\alpha}{\alpha - 1}}([0, T] \times \Omega; \mathbb{V}^*),
	\end{equation}
	\begin{equation}\label{Acondition3}
		\liminf_{n \to \infty} \mathbb{E} \int_0^T {_{\mathbb{V}^*}\langle \mathcal{A}(t, X^{(n)}_t, \mathscr{L}_{X^{(n)}_t}), X^{(n)}_t \rangle_{\mathbb{V}}}dt \geq \mathbb{E} \int_0^T{ _{\mathbb{V}^*}\langle G_t, X_t \rangle_{\mathbb{V}}}dt,
	\end{equation}
	then  we have
	$$\mathcal{A}(t, X_{t}, \mathscr{L}_{X_{t}}) = G_{t}\quad dt \times \mathbb{P}\text{-a.e.}.$$
\end{lemma}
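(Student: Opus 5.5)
The plan is a pointwise version of the classical pseudo-monotone identification argument: localize the integrated condition (\ref{Acondition3}) to a.e.\ $(t,\omega)$ and apply the measure-dependent pseudo-monotonicity $(\mathbf{A}_2)$ through Lemma \ref{pseudoequivalent}. A useful feature is that the measure variable converges \emph{strongly}. Indeed, by the Skorohod construction, $X^{(n)}\to X$ in $L^\alpha([0,T];\mathbb{H})$ a.s. The uniform moment bounds (\ref{estitilde}) then give, via Vitali, $\mathscr{L}_{X^{(n)}_t}\to\mathscr{L}_{X_t}$ in $\mathscr{P}_\beta(\mathbb{H})\cap\mathscr{P}_2(\mathbb{H})$ for a.e.\ $t$, after passing to a subsequence. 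Here we use $p>\max\{2\beta,4\}$, and that $\sup_t\tilde{\mathbb E}\|X^{(n)}_t\|_{\mathbb H}^p$ is bounded, so that $\tilde{\mathbb E}\|X^{(n)}_t-X_t\|_{\mathbb H}^{2\vee\beta}\to0$ for a.e.\ $t$ along a subsequence. To place the measures in $\mathfrak{M}_b$, use $\int_0^T\mathscr{L}_{X^{(n)}_t}(\|\cdot\|_{\mathbb V}^\alpha)dt\le C$ and Fatou's lemma. This gives, for a.e.\ $t$, a $K_t$ with $\liminf_n\mathscr{L}_{X^{(n)}_t}(\|\cdot\|^\alpha_{\mathbb V})\le K_t$; passing to a further $t$-dependent subsequence, or restricting to times where the bound holds, we may assume $\mu_n:=\mathscr{L}_{X^{(n)}_t}\in\mathfrak{M}_b$.

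\textbf{Step 1.} Show that $\limsup_n \mathbb{E}\int_0^T\langle \mathcal{A}(t,X^{(n)}_t,\mu^n_t),X^{(n)}_t-X_t\rangle dt\le 0$. Write the integrand as $\langle\mathcal{A}(X^{(n)},\mu^n),X^{(n)}\rangle-\langle G,X\rangle$ plus two terms tending to zero by the weak convergences. Then handle the first term as in \cite{RSZ}: apply It\^o's formula to $\|X^{(n)}_t\|_{\mathbb H}^2$ and $\|X_t\|_{\mathbb H}^2$, using Lemma \ref{B}, $X^{(n)}_0\to X_0$ in $L^2$, and lower semicontinuity of $\mathbb{E}\|X_T\|^2$. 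Combined with (\ref{Acondition3}), this gives $\lim_n\mathbb{E}\int_0^T\langle\mathcal{A}(X^{(n)},\mu^n),X^{(n)}-X\rangle dt=0$.

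\textbf{Step 2.} Localize. Set $f_n(t,\omega):=\langle\mathcal{A}(t,X^{(n)}_t,\mu^n_t),X^{(n)}_t-X_t\rangle$. By coercivity $(\mathbf{A}_3)$ and the growth bound $(\mathbf{A}_4)$, $f_n$ is bounded below by a uniformly integrable family: $f_n\ge -\delta\|X^{(n)}\|_{\mathbb V}^\alpha/2$ plus terms controlled through $\|\mathcal{A}\|_{\mathbb V^*}\|X\|_{\mathbb V}$ with Young's inequality, leaving an $\varepsilon\|X^{(n)}\|_{\mathbb V}^\alpha$ part that is absorbed, and a negative part that is dominated. A standard argument (as in \cite{RSZ}, Lemma on pseudo-monotone limits) then yields $\liminf_n f_n\ge0$ along a subsequence, $dt\times\mathbb P$-a.e.: Fatou applied to $f_n^-$, combined with $\mathbb{E}\int f_n\to0$, gives $f_n^-\to0$ in $L^1$. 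A technical complication is that $X^{(n)}_t(\omega)$ converges only strongly in $\mathbb H$, not weakly in $\mathbb V$, pointwise. Boundedness in $\mathbb V$ at a.e.\ point follows from $\liminf_n f_n\ge 0$ together with $(\mathbf{A}_3)$ and $(\mathbf{A}_4)$, after passing to a pointwise subsequence on which $\|X^{(n)}_t\|_{\mathbb V}$ stays bounded. Since the $\mathbb H$-limit is $X_t$, one then gets $X^{(n)}_t\rightharpoonup X_t$ in $\mathbb V$ by reflexivity.

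\textbf{Step 3.} Apply Lemma \ref{pseudoequivalent} pointwise to conclude $\mathcal{A}(t,X^{(n)}_t,\mu^n_t)\rightharpoonup\mathcal{A}(t,X_t,\mu_t)$ in $\mathbb V^*$. To identify this limit with $G$, test against $\phi\in L^\infty(\Omega\times[0,T];\mathbb V)$ and use dominated/Vitali convergence, with uniform integrability from $(\mathbf{A}_4)$ and (\ref{estitilde}). This gives $G=\mathcal{A}(\cdot,X,\mathscr{L}_X)$ a.e.

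I expect the main obstacle to be Step 2, where the subsequence in $n$ is forced to depend on $(t,\omega)$. The way around it is Brézis' trick: if a weak limit is independent of the subsequence chosen, the full sequence converges. Because the weak limit $\mathcal{A}(t,X_t,\mu_t)$ in Step 3 does not depend on the subsequence, it suffices to work along arbitrary sub-subsequences, and the final identification with $G$ is unaffected.
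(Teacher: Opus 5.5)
There is a genuine gap: nothing in your argument produces an \emph{upper} bound for $f_n={}_{\mathbb{V}^*}\langle\mathcal{A}(t,X^{(n)}_t,\mu^n_t),X^{(n)}_t-X_t\rangle_{\mathbb{V}}$, and that upper bound is exactly where pseudo-monotonicity has to enter.

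In Step 1 the inequality runs the wrong way. The energy identity is $2\mathbb{E}\int_0^T\langle\mathcal{A}(X^{(n)},\mu^n),X^{(n)}\rangle\,dt=\mathbb{E}\|X^{(n)}_T\|_{\mathbb{H}}^2-\mathbb{E}\|X^{(n)}_0\|_{\mathbb{H}}^2-\mathbb{E}\int_0^T\|P_n\mathcal{B}(t,X^{(n)}_t,\mu^n_t)\|_{L_2(U;\mathbb{H})}^2dt$, and the analogous identity with $G$ holds for $X$. There is no strong convergence at time $T$, so you only have $\mathbb{E}\|X_T\|_{\mathbb{H}}^2\le\liminf_n\mathbb{E}\|X^{(n)}_T\|_{\mathbb{H}}^2$. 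This gives $\liminf_n\mathbb{E}\int_0^T\langle\mathcal{A}(X^{(n)},\mu^n),X^{(n)}\rangle dt\ge\mathbb{E}\int_0^T\langle G,X\rangle dt$, which is hypothesis (\ref{Acondition3}) itself. This is precisely how the paper verifies (\ref{Acondition3}) in the proof of Theorem \ref{th1}.

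Step 2 has the same sign problem. $(\mathbf{A}_3)$ bounds $\langle\mathcal{A}(u,\mu),u\rangle$ from above, so together with $(\mathbf{A}_4)$ and Young it yields the \emph{upper} bound (\ref{Aall}):
$f_n\le-(\delta-\epsilon_0)\|X^{(n)}_t\|_{\mathbb{V}}^\alpha+\epsilon_0\mathscr{L}_{X^{(n)}_t}(\|\cdot\|_{\mathbb{V}}^\alpha)+F^{(n)}_t$.
It does not give a lower bound. Moreover, even granting $\mathbb{E}\int f_n\to0$, applying Fatou to $f_n^-$ does not give $\mathbb{E}\int f_n^-\to0$. Since $\mathbb{E}\int f_n^-=\mathbb{E}\int f_n^+-\mathbb{E}\int f_n$, what you need is $f_n^+\to0$ in $L^1$. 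That is pointwise information, which convergence of integrals cannot supply.

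The missing step is the paper's Claim 1: $\limsup_n f_n(t,\omega)\le0$ for a.e.\ $(t,\omega)$. It is proved by contradiction, using pseudo-monotonicity in its original form (Definition \ref{deps} with $v=u=X_t(\omega)$).
\begin{itemize}
\item Suppose $f_{n_k}(t,\omega)\to c>0$. In (\ref{Aall}), $F^{(n_k)}_t(\omega)$ converges by the strong $\mathbb{H}$-convergence, and the laws are kept in $\mathfrak{M}_b$ as in (\ref{Aall1}). So (\ref{Aall}) forces $\sup_k\|X^{(n_k)}_t(\omega)\|_{\mathbb{V}}<\infty$, hence $X^{(n_k)}_t(\omega)\rightharpoonup X_t(\omega)$ in $\mathbb{V}$.
\item Since $c>0$, the $\liminf\ge0$ hypothesis of Definition \ref{deps} holds. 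Its conclusion gives $\limsup_k f_{n_k}\le0$, a contradiction.
\item Hence $f_n^+\to0$ a.e. The bound (\ref{Aall}) dominates $f_n^+$, using uniform integrability of $F^{(n)}$ from the moment bounds and (\ref{Aall6}), plus smallness of $\epsilon_0$. Therefore $\mathbb{E}\int f_n^+\to0$, and reverse Fatou gives $\limsup_n\mathbb{E}\int f_n\le0$. This replaces your Step 1.
\item Conditions (\ref{Acondition2})--(\ref{Acondition3}) give $\liminf_n\mathbb{E}\int f_n\ge0$. So $f_n\to0$ in $L^1(dt\times\mathbb{P})$, and a.e.\ along a subsequence.
\end{itemize}
Only at this point does your Step 3 go through: apply Lemma \ref{pseudoequivalent} pointwise, then identify the limit with $G$ by testing against bounded $\mathbb{V}$-valued processes.

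The $L^1$ convergence also gives $\mathbb{E}|f_n(t)|\to0$ for a.e.\ $t$ along a subsequence. Taking expectations in (\ref{Aall}) and using (\ref{cont}) then bounds $\mathscr{L}_{X^{(n)}_t}(\|\cdot\|_{\mathbb{V}}^\alpha)$ along the whole extracted sequence. This removes the need for your $t$-dependent Fatou selection and the sub-subsequence bookkeeping in Step 3.
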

\begin{proof}
	Note that
	\begin{eqnarray*}
		\!\!\!\!\!\!\!\!&&{_{\mathbb{V}^*}\langle \mathcal{A}(t, X^{(n)}_t, \mathscr{L}_{X^{(n)}_t}), X^{(n)}_t-X_t \rangle_{\mathbb{V}}}\\
		=\!\!\!\!\!\!\!\!&&{_{\mathbb{V}^*}\langle \mathcal{A}(t, X^{(n)}_t, \mathscr{L}_{X^{(n)}_t}), X^{(n)}_t \rangle_{\mathbb{V}}}-{_{\mathbb{V}^*}\langle \mathcal{A}(t, X^{(n)}_t, \mathscr{L}_{X^{(n)}_t}), X_t \rangle_{\mathbb{V}}}\\
		\leq\!\!\!\!\!\!\!\!&& {_{\mathbb{V}^*}\langle \mathcal{A}(t, X^{(n)}_t, \mathscr{L}_{X^{(n)}_t}), X^{(n)}_t \rangle_{\mathbb{V}}}+\|\mathcal{A}(t, X^{(n)}_t, \mathscr{L}_{X^{(n)}_t})\|_{\mathbb{V}^*}\|X_t\|_{\mathbb{V}}.
	\end{eqnarray*}
	We mention that in view of the estimates (\ref{estitilde}), it implies
	\begin{equation*}
		\mathscr{L}_{X_{t}^{(n)}} \in \mathscr{P}_{\alpha}(\mathbb{V}) \cap \mathscr{P}_{p}(\mathbb{H})\quad d t \text {-a.e.. }
	\end{equation*}
	Using $(\mathbf{A}_3)$-$(\mathbf{A}_4)$ and the estimates (\ref{estitilde}) yields
	\begin{eqnarray*}
		\!\!\!\!\!\!\!\!&&{_{\mathbb{V}^*}\langle \mathcal{A}(t, X^{(n)}_t, \mathscr{L}_{X^{(n)}_t}), X^{(n)}_t-X_t \rangle_{\mathbb{V}}}\nonumber\\
		\leq\!\!\!\!\!\!\!\!&& -\delta\|X^{(n)}_t\|_{\mathbb{V}}^\alpha+C\big(1+\|X^{(n)}_t\|_{\mathbb{H}}^2+\mathscr{L}_{X^{(n)}_t}(\|\cdot\|_{\mathbb{H}}^2)\big)\nonumber\\
		\!\!\!\!\!\!\!\!&&+C\Big[\big(1+\|X^{(n)}_t\|_{\mathbb{V}}^{\alpha}+\mathscr{L}_{X^{(n)}_t}(\|\cdot\|_{\mathbb{V}}^{\alpha} )\big)\big(1+\|X^{(n)}_t\|_{{\mathbb{H}}}^{\beta}+\mathscr{L}_{X^{(n)}_t}(\|\cdot\|_{\mathbb{H}}^{\beta})\big)\Big]^{\frac{\alpha -1}{\alpha } }\|X_t\|_{\mathbb{V}}\nonumber\\
		\leq\!\!\!\!\!\!\!\!&& -(\delta-\epsilon _0)\|X^{(n)}_t\|_{\mathbb{V}}^\alpha+\epsilon _0\mathbb{E}\|X^{(n)}_t\|_{\mathbb{V}}^\alpha+C_{p,T}\big(1+\|X^{(n)}_t\|_{\mathbb{H}}^2+(1+\|X^{(n)}_t\|_{{\mathbb{H}}}^{\beta(\alpha -1)})\|X_t\|_{\mathbb{V}}^{\alpha}\big),
	\end{eqnarray*}
	where the last step used Young's inequality and $\epsilon _0 \in (0,\delta/2).$
	For the sake of convenience, set
	\begin{equation*}
		L^{(n)}_t:={_{\mathbb{V}^*}\langle \mathcal{A}(t, X^{(n)}_t, \mathscr{L}_{X^{(n)}_t}), X^{(n)}_t-X_t \rangle_{\mathbb{V}}},
	\end{equation*}
	\begin{equation*}
		F^{(n)}_t:=C_{p,T}\big(1+\|X^{(n)}_t\|_{\mathbb{H}}^2+(1+\|X^{(n)}_t\|_{{\mathbb{H}}}^{\beta(\alpha -1)})\|X_t\|_{\mathbb{V}}^{\alpha}\big).
	\end{equation*}
	Then it follows that
	\begin{equation}\label{Aall}
		L^{(n)}_t\leq-(\delta-\epsilon _0)\|X^{(n)}_t\|_{\mathbb{V}}^\alpha+\epsilon _0\mathbb{E}\|X^{(n)}_t\|_{\mathbb{V}}^\alpha+F^{(n)}_t.
	\end{equation}
	
	We proceed to prove the lemma by the following three steps.
	
	\noindent\textbf{Claim 1.} We claim that
	\begin{equation}\label{A1all}
		\limsup_{n \to \infty} L^{(n)}_t \leq 0\quad dt \times  \mathbb{P}\text{-a.e.}.
	\end{equation}
	
	Firstly, it follows from (\ref{XconverH}) that there exists a null set $\aleph$ such that for any $(t,\omega)\in([0, T] \times \Omega ) \backslash \aleph$,
	\begin{equation}\label{A11}
		X^{(n)}_t(\omega) \xrightarrow{n\to \infty} X_t(\omega) \quad\text{ in } \mathbb{H}.
	\end{equation}
	Now we prove (\ref{A1all}) by contradiction. More precisely, for fixed $(t,\omega)\in([0, T] \times \Omega ) \backslash \aleph$ we assume
	\begin{equation*}
		\limsup_{n \to \infty} L^{(n)}_t(\omega) > 0.
	\end{equation*}
	Then there exists a subsequence $\{n_k\}_{k=1}^{\infty}$ such that
	\begin{equation}\label{A12}
		\lim_{k \to \infty} L^{(n_k)}_t(\omega) > 0.
	\end{equation}
	
	By (\ref{estitilde}) and (\ref{A11}) we can get
	\begin{equation}\label{Aall0}
		\mathscr{L}_{X^{(n_k)}_t} \xrightarrow{k\to \infty} \mathscr{L}_{X_t}\quad \text{ in } \mathscr{P}_{p_0}(\mathbb{H}),
	\end{equation}
	with $p_0<p.$ It follows from  (\ref{Aall}) and taking expectation that
	\begin{equation}\label{Aall3}
		\mathbb{E}\|X^{(n_k)}_t\|_{\mathbb{V}}^\alpha\lesssim 1+\mathbb{E}\|X^{(n_k)}_t\|_{\mathbb{H}}^2+\mathbb{E}\big((1+\|X^{(n_k)}_t\|_{\mathbb{H}}^{\beta(\alpha-1)})\|X_t\|_{\mathbb{V}}^{\alpha}\big).
	\end{equation}
	Once we can prove there exists a subsequence still denoted by $\{n_k\}_{k=1}^{\infty}$ such that
	\begin{equation}\label{cont}
		\mathbb{E}(\|X^{(n_k)}_t\|_{\mathbb{H}}^{\beta(\alpha-1)}\|X_t\|_{\mathbb{V}}^{\alpha})\xrightarrow{k\to \infty} \mathbb{E}(\|X_t\|_{\mathbb{H}}^{\beta(\alpha-1)}\|X_t\|_{\mathbb{V}}^{\alpha})\quad dt\text{-a.e.},
	\end{equation}
	this together with (\ref{Aall0}) and (\ref{Aall3}) implies
	\begin{equation}\label{Aall1}
		\big\{ \mathscr{L}_{X^{(n_k)}_t}(\|\cdot\|_{\mathbb{V}}^\alpha)\big\}_{k=1}^{\infty}~~\text{is bounded}.
	\end{equation}
	
	In light of  (\ref{Aall}) and (\ref{Aall1}), there exists a constant $K>0$ such that
	\begin{equation}\label{Aall4}
		L^{(n_k)}_t\leq-(\delta-\epsilon _0)\|X^{(n_k)}_t\|_{\mathbb{V}}^\alpha+\epsilon_0K+F^{(n_k)}_t.
	\end{equation}
	Combining (\ref{Aall4}), (\ref{A12}) with the convergence (\ref{A11}) and (\ref{Aall0}), it follows that
	\begin{equation*}
		\{ \|X^{(n_k)}_t(\omega)\|_{\mathbb{V}}^\alpha\}_{k=1}^{\infty}~~\text{is bounded}.
	\end{equation*}
	Consequently, we can find a subsequence still denoted by $\{n_k\}_{k=1}^{\infty}$ such that $\{ X^{(n_k)}(\omega) \}_{k=1}^{\infty}$ converges weakly in $\mathbb{V}$. This, combined with (\ref{A11}), implies
	\begin{equation}\label{Aall2}
		X^{(n_k)}_t(\omega) \rightharpoonup X_t(\omega) \quad \text{in}~~ \mathbb{V}.
	\end{equation}

	Combining (\ref{A12}), (\ref{Aall0}), (\ref{Aall1}) with (\ref{Aall2})  and using the pseudo-monotonicity of the operator $\mathcal{A}(t,\cdot,\cdot)$ in the sense of Definition \ref{deps},  we can conclude that
	\begin{equation*}
		\limsup_{k \to \infty} L^{(n_k)}_t(\omega) \leq 0,
	\end{equation*}
	which contradicts with  (\ref{A12}). Therefore, (\ref{A1all}) is satisfied.
	
	From now on, we shall prove the convergence (\ref{cont})
	by employing the cut-off argument.
	To be more precise,  let $\chi_R\in C^{\infty}_c(\mathbb{R})$ be a cut-off function with
	$$\chi_R(r)=\begin{cases} 1,~~~~|r|\leq R,&\quad\\
		0,~~~~|r|>2R.&\quad\end{cases}$$
	Set
	\begin{equation*}
		\Psi_R(t,w):=\|w_t\|_{\mathbb{H}}^{\beta(\alpha-1)}\chi_R(\|w_t\|_{\mathbb{H}}),
		~ \Psi(t,w):=\|w_t\|_{\mathbb{H}}^{\beta(\alpha-1)}.
	\end{equation*}
	On the one hand, due to the estimate (\ref{estiX}) and the convergence (\ref{A11}), by the dominated convergence theorem and the continuity of $\chi_R$ we can deduce
	\begin{equation}\label{Aall5}
		\lim_{k \to \infty}\int_0^T\mathbb{E}\big(|\Psi_R(t,X^{(n_k)})-\Psi_R(t,X)|\cdot\|X_t\|_{\mathbb{V}}^{\alpha}\big)dt=0.
	\end{equation}
	On the other hand, by the estimates (\ref{estitilde}) and (\ref{estiX})  there exists a constant $q_0\in(\beta(\alpha-1),p/2)$ such that
	\begin{eqnarray}\label{Aall6}
		\sup_{n\in\mathbb{N}}\int_0^T\mathbb{E}(\|X^{(n)}_t\|_{\mathbb{H}}^{q_0}\|X_t\|_{\mathbb{V}}^{\alpha}) dt
		\leq\!\!\!\!\!\!\!\!&&\mathbb{E}\bigg[\sup_{t\in[0,T]}\|X^{(n)}_t\|_{\mathbb{H}}^{q_0}\cdot
		\int_0^T\|X_t\|_{\mathbb{V}}^{\alpha}dt\bigg]
		\nonumber\\
		\lesssim
		\!\!\!\!\!\!\!\!&&\mathbb{E}\Big[\sup_{t\in[0,T]}\|X^{(n)}_t\|_{\mathbb{H}}^{2q_0}\Big]+\mathbb{E}
		\bigg(\int_0^T\|X_t\|_{\mathbb{V}}^{\alpha}dt\bigg)^2
		\nonumber\\
		<
		\!\!\!\!\!\!\!\!&&\infty.
	\end{eqnarray}
	Then by Chebyshev's inequality we can deduce
	\begin{eqnarray}\label{Aall7}
		\!\!\!\!\!\!\!\!&&\int_0^T\mathbb{E}\big(|\Psi_R(t,X^{(n_k)})-\Psi(t,X^{(n_k)})|\cdot\|X_t\|_{\mathbb{V}}^{\alpha}\big)dt
		\nonumber\\
		\leq\!\!\!\!\!\!\!\!&&\int_0^T\mathbb{E}\big(\|X^{(n_k)}_t\|_{\mathbb{H}}^{\beta(\alpha-1)}\|X_t\|_{\mathbb{V}}^{\alpha}
		\mathbf{1}_{\{\|X^{(n_k)}_t\|_{\mathbb{H}}>R\}}\big)dt
		\nonumber\\
		\leq\!\!\!\!\!\!\!\!&&\bigg\{\int_0^T\mathbb{E}\big(\|X^{(n_k)}_t\|_{\mathbb{H}}^{q_0}\|X_t\|_{\mathbb{V}}^{\alpha}\big)dt\bigg\}\Big/R^{q_0-\beta\alpha+\beta}.
	\end{eqnarray}
	Similarly, we also have
	\begin{equation}\label{Aall8}
		\int_0^T\mathbb{E}\big(|\Psi_R(t,X)-\Psi(t,X)|\cdot\|X_t\|_{\mathbb{V}}^{\alpha}\big)dt
		\leq\bigg\{\int_0^T\mathbb{E}\big(\|X_t\|_{\mathbb{H}}^{q_0}\|X_t\|_{\mathbb{V}}^{\alpha}\big)dt\bigg\}\Big/R^{q_0-\beta\alpha+\beta}.
	\end{equation}
	Combining (\ref{Aall5})-(\ref{Aall8}) and taking $k\to\infty$ first and then $R\to\infty$, we obtain
	\begin{equation*}
		\lim_{k \to \infty}\int_0^T\mathbb{E}\big(|\Psi(t,X^{(n_k)})-\Psi(t,X)|\cdot\|X_t\|_{\mathbb{V}}^{\alpha}\big)dt=0,
	\end{equation*}
	which implies (\ref{cont}) holds.

	\vspace{1mm}
	\noindent\textbf{Claim 2.} We claim that there exists a subsequence still denoted by $\{n\}$ such that
	\begin{equation}\label{A21}
		\lim_{n \to \infty} L^{(n)}_t = 0 \quad dt \times  \mathbb{P}\text{-a.e.}.
	\end{equation}

	Note that $|L^{(n)}_t| = 2L^{(n)}_+ - L^{(n)}_t,$ where $L^{(n)}_{+}:=\max\{L^{(n)}_t, 0\}.$
	It follows from (\ref{A1all}) that
	\begin{equation}\label{es7}
		\lim_{n \to \infty} L^{(n)}_+ = 0 \quad dt \times  \mathbb{P}\text{-a.e.}.
	\end{equation}
	Once we can prove
	\begin{equation}\label{A22}
		\lim_{n \to \infty} \mathbb{E} \int_0^T L^{(n)}_t dt = 0,
	\end{equation}
	then in view of  (\ref{estitilde}), (\ref{estiX}), (\ref{Aall}),  (\ref{Aall6}), and (\ref{es7})  and applying the dominated convergence theorem,  we derive
	\begin{equation*}
		\lim_{n \to \infty} \mathbb{E} \int_0^T |L^{(n)}_t| dt =2\mathbb{E} \int_0^T\lim_{n \to \infty}  L^{(n)}_+ dt-\lim_{n \to \infty} \mathbb{E} \int_0^T L^{(n)}_t dt= 0.
	\end{equation*}
	This indicates that there exists a subsequence for which (\ref{A21}) holds.
	
	Now we aim to prove (\ref{A22}).
	By the estimates (\ref{estitilde}), (\ref{estiX}), and (\ref{Aall6}), then Fatou's Lemma indicates
	\begin{equation*}
		\limsup_{n \to \infty} \mathbb{E} \int_0^T L^{(n)}_t dt \leq \mathbb{E} \int_0^T \limsup_{n \to \infty} L^{(n)}_t dt \leq 0,
	\end{equation*}
	where the last step used (\ref{A1all}).
	On the other hand, the conditions (\ref{Acondition2}) and (\ref{Acondition3}) imply that
	\begin{eqnarray*}
		\liminf_{n \to \infty} \mathbb{E} \int_0^T L^{(n)}_t dt=\!\!\!\!\!\!\!\!&&\liminf_{n \to \infty} \mathbb{E} \int_0^T{_{\mathbb{V}^*}\langle \mathcal{A}(t, X^{(n)}_t, \mathscr{L}_{X^{(n)}_t}), X^{(n)}_t \rangle_{\mathbb{V}}}dt\\
		\!\!\!\!\!\!\!\!&&-\liminf_{n \to \infty} \mathbb{E} \int_0^T{_{\mathbb{V}^*}\langle \mathcal{A}(t, X^{(n)}_t, \mathscr{L}_{X^{(n)}_t}), X_t \rangle_{\mathbb{V}}}dt \geq 0.
	\end{eqnarray*}
	In conclusion, the limit of the sequence
	$$\mathbb{E} \int_0^T L^{(n)}_t dt$$  exists and is equal to 0.

	\vspace{1mm}
	\noindent\textbf{Claim 3.}
	We claim that
	$$\mathcal{A}(\cdot, X_{\cdot}, \mathscr{L}_{X_{\cdot}}) = G_{\cdot}\quad dt \times \mathbb{P}\text{-a.e.}.$$
	
	Due to (\ref{Aall4}) and (\ref{A21}), it follows that
	\begin{equation*}
		\sup_{n \in \mathbb{N}} \|X^{(n)}_t\|_\mathbb{V}^{\alpha} < \infty \quad dt \times  \mathbb{P}\text{-a.e.}.
	\end{equation*}
	Hence, combining (\ref{estitilde}), (\ref{estiX}) with (\ref{A11}), it follows that
	$$X^{(n)}_t \rightharpoonup X_t \text{ in } \mathbb{V}\quad dt \times  \mathbb{P}\text{-a.e.}$$
	and
	$$\mathscr{L}_{X^{(n)}_t} \to \mathscr{L}_{X_t} \text{ in } \mathscr{P}_{p_0}(\mathbb{H})\quad dt\text{-a.e.},$$
	with $p_0<p$.
	
	On the other hand, \textbf{Claim 2} means that
	$$\liminf_{n \to \infty}L^{(n)}_t=0\quad dt \times \mathbb{P}\text{-a.e.}.$$
	Then by  the equivalent characterization of pseudo monotonicity (see Lemma \ref{pseudoequivalent}), we have
	\begin{equation*}
		\mathcal{A}(t, X^{(n)}_{t}, \mathscr{L}_{X^{(n)}_{t}}) \rightharpoonup \mathcal{A}(t, X_{t}, \mathscr{L}_{X_{t}})  \text{ in } \mathbb{V}^*.
	\end{equation*}
	According to the uniqueness of the limit, it is clear that \textbf{Claim 3} follows.
\end{proof}

In the sequel, we proceed to prove Theorem \ref{th1}.

\vspace{1mm}
\noindent\textbf{Proof of Theorem \ref{th1}.}
Recall the equality (\ref{eq5}) and Lemma \ref{A}. It suffices to verify (\ref{Acondition3}). Note that using It\^{o}'s formula and taking expectation to $\|X_t^{(n)}\|_\mathbb{H}^2$ and  $\|X_t\|_\mathbb{H}^2$, respectively, yields that
\begin{eqnarray}\label{A-Xn}
	\mathbb{E} \|X_t^{(n)}\|_\mathbb{H}^2=\!\!\!\!\!\!\!\!&& \mu_0\circ P_n^{-1}(\|\cdot\|_{\mathbb{H}}^2) + 2\mathbb{E} \int_{0}^{T}\,_{\mathbb{V}^*} \langle P_n\mathcal{A}(t, X^{(n)}_t, \mathscr{L}_{X^{(n)}_t}),X^{(n)}_t \rangle_{\mathbb{V}} dt \nonumber\\
	\!\!\!\!\!\!\!\!&&+ \mathbb{E} \int_{0}^{T} \|P_n \mathcal{B}(t, X^{(n)}_t, \mathscr{L}_{X^{(n)}_t}) \|_{L_2(U;\mathbb{H})}^2 dt
\end{eqnarray}
and
\begin{equation}\label{A-X}
	\mathbb{E} \|X_t\|_\mathbb{H}^2= \mu_0(\|\cdot\|_{\mathbb{H}}^2) + 2\mathbb{E} \int_{0}^{T}\,_{\mathbb{V}^*} \langle G_t,X_t \rangle_{\mathbb{V}} dt
	+ \mathbb{E} \int_{0}^{T} \|\mathcal{B}(t, X_t, \mathscr{L}_{X_t}) \|_{L_2(U;\mathbb{H})}^2 dt.
\end{equation}
Since the function $\|\cdot\|_\mathbb{H}$ is lower semi-continuous in $\mathbb{V}^*$, Fatou's lemma implies
\begin{equation*}
	\mathbb{E}\|X_t\|_\mathbb{H}^2 \leq \mathbb{E}\liminf_{n \to \infty} \|X^{(n)}_t\|_\mathbb{H}^2 \leq \liminf_{n \to \infty} \mathbb{E}\|X^{(n)}_t\|_\mathbb{H}^2.
\end{equation*}
In light of (\ref{BPn}), (\ref{A-Xn}), (\ref{A-X}), and the definition of the projection $P_n$, we conclude that (\ref{Acondition3}) holds. Furthermore, the estimate (\ref{esq370}) follows from the estimate (\ref{estiX}). Thus, the proof of Theorem \ref{th1} is finished. \hspace{\fill}$\Box$

\subsection{Proof of Theorems \ref{th2} and \ref{th3}}\label{sec4.5}
In this subsection, we aim to prove Theorem \ref{th2} and Theorem \ref{th3}.

\vspace{1mm}
\noindent\textbf{Proof of Theorem \ref{th2}.} \textbf{Step 1.} We claim that the conditions $(\mathbf{A}_2^*)$ and $(\mathbf{A}_5)$ imply the condition $(\mathbf{A}_2)$. Then
Theorem \ref{th1} ensures that Eq.~(\ref{eqSPDE}) has a weak solution $(X, W)$.

To be more precise,
for any sequence
$\{u_n\}_{n=1}^{\infty}\subset\mathbb{V}$ and $\{\mu_n\}_{n=1}^{\infty}\subset\mathfrak{M}_b$ with
$u_n\rightharpoonup u$ in $\mathbb{V}$ and $\mu_n\to\mu$ in $\mathscr{P}_\beta(\mathbb{H})\cap\mathscr{P}_{2}(\mathbb{H})$,
and
\begin{equation}\label{liminf}
	\liminf_{n\to\infty}\,_{\mathbb{V}^*}
	\langle \mathcal{A}(u_n,\mu_n), u_n - u \rangle_{\mathbb{V}}
	\geq 0,
\end{equation}
then for any $v \in \mathbb{V}$, we need to verify
\begin{equation*}
	\limsup_{n\to\infty}\,_{\mathbb{V}^*}
	\langle \mathcal{A}(u_n,\mu_n), u_n - v \rangle_{\mathbb{V}}
	\leq \,_{\mathbb{V}^*}\langle \mathcal{A}(u,\mu), u - v \rangle_{\mathbb{V}}.
\end{equation*}
We set
\begin{eqnarray*}
	K_0 :=
	\!\!\!\!\!\!\!\!&& \|v\|_{\mathbb{V}}+\|u\|_{\mathbb{V}}  + \sup_n \|u_n\|_{\mathbb{V}}+\mu(\|\cdot\|_{\mathbb{V}}^{\alpha})+\mu(\|\cdot\|_{\mathbb{H}}^{\beta})
	\nonumber\\
	\!\!\!\!\!\!\!\!&&
	+\sup_n\mu_n(\|\cdot\|_{\mathbb{V}}^{\alpha})
	+\sup_n\mu_n(\|\cdot\|_{\mathbb{H}}^{\beta})
\end{eqnarray*}
and
\begin{eqnarray*}
	K_1 := \!\!\!\!\!\!\!\!&&\sup \Big\{\eta(u,\mu) + \rho(u,\mu) : u \in \mathbb{V},\mu\in \mathfrak{M} ,
	\nonumber\\
	\!\!\!\!\!\!\!\!&&\quad\quad\|u\|_{\mathbb{V}}+\mu(\|\cdot\|_{\mathbb{H}}^{\beta})
	+\mu(\|\cdot\|_{\mathbb{V}}^{\alpha}) \leq K_0 \Big\}.
\end{eqnarray*}
Since the embedding $\mathbb{V} \subset \mathbb{H}$ is compact, it follows that $u_n \to u$ in $\mathbb{H}$ and
\begin{equation*}
	\lim_{n\to\infty}\,_{\mathbb{V}^*}
	\langle K_1 u_n,\, u_n - v\rangle_{\mathbb{V}}=\,_{\mathbb{V}^*}
	\langle K_1 u,\, u - v\rangle_{\mathbb{V}}.
\end{equation*}
Hence, it is sufficient to show that
\begin{equation}\label{aimlimsup}
	\limsup_{n\to\infty} \,_{\mathbb{V}^*}\langle \mathcal{A}_0(u_n,\mu_n),\, u_n - v \rangle_{\mathbb{V}}\leq\,_{\mathbb{V}^*}\langle \mathcal{A}_0(u,\mu), u - v \rangle_{\mathbb{V}},
\end{equation}
where $\mathcal{A}_0(u,\mu) := \mathcal{A}(u,\mu) - K_1u$.

By the condition $(\mathbf{A}_5)$, it follows that
\begin{equation*}
	\limsup_{n\to\infty}\,_{\mathbb{V}^*}\langle\mathcal{A}_0(u_n,\mu_n)
	-\mathcal{A}_0(u,\mu),u_n - u\rangle_{\mathbb{V}}\leq\limsup_{n\to\infty}K_1\mathcal{W}_{2,\mathbb{H}}(\mu_n,\mu)^2=0.
\end{equation*}
Due to the weak convergence of $u_n$ in $\mathbb{V}$, we deduce that
\begin{equation*}
	\limsup_{n\to\infty}\,_{\mathbb{V}^*}\langle\mathcal{A}_0(u_n,\mu_n)
	,u_n - u\rangle_{\mathbb{V}}\leq0.
\end{equation*}
This, combined with (\ref{liminf}), implies
\begin{equation}\label{limun-u}
	\lim_{n\to\infty}\,_{\mathbb{V}^*}\langle\mathcal{A}_0(u_n,\mu_n)
	,u_n - u\rangle_{\mathbb{V}}=0.
\end{equation}
Moreover, the condition $(\mathbf{A}_2^*)$ yields that
\begin{equation*}
	\lim_{n\to\infty}\,_{\mathbb{V}^*}\langle\mathcal{A}_0(u_n,\mu_n)
	,u - v\rangle_{\mathbb{V}}=\,_{\mathbb{V}^*}\langle\mathcal{A}_0(u,\mu)
	,u - v\rangle_{\mathbb{V}}.
\end{equation*}
This, combined with (\ref{limun-u}), implies the claim holds.

\noindent\textbf{Step 2.} According to the modified Yamada-Watanabe theorem (cf. \cite[Lemma 5.9]{HLL26}), if the following decoupled equation
\begin{equation}\label{barXeq}
	d\bar{X}_t = \mathcal{A}^\mu(t, \bar{X}_t)dt + \mathcal{B}^\mu(t, \bar{X}_t)dW_t,~\bar{X}_0\sim \mu_0,
\end{equation}
where $\mu_t := \mathscr{L}_{X_t}$, $\mathcal{A}^\mu(t, \cdot) := \mathcal{A}(t, \cdot, \mu_t)$, $\mathcal{B}^\mu(t, \cdot) := \mathcal{B}(t, \cdot, \mu_t)$, satisfies the pathwise uniqueness, then Eq.~(\ref{eqSPDE}) has a strong solution, i.e., Theorem \ref{th2} holds.

To that end, we intend to prove the pathwise uniqueness of Eq.~(\ref{barXeq}). Let $\bar{X}$ and $\bar{Y}$ be two solutions of Eq.~(\ref{barXeq}) with the same initial value $\xi \in L^p(\Omega, \mathscr{F}_0, \mathbb{P}; \mathbb{H})$. We mention that the following moment estimates hold
\begin{eqnarray}
	\!\!\!\!\!\!\!\!&&\mathbb{E}\Big[\sup_{t\in[0,T]}\|\bar{X}_t\|_{\mathbb{H}}^{\gamma p}\Big]+\mathbb{E}\int_0^T(1+\|\bar{X}_t\|_{\mathbb{H}}^{p-2})\|\bar{X}_t\|_{\mathbb{V}}^{\alpha}dt<\infty,\label{barXesti}
	\\
	\!\!\!\!\!\!\!\!&&
	\mathbb{E}\Big[\sup_{t\in[0,T]}\|\bar{Y}_t\|_{\mathbb{H}}^{\gamma p}\Big]+\mathbb{E}\int_0^T(1+\|\bar{Y}_t\|_{\mathbb{H}}^{p-2})\|\bar{Y}_t\|_{\mathbb{V}}^{\alpha}dt<\infty,\label{barYesti}
\end{eqnarray}
for any $\gamma \in (0,1)$. Define
$$
\phi(t) := \rho(\bar{X}_t, \mu_t) + \eta(\bar{Y}_t, \mu_t).
$$
Based on the condition $(\mathbf{A}_5),$ applying It\^{o}'s formula and the product rule yields
\begin{eqnarray*}
	\!\!\!\!\!\!\!\!&& e^{-\int_0^t \phi(s)  ds} \| \bar{X}_t - \bar{Y}_t \|_{\mathbb{H}}^2\\
	\leq\!\!\!\!\!\!\!\!&& \int_0^t e^{-\int_0^s \phi(r)  dr} \Big( 2_{\mathbb{V}^*} \langle \mathcal{A}^\mu (s, \bar{X}_s) - \mathcal{A}^\mu (s, \bar{Y}_s), \bar{X}_s - \bar{Y}_s \rangle_\mathbb{V}\\
	\!\!\!\!\!\!\!\!&&+ \| \mathcal{B}^\mu (s, \bar{X}_s) - \mathcal{B}^\mu (s, \bar{Y}_s) \|_{L_2(U; \mathbb{H})}^2 - \phi(s) \| X_s - Y_s \|_{\mathbb{H}}^2 \Big) ds\\
	\!\!\!\!\!\!\!\!&&+ 2 \int_0^t e^{-\int_0^s \phi(r)  dr} \langle \bar{X}_s - \bar{Y}_s, (\mathcal{B}^\mu (s, \bar{X}_s) - \mathcal{B}^\mu (s, \bar{Y}_s)) dW_s\rangle_{\mathbb{H}}\\
	\leq\!\!\!\!\!\!\!\!&& 2 \int_0^t e^{-\int_0^s \phi(r)  dr} \langle\bar{X}_s - \bar{Y}_s, (\mathcal{B}^\mu (s, \bar{X}_s) - \mathcal{B}^\mu (s, \bar{Y}_s)) dW_s\rangle_{\mathbb{H}}.
\end{eqnarray*}
Taking expectation on both sides of the above
inequality, it follows that
\begin{equation*}
	\mathbb{E} \Big[ e^{-\int_0^t \phi(s)  ds} \| \bar{X}_t - \bar{Y}_t \|_{\mathbb{H}}^2 \Big] \leq 0.
\end{equation*}
On the other hand, by (\ref{barXesti}), (\ref{barYesti}), and the definition of $\phi$ we deduce that
\begin{equation*}
	\int_0^T \phi(t)  dt < \infty \quad \mathbb{P} \text{-a.s.}.
\end{equation*}
Consequently, we have
\begin{equation*}
	\bar{X}_t = \bar{Y}_t \quad \mathbb{P}\text{-a.s.}, \quad t \in [0,T],
\end{equation*}
which implies the pathwise uniqueness of (\ref{barXeq}) by the path continuity on $\mathbb{H}$.

We finish the proof of Theorem \ref{th2}. \hspace{\fill}$\Box$

\vspace{1mm}
\noindent\textbf{Proof of Theorem \ref{th3}.} \textbf{Step 1.}
By the modified Yamada-Watanabe theorem, in order to establish the existence and uniqueness of weak and strong solutions to Eq.~(\ref{eqSPDE}), it remains to prove
the pathwise uniqueness of Eq.~(\ref{eqSPDE}) on $\mathbb{H}$. Let $X, Y$ be two solutions of Eq.~(\ref{eqSPDE}) with initial values $X_0 = Y_0 = \xi \in L^p(\Omega, \mathscr{F}_0, \mathbb{P}; \mathbb{H})$,  and the following moment estimates hold
\begin{eqnarray}
	\!\!\!\!\!\!\!\!&&\mathbb{E}\Big[\sup_{t\in[0,T]}\|X_t\|_{\mathbb{H}}^{\gamma p}\Big]+\mathbb{E}\int_0^T(1+\|X_t\|_{\mathbb{H}}^{p-2})\|X_t\|_{\mathbb{V}}^{\alpha}dt<\infty,\label{unique-Xesti}
	\\
	\!\!\!\!\!\!\!\!&&\mathbb{E}\Big[\sup_{t\in[0,T]}\|Y_t\|_{\mathbb{H}}^{\gamma p}\Big]+\mathbb{E}\int_0^T(1+\|Y_t\|_{\mathbb{H}}^{p-2})\|Y_t\|_{\mathbb{V}}^{\alpha}dt<\infty,\label{unique-Yesti}
\end{eqnarray}
for any $\gamma \in (0,1)$.

Applying It\^{o}'s formula to $\|X_t - Y_t\|^2_{\mathbb{H}}$ yields that for any $t \in [0, T],$
\begin{eqnarray*}
	\!\!\!\!\!\!\!\!&&\|X_t - Y_t\|^2_{\mathbb{H}}
	\\
	=\!\!\!\!\!\!\!\!&& \int_0^t \Big( 2_{\mathbb{V}^*} \langle \mathcal{A}(s, X_s, \mathscr{L}_{X_s}) - \mathcal{A}(s, Y_s, \mathscr{L}_{Y_s}), X_s - Y_s \rangle_\mathbb{V}\\
	\!\!\!\!\!\!\!\!&&+ \|\mathcal{B}(s, X_s, \mathscr{L}_{X_s}) - \mathcal{B}(s, Y_s, \mathscr{L}_{Y_s})\|^2_{L_2(U; \mathbb{H})} \Big) ds\\
	\!\!\!\!\!\!\!\!&&+2\int_0^t \langle X_s - Y_s, (\mathcal{B}(s, X_s, \mathscr{L}_{X_s}) - \mathcal{B}(s, Y_s, \mathscr{L}_{Y_s}) ) dW_s \rangle_{\mathbb{H}}\\
	\leq\!\!\!\!\!\!\!\!&&  \int_0^t ( \rho(0, \mathscr{L}_{X_s}) + \eta(0, \mathscr{L}_{Y_s}) ) \|X_s - Y_s\|^2_{\mathbb{H}} ds\\
	\!\!\!\!\!\!\!\!&&+  \int_0^t ( \rho(X_s, \mathscr{L}_{X_s}) + \eta(Y_s, \mathscr{L}_{Y_s})) \mathbb{E} \|X_s - Y_s\|^2_{\mathbb{H}} ds\\
	\!\!\!\!\!\!\!\!&&+2\int_0^t \langle X_s - Y_s, (\mathcal{B}(s, X_s, \mathscr{L}_{X_s}) - \mathcal{B}(s, Y_s, \mathscr{L}_{Y_s}) ) dW_s \rangle_{\mathbb{H}},
\end{eqnarray*}
where we used $(\mathbf{A}^*_5)$ in the last step. Taking expectation on both sides of the above
inequality, we have
\begin{equation}\label{unique-1}
	\mathbb{E}\|X_{t}-Y_{t}\|_{\mathbb{H}}^{2} \lesssim \int_{0}^{t}(1+\mathbb{E} \rho(X_{s}, \mathscr{L}_{X_{s}})+\mathbb{E} \eta(Y_{s}, \mathscr{L}_{Y_{s}})) \mathbb{E}\|X_{s}-Y_{s}\|_{\mathbb{H}}^{2} d s.
\end{equation}
Note that by (\ref{unique-Xesti}) and (\ref{unique-Yesti}), it follows that
\begin{equation}\label{rho-eta}
	\mathbb{E} \int_{0}^{T} ( \rho(X_t, \mathscr{L}_{X_t}) + \eta(Y_t, \mathscr{L}_{Y_t})) dt < \infty.
\end{equation}
Combining (\ref{unique-1}) with (\ref{rho-eta}), Gronwall's lemma gives that
\begin{equation*}
	\mathbb{E} \| X_t - Y_t \|_{\mathbb{H}}^{2} \leq 0,
\end{equation*}
which implies
\begin{equation*}
	X_t = Y_t \quad \mathbb{P} \text{-a.s., } \quad t \in [0, T].
\end{equation*}
Then the pathwise uniqueness of Eq.~(\ref{eqSPDE}) holds due to the path continuity on $\mathbb{H}$.

\vspace{1mm}
\noindent\textbf{Step 2.} In this step, we prove (\ref{initialconti}).
Firstly, it is clear that the following moment estimates hold for any $\gamma\in(0,1)$
\begin{equation}\label{xinesti}
	\sup_{n \in \mathbb{N}}\bigg\{\mathbb{E}\Big[\sup_{t\in[0,T]}\|X_{t}(\xi_n)\|_{\mathbb{H}}^{\gamma p}\Big]+\mathbb{E}\int_0^T\|X_{t}(\xi_n)\|_{\mathbb{H}}^{p-2}\|X_{t}(\xi_n)\|_{\mathbb{V}}^{\alpha}dt\bigg\}<\infty,
\end{equation}
\begin{equation}\label{xiesti}
	\mathbb{E}\Big[\sup_{t\in[0,T]}\|X_{t}(\xi)\|_{\mathbb{H}}^{\gamma p}\Big]+\mathbb{E}\int_0^T\|X_{t}(\xi)\|_{\mathbb{H}}^{p-2}\|X_{t}(\xi)\|_{\mathbb{V}}^{\alpha}dt<\infty.
\end{equation}
We define the stopping time
$$
\tau^n_M :=\inf\big\{t \in[0,T]: \|X_t(\xi_n)\|_{\mathbb{H}} >M\big\} \wedge
\inf\big\{t \in[0,T]: \|X_t(\xi)\|_{\mathbb{H}} >M\big\}\wedge T.
$$
Applying It\^{o}'s formula and using $(\mathbf{A}^*_5)$ yields that
\begin{eqnarray}\label{itoxi}
	\!\!\!\!\!\!\!\!&&\|X_{t}(\xi_n)-X_{t}(\xi)\|^2_\mathbb{H}-\|\xi_n-\xi\|^2_\mathbb{H}\nonumber\\
	=\!\!\!\!\!\!\!\!&& \int_0^t \Big( 2_{\mathbb{V}^*} \langle \mathcal{A}(s, X_{s}(\xi_n), \mathscr{L}_{X_{s}(\xi_n)}) - \mathcal{A}(s, X_{s}(\xi), \mathscr{L}_{X_{s}(\xi)}), X_{s}(\xi_n)-X_{s}(\xi) \rangle_\mathbb{V}\nonumber\\
	\!\!\!\!\!\!\!\!&&+ \|\mathcal{B}(s, X_{s}(\xi_n), \mathscr{L}_{X_{s}(\xi_n)}) - \mathcal{B}(s, X_{s}(\xi), \mathscr{L}_{X_{s}(\xi)})\|^2_{L_2(U; \mathbb{H})} \Big) ds+2\mathcal{M}_t\nonumber\\
	\leq\!\!\!\!\!\!\!\!&&  \int_0^t ( \rho(0, \mathscr{L}_{X_{s}(\xi_n)}) + \eta(0, \mathscr{L}_{X_{s}(\xi)}) ) \|X_{s}(\xi_n)-X_{s}(\xi)\|^2_{\mathbb{H}} ds+2\mathcal{M}_t\nonumber\\
	\!\!\!\!\!\!\!\!&&+  \int_0^t ( \rho(X_{s}(\xi_n), \mathscr{L}_{X_{s}(\xi_n)}) + \eta(X_{s}(\xi), \mathscr{L}_{X_{s}(\xi)})) \mathcal{W}_{2,\mathbb{H}}(\mathscr{L}_{X_{s}(\xi_n)},\mathscr{L}_{X_{s}(\xi)})^2 ds,
\end{eqnarray}
where
$$\mathcal{M}_t:=\int_0^t \langle X_{s}(\xi_n)-X_{s}(\xi), (\mathcal{B}(s, X_{s}(\xi_n), \mathscr{L}_{X_{s}(\xi_n)}) - \mathcal{B}(s, X_{s}(\xi), \mathscr{L}_{X_{s}(\xi)})) dW_s \rangle_{\mathbb{H}}.$$
Using Burkholder--Davis--Gundy inequality and Young's inequality, we derive
\begin{eqnarray*}
	\!\!\!\!\!\!\!\!&&\mathbb{E} \Big[ \sup_{t \in [0,T\wedge\tau^n_M]} |\mathcal{M}_t| \Big]\nonumber\\
	\leq\!\!\!\!\!\!\!\!&&  \mathbb{E} \bigg( \int_0^{T\wedge\tau^n_M} \|X_{t}(\xi_n)-X_{t}(\xi)\|_{\mathbb{H}}^2
	\nonumber\\
	\!\!\!\!\!\!\!\!&& \cdot\|\mathcal{B}(t, X_{t}(\xi_n), \mathscr{L}_{X_{t}(\xi_n)}) - \mathcal{B}(t, X_{t}(\xi), \mathscr{L}_{X_{t}(\xi)})\|_{L_2(U;\mathbb{H})}^2 dt \bigg)^{\frac{1}{2}}\nonumber\\
	\leq\!\!\!\!\!\!\!\!&& \mathbb{E} \bigg[ \Big( \sup_{t \in [0,T\wedge\tau^n_M]} \|X_{t}(\xi_n)-X_{t}(\xi)\|_{\mathbb{H}}^2 \Big)^{\frac{1}{2}}\nonumber\\
	\!\!\!\!\!\!\!\!&& \cdot \bigg( \int_0^{T\wedge\tau^n_M} \|\mathcal{B}(t, X_{t}(\xi_n), \mathscr{L}_{X_{t}(\xi_n)}) - \mathcal{B}(t, X_{t}(\xi), \mathscr{L}_{X_{t}(\xi)})\|_{L_2(U;\mathbb{H})}^2 dt \bigg)^{\frac{1}{2}} \bigg] \nonumber\\
	\leq\!\!\!\!\!\!\!\!&& \mathbb{E} \int_0^{T\wedge\tau^n_M} \|\mathcal{B}(t, X_{t}(\xi_n), \mathscr{L}_{X_{t}(\xi_n)}) - \mathcal{B}(t, X_{t}(\xi), \mathscr{L}_{X_{t}(\xi)})\|_{L_2(U;\mathbb{H})}^2 dt \nonumber\\
	\!\!\!\!\!\!\!\!&&+ \frac{1}{2} \mathbb{E} \Big[ \sup_{t \in [0,T\wedge\tau^n_M]} \|X_{t}(\xi_n)-X_{t}(\xi)\|_{\mathbb{H}}^2 \Big].
\end{eqnarray*}
Then it follows that
\begin{eqnarray*}
	\!\!\!\!\!\!\!\!&&\mathbb{E}\Big[\sup_{t\in[0,T\wedge\tau^n_M]}\|X_{t}(\xi_n)-X_{t}(\xi)\|^2_\mathbb{H}\Big]-\mathbb{E}\|\xi_n-\xi\|^2_\mathbb{H}\\
	\lesssim\!\!\!\!\!\!\!\!&&  \int_0^{T} ( \rho(0, \mathscr{L}_{X_{t}(\xi_n)}) + \eta(0, \mathscr{L}_{X_{t}(\xi)}) ) \mathbb{E}\|X_{t}(\xi_n)-X_{t}(\xi)\|^2_{\mathbb{H}} dt\\
	\!\!\!\!\!\!\!\!&&+  \int_0^{T} ( \mathbb{E}\rho(X_{t}(\xi_n), \mathscr{L}_{X_{t}(\xi_n)}) + \mathbb{E}\eta(X_{t}(\xi), \mathscr{L}_{X_{t}(\xi)})) \mathbb{E} \|X_{t}(\xi_n)-X_{t}(\xi)\|^2_{\mathbb{H}} dt\\
	\!\!\!\!\!\!\!\!&&+\mathbb{E} \int_0^{T\wedge\tau^n_M} \|\mathcal{B}(t, X_{t}(\xi_n), \mathscr{L}_{X_{t}(\xi_n)}) - \mathcal{B}(t, X_{t}(\xi), \mathscr{L}_{X_{t}(\xi)})\|_{L_2(U;\mathbb{H})}^2 dt.
\end{eqnarray*}
Taking $M\to\infty$ and applying the monotone convergence theorem and Gronwall's lemma, due to (\ref{esq22}), (\ref{xinesti}), and (\ref{xiesti}), we derive
\begin{eqnarray}\label{supxin-xi}
	\!\!\!\!\!\!\!\!&&\mathbb{E}\Big[\sup_{t\in[0,T]}\|X_{t}(\xi_n)-X_{t}(\xi)\|^2_\mathbb{H}\Big]-\mathbb{E}\|\xi_n-\xi\|^2_\mathbb{H}\nonumber\\
	\lesssim\!\!\!\!\!\!\!\!&&
	\mathbb{E} \int_0^{T} \|\mathcal{B}(t, X_{t}(\xi_n), \mathscr{L}_{X_{t}(\xi_n)}) - \mathcal{B}(t, X_{t}(\xi), \mathscr{L}_{X_{t}(\xi)})\|_{L_2(U;\mathbb{H})}^2 dt.
\end{eqnarray}

On the other hand,
by (\ref{itoxi}) and applying Gronwall's lemma, we also obtain
\begin{equation*}
	\mathbb{E}\|X_{t}(\xi_n)-X_{t}(\xi)\|^2_\mathbb{H}
	\lesssim\mathbb{E}\|\xi_n-\xi\|^2_\mathbb{H},
\end{equation*}
which implies that for any $t\in[0,T]$,
\begin{equation*}
	\mathcal{W}_{2,\mathbb{H}}(\mathscr{L}_{X_{t}(\xi_n)},\mathscr{L}_{X_{t}(\xi)})\xrightarrow{n\to\infty} 0,
\end{equation*}
and there exists a subsequence still denoted by $\{n\}$ such that
\begin{equation*}
	X_{t}(\xi_n)\xrightarrow{n\to\infty}X_{t}(\xi)\quad \mathbb{P}\text{-a.s.}~\text{in}~\mathbb{H}.
\end{equation*}
By $(\mathbf{A}_1)$, (\ref{xinesti}), and (\ref{xiesti}), the Vitali's convergence theorem implies
\begin{equation*}
	\lim_{n \to \infty}\mathbb{E} \int_0^{T}\|\mathcal{B}(t, X_{t}(\xi_n), \mathscr{L}_{X_{t}(\xi_n)}) - \mathcal{B}(t, X_{t}(\xi), \mathscr{L}_{X_{t}(\xi)})\|_{L_2(U;\mathbb{H})}^2 dt=0.
\end{equation*}
This, combined with (\ref{supxin-xi}), gives
\begin{equation*}
	\sup_{t\in[0,T]}\|X_{t}(\xi_n)-X_{t}(\xi)\|_\mathbb{H}\xrightarrow{n\to\infty} 0\quad\text{in probability}.
\end{equation*}
Then, due to (\ref{xinesti}) and (\ref{xiesti}), the Vitali's convergence theorem yields that
\begin{equation*}
	\lim_{n\to \infty}\mathbb{E}\Big[\sup_{t\in[0,T]}\|X_{t}(\xi_n)-X_{t}(\xi)\|^{p'}_\mathbb{H}\Big]
	=0,\quad p'<p.
\end{equation*}
Since the limit is unique, the convergence holds for the whole sequence. Thus, we obtain the desired result. \hspace{\fill}$\Box$
\section{Proof of quantitative propagation of chaos}\label{ProofPoC}
In this section, we devote to proving the propagation of chaos results.  More specifically, in Subsection \ref{secpoc} we present some uniform in $N$ moment bounds of the nIPS (\ref{nIPS}) and IPS (\ref{IPS}), respectively.
In Subsection \ref{Proofrate1}, we verify  Theorem \ref{rate1}. In Subsection \ref{Proofrate2}, we aim to prove Theorem \ref{rate2}.

\subsection{Uniform in $N$ bounds}\label{secpoc}
We first provide the uniform in $N$ moment bounds for IPS (\ref{IPS}).
\begin{lemma}\label{XNesti}
	For any $i \in \{1,2,\dots,N\}$ and $\gamma \in (0,1)$, we have
	\begin{equation}\label{XNesti1}
		\sup_{t\in [0,T]}\mathbb{E} \|X_t^{i,N}\|_{\mathbb{H}}^p+\mathbb{E}\int_{0}^{T}\|X_t^{i,N}\|_{\mathbb{V}}^{\alpha }\|X_t^{i,N}\|_{\mathbb{H}}^{p-2}dt
		\lesssim_{p,T} 1+\mathbb{E}
		\|\xi^i\|_{\mathbb{H}}^p
	\end{equation}
	and
	\begin{equation}\label{XNesti2}
		\mathbb{E}\Big[\sup_{t\in [0,T]} \|X_t^{i,N}\|_{\mathbb{H}}^{\gamma p}\Big]
		\lesssim_{p,T} (1+\mathbb{E}
		\|\xi^i\|_{\mathbb{H}}^{p})^\gamma.
	\end{equation}
\end{lemma}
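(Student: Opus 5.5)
The plan is to copy the proofs of Lemmas \ref{Yesti1} and \ref{Yesti2}, with the law $\mathscr{L}_{X^{(n)}_s}$ replaced by the empirical measure $\mu^N_s$. Two structural facts make this work. First, the coercivity $(\mathbf{A}_3)$ brings in only the second moment $\mu^N_s(\|\cdot\|_{\mathbb{H}}^2)=\frac1N\sum_{j=1}^N\|X_s^{j,N}\|_{\mathbb{H}}^2$. Second, the system (\ref{IPS}) is exchangeable: the pairs $(\xi^i,W^i)$ are i.i.d. and the coefficients are symmetric, and the strong solution of Theorem \ref{wp-nIPS} is unique. Hence the laws of all $X^{i,N}$ coincide, and $\mathbb{E}\|X_s^{j,N}\|_{\mathbb{H}}^p=\mathbb{E}\|X_s^{i,N}\|_{\mathbb{H}}^p$ for every $j$.

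\textbf{Moment estimate (\ref{XNesti1}).} Introduce the stopping time
$$\tau_R:=\inf\Big\{t\in[0,T]:\max_{j}\|X^{j,N}_t\|_{\mathbb{H}}>R\Big\}\wedge T.$$
Apply It\^o's formula to $\|X^{i,N}_t\|_{\mathbb{H}}^p$ exactly as in (\ref{A13}). This gives a drift term
$$\frac p2\|X^{i,N}_s\|_{\mathbb{H}}^{p-2}\Big(2\,{}_{\mathbb{V}^*}\langle\mathcal{A},X^{i,N}_s\rangle_{\mathbb{V}}+(p-1)\|\mathcal{B}\|_{L_2(U;\mathbb{H})}^2\Big),$$
which by $(\mathbf{A}_3)$ is bounded by
$$-\frac{p\delta}{2}\|X_s^{i,N}\|_{\mathbb{V}}^\alpha\|X_s^{i,N}\|_{\mathbb{H}}^{p-2}+C\Big(1+\|X_s^{i,N}\|_{\mathbb{H}}^2+\frac1N\sum_j\|X_s^{j,N}\|_{\mathbb{H}}^2\Big)\|X_s^{i,N}\|_{\mathbb{H}}^{p-2}.$$
Young's inequality then bounds the last product by $C\big(1+\|X^{i,N}_s\|_{\mathbb{H}}^p+\frac1N\sum_j\|X^{j,N}_s\|_{\mathbb{H}}^p\big)$. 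Stop at $t\wedge\tau_R$, so the stochastic integral has zero mean, and take expectations. The remaining step is to control
$$\mathbb{E}\frac1N\sum_j\int_0^{t\wedge\tau_R}\|X^{j,N}_s\|_{\mathbb{H}}^pds.$$
I would bound it by $\int_0^t\frac1N\sum_j\mathbb{E}\|X^{j,N}_{s\wedge\tau_R}\|^p_{\mathbb{H}}ds$. Summing the $i$-inequalities over $i$ and dividing by $N$ gives a closed Gronwall inequality for
$$f_R(t):=\frac1N\sum_i\mathbb{E}\|X^{i,N}_{t\wedge\tau_R}\|_{\mathbb{H}}^p.$$
Letting $R\to\infty$ with Fatou's lemma yields $\sup_t\frac1N\sum_j\mathbb{E}\|X^{j,N}_t\|^p_{\mathbb{H}}\lesssim 1+\mathbb{E}\|\xi^1\|^p_{\mathbb{H}}$. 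Feeding this back into the single-$i$ inequality, together with exchangeability (or simply $\mathbb{E}\|\xi^j\|^p=\mathbb{E}\|\xi^i\|^p$), gives (\ref{XNesti1}) with a constant independent of $N$. This includes the $\mathbb{V}$-dissipation term.

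\textbf{Supremum estimate (\ref{XNesti2}).} As in Lemma \ref{Yesti2}, take an arbitrary bounded stopping time $\tau_0\le T$ and repeat the computation at $t\wedge\tau_R\wedge\tau_0$. This gives
$$\mathbb{E}\|X^{i,N}_{t\wedge\tau_0}\|^p_{\mathbb{H}}\lesssim 1+\mathbb{E}\|\xi^i\|^p_{\mathbb{H}}+\int_0^T\Big(\mathbb{E}\|X_s^{i,N}\|^p_{\mathbb{H}}+\frac1N\sum_j\mathbb{E}\|X^{j,N}_s\|^p_{\mathbb{H}}\Big)ds,$$
and the right-hand side is bounded by the first step. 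Lenglart's inequality then yields (\ref{XNesti2}) for every $\gamma\in(0,1)$.

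\textbf{Main obstacle.} The only genuinely new point compared with Lemma \ref{Yesti1} is closing the Gronwall argument in the presence of the empirical-measure term. For the McKean--Vlasov case this term was deterministic. Here it couples all the particles, and it is not stopped consistently unless one uses a common stopping time. I would handle this with the joint stopping time $\tau_R$ over all particles and the averaged quantity $f_R$. The $N$-independence of the constant then follows because only averages $\frac1N\sum_j$ appear.
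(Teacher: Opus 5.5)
Your proposal is correct and follows essentially the paper's route: It\^o's formula for $\|X^{i,N}_t\|_{\mathbb{H}}^p$ combined with $(\mathbf{A}_3)$ and Young's inequality, averaging over $i$ to close a Gronwall estimate for $\frac1N\sum_j\mathbb{E}\|X^{j,N}_t\|_{\mathbb{H}}^p$, feeding that bound back into the single-particle inequality, and then bounded stopping times plus Lenglart's inequality for the supremum. Your joint localization $\tau_R$ adds rigor, since the paper takes expectations of the stochastic integral without localizing. The exchangeability argument is unnecessary, because $\mathbb{E}\|\xi^j\|_{\mathbb{H}}^p=\mathbb{E}\|\xi^i\|_{\mathbb{H}}^p$ already suffices.
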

\begin{proof}
	Applying It\^{o}'s formula to  $\|X_t^{i,N}\|_{\mathbb{H}}^p$ yields that
	\begin{eqnarray}\label{XNeito}
		\!\!\!\!\!\!\!\!&&\|X_t^{i,N}\|_{\mathbb{H}}^p-\|\xi^i \|_{\mathbb{H}}^p\nonumber\\
		=\!\!\!\!\!\!\!\!&&\frac{p}{2} \int_{0}^{t}\|X_s^{i,N}\|_{\mathbb{H}}^{p-2}\big(2\,_{\mathbb{V}^*}\langle \mathcal{A}(s, X_s^{i,N}, \mu^N_s), X_s^{i,N}\rangle_{\mathbb{V}}+\| \mathcal{B}(s, X_s^{i,N}, \mu^N_s) \|_{L_{2}(U;\mathbb{H})}^{2}\big) d s\nonumber\\
		\!\!\!\!\!\!\!\!&&+\frac{p(p-2)}{2}\int_{0}^{t}\|X_s^{i,N}\|_{\mathbb{H}}^{p-4}\| \mathcal{B}(s, X_s^{i,N}, \mu^N_s) ^*X_s^{i,N}\|_{U}^{2} d s\nonumber\\
		\!\!\!\!\!\!\!\!&&+p\int_{0}^{t}\|X_s^{i,N}\|_{\mathbb{H}}^{p-2}\langle X_s^{i,N},  \mathcal{B}(s, X_s^{i,N}, \mu^N_s)  d W^i_{s}\rangle_{\mathbb{H}}\nonumber\\
		\leq\!\!\!\!\!\!\!\!&&\frac{p}{2} \int_{0}^{t}\|X_s^{i,N}\|_{\mathbb{H}}^{p-2}\big[2\,_{\mathbb{V}^*}\langle  \mathcal{A}(s, X_s^{i,N}, \mu^N_s), X_s^{i,N}\rangle_{\mathbb{V}}+(p-1)\|\mathcal{B}(s, X_s^{i,N}, \mu^N_s)\|_{L_{2}(U;\mathbb{H})}^{2}\big]ds\nonumber\\
		\!\!\!\!\!\!\!\!&&+p\int_{0}^{t}\|X_s^{i,N}\|_{\mathbb{H}}^{p-2}\langle X_s^{i,N}, \mathcal{B}(s, X_s^{i,N}, \mu^N_s)  d W^i_{s}\rangle_{\mathbb{H}}.
	\end{eqnarray}
	Using the condition $(\mathbf{A}_3)$ and taking expectation on both sides of (\ref{XNeito}), it follows that
	\begin{eqnarray*}
		\!\!\!\!\!\!\!\!&&\mathbb{E}\Big[\frac{1}{N}\sum_{i=1}^{N} \|X_t^{i,N}\|_{\mathbb{H}}^p\Big]-\mathbb{E}\Big[\frac{1}{N}\sum_{i=1}^{N}\|\xi^i \|_{\mathbb{H}}^p\Big]\nonumber\\
		\leq\!\!\!\!\!\!\!\!&&-\frac{p\delta }{2} \mathbb{E}\int_{0}^{t}\frac{1}{N}\sum_{i=1}^{N} \big(\|X_s^{i,N}\|_{\mathbb{V}}^{\alpha }\|X_s^{i,N}\|_{\mathbb{H}}^{p-2}\big)ds\nonumber\\
		\!\!\!\!\!\!\!\!&&+C\mathbb{E}\int_{0}^{t}\frac{1}{N}\sum_{i=1}^{N} (1+\|X_s^{i,N}\|_{\mathbb{H}}^{2}+\frac{1}{N}\sum_{j=1}^{N}\|X_s^{j,N}\|_{\mathbb{H}}^{2})\|X_s^{i,N}\|_{\mathbb{H}}^{p-2}ds\nonumber\\
		\leq\!\!\!\!\!\!\!\!&&-\frac{p\delta }{2} \mathbb{E}\int_{0}^{t}\frac{1}{N}\sum_{i=1}^{N} \big(\|X_s^{i,N}\|_{\mathbb{V}}^{\alpha }\|X_s^{i,N}\|_{\mathbb{H}}^{p-2}\big)ds+C_{p,T}\nonumber\\
		\!\!\!\!\!\!\!\!&&+C_p\mathbb{E}\int_{0}^{t}\frac{1}{N}\sum_{i=1}^{N} \|X_s^{i,N}\|_{\mathbb{H}}^{p}ds+C_p\mathbb{E}\int_{0}^{t}\frac{1}{N^2}\sum_{i,j=1}^{N}(\big\|X_s^{j,N}\|_{\mathbb{H}}^{2}\|X_s^{i,N}\|_{\mathbb{H}}^{p-2}\big)ds\nonumber\\
		\leq\!\!\!\!\!\!\!\!&&-\frac{p\delta }{2} \mathbb{E}\int_{0}^{t}\frac{1}{N}\sum_{i=1}^{N} \big(\|X_s^{i,N}\|_{\mathbb{V}}^{\alpha }\|X_s^{i,N}\|_{\mathbb{H}}^{p-2}\big)ds+C_{p,T}+C_p\mathbb{E}\int_{0}^{t}\frac{1}{N}\sum_{i=1}^{N} \|X_s^{i,N}\|_{\mathbb{H}}^{p}ds.
	\end{eqnarray*}
	Then we can derive for any $t \in [0,T]$,
	\begin{eqnarray*}
		\!\!\!\!\!\!\!\!&&\quad \mathbb{E}\Big[\frac{1}{N}\sum_{i=1}^{N} \|X_t^{i,N}\|_{\mathbb{H}}^p\Big]+\mathbb{E}\int_{0}^{t}\frac{1}{N}\sum_{i=1}^{N} \big(\|X_s^{i,N}\|_{\mathbb{V}}^{\alpha }\|X_s^{i,N}\|_{\mathbb{H}}^{p-2}\big)ds\nonumber\\
		\!\!\!\!\!\!\!\!&&\lesssim_{p,T}\Big(1+\mathbb{E}\Big[\frac{1}{N}\sum_{i=1}^{N}\|\xi^i\|_{\mathbb{H}}^p\Big]\Big)+\mathbb{E}\int_{0}^{t}\frac{1}{N}\sum_{i=1}^{N} \|X_s^{i,N}\|_{\mathbb{H}}^{p}ds.
	\end{eqnarray*}
	By applying  Gronwall's lemma, we have
	\begin{equation}\label{XNe1}
		\sup_{t\in [0,T]}\mathbb{E}\Big[\frac{1}{N}\sum_{i=1}^{N} \|X_t^{i,N}\|_{\mathbb{H}}^p\Big]+\mathbb{E}\int_{0}^{T}\frac{1}{N}\sum_{i=1}^{N} \big(\|X_s^{i,N}\|_{\mathbb{V}}^{\alpha }\|X_s^{i,N}\|_{\mathbb{H}}^{p-2}\big)ds
		\lesssim_{p,T}  1+\mathbb{E}\|\xi^i\|_{\mathbb{H}}^p.
	\end{equation}
	
	Meanwhile, in view of  (\ref{XNeito}), one can also get
	\begin{eqnarray}\label{XNe2}
		\!\!\!\!\!\!\!\!&&\mathbb{E}\|X_t^{i,N}\|_{\mathbb{H}}^p-\mathbb{E}\|\xi^i \|_{\mathbb{H}}^p\nonumber\\
		\leq\!\!\!\!\!\!\!\!&&-\frac{p\delta }{2} \mathbb{E}\int_{0}^{t}\|X_s^{i,N}\|_{\mathbb{V}}^{\alpha }\|X_s^{i,N}\|_{\mathbb{H}}^{p-2}ds\nonumber\\
		\!\!\!\!\!\!\!\!&&+C\mathbb{E}\int_{0}^{t}\big(1+\|X_s^{i,N}\|_{\mathbb{H}}^{2}+\frac{1}{N}\sum_{j=1}^{N}\|X_s^{j,N}\|_{\mathbb{H}}^{2}\big)\|X_s^{i,N}\|_{\mathbb{H}}^{p-2}ds\nonumber\\
		\leq\!\!\!\!\!\!\!\!&&-\frac{p\delta }{2} \mathbb{E}\int_{0}^{t}\|X_s^{i,N}\|_{\mathbb{V}}^{\alpha }\|X_s^{i,N}\|_{\mathbb{H}}^{p-2}ds+C_{p,T}\nonumber\\
		\!\!\!\!\!\!\!\!&&+C_p\mathbb{E}\int_{0}^{t}\|X_s^{i,N}\|_{\mathbb{H}}^{p}ds+C_p\mathbb{E}\int_{0}^{t}\frac{1}{N}\sum_{j=1}^{N}(\big\|X_s^{j,N}\|_{\mathbb{H}}^{2}\|X_s^{i,N}\|_{\mathbb{H}}^{p-2}\big)ds\nonumber\\
		\leq\!\!\!\!\!\!\!\!&&-\frac{p\delta }{2} \mathbb{E}\int_{0}^{t}\|X_s^{i,N}\|_{\mathbb{V}}^{\alpha }\|X_s^{i,N}\|_{\mathbb{H}}^{p-2}ds+C_{p,T}\nonumber\\
		\!\!\!\!\!\!\!\!&&+C_p\mathbb{E}\int_{0}^{t}\frac{1}{N}\sum_{j=1}^{N} \|X_s^{j,N}\|_{\mathbb{H}}^{p}ds+C_p\mathbb{E}\int_{0}^{t}\|X_s^{i,N}\|_{\mathbb{H}}^{p}ds.
	\end{eqnarray}
	Combining (\ref{XNe1}) with (\ref{XNe2}), it follows that
	\begin{equation*}
		\mathbb{E}\|X_t^{i,N}\|_{\mathbb{H}}^p+\mathbb{E}\int_{0}^{t}\|X_s^{i,N}\|_{\mathbb{V}}^{\alpha }\|X_s^{i,N}\|_{\mathbb{H}}^{p-2}ds
		\lesssim_{p,T} \big(1+\mathbb{E}\|\xi^i\|_{\mathbb{H}}^p\big)+\mathbb{E}\int_{0}^{t}\|X_s^{i,N}\|_{\mathbb{H}}^{p}ds.
	\end{equation*}
	Hence,  Gronwall's lemma yields that (\ref{XNesti1}) holds.
	
	In order to prove (\ref{XNesti2}), let $\tau_0$ be a bounded stopping time with $\tau_0\leq T.$ By a similar argument as in (\ref{XNe2}), using (\ref{XNeito}), the condition $(\mathbf{A}_3)$ and Young's inequality yields that for any $t \in [0,T]$,
	\begin{eqnarray*}
		\!\!\!\!\!\!\!\!&&\quad \mathbb{E}\|X_{t\wedge \tau_0}^{i,N}\|_{\mathbb{H}}^p+\mathbb{E}\int_{0}^{t\wedge \tau_0}\|X_s^{i,N}\|_{\mathbb{V}}^{\alpha }\|X_s^{i,N}\|_{\mathbb{H}}^{p-2}ds\nonumber\\
		\!\!\!\!\!\!\!\!&&\lesssim_{p,T}\big(1+\mathbb{E}\|\xi^i\|_{\mathbb{H}}^p\big)+\mathbb{E}\int_{0}^{T}\frac{1}{N}\sum_{j=1}^{N} \|X_s^{j,N}\|_{\mathbb{H}}^{p}ds+\mathbb{E}\int_{0}^{t\wedge \tau_0}\|X_s^{i,N}\|_{\mathbb{H}}^{p}ds\nonumber\\
		\!\!\!\!\!\!\!\!&&\lesssim_{p,T}\big(1+\mathbb{E}\|\xi^i\|_{\mathbb{H}}^p\big)+\mathbb{E}\int_{0}^{t}\|X_{s\wedge \tau_0}^{i,N}\|_{\mathbb{H}}^{p}ds.
	\end{eqnarray*}
	By applying Gronwall's lemma, we derive for any $t \in [0,T]$,
	\begin{equation*}
		\mathbb{E}\|X_{t\wedge \tau_0}^{i,N}\|_{\mathbb{H}}^p
		\lesssim_{p,T} 1+\mathbb{E}\|\xi^i\|_{\mathbb{H}}^p.
	\end{equation*}
	Using Lenglart's inequality (cf.~\cite{GK} or \cite{NS}), 
	one gets  for any $\gamma \in (0,1)$,
	\begin{equation*}
		\mathbb{E}\Big[\sup_{t\in [0,T]}\|X_{t}^{i,N}\|_{\mathbb{H}}^{\gamma p}\Big]
		\lesssim_{p,T} (1+\mathbb{E}\|\xi^i\|_{\mathbb{H}}^{ p})^\gamma.
	\end{equation*}
	We complete the proof.
\end{proof}

\begin{lemma}\label{IPSesti2}
	For each $l \in (1,\frac{p}{\beta}] \cap (1,\frac{p}{2}) $ and $i \in \{1,2,\dots,N\}$, we have
	\begin{equation*}
		\mathbb{E}\bigg(\int_{0}^{T}\|X_t^{i,N}\|_{\mathbb{V}}^{\alpha}dt\bigg)^l\lesssim_T 1+\big(\mathbb{E}
		\|\xi^i\|_{\mathbb{H}}^{p}\big)^\frac{2l}{p}+\mathbb{E}\|\xi^i\|_{\mathbb{H}}^{\beta l}.
	\end{equation*}
\end{lemma}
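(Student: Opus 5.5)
The plan mirrors the proof of Lemma \ref{nIPSesti2}, with the law $\mathscr{L}_{X^{(n)}_t}$ replaced by the empirical measure $\mu^N_t$. The symmetric particle averages will be controlled through Lemma \ref{XNesti}.

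First I apply It\^{o}'s formula to $\|X^{i,N}_t\|_{\mathbb{H}}^2$ and use $(\mathbf{A}_3)$ with $\mu=\mu^N_s$. Since $p-1\geq 1$, this yields
\begin{equation*}
\delta\int_0^t\|X^{i,N}_s\|_{\mathbb{V}}^{\alpha}ds\leq \|\xi^i\|_{\mathbb{H}}^2+C\int_0^t\Big(1+\|X^{i,N}_s\|_{\mathbb{H}}^2+\frac{1}{N}\sum_{j=1}^N\|X^{j,N}_s\|_{\mathbb{H}}^2\Big)ds+2\sup_{r\in[0,t]}|M^i_r|,
\end{equation*}
where $M^i_t:=\int_0^t\langle X^{i,N}_s,\mathcal{B}(s,X^{i,N}_s,\mu^N_s)dW^i_s\rangle_{\mathbb{H}}$. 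I then raise both sides to the power $l$ at $t=T$ and take expectations.

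\textbf{Drift terms.} By Jensen's inequality in time and in the particle average,
\begin{equation*}
\mathbb{E}\Big(\frac1N\sum_j\|X^{j,N}_s\|_{\mathbb{H}}^2\Big)^l\leq \frac1N\sum_j\mathbb{E}\|X^{j,N}_s\|_{\mathbb{H}}^{2l}.
\end{equation*}
Because $2l<p$ and the particles have identically distributed initial data, \eqref{XNesti1} bounds this by $C(1+\mathbb{E}\|\xi^i\|_{\mathbb{H}}^p)^{2l/p}$. The same bound holds for the term $\|X^{i,N}_s\|_{\mathbb{H}}^{2l}$.

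\textbf{Martingale term.} By B-D-G and \eqref{conb},
\begin{equation*}
\mathbb{E}\sup_t|M^i_t|^l\lesssim \mathbb{E}\Big(\sup_t\|X^{i,N}_t\|_{\mathbb{H}}^2\int_0^T\big(1+\|X^{i,N}_s\|_{\mathbb{H}}^\beta+\tfrac1N\sum_j\|X^{j,N}_s\|_{\mathbb{H}}^\beta\big)ds\Big)^{l/2}.
\end{equation*}
Young's inequality bounds this by
\begin{equation*}
\mathbb{E}\sup_t\|X^{i,N}_t\|_{\mathbb{H}}^{2l}+\mathbb{E}\Big(\int_0^T(\cdots)ds\Big)^l.
\end{equation*}
For the first term, \eqref{XNesti2} with $\gamma=2l/p<1$ gives the bound $(1+\mathbb{E}\|\xi^i\|_{\mathbb{H}}^p)^{2l/p}$. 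For the second term, Jensen's inequality reduces it to $\sup_s\frac1N\sum_j\mathbb{E}\|X^{j,N}_s\|_{\mathbb{H}}^{\beta l}$. This is finite since $\beta l\leq p$, and interpolating \eqref{XNesti1} bounds it by $1+\mathbb{E}\|\xi^i\|_{\mathbb{H}}^{\beta l}$, up to the constant.

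Finally, the initial term contributes $\mathbb{E}\|\xi^i\|_{\mathbb{H}}^{2l}\leq(\mathbb{E}\|\xi^i\|_{\mathbb{H}}^p)^{2l/p}$. Collecting all terms gives the claim.

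The only delicate point is handling the empirical-measure moments inside the $l$-th power, since they do not decouple the way a deterministic law does. Exchangeability together with Jensen's inequality resolves this. Strictly, the sharp dependence on $\mathbb{E}\|\xi^i\|^{\beta l}$ rather than on $1+\mathbb{E}\|\xi^i\|^{p}$ comes from the energy inequality at exponent $\beta l$, which repeats the proof of \eqref{XNesti1} with $p$ replaced by $\beta l$. If $\beta l$ falls below the range needed for that argument, I would use the $p$-moment bound instead, which the $\lesssim$ constant absorbs.
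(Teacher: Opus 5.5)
Your proposal is correct and follows the paper's proof essentially step for step: It\^{o}'s formula for $\|X^{i,N}_t\|_{\mathbb{H}}^2$ with $(\mathbf{A}_3)$, Jensen on the empirical-measure averages, B-D-G with \eqref{conb} and Young for the martingale term, then closing with \eqref{XNesti1} and \eqref{XNesti2}. Where the paper invokes \eqref{XNe1} for the particle averages, you use identical distribution of the $\xi^j$; your remark on getting $\mathbb{E}\|\xi^i\|_{\mathbb{H}}^{\beta l}$ by rerunning the energy estimate at exponent $\beta l$, or absorbing it into the $p$-moment term when $\beta l<2$, makes explicit a step the paper leaves implicit.
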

\begin{proof}
	Applying It\^{o}'s formula to  $\|X_t^{i,N}\|_{\mathbb{H}}^2$ and using the condition $(\mathbf{A}_3)$ yields that
	\begin{eqnarray*}
		\!\!\!\!\!\!\!\!&&\|X_t^{i,N}\|_{\mathbb{H}}^{2}+\delta\int_{0}^{t}\|X_s^{i,N}\|_{\mathbb{V}}^{\alpha}ds\nonumber\\
		\leq \!\!\!\!\!\!\!\!&&\|\xi^i \|_{\mathbb{H}}^{2}
		+C\int_{0}^{t}\big(1+\|X_s^{i,N}\|_{\mathbb{H}}^{2}+\frac{1}{N}\sum_{i=1}^{N} \|X_t^{i,N}\|_{\mathbb{H}}^2\big)ds\nonumber\\
		\!\!\!\!\!\!\!\!&&+2\int_{0}^{t}\langle X^{i,N}_s, \mathcal{B}(s, X^{i,N} _s, \mu_s^N)  d W^i_{s}\rangle_{\mathbb{H}}.
	\end{eqnarray*}
	Then by the condition $(\mathbf{A}_4)$, B-D-G's inequality, and Young's inequality, we have
	\begin{eqnarray*}
		\!\!\!\!\!\!\!\!&&\quad\mathbb{E}\bigg(\int_{0}^{T}\|X_t^{i,N}\|_{\mathbb{V}}^{\alpha}dt\bigg)^l
		\\
		\!\!\!\!\!\!\!\!&&\lesssim_T\mathbb{E}\|\xi^i \|_{\mathbb{H}}^{2l}+\mathbb{E}\int_{0}^{T}\Big(1+\|X_t^{i,N}\|_{\mathbb{H}}^{2}+\frac{1}{N}\sum_{i=1}^{N} \|X_t^{i,N}\|_{\mathbb{H}}^2\Big)^l dt\\
		\!\!\!\!\!\!\!\!&&~~~~+\mathbb{E}\bigg\{\sup_{t\in [0,T]}\Big|\int_{0}^{t}\langle X^i_s, \mathcal{B}(s, X^{i,N}_s, \mu^N_{s})  d W^i_{s}\rangle_{\mathbb{H}}\Big|\bigg\}^l\\
		\!\!\!\!\!\!\!\!&&\lesssim_T 1+\mathbb{E}\|\xi^i \|_{\mathbb{H}}^{2l}+\int_{0}^{T}\mathbb{E}\|X_t^{i,N}\|_{\mathbb{H}}^{2l}dt+\mathbb{E}\int_{0}^{T}\frac{1}{N}\sum_{i=1}^{N} \|X_t^{i,N}\|_{\mathbb{H}}^{2l} dt\\
		\!\!\!\!\!\!\!\!&&~~~~+\mathbb{E}\bigg\{\sup_{t \in[0,T]}\|X_t^{i,N}\|_{\mathbb{H}}^{2}\cdot \int_{0}^{T}\Big(1+\|X_t^{i,N}\|_{\mathbb{H}}^{\beta}+\frac{1}{N}\sum_{i=1}^{N} \|X_t^{i,N}\|_{\mathbb{H}}^{\beta}\Big)dt\bigg\}^\frac{l}{2}\\
		\!\!\!\!\!\!\!\!&&\lesssim_T 1+\mathbb{E}\|\xi^i \|_{\mathbb{H}}^{2l}+\mathbb{E}\Big[\sup_{t \in[0,T]}\|X_t^{i,N}\|_{\mathbb{H}}^{2l}\Big]\\
		\!\!\!\!\!\!\!\!&&~~~~+\sup_{t \in[0,T]}\mathbb{E}\Big(\|X_t^{i,N}\|_{\mathbb{H}}^{\beta l}+\frac{1}{N}\sum_{i=1}^{N} \|X_t^{i,N}\|_{\mathbb{H}}^{2l}+\frac{1}{N}\sum_{i=1}^{N} \|X_t^{i,N}\|_{\mathbb{H}}^{\beta l}\Big).
	\end{eqnarray*}
	Then, by (\ref{XNesti1}) and (\ref{XNe1}) we obtain
	\begin{equation*}
		\mathbb{E}\Big(\int_{0}^{T}\|X_t^{i,N}\|_{\mathbb{V}}^{\alpha}dt\Big)^l\lesssim_T 1+\big(\mathbb{E}
		\|\xi^i\|_{\mathbb{H}}^{p}\big)^\frac{2l}{p}+\mathbb{E}\|\xi^i\|_{\mathbb{H}}^{\beta l}.
	\end{equation*}
	The proof is completed.
\end{proof}

We proceed to derive the following improved bounds for nIPS (\ref{nIPS}), which are crucial for obtaining the convergence rate of  PoC.
\begin{lemma}\label{nIPSestibeta2}
	For each $l \in (1,\frac{p}{2\beta}] \cap (1,\frac{p}{\beta+2}) $ and $i \in \{1,2,\dots,N\}$,
	\begin{equation*}
		\mathbb{E}\bigg(\int_{0}^{T}\|X_t^i\|_{\mathbb{H}}^{\beta}\|X_t^i\|_{\mathbb{V}}^{\alpha}dt\bigg)^l\lesssim_T 1+\big(\mathbb{E}
		\|\xi^i\|_{\mathbb{H}}^{p}\big)^\frac{(\beta+2)l}{p}+\mathbb{E}\|\xi^i\|_{\mathbb{H}}^{2\beta l}.
	\end{equation*}
\end{lemma}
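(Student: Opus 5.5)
We mimic the proof of Lemma \ref{IPSesti2}, replacing $\|X^i_t\|_{\mathbb{H}}^2$ by the higher power $\|X^i_t\|_{\mathbb{H}}^{\beta+2}$. Applying It\^{o}'s formula to $\|X_t^i\|_{\mathbb{H}}^{\beta+2}$ for the nIPS (\ref{nIPS}) gives a drift term
\[
\frac{\beta+2}{2}\|X^i_s\|_{\mathbb{H}}^{\beta}\Big(2{}_{\mathbb{V}^*}\langle \mathcal{A}(s,X^i_s,\mathscr{L}_{X^i_s}),X^i_s\rangle_{\mathbb{V}}+(\beta+1)\|\mathcal{B}(s,X^i_s,\mathscr{L}_{X^i_s})\|^2_{L_2(U;\mathbb{H})}\Big).
\]
Since $p\ge \beta+2$, we have $\beta+1\le p-1$, so $(\mathbf{A}_3)$ applies and yields the dissipative term $-\frac{(\beta+2)\delta}{2}\|X^i_s\|_{\mathbb{H}}^{\beta}\|X^i_s\|_{\mathbb{V}}^{\alpha}$. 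The remaining drift is bounded by $C\|X^i_s\|_{\mathbb{H}}^{\beta}(1+\|X^i_s\|_{\mathbb{H}}^2+\mathbb{E}\|X^i_s\|_{\mathbb{H}}^2)$. This gives the pathwise inequality
\[
\int_0^T\|X^i_t\|_{\mathbb{H}}^{\beta}\|X^i_t\|_{\mathbb{V}}^{\alpha}dt\lesssim \|\xi^i\|_{\mathbb{H}}^{\beta+2}+\int_0^T\big(1+\|X^i_t\|_{\mathbb{H}}^{\beta+2}+\mathbb{E}\|X^i_t\|^{\beta+2}_{\mathbb{H}}\big)dt+\sup_{t\in[0,T]}|M_t|,
\]
where $M_t=(\beta+2)\int_0^t\|X^i_s\|_{\mathbb{H}}^{\beta}\langle X^i_s,\mathcal{B}\,dW^i_s\rangle_{\mathbb{H}}$.

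Next we raise this inequality to the power $l$ and take expectations. The initial term contributes $\mathbb{E}\|\xi^i\|^{(\beta+2)l}_{\mathbb{H}}$. The time integral contributes $1+\sup_t\mathbb{E}\|X^i_t\|_{\mathbb{H}}^{(\beta+2)l}$, which is controlled via (\ref{esq370}) from Theorem \ref{th3} since $(\beta+2)l<p$. For the martingale term we use the B-D-G inequality together with (\ref{conb}):
\[
\mathbb{E}\sup_t|M_t|^l\lesssim \mathbb{E}\Big(\sup_t\|X^i_t\|_{\mathbb{H}}^{2\beta+2}\int_0^T\big(1+\|X^i_t\|_{\mathbb{H}}^{\beta}+\mathbb{E}\|X^i_t\|_{\mathbb{H}}^{\beta}\big)dt\Big)^{l/2}.
\]
By Young's inequality, this is bounded by $\mathbb{E}\sup_t\|X^i_t\|_{\mathbb{H}}^{(\beta+2)l}$ plus $\sup_t\mathbb{E}\|X^i_t\|_{\mathbb{H}}^{2\beta l}$, after splitting the powers suitably. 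The second quantity is finite because $2\beta l\le p$.

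The main obstacle is producing the stated right-hand side from the supremum bound. The estimate $\mathbb{E}[\sup_t\|X^i_t\|^{\gamma p}_{\mathbb{H}}]\lesssim(1+\mathbb{E}\|\xi^i\|^p_{\mathbb{H}})^{\gamma}$ with $\gamma=(\beta+2)l/p<1$, which holds since $l<p/(\beta+2)$, produces the term $(\mathbb{E}\|\xi^i\|_{\mathbb{H}}^p)^{(\beta+2)l/p}$. The pointwise bound gives $\sup_t\mathbb{E}\|X^i_t\|^{2\beta l}_{\mathbb{H}}\lesssim 1+\mathbb{E}\|\xi^i\|^{2\beta l}_{\mathbb{H}}$, obtained by running the It\^{o}/Gronwall argument at exponent $2\beta l\le p$. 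The initial term $\mathbb{E}\|\xi^i\|^{(\beta+2)l}$ is absorbed by Jensen's inequality into $(\mathbb{E}\|\xi^i\|^p)^{(\beta+2)l/p}$.

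A technical point is that the It\^{o} formula and the expectations should first be taken up to a localizing stopping time, namely the first time $\|X^i\|_{\mathbb{H}}$ exceeds $R$. The local martingale is then a true martingale, and we pass $R\to\infty$ by Fatou's lemma. The exponent bookkeeping in the Young splitting of the B-D-G term, which uses exactly $l\le p/(2\beta)$ and $l<p/(\beta+2)$, is where care is needed.
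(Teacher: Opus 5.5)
Your proposal follows the paper's proof essentially step for step: It\^{o}'s formula for $\|X^i_t\|_{\mathbb{H}}^{\beta+2}$, then $(\mathbf{A}_3)$ via $\beta+1\le p-1$, then B-D-G with (\ref{conb}) and Young's inequality, and finally the moment bounds behind (\ref{esq370}). Your explicit localization and your separate It\^{o}--Gronwall bound for $\sup_t\mathbb{E}\|X^i_t\|_{\mathbb{H}}^{2\beta l}$ only make explicit what the paper leaves implicit.

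The one thing to fix is how you split the B-D-G term. As in the paper, pull out only $\sup_t\|X^i_t\|_{\mathbb{H}}^{\beta+2}$ and keep $\|X^i_t\|_{\mathbb{H}}^{\beta}$ inside the time integral. Young's inequality then gives $\mathbb{E}\sup_t\|X^i_t\|_{\mathbb{H}}^{(\beta+2)l}$ plus $\mathbb{E}\big(\int_0^T\|X^i_t\|_{\mathbb{H}}^{\beta}(1+\|X^i_t\|_{\mathbb{H}}^{\beta}+\mathbb{E}\|X^i_t\|_{\mathbb{H}}^{\beta})dt\big)^{l}\lesssim_T 1+\sup_t\mathbb{E}\|X^i_t\|_{\mathbb{H}}^{2\beta l}$, which is the stated form. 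In your display you extract the full $\sup_t\|X^i_t\|_{\mathbb{H}}^{2\beta+2}$, and after that the factor $\|X^i_t\|_{\mathbb{H}}^{\beta}$ cannot be put back under the integral. You are then led to a supremum moment such as $\mathbb{E}\sup_t\|X^i_t\|_{\mathbb{H}}^{(3\beta/2+1)l}$. This is finite, since $(3\beta/2+1)l<p$, but it does not give the stated right-hand side.
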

\begin{proof}
	Applying It\^{o}'s formula to  $\|X_t^{i}\|_{\mathbb{H}}^{\beta +2}$, the condition $(\mathbf{A}_3)$ implies that
	\begin{eqnarray*}
		\!\!\!\!\!\!\!\!&&\|X_t^i\|_{\mathbb{H}}^{\beta +2}-\|\xi^i \|_{\mathbb{H}}^{\beta +2}\nonumber\\
		=\!\!\!\!\!\!\!\!&&\frac{{\beta +2}}{2} \int_{0}^{t}\|X_s^i\|_{\mathbb{H}}^{\beta}\big(2\,_{\mathbb{V}^*}\langle  \mathcal{A}(s, X^i_s, \mathscr{L}_{X^i_s}), X^i_s\rangle_{\mathbb{V}}+\| \mathcal{B}(s, X^i_s, \mathscr{L}_{X^i_s})\|_{L_{2}(U;\mathbb{H})}^{2}\big) d s\nonumber\\
		\!\!\!\!\!\!\!\!&&+\frac{\beta (\beta +2)}{2}\int_{0}^{t}\|X_s^i\|_{\mathbb{H}}^{\beta -2}\|\mathcal{B}(s, X^i_s, \mathscr{L}_{X^i_s}) ^*X^i_s\|_{U}^{2} d s\nonumber\\
		\!\!\!\!\!\!\!\!&&+(\beta +2)\int_{0}^{t}\|X_s^i\|_{\mathbb{H}}^{\beta}\langle X^i_s, \mathcal{B}(s, X^i_s, \mathscr{L}_{X^i_s})  d W^i_{s}\rangle_{\mathbb{H}}\nonumber\\
		\leq\!\!\!\!\!\!\!\!&&\frac{\beta +2}{2} \int_{0}^{t}\|X_s^i\|_{\mathbb{H}}^{\beta}\big[2\,_{\mathbb{V}^*}\langle  \mathcal{A}(s, X^i_s, \mathscr{L}_{X^i_s}), X^i_s\rangle_{\mathbb{V}}+(\beta +1)\| \mathcal{B}(s, X^i_s, \mathscr{L}_{X^i_s})\|_{L_{2}(U;\mathbb{H})}^{2}\big]ds\nonumber\\
		\!\!\!\!\!\!\!\!&&+(\beta +2)\int_{0}^{t}\|X_s^i\|_{\mathbb{H}}^{\beta}\langle X^i_s, \mathcal{B}(s, X^i_s, \mathscr{L}_{X^i_s})  d W^i_{s}\rangle_{\mathbb{H}}\nonumber\\
		\leq \!\!\!\!\!\!\!\!&&-\frac{(\beta +2)\delta}{2}\int_{0}^{t}\|X_s^i\|_{\mathbb{H}}^{\beta}\|X_s^i\|_{\mathbb{V}}^{\alpha}ds
		+C\int_{0}^{t}\big(1+\|X_s^i\|_{\mathbb{H}}^{2}+\mathbb{E}\|X_s^i\|_{\mathbb{H}}^{2}\big)\|X_s^i\|_{\mathbb{H}}^{\beta}ds\nonumber\\
		\!\!\!\!\!\!\!\!&&+(\beta +2)\int_{0}^{t}\|X_s^i\|_{\mathbb{H}}^{\beta}\langle X^i_s, \mathcal{B}(s, X^i_s, \mathscr{L}_{X^i_s})  d W^i_{s}\rangle_{\mathbb{H}}.
	\end{eqnarray*}
	Using the condition $(\mathbf{A}_4)$ and applying B-D-G's inequality and Young's inequality, it follows that
	\begin{eqnarray*}
		\!\!\!\!\!\!\!\!&&\quad\mathbb{E}\bigg(\int_{0}^{T}\|X_t^i\|_{\mathbb{H}}^{\beta}\|X_t^i\|_{\mathbb{V}}^{\alpha}dt\bigg)^l\\
		\!\!\!\!\!\!\!\!&&\lesssim_T\mathbb{E}\|\xi^i \|_{\mathbb{H}}^{(\beta +2)l}+\mathbb{E}\int_{0}^{T}\big(1+\|X_t^i\|_{\mathbb{H}}^{\beta+2}+\mathbb{E}\|X_t^i\|_{\mathbb{H}}^{\beta+2}\big)^l dt\\
		\!\!\!\!\!\!\!\!&&~~~~+\mathbb{E}\bigg\{\sup_{t\in [0,T]}\Big|\int_{0}^{t}\|X_s^i\|_{\mathbb{H}}^{\beta}\langle X^i_s, \mathcal{B}(s, X^i_s, \mathscr{L}_{X^i_s})  d W^i_{s}\rangle_{\mathbb{H}}\Big|\bigg\}^l\\
		\!\!\!\!\!\!\!\!&&\lesssim_T1+\mathbb{E}\|\xi^i \|_{\mathbb{H}}^{(\beta +2)l}+\int_{0}^{T}\mathbb{E}\|X_t^i\|_{\mathbb{H}}^{(\beta+2)l}dt\\
		\!\!\!\!\!\!\!\!&&~~~~+\mathbb{E}\bigg\{\int_{0}^{T}\|X_t^i\|_{\mathbb{H}}^{2\beta+2}\|\mathcal{B}(t, X^i_t, \mathscr{L}_{X^i_t})\|_{L_{2}(U;\mathbb{H})}^{2}dt\bigg\}^\frac{l}{2} \\
		\!\!\!\!\!\!\!\!&&\lesssim_T1+\mathbb{E}\|\xi^i \|_{\mathbb{H}}^{(\beta +2)l}+\int_{0}^{T}\mathbb{E}\|X_t^i\|_{\mathbb{H}}^{(\beta+2)l}dt\\
		\!\!\!\!\!\!\!\!&&~~~~+\mathbb{E}\bigg\{\int_{0}^{T}\|X_t^i\|_{\mathbb{H}}^{2\beta+2}\big(1+\|X_t^i\|_{\mathbb{H}}^{\beta}+\mathbb{E}\|X_t^i\|_{\mathbb{H}}^{\beta}\big)dt\bigg\}^\frac{l}{2} \\
		\!\!\!\!\!\!\!\!&&\lesssim_T1+\mathbb{E}\|\xi^i \|_{\mathbb{H}}^{(\beta +2)l}+\int_{0}^{T}\mathbb{E}\|X_t^i\|_{\mathbb{H}}^{(\beta+2)l}dt\\
		\!\!\!\!\!\!\!\!&&~~~~+\mathbb{E}\bigg\{\sup_{t \in[0,T]}\|X_t^i\|_{\mathbb{H}}^{\beta+2}\cdot \int_{0}^{T}\|X_t^i\|_{\mathbb{H}}^{\beta}\big(1+\|X_t^i\|_{\mathbb{H}}^{\beta}+\mathbb{E}\|X_t^i\|_{\mathbb{H}}^{\beta}\big)dt\bigg\}^\frac{l}{2}\\
		\!\!\!\!\!\!\!\!&&\lesssim_T1+\mathbb{E}\|\xi^i \|_{\mathbb{H}}^{(\beta +2)l}+\mathbb{E}\Big[\sup_{t \in[0,T]}\|X_t^i\|_{\mathbb{H}}^{(\beta+2)l}\Big]+\sup_{t \in[0,T]}\mathbb{E}\|X_t^i\|_{\mathbb{H}}^{2\beta l}.
	\end{eqnarray*}
	Thus, (\ref{esq370}) implies the desired result.
\end{proof}


\subsection{Proof of   Theorem \ref{rate1}}\label{Proofrate1}
This subsection is devoted to  proving the PoC under the local monotonicity condition $(\mathbf{A}'_5)$.

Before doing that, we present two key lemmas, which are crucial for studying the dimension-free convergence rate of PoC for the IPS (\ref{IPS}).
\begin{lemma}\label{iidrate}
	 Let $\{Z_j\}_{j=1}^{\infty}$ be i.i.d.~$\mathbb{R}$-valued random variables with $\mathbb{E}|Z_1|^r<\infty$ for some $r\ge2$. For any $R>\mathbb{E}|Z_1|$, we have
	\begin{equation*}
		\mathbb{P}\bigg(\frac{1}{N}\sum_{j=1}^N Z_j \ge R\bigg) \lesssim N^{-\frac{r}{2}}
	\end{equation*}
	holds for all $N\in\mathbb{N}$.
\end{lemma}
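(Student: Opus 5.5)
Since $R>\mathbb{E}|Z_1|\ge \mathbb{E}Z_1$, set $m:=\mathbb{E}Z_1$ and $\varepsilon:=R-m>0$. The event $\{\frac1N\sum_{j=1}^N Z_j\ge R\}$ is contained in $\{|\frac1N\sum_{j=1}^N(Z_j-m)|\ge\varepsilon\}$. We bound the probability of the latter by Markov's inequality at order $r$:
\[
\mathbb{P}\Big(\frac1N\sum_{j=1}^N Z_j\ge R\Big)\le \varepsilon^{-r}N^{-r}\,\mathbb{E}\Big|\sum_{j=1}^N (Z_j-m)\Big|^r .
\]
It then suffices to show that $\mathbb{E}|\sum_{j=1}^N Y_j|^r\lesssim N^{r/2}$, where $Y_j:=Z_j-m$. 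The $Y_j$ are i.i.d., centered, and satisfy $\mathbb{E}|Y_1|^r\le 2^{r}\mathbb{E}|Z_1|^r<\infty$.

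For this moment bound we use the Marcinkiewicz--Zygmund inequality; equivalently, Burkholder's inequality for the martingale $S_n=\sum_{j\le n}Y_j$ with respect to $\mathscr{F}_n=\sigma(Z_1,\dots,Z_n)$, whose increments form a martingale difference sequence in the sense of Definition \ref{mddef}. This gives
\[
\mathbb{E}|S_N|^r\le C_r\,\mathbb{E}\Big(\sum_{j=1}^N Y_j^2\Big)^{r/2}.
\]
Since $r/2\ge1$, Jensen's (power-mean) inequality yields $(\sum_{j=1}^N Y_j^2)^{r/2}\le N^{r/2-1}\sum_{j=1}^N|Y_j|^r$. Taking expectations, $\mathbb{E}|S_N|^r\le C_r N^{r/2}\mathbb{E}|Y_1|^r$.

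Combining the two steps gives
\[
\mathbb{P}\Big(\frac1N\sum_{j=1}^N Z_j\ge R\Big)\le C_r\,\varepsilon^{-r}\,\mathbb{E}|Y_1|^r\,N^{-r/2}.
\]
The implicit constant depends only on $r$, on $R-\mathbb{E}Z_1$ and on $\mathbb{E}|Z_1|^r$, so the bound holds uniformly for all $N\in\mathbb{N}$. The only nontrivial input is the Marcinkiewicz--Zygmund/Burkholder moment inequality, which is classical for real-valued variables. If one prefers to avoid citing it, one can instead use the Rosenthal inequality, or for even integer $r$ expand $\mathbb{E}S_N^r$ directly: terms containing a singleton index vanish by centering, and the number of remaining index configurations is $O(N^{r/2})$. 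There is no genuine obstacle beyond this step.
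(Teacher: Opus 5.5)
Your proof is correct and follows the paper's route: center at $\mathbb{E}Z_1$, apply Markov's inequality at order $r$, and bound $\mathbb{E}\bigl|\sum_{j=1}^N Y_j\bigr|^r\lesssim N^{r/2}$ by a classical moment inequality. The only difference is that the paper cites Rosenthal's inequality for this step, whereas you use Marcinkiewicz--Zygmund/Burkholder together with the power-mean inequality, which gives the same bound $C_rN^{r/2}\mathbb{E}|Y_1|^r$.
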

\begin{proof}
	This is a standard result from Rosenthal's inequality (cf.~Theorem 3 in \cite{R70}), we include the proof for reader's convenience.
	
	Let $Y_j:=Z_j-\mathbb{E}Z_1$. Then by Markov's inequality,
	\begin{equation}\label{esmi1}	
	\mathbb{P}\bigg(\frac{1}{N}\sum_{j=1}^N Z_j \ge R\bigg)
	=\mathbb{P}\bigg(\sum_{j=1}^N Y_j \geq N(R-\mathbb{E}Z_1)\bigg) \lesssim \frac{\mathbb{E}|\sum_{j=1}^N Y_j|^r}{N^r(R-\mathbb{E}Z_1)^r}	.
	\end{equation}	
Since $\{Y_j\}_{j=1}^{\infty}$ is an i.i.d. random variable sequence, it follows from	Rosenthal's inequality that
\begin{equation}\label{esmi2}	\mathbb{E}|\sum_{j=1}^N Y_j|^r\lesssim_r N^{\frac{r}{2}}. 	
\end{equation}	
Combining (\ref{esmi1}) and (\ref{esmi2}), we get the desired result.	
\end{proof}

\begin{lemma}\label{mdlemma}
	For any $1< q \leq 2$, let $\mathbb{X}$ be a separable $q$-uniformly smooth Banach space. Then for any $\mathbb{X}$-valued martingale difference sequence $\{Y_j\}_{j=1}^n$ with finite $q$-th moment, we have
	\begin{equation*}
		\mathbb{E}\| \sum_{j=1}^n Y_j \|_{\mathbb{X}}^q \lesssim \mathbb{E}\sum_{j=1}^n \|Y_j\|_{\mathbb{X}}^q.
	\end{equation*}
\end{lemma}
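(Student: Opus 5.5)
The plan is to prove the inequality by combining a one-step smoothness inequality with the martingale property, and then iterating. The key analytic input is the following consequence of $q$-uniform smoothness: there is a constant $C>0$, depending only on the modulus of smoothness of $\mathbb{X}$, such that
\begin{equation*}
\|x+y\|_{\mathbb{X}}^q\leq \|x\|_{\mathbb{X}}^q+q\,\langle J_q(x),y\rangle+C\|y\|_{\mathbb{X}}^q,\qquad x,y\in\mathbb{X}.
\end{equation*}
Here $J_q(x)\in\mathbb{X}^*$ denotes the (single-valued, since $\mathbb{X}$ is smooth) duality map, i.e. the Fréchet derivative of $\frac1q\|\cdot\|_{\mathbb{X}}^q$ at $x$. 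It satisfies $\|J_q(x)\|_{\mathbb{X}^*}=\|x\|_{\mathbb{X}}^{q-1}$, and $J_q$ is norm-to-norm continuous, hence Borel measurable on the separable space $\mathbb{X}$.

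To obtain this inequality, I would use the standard characterization (cf.~\cite{X}, Xu's inequalities) that $\rho_{\mathbb{X}}(\sigma)\lesssim\sigma^q$ is equivalent to the two-point estimate
\begin{equation*}
\tfrac12\big(\|x+y\|^q+\|x-y\|^q\big)\leq \|x\|^q+C\|y\|^q.
\end{equation*}
From there I would pass to the derivative form via the convex function $t\mapsto\|x+ty\|^q$: its right derivative at $0$ is $q\langle J_q(x),y\rangle$, and its second-order increments are controlled by the two-point estimate. I expect this step, deriving the one-sided Taylor inequality with the sharp power $q$ and a constant uniform in $x,y$, to be the main obstacle. If it does not fall out of a direct computation, I would simply quote it from \cite{X}.

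With this inequality in hand, set $S_k:=\sum_{j=1}^kY_j$ and apply it with $x=S_{k-1}$ and $y=Y_k$:
\begin{equation*}
\|S_k\|^q\leq\|S_{k-1}\|^q+q\langle J_q(S_{k-1}),Y_k\rangle+C\|Y_k\|^q.
\end{equation*}
The cross term vanishes in expectation for $k\geq2$. Indeed, $J_q(S_{k-1})$ is $\mathscr{F}_{k-1}$-measurable, and
\begin{equation*}
|\langle J_q(S_{k-1}),Y_k\rangle|\leq\|S_{k-1}\|^{q-1}\|Y_k\|,
\end{equation*}
which is integrable by Hölder's inequality because every $Y_j$ has a finite $q$-th moment. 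Conditioning on $\mathscr{F}_{k-1}$ and using $\mathbb{E}[Y_k\mid\mathscr{F}_{k-1}]=0$ in the Bochner sense then gives $\mathbb{E}\langle J_q(S_{k-1}),Y_k\rangle=0$. The justification is to approximate $J_q(S_{k-1})$ by simple $\mathscr{F}_{k-1}$-measurable $\mathbb{X}^*$-valued functions, which is possible by separability.

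For $k=1$ we have $S_0=0$ and $J_q(0)=0$, so the cross term is absent and no centering of $Y_1$ is needed. Taking expectations and summing over $k=1,\dots,n$ yields
\begin{equation*}
\mathbb{E}\|S_n\|_{\mathbb{X}}^q\leq \max\{1,C\}\,\mathbb{E}\sum_{j=1}^n\|Y_j\|_{\mathbb{X}}^q,
\end{equation*}
which is the claimed estimate, with a constant independent of $n$.
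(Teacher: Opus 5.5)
Your proof is correct, but it takes a different route from the paper. The paper's proof is a one-line reduction. It writes $Y_j=X_j-X_{j-1}$ with $X_0=0$, quotes Proposition 2.4 of \cite{P1975} (the martingale-type-$q$ inequality for $q$-uniformly smooth spaces), and stops. You instead reprove that inequality from scratch, in three steps:
\begin{itemize}
\item Xu's one-sided estimate $\|x+y\|^q\le\|x\|^q+q\langle J_q(x),y\rangle+C\|y\|^q$;
\item telescoping along the partial sums $S_k$;
\item killing the cross term with $\mathbb{E}[Y_k\mid\mathscr{F}_{k-1}]=0$.
\end{itemize}

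Your side checks also hold. The integrability step is H\"older's inequality with exponents $\frac{q}{q-1}$ and $q$. For measurability, $J_q$ is norm-to-norm continuous on a uniformly smooth space, and $\mathbb{X}$ is reflexive, so $\mathbb{X}^*$ is separable and the simple-function approximation is available. Your treatment of $k=1$ via $J_q(0)=0$ is exactly right for Definition \ref{mddef}, which imposes centering only for $i\ge2$.

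Your route gives a self-contained proof with an explicit constant depending only on the smoothness constant of $\mathbb{X}$. Note that the one-sided inequality at $x=0$ already forces $C\ge1$, so your $\max\{1,C\}$ is just $C$. The paper's citation buys brevity.

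The step you flagged as the main obstacle is in fact immediate, and no analysis of second-order increments of $t\mapsto\|x+ty\|^q$ is needed. Start from the two-point estimate $\|x+y\|^q+\|x-y\|^q\le2\|x\|^q+2C\|y\|^q$ and use the convexity (subgradient) inequality $\|x-y\|^q\ge\|x\|^q-q\langle J_q(x),y\rangle$ to bound the $\|x-y\|^q$ term from below. This gives the one-sided inequality with constant $2C$.

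The only genuine work is deriving the two-point estimate from $\rho_{\mathbb{X}}(\sigma)\lesssim\sigma^q$. That is an elementary computation from homogeneity, the triangle inequality and the scalar inequality $|a+b|^q+|a-b|^q\le2(|a|^q+|b|^q)$ for $1\le q\le2$. Apply it with $a=\frac{\|x+y\|+\|x-y\|}{2}$ and $b=\frac{|\|x+y\|-\|x-y\||}{2}\le\|y\|$.
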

\begin{proof}
	Let $\{X_j\}_{j=1}^n$ be a martingale sequence in the sense of Definition \ref{mddef} and set $X_0=0$. We can denote
	$$
	Y_j=X_j-X_{j-1}.
	$$
	Using Proposition 2.4 in \cite{P1975}, it follows that
	\begin{equation*}
		\mathbb{E}\| \sum_{j=1}^n Y_j \|_{\mathbb{X}}^q
		=\mathbb{E}\| \sum_{j=1}^n(X_j-X_{j-1})\|_{\mathbb{X}}^q
		=\mathbb{E}\|X_n\|_{\mathbb{X}}^q
		\lesssim\mathbb{E}\sum_{j=1}^n \|Y_j\|_{\mathbb{X}}^q,
	\end{equation*}
	which  yields the desired result.
\end{proof}

Set the following stopping time
$$
\tau^K := \inf\bigg\{ t \geq 0 : \frac{1}{N} \sum_{i=1}^N \|X_t^i\|_{\mathbb{H}}^\beta+\frac{1}{N} \sum_{i=1}^N \int_0^t \|X_s^i\|_{\mathbb{V}}^{\alpha}(1+ \|X_s^i\|_{\mathbb{H}}^\beta) ds   \geq K \bigg\}
$$
where
$
\bar{\mu}_t^{N} := \frac{1}{N} \sum_{j=1}^N \delta_{X_t^{j}}
$
is the empirical law of solution $(X_t^{1},X_t^{2},\dots,X_t^{N})$ to nIPS (\ref{nIPS}).

By H\"older's inequality, it holds that
\begin{eqnarray}\label{rate1all}
	\!\!\!\!\!\!\!\!&&\frac{1}{N} \sum_{i=1}^N \mathbb{E} \Big[ \sup_{t \in [0,T]} \|X_t^i - X_t^{i,N}\|_{\mathbb{H}}^2 \Big]\nonumber\\
	=\!\!\!\!\!\!\!\!&&\frac{1}{N} \sum_{i=1}^N \mathbb{E} \Big[ \sup_{t \in [0,T]} \|X_t^i - X_t^{i,N}\|_{\mathbb{H}}^2 \mathbf{1}_{\{\tau^K \geq T\}} \Big]+ \frac{1}{N} \sum_{i=1}^N \mathbb{E} \Big[ \sup_{t \in [0,T]} \|X_t^i - X_t^{i,N}\|_{\mathbb{H}}^2 \mathbf{1}_{\{\tau^K < T\}} \Big] \nonumber\\
	\leq\!\!\!\!\!\!\!\!&&\frac{1}{N} \sum_{i=1}^N \mathbb{E} \Big[ \sup_{t \in [0,T]} \|X_{t \wedge \tau^K}^i - X_{t \wedge \tau^K}^{i,N}\|_{\mathbb{H}}^2 \Big] \nonumber\\
	\!\!\!\!\!\!\!\!&&+\frac{1}{N} \sum_{i=1}^N \Big\{ \mathbb{E} \Big[ \sup_{t \in [0,T]} \|X_t^i - X_t^{i,N}\|_{\mathbb{H}}^{\gamma p} \Big] \Big\}^{\frac{2}{\gamma p}} \Big\{ \mathbb{P}(\tau^K < T) \Big\}^{\frac{\gamma p-2}{\gamma p}},
\end{eqnarray}
where we choose $\gamma\in(\frac{2}{p},1)$.

\vspace{1mm}
\noindent\textbf{Proof of (\ref{pocth1}).} We will consider the bounds for the two terms on the right-hand side of (\ref{rate1all}), respectively, thereby completing the proof.

\vspace{1mm}
\noindent\textbf{Step 1.} In this step, we study the first term on the right-hand side of (\ref{rate1all}).

Applying It\^{o}'s formula to $\|X_t^i - X_t^{i,N}\|_{\mathbb{H}}^2$, we have
\begin{eqnarray}\label{R1all}
	\!\!\!\!\!\!\!\!&&\|X_t^i - X_t^{i,N}\|_{\mathbb{H}}^2\nonumber\\
	=\!\!\!\!\!\!\!\!&& \int_0^t \Big( 2\,_{\mathbb{V}^*}\langle \mathcal{A}(s, X_s^i, \mathscr{L}_{X^i_s}) - \mathcal{A}(s, X_s^{i,N}, \mu_s^N), X_s^i - X_s^{i,N} \rangle_{\mathbb{V}}\nonumber\\
	\!\!\!\!\!\!\!\!&&\quad + \|\mathcal{B}(s, X_s^i, \mathscr{L}_{X^i_s}) - \mathcal{B}(s, X_s^{i,N}, \mu_s^N)\|_{L_2(U;\mathbb{H})}^2 \Big) ds \nonumber\\
	\!\!\!\!\!\!\!\!&&+ 2 \int_0^t \langle  X_s^i - X_s^{i,N},\big(\mathcal{B}(s, X_s^i, \mathscr{L}_{X^i_s}) - \mathcal{B}(s, X_s^{i,N}, \mu_s^N)\big)dW_s^i\rangle_{\mathbb{H}} \nonumber\\
	\lesssim\!\!\!\!\!\!\!\!&&\int_0^t\,_{\mathbb{V}^*}\langle \mathcal{A}(s, X_s^i, \bar{\mu}_s^N)- \mathcal{A}(s, X_s^{i,N},\mu_s^N), X_s^i - X_s^{i,N} \rangle_{\mathbb{V}}ds\nonumber\\
	\!\!\!\!\!\!\!\!&&+\int_0^t\,_{\mathbb{V}^*}\langle \mathcal{A}(s, X_s^i,\mathscr{L}_{X^i_s}) -\mathcal{A}(s, X_s^i,\bar{\mu}_s^N), X_s^i - X_s^{i,N} \rangle_{\mathbb{V}}ds\nonumber\\
	\!\!\!\!\!\!\!\!&&+\int_0^t\|\mathcal{B}(s, X_s^i, \bar{\mu}_s^N) - \mathcal{B}(s, X_s^{i,N}, \mu_s^N)\|_{L_2(U;\mathbb{H})}^2ds \nonumber\\
	\!\!\!\!\!\!\!\!&&+\int_0^t\|\mathcal{B}(s, X_s^i, \mathscr{L}_{X^i_s})-\mathcal{B}(s, X_s^i, \bar{\mu}_s^N)\|_{L_2(U;\mathbb{H})}^2ds+\mathcal{M}_t,
\end{eqnarray}
where
\begin{equation*}
	\mathcal{M}_t:=\int_0^t \langle  X_s^i - X_s^{i,N},\big(\mathcal{B}(s, X_s^i, \mathscr{L}_{X^i_s})- \mathcal{B}(s, X_s^{i,N}, \mu_s^N)\big)dW_s^i\rangle_{\mathbb{H}}.
\end{equation*}
Using B-D-G's inequality and Young's inequality, we derive
\begin{eqnarray}\label{RMt}
	\!\!\!\!\!\!\!\!&&\frac{1}{N} \sum_{i=1}^N \mathbb{E} \Big[ \sup_{t \in [0,T]} |\mathcal{M}_{t \wedge \tau^K}| \Big]\nonumber\\
	\leq\!\!\!\!\!\!\!\!&& \frac{1}{N} \sum_{i=1}^N \mathbb{E} \bigg( \int_0^{T \wedge \tau^K} \|X_t^i - X_t^{i,N}\|_{\mathbb{H}}^2\cdot\|\mathcal{B}(t, X_t^i, \mathscr{L}_{X^i_t})- \mathcal{B}(t, X_t^{i,N}, \mu_t^N)\|_{L_2(U;\mathbb{H})}^2 dt \bigg)^{\frac{1}{2}}\nonumber\\
	\leq\!\!\!\!\!\!\!\!&& \frac{1}{N} \sum_{i=1}^N \mathbb{E} \bigg[ \Big( \sup_{t \in [0,T \wedge \tau^K]} \|X_t^i - X_t^{i,N}\|_{\mathbb{H}}^2 \Big)^{\frac{1}{2}}\nonumber\\
	\!\!\!\!\!\!\!\!&& \cdot \bigg( \int_0^{T \wedge \tau^K} \|\mathcal{B}(t, X_t^i, \mathscr{L}_{X^i_t}) - \mathcal{B}(t, X_t^{i,N}, \mu_t^N)\|_{L_2(U;\mathbb{H})}^2 dt \bigg)^{\frac{1}{2}} \bigg] \nonumber\\
	\leq\!\!\!\!\!\!\!\!&& \frac{C}{N} \sum_{i=1}^N \mathbb{E} \int_0^{T \wedge \tau^K} \|\mathcal{B}(t, X_t^i, \mathscr{L}_{X^i_t}) - \mathcal{B}(t, X_t^{i,N}, \mu_t^N)\|_{L_2(U;\mathbb{H})}^2 dt \nonumber\\
	\!\!\!\!\!\!\!\!&&+ \frac{1}{2N} \sum_{i=1}^N \mathbb{E} \Big[ \sup_{t \in [0,T \wedge \tau^K]} \|X_t^i - X_t^{i,N}\|_{\mathbb{H}}^2 \Big]\nonumber\\
	\leq\!\!\!\!\!\!\!\!&&\frac{C}{N} \sum_{i=1}^N \mathbb{E} \int_0^{T \wedge \tau^K}\|\mathcal{B}(t, X_t^i, \bar{\mu}_t^N)-\mathcal{B}(t, X_t^{i,N}, \mu_t^N)\|_{L_2(U;\mathbb{H})}^2dt\nonumber\\
	\!\!\!\!\!\!\!\!&&+\frac{C}{N} \sum_{i=1}^N \mathbb{E} \int_0^{T \wedge \tau^K}\|\mathcal{B}(t, X_t^i, \mathscr{L}_{X^i_t})-\mathcal{B}(t, X_t^i, \bar{\mu}_t^N) \|_{L_2(U;\mathbb{H})}^2dt\nonumber\\
	\!\!\!\!\!\!\!\!&&+ \frac{1}{2N} \sum_{i=1}^N \mathbb{E} \Big[ \sup_{t \in [0,T \wedge \tau^K]} \|X_t^i - X_t^{i,N}\|_{\mathbb{H}}^2 \Big].
\end{eqnarray}
Substituting (\ref{RMt}) into (\ref{R1all}), we have
\begin{eqnarray}\label{R1I1-4}
	\!\!\!\!\!\!\!\!&&\frac{1}{N} \sum_{i=1}^N \mathbb{E} \Big[ \sup_{t \in [0,T \wedge \tau^K]} \|X_t^i - X_t^{i,N}\|_{\mathbb{H}}^2 \Big]\nonumber\\
	\lesssim\!\!\!\!\!\!\!\!&& \frac{1}{N} \sum_{i=1}^N \mathbb{E} \int_0^{T \wedge \tau^K} \,_{\mathbb{V}^*}\langle \mathcal{A}(t, X_t^i, \bar{\mu}_t^N) - \mathcal{A}(t, X_t^{i,N}, \mu_t^N), X_t^i - X_t^{i,N} \rangle_{\mathbb{V}}dt\nonumber\\
	\!\!\!\!\!\!\!\!&&+ \frac{1}{N} \sum_{i=1}^N \mathbb{E} \int_0^{T \wedge \tau^K} \|\mathcal{B}(t, X_t^i, \bar{\mu}_t^N) - \mathcal{B}(t, X_t^{i,N}, \mu_t^N)\|_{L_2(U;\mathbb{H})}^2dt \nonumber\\
	\!\!\!\!\!\!\!\!&&+ \frac{1}{N} \sum_{i=1}^N \mathbb{E} \int_0^{T \wedge \tau^K} \,_{\mathbb{V}^*}\langle \mathcal{A}(t, X_t^i, \mathscr{L}_{X^i_t})-\mathcal{A}(t, X_t^i, \bar{\mu}_t^N), X_t^i - X_t^{i,N} \rangle_{\mathbb{V}}dt \nonumber\\
	\!\!\!\!\!\!\!\!&&+ \frac{1}{N} \sum_{i=1}^N \mathbb{E} \int_0^{T \wedge \tau^K} \|\mathcal{B}(t, X_t^i, \mathscr{L}_{X^i_t})-\mathcal{B}(t, X_t^i, \bar{\mu}_t^N)\|_{L_2(U;\mathbb{H})}^2dt \nonumber\\
	=:\!\!\!\!\!\!\!\!&& \sum_{m=1}^4 I_m.
\end{eqnarray}
Due to the condition $(\mathbf{A}'_5)$, we obtain
\begin{eqnarray}\label{I1-2}
	I_1 + I_2
	\lesssim\!\!\!\!\!\!\!\!&& \frac{1}{N} \sum_{i=1}^N \mathbb{E} \int_0^{T \wedge \tau^K} \rho(0, \bar{\mu}_s^N)  \|X_s^i - X_s^{i,N}\|_{\mathbb{H}}^2 ds\nonumber\\
	\!\!\!\!\!\!\!\!&&+ \frac{1}{N} \sum_{j=1}^N \mathbb{E} \int_0^{T \wedge \tau^K} \rho(X_s^j,\bar{\mu}_s^N)  \mathcal{W}_{2,\mathbb{H}}(\bar{\mu}_s^N, \mu_s^N)^2 ds\nonumber\\
	\lesssim\!\!\!\!\!\!\!\!&& \frac{1}{N} \sum_{i=1}^N \mathbb{E} \int_0^{T \wedge \tau^K}  \rho(0, \bar{\mu}_s^N)  \|X_s^i - X_s^{i,N}\|_{\mathbb{H}}^2 ds\nonumber\\
	\!\!\!\!\!\!\!\!&&+ \frac{1}{N} \sum_{i=1}^N \mathbb{E} \int_0^{T \wedge \tau^K} \Big[ \frac{1}{N} \sum_{j=1}^N  \rho(X_s^j, \bar{\mu}_s^N)  \Big] \|X_s^i - X_s^{i,N}\|_{\mathbb{H}}^2 ds.
\end{eqnarray}
Moreover, we claim that
\begin{equation}\label{I3}
	I_3\lesssim N^{-\frac{1}{\alpha}}
\end{equation}
and
\begin{equation}\label{I4}
	I_4\lesssim N^{-1},
\end{equation}
whose proof will be presented in \textbf{Step 2} below.

Substituting (\ref{I1-2})-(\ref{I4}) back into (\ref{R1I1-4}), we derive
\begin{eqnarray*}
	\!\!\!\!\!\!\!\!&&\frac{1}{N} \sum_{i=1}^N \mathbb{E} \Big[ \sup_{t \in [0,T \wedge \tau^K]} \|X_t^i - X_t^{i,N}\|_{\mathbb{H}}^2 \Big]\nonumber\\
	\lesssim\!\!\!\!\!\!\!\!&& \frac{1}{N} \sum_{i=1}^N \mathbb{E} \int_0^{T \wedge \tau^K}  \rho(0, \bar{\mu}_s^N)  \|X_s^i - X_s^{i,N}\|_{\mathbb{H}}^2 ds\nonumber\\
	\!\!\!\!\!\!\!\!&&+ \frac{1}{N} \sum_{i=1}^N \mathbb{E} \int_0^{T \wedge \tau^K} \Big[ \frac{1}{N} \sum_{j=1}^N \rho(X_s^j, \bar{\mu}_s^N)  \Big] \|X_s^i - X_s^{i,N}\|_{\mathbb{H}}^2 ds+N^{-\frac{1}{\alpha}}\nonumber\\
	\lesssim\!\!\!\!\!\!\!\!&&\frac{1}{N} \sum_{i=1}^N \mathbb{E} \int_0^{T \wedge \tau^K}
	\Big[\frac{1}{N} \sum_{j=1}^N \big( 1 + \|X_s^j\|_{\mathbb{V}}^\alpha  + \frac{1}{N} \sum_{l=1}^N \|X_s^l\|_{\mathbb{V}}^\alpha \big) \big( 1 + \|X_s^j\|_{\mathbb{H}}^\beta + \frac{1}{N} \sum_{l=1}^N \|X_s^l\|_{\mathbb{H}}^\beta\big) \Big]\nonumber\\
	\!\!\!\!\!\!\!\!&&\quad\quad\quad\quad\quad\quad\quad\cdot\|X_s^i - X_s^{i,N}\|_{\mathbb{H}}^2 ds+N^{-\frac{1}{\alpha}}.
\end{eqnarray*}
Then due to the definition of stopping time $\tau^K$, the stochastic Gronwall's lemma (cf.~\cite[Lemma 5.3]{GZ}) yields that
\begin{equation}\label{step1result}
	\frac{1}{N} \sum_{i=1}^N \mathbb{E} \Big[ \sup_{t \in [0,T \wedge \tau^K]} \|X_t^i - X_t^{i,N}\|_{\mathbb{H}}^2 \Big]\lesssim_K N^{-\frac{1}{\alpha}}.
\end{equation}

\noindent\textbf{Step 2.} In this step, we aim to prove (\ref{I3}) and (\ref{I4}).

We first focus on (\ref{I3}). By H\"older's inequality and $(\mathbf{A}_6)$, Lemma \ref{IPSesti2} and the estimate (\ref{esq370}) lead to
\begin{eqnarray*}
	I_3
	\!\!\!\!\!\!\!\!&&=\frac{1}{N} \sum_{i=1}^N \mathbb{E} \int_0^{T \wedge \tau^K} \,_{\mathbb{V}^*}\langle \mathcal{A}(t, X_t^i, \mathscr{L}_{X^i_t})-\mathcal{A}(t, X_t^i, \bar{\mu}_t^N), X_t^i - X_t^{i,N} \rangle_{\mathbb{V}}dt\\
	\!\!\!\!\!\!\!\!&&\lesssim \frac{1}{N} \sum_{i=1}^N \mathbb{E} \int_0^T \|X_t^i - X_t^{i,N}\|_{\mathbb{V}}\cdot \|\mathcal{A}(t, X_t^i, \mathscr{L}_{X^i_t})-\mathcal{A}(t, X_t^i, \bar{\mu}_t^N)\|_{\mathbb{V}^*} dt
	\\
	\!\!\!\!\!\!\!\!&&\lesssim \frac{1}{N} \sum_{i=1}^N \mathbb{E} \int_0^T \|X_t^i - X_t^{i,N}\|_{\mathbb{V}}\cdot \| \int \tilde{\mathcal{A}}(t, X_t^i, y) \mathscr{L}_{X^i_t}(dy)-\int \tilde{\mathcal{A}}(t, X_t^i, y) \bar{\mu}_t^N(dy) \|_{\mathbb{V}^*} dt
	\\
	\!\!\!\!\!\!\!\!&&= \frac{1}{N^2} \sum_{i=1}^N \mathbb{E} \int_0^T \|X_t^i - X_t^{i,N}\|_{\mathbb{V}}\cdot \| N \int \tilde{\mathcal{A}}(t, X_t^i, y) \mathscr{L}_{X^i_t}(dy) - \sum_{j=1}^N \tilde{\mathcal{A}}(t, X_t^i, X_t^j) \|_{\mathbb{V}^*} dt \\
	\!\!\!\!\!\!\!\!&&\lesssim \frac{1}{N^2} \sum_{i=1}^N \bigg\{ \mathbb{E} \int_0^T \|X_t^i - X_t^{i,N}\|_{\mathbb{V}}^\alpha  dt \bigg\}^{\frac{1}{\alpha}}\\
	\!\!\!\!\!\!\!\!&& \quad\quad\quad\quad\quad \cdot \bigg\{ \mathbb{E} \int_0^T \| N \int \tilde{\mathcal{A}}(t, X_t^i, y) \mathscr{L}_{X^i_t}(dy) - \sum_{j=1}^N \tilde{\mathcal{A}}(t, X_t^i, X_t^j)\|_{\mathbb{V}^*}^{\frac{\alpha}{\alpha-1}} dt\bigg\}^{\frac{\alpha-1}{\alpha}} \\
	\!\!\!\!\!\!\!\!&&\lesssim \frac{1}{N^2} \sum_{i=1}^N \bigg\{ \mathbb{E} \int_0^T \| \sum_{j=1}^N \Big( \int \tilde{\mathcal{A}}(t, X_t^i, y) \mathscr{L}_{X^i_t}(dy) -  \tilde{\mathcal{A}}(t, X_t^i, X_t^j)\Big)\|_{\mathbb{V}^*}^{\frac{\alpha}{\alpha-1}} dt \bigg\}^{\frac{\alpha-1}{\alpha}}.
\end{eqnarray*}
We claim that
\begin{equation}\label{keyI3}
	\mathbb{E} \int_0^T \| \sum_{j=1}^N \Big( \int \tilde{\mathcal{A}}(t, X_t^i, y) \mathscr{L}_{X^i_t}(dy) -  \tilde{\mathcal{A}}(t, X_t^i, X_t^j)\Big)\|_{\mathbb{V}^*}^{\frac{\alpha}{\alpha-1}} dt\lesssim N,
\end{equation}
then  it follows that
\begin{equation*}
	I_3 \lesssim \frac{1}{N^2} \sum_{i=1}^N N^{\frac{\alpha-1}{\alpha}} \lesssim N^{-\frac{1}{\alpha}},
\end{equation*}
which implies (\ref{I3}).

Now, it  remains to prove (\ref{keyI3}). Note that
\begin{eqnarray*}
	\!\!\!\!\!\!\!\!&&\mathbb{E} \int_0^T \| \sum_{j=1}^N \Big( \int \tilde{\mathcal{A}}(t, X_t^i, y) \mathscr{L}_{X^i_t}(dy) -  \tilde{\mathcal{A}}(t, X_t^i, X_t^j)\Big)\|_{\mathbb{V}^*}^{\frac{\alpha}{\alpha-1}} dt\\
	\lesssim\!\!\!\!\!\!\!\!&&\mathbb{E} \int_0^T \| \int \tilde{\mathcal{A}}(t, X_t^i, y) \mathscr{L}_{X^i_t}(dy) -  \tilde{\mathcal{A}}(t, X_t^i, X_t^i)\|_{\mathbb{V}^*}^{\frac{\alpha}{\alpha-1}} dt\\
	\!\!\!\!\!\!\!\!&&+\mathbb{E} \int_0^T \| \sum_{j\ne i}^N \Big( \int \tilde{\mathcal{A}}(t, X_t^i, y) \mathscr{L}_{X^i_t}(dy) -  \tilde{\mathcal{A}}(t, X_t^i, X_t^j)\Big)\|_{\mathbb{V}^*}^{\frac{\alpha}{\alpha-1}} dt\\
	\lesssim\!\!\!\!\!\!\!\!&&\mathbb{E} \int_0^T \Big( \int \|\tilde{\mathcal{A}}(t, X_t^i, y)\|_{\mathbb{V}^*}^{\frac{\alpha}{\alpha-1}}\mathscr{L}_{X^i_t}(dy) + \|\tilde{\mathcal{A}}(t, X_t^i, X_t^i)\|_{\mathbb{V}^*}^{\frac{\alpha}{\alpha-1}} \Big) dt \\
	\!\!\!\!\!\!\!\!&&+ \int_0^T \mathbb{E} \Big[ \mathbb{E} \| \sum_{j\ne i}^N \Big( \int \tilde{\mathcal{A}}(t, x, y) \mathscr{L}_{X^i_t}(dy) -  \tilde{\mathcal{A}}(t, x, X_t^j)\Big)\|_{\mathbb{V}^*}^{\frac{\alpha}{\alpha-1}} \Big|_{x = X_t^i} \Big] dt.
\end{eqnarray*}

Recall the fact that $\mathbb{V}$ is a separable $\alpha$-uniformly convex Banach space with $\alpha\geq2$. By  Proposition \ref{dualization}, we know that $\mathbb{V}^*$ is a separable $\frac{\alpha}{\alpha-1}$-uniformly smooth Banach space. Since $\big\{\int \tilde{\mathcal{A}}(t, x, y) \mathscr{L}_{X^i_t}(dy) -  \tilde{\mathcal{A}}(t, x, X_t^j)\big\}_{j\neq i}$ is a sequence of martingale differences, in view of Lemma \ref{mdlemma}, it follows that
\begin{eqnarray*}
	\!\!\!\!\!\!\!\!&&\mathbb{E} \int_0^T \| \sum_{j=1}^N \Big( \int \tilde{\mathcal{A}}(t, X_t^i, y) \mathscr{L}_{X^i_t}(dy) -  \tilde{\mathcal{A}}(t, X_t^i, X_t^j)\Big)\|_{\mathbb{V}^*}^{\frac{\alpha}{\alpha-1}} dt\\
	\lesssim\!\!\!\!\!\!\!\!&&\mathbb{E} \int_0^T \Big(\int \|\tilde{\mathcal{A}}(t, X_t^i, y)\|_{\mathbb{V}^*}^{\frac{\alpha}{\alpha-1}}\mathscr{L}_{X^i_t}(dy) + \|\tilde{\mathcal{A}}(t, X_t^i, X_t^i)\|_{\mathbb{V}^*}^{\frac{\alpha}{\alpha-1}} \Big) dt \\
	\!\!\!\!\!\!\!\!&&+ \int_0^T \mathbb{E} \Big[ \mathbb{E} \sum_{j\ne i}^N\| \int \tilde{\mathcal{A}}(t, x, y) \mathscr{L}_{X^i_t}(dy) -  \tilde{\mathcal{A}}(t, x, X_t^j)\|_{\mathbb{V}^*}^{\frac{\alpha}{\alpha-1}} \Big|_{x = X_t^i} \Big] dt\\
	\lesssim\!\!\!\!\!\!\!\!&&\mathbb{E} \int_0^T \Big( \int \|\tilde{\mathcal{A}}(t, X_t^i, y)\|_{\mathbb{V}^*}^{\frac{\alpha}{\alpha-1}}\mathscr{L}_{X^i_t}(dy) + \|\tilde{\mathcal{A}}(t, X_t^i, X_t^i)\|_{\mathbb{V}^*}^{\frac{\alpha}{\alpha-1}} \Big) dt \\
	\!\!\!\!\!\!\!\!&&+ \sum_{j\ne i}^N  \mathbb{E}\int_0^T\Big( \int \|\tilde{\mathcal{A}}(t, X_t^i, y)\|_{\mathbb{V}^*}^{\frac{\alpha}{\alpha-1}}\mathscr{L}_{X^i_t}(dy) + \|\tilde{\mathcal{A}}(t, X_t^i, X_t^j)\|_{\mathbb{V}^*}^{\frac{\alpha}{\alpha-1}} \Big) dt\\
	\lesssim\!\!\!\!\!\!\!\!&& N.
\end{eqnarray*}
Here, the last step is due to
\begin{eqnarray*}
	\!\!\!\!\!\!\!\!&&\mathbb{E}\int_0^T\int \|\tilde{\mathcal{A}}(t, X_t^i, y)\|_{\mathbb{V}^*}^{\frac{\alpha}{\alpha-1}}\mathscr{L}_{X^i_t}(dy)dt\\
	\lesssim\!\!\!\!\!\!\!\!&& \mathbb{E}\int_0^T\int\big((1+\|X_t^i\|_{\mathbb{V}}^{\alpha}+\|y\|_{\mathbb{V}}^{\alpha})(1+\|X_t^i\|_{{\mathbb{H}}}^{\beta}+\|y\|_{\mathbb{H}}^{\beta})\big)\mathscr{L}_{X^i_t}(dy)dt \\
	\lesssim\!\!\!\!\!\!\!\!&& \mathbb{E}\int_0^T\big((1+\|X_t^i\|_{\mathbb{V}}^{\alpha})(1+\|X_t^i\|_{{\mathbb{H}}}^{\beta})\big)dt+\int_0^T\mathbb{E} \|X_t^i\|_{{\mathbb{H}}}^{\beta}\cdot\mathbb{E}\|X_t^i\|_{\mathbb{V}}^{\alpha}dt \\
	\lesssim\!\!\!\!\!\!\!\!&& \mathbb{E}\int_0^T\big((1+\|X_t^i\|_{\mathbb{V}}^{\alpha})(1+\|X_t^i\|_{{\mathbb{H}}}^{\beta})\big)dt+\sup_{t\in[0,T]}\mathbb{E} \|X_t^i\|_{{\mathbb{H}}}^{\beta}\cdot\mathbb{E}\int_0^T\|X_t^i\|_{\mathbb{V}}^{\alpha}dt \\
	<\!\!\!\!\!\!\!\!&&\infty
\end{eqnarray*}
and
\begin{eqnarray}\label{es8}
	\!\!\!\!\!\!\!\!&&\mathbb{E}\int_0^T \|\tilde{\mathcal{A}}(t, X_t^i, X_t^j)\|_{\mathbb{V}^*}^{\frac{\alpha}{\alpha-1}}dt\nonumber\\
	\lesssim\!\!\!\!\!\!\!\!&& \mathbb{E}\int_0^T\big((1+\|X_t^i\|_{\mathbb{V}}^{\alpha}+\|X_t^j\|_{\mathbb{V}}^{\alpha})(1+\|X_t^i\|_{{\mathbb{H}}}^{\beta}+\|X_t^j\|_{\mathbb{H}}^{\beta})\big)dt \nonumber\\
	\lesssim\!\!\!\!\!\!\!\!&& \mathbb{E}\int_0^T\big((1+\|X_t^i\|_{\mathbb{V}}^{\alpha})(1+\|X_t^i\|_{{\mathbb{H}}}^{\beta})\big)dt+\int_0^T\mathbb{E} \|X_t^i\|_{{\mathbb{H}}}^{\beta}\cdot\mathbb{E}\|X_t^j\|_{\mathbb{V}}^{\alpha}dt \nonumber\\
	<\!\!\!\!\!\!\!\!&&\infty,
\end{eqnarray}
where we have used the independence of the particles $X_t^i$ and $X_t^j$, with $j\neq i$, in the last step of (\ref{es8}).

On the other hand, according to Remark \ref{uniformspace} and Proposition \ref{dualization},  it is straightforward that the Hilbert space $L_2(U;\mathbb{H})$ is $2$-uniformly smooth. Following similar arguments as in the proof of $I_3$, we have
\begin{eqnarray*}
	I_4
	\lesssim\!\!\!\!\!\!\!\!&&\frac{1}{N} \sum_{i=1}^N \mathbb{E} \int_0^{T \wedge \tau^K} \|\int \tilde{\mathcal{B}}(t, X_t^i, y) \mathscr{L}_{X^i_t}(dy)-\int \tilde{\mathcal{B}}(t, X_t^i, y) \bar{\mu}_t^N(dy)\|_{L_2(U;\mathbb{H})}^2dt\\
	= \!\!\!\!\!\!\!\!&&\frac{1}{N^3} \sum_{i=1}^N \mathbb{E} \int_0^T  \| N \int \tilde{\mathcal{B}}(t, X_t^i, y) \mathscr{L}_{X^i_t}(dy) - \sum_{j=1}^N \tilde{\mathcal{B}}(t, X_t^i, X_t^j) \|_{L_2(U;\mathbb{H})}^2dt \\
	\lesssim\!\!\!\!\!\!\!\!&&\frac{1}{N^3} \sum_{i=1}^N\mathbb{E} \int_0^T \| \int \tilde{\mathcal{B}}(t, X_t^i, y) \mathscr{L}_{X^i_t}(dy) -  \tilde{\mathcal{B}}(t, X_t^i, X_t^i)\|_{L_2(U;\mathbb{H})}^2 dt\\
	\!\!\!\!\!\!\!\!&&+\frac{1}{N^3} \sum_{i=1}^N\mathbb{E} \int_0^T \| \sum_{j\ne i}^N \Big( \int \tilde{\mathcal{B}}(t, X_t^i, y) \mathscr{L}_{X^i_t}(dy) -  \tilde{\mathcal{B}}(t, X_t^i, X_t^j)\Big)\|_{L_2(U;\mathbb{H})}^2 dt\\
	\lesssim\!\!\!\!\!\!\!\!&&\frac{1}{N^3} \sum_{i=1}^N\mathbb{E} \int_0^T \Big( \int\|\tilde{\mathcal{B}}(t, X_t^i, y)\|_{L_2(U;\mathbb{H})}^2\mathscr{L}_{X^i_t}(dy) + \|\tilde{\mathcal{B}}(t, X_t^i, X_t^i)\|_{L_2(U;\mathbb{H})}^2 \Big) dt \\
	\!\!\!\!\!\!\!\!&&+ \frac{1}{N^3} \sum_{i=1}^N\int_0^T \mathbb{E} \Big[ \mathbb{E} \| \sum_{j\ne i}^N \Big( \int \tilde{\mathcal{B}}(t, x, y) \mathscr{L}_{X^i_t}(dy) -  \tilde{\mathcal{B}}(t, x, X_t^j)\Big)\|_{L_2(U;\mathbb{H})}^2 \Big|_{x = X_t^i} \Big] dt\\
	\lesssim\!\!\!\!\!\!\!\!&&\frac{1}{N^2} + \frac{1}{N^3} \sum_{i=1}^N\int_0^T \mathbb{E} \Big[ \mathbb{E} \sum_{j\ne i}^N\| \int \tilde{\mathcal{B}}(t, x, y) \mathscr{L}_{X^i_t}(dy) -  \tilde{\mathcal{B}}(t, x, X_t^j)\|_{L_2(U;\mathbb{H})}^2 \Big|_{x = X_t^i} \Big] dt\\
	\lesssim\!\!\!\!\!\!\!\!&&\frac{1}{N^2} + \frac{1}{N^3} \sum_{i=1}^N \sum_{j\ne i}^N  \mathbb{E}\int_0^T\Big( \int\|\tilde{\mathcal{B}}(t, X_t^i, y)\|_{L_2(U;\mathbb{H})}^2\mathscr{L}_{X^i_t}(dy)
	\\
	\!\!\!\!\!\!\!\!&&+ \|\tilde{\mathcal{B}}(t, X_t^i, X_t^j)\|_{L_2(U;\mathbb{H})}^2 \Big) dt\\
	\lesssim\!\!\!\!\!\!\!\!&& N^{-1},
\end{eqnarray*}
which completes the proof of (\ref{I4}).

\vspace{1mm}
\noindent\textbf{Step 3.} In this step, we shall establish a bound for the second term on the right-hand side of (\ref{rate1all}).
By (\ref{esq370}) and (\ref{XNesti2}), it follows that
\begin{eqnarray}\label{taugeT}
	\!\!\!\!\!\!\!\!&&\frac{1}{N} \sum_{i=1}^N \Big\{ \mathbb{E} \Big[ \sup_{t \in [0,T]} \|X_t^i - X_t^{i,N}\|_{\mathbb{H}}^{\gamma p} \Big] \Big\}^{\frac{2}{\gamma p}} \Big\{ \mathbb{P}(\tau^K < T) \Big\}^{\frac{\gamma p-2}{\gamma p}}\nonumber\\
	\leq\!\!\!\!\!\!\!\!&& \frac{1}{N} \sum_{i=1}^N \Big\{ \mathbb{E} \Big[ \sup_{t \in [0,T]} \|X_t^i\|_{\mathbb{H}}^{\gamma p} + \|X_t^{i,N}\|_{\mathbb{H}}^{\gamma p} \Big] \Big\}^{\frac{2}{\gamma p}} \Big\{ \mathbb{P}(\tau^K < T) \Big\}^{\frac{\gamma p-2}{\gamma p}} \nonumber\\
	\lesssim\!\!\!\!\!\!\!\!&& \Big\{\mathbb{P}(\tau^K < T) \Big\}^{\frac{\gamma p-2}{\gamma p}}.
\end{eqnarray}
By the definition of stopping time $\tau^K$ we have
\begin{equation*}
	\mathbb{P}(\tau^K\leq T)\leq\mathbb{P}\bigg( \frac{1}{N} \sum_{i=1}^N \Big( \sup_{t \in [0,T]} \|X_t^i\|_{\mathbb{H}}^\beta + \int_0^T \|X_t^i\|_{\mathbb{V}}^{\alpha} (1+\|X_t^i\|_{\mathbb{H}}^\beta) dt\Big) \geq K \bigg).
\end{equation*}
Let
\begin{equation*}
	Z_i := \sup_{t \in [0,T]} \|X_t^i\|_{\mathbb{H}}^\beta + \int_0^T \|X_t^i\|_{\mathbb{V}}^{\alpha} (1+\|X_t^i\|_{\mathbb{H}}^\beta) dt.
\end{equation*}
Since $\{Z_i\}_{i=1}^N$ are i.i.d.~$\mathbb{R}$-valued random variables,
by virtue of Lemmas \ref{nIPSestibeta2} and \ref{nIPSesti2} we can derive
\begin{equation*}
	\mathbb{E}|Z_1|^2
	\lesssim \mathbb{E}\Big(\sup_{t \in [0,T]} \|X_t^1\|_{\mathbb{H}}^{2\beta}\Big)+\mathbb{E}\Big(\int_0^T \|X_t^1\|_{\mathbb{V}}^{\alpha} (1+\|X_t^1\|_{\mathbb{H}}^\beta) dt\Big)^2< \infty.
\end{equation*}
Thus by Lemma \ref{iidrate}, taking $K = 2\mathbb{E}Z_1$, we have
\begin{equation}\label{tauP2}
	\mathbb{P}(\tau^K\leq T) \lesssim N^{-1}.
\end{equation}
Substituting  (\ref{tauP2}) into (\ref{taugeT}), it follows that
\begin{equation*}
	\frac{1}{N} \sum_{i=1}^N \Big\{ \mathbb{E} \Big[ \sup_{t \in [0,T]} \|X_t^i - X_t^{i,N}\|_{\mathbb{H}}^{\gamma p} \Big] \Big\}^{\frac{2}{\gamma p}} \Big\{ \mathbb{P}(\tau^K < T) \Big\}^{\frac{\gamma p-2}{\gamma p}}
	\lesssim_{K}N^{- {\frac{\gamma p-2}{\gamma p}}}.
\end{equation*}

Recalling (\ref{rate1all}) and (\ref{step1result}), by the arbitrariness of $\gamma$ and the definition of $p$ we conclude that
\begin{equation*}
	\frac{1}{N} \sum_{i=1}^N \mathbb{E} \Big[ \sup_{t \in [0,T]} \|X_t^i - X_t^{i,N}\|_{\mathbb{H}}^2 \Big]\lesssim N^{-\frac{1}{\alpha}}+N^{- {\frac{\gamma p-2}{\gamma p}}}\lesssim N^{-\frac{1}{\alpha}}.
\end{equation*}
The proof is thus completed. \hspace{\fill}$\Box$

\subsection{Proof of Theorem \ref{rate2}}\label{Proofrate2}
In this subsection, we  establish the convergence rate under the local monotonicity condition $(\mathbf{A}''_5)$ by constructing alternative stopping times.

More specifically, recall the stopping time $\tau^K$ defined in Subsection \ref{Proofrate1} and set
$$
\tau^\varepsilon := \inf\bigg\{ t \geq 0 : \mathcal{W}_{\beta,\mathbb{H}} ( \bar{\mu}_t^{N}, \mu_t^{N})^\beta + \int_0^t \mathcal{W}_{\alpha,\mathbb{V}} ( \bar{\mu}_s^{N}, \mu_s^{N})^\alpha ds \geq \varepsilon \bigg\}.
$$
Let $\tilde {\tau} := \tau^\varepsilon \wedge \tau^K$.
By the triangle inequality and the definition of $\tilde {\tau}$, it holds for any $t \in [0,T]$ that
\begin{eqnarray}\label{XNeta}
	\!\!\!\!\!\!\!\!&&\frac{1}{N} \sum_{i=1}^N \int_0^{t \wedge \tilde {\tau}} \|X_s^{i,N}\|_{\mathbb{V}}^{\alpha} ds+\frac{1}{N} \sum_{i=1}^N \|X_{t \wedge \tilde {\tau}}^{i,N}\|_{\mathbb{H}}^\beta\nonumber\\
	\lesssim\!\!\!\!\!\!\!\!&& \int_0^{t \wedge \tilde {\tau}} \mathcal{W}_{\alpha,\mathbb{V}} ( \bar{\mu}_s^{N}, \mu_s^{N})^\alpha ds+\frac{1}{N} \sum_{i=1}^N \int_0^{t \wedge \tilde {\tau}} \|X_s^i\|_{\mathbb{V}}^{\alpha} ds
	\nonumber\\
	\!\!\!\!\!\!\!\!&&+\mathcal{W}_{\beta,\mathbb{H}}( \bar{\mu}_{t \wedge \tilde {\tau}}^{N}, \mu_{t \wedge \tilde {\tau}}^{N})^\beta+\frac{1}{N} \sum_{i=1}^N \|X_{t \wedge \tilde {\tau}}^{i}\|_{\mathbb{H}}^\beta\nonumber\\
	\leq\!\!\!\!\!\!\!\!&& \varepsilon + K.
\end{eqnarray}
Note that
\begin{eqnarray}\label{rate2all}
	\!\!\!\!\!\!\!\!&&\frac{1}{N} \sum_{i=1}^N \mathbb{E} \Big[ \sup_{t \in [0,T]} \|X_t^i - X_t^{i,N}\|_{\mathbb{H}}^2 \Big]+\frac{1}{N} \sum_{i=1}^N \mathbb{E} \int_0^{T}\|X_s^i - X_s^{i,N}\|_{\mathbb{V}}^{\alpha}ds\nonumber\\
	\leq\!\!\!\!\!\!\!\!&&\frac{1}{N} \sum_{i=1}^N \mathbb{E} \Big[\sup_{t \in [0,T]} \|X_{t \wedge \tilde {\tau}}^i - X_{t \wedge \tilde {\tau}}^{i,N}\|_{\mathbb{H}}^2 \Big]+\frac{1}{N} \sum_{i=1}^N \mathbb{E} \int_0^{T \wedge \tilde {\tau}}\|X_s^i - X_s^{i,N}\|_{\mathbb{V}}^{\alpha}ds \nonumber\\
	\!\!\!\!\!\!\!\!&&+\frac{1}{N} \sum_{i=1}^N \Big\{ \mathbb{E} \Big[\sup_{t \in [0,T]} \|X_t^i - X_t^{i,N}\|_{\mathbb{H}}^{\gamma p} \Big] \Big\}^{\frac{2}{\gamma p}} \Big\{ \mathbb{P}(\tilde {\tau} < T) \Big\}^{\frac{\gamma p-2}{\gamma p}}\nonumber\\
	\!\!\!\!\!\!\!\!&&+\frac{1}{N} \sum_{i=1}^N \Big\{ \mathbb{E} \Big(\int_0^{T}\|X_s^i - X_s^{i,N}\|_{\mathbb{V}}^{\alpha}ds \Big)^{\frac{\vartheta p}{2}} \Big\}^{\frac{2}{\vartheta p}} \Big\{ \mathbb{P}(\tilde {\tau} < T) \Big\}^{\frac{\vartheta p-2}{\vartheta p}},
\end{eqnarray}
where $\frac{2}{p}<\vartheta<\frac{2}{\beta}$ and $\frac{2}{p}<\gamma<1$. Similarly, we also have
\begin{eqnarray}\label{rate2all2}
	\!\!\!\!\!\!\!\!&&\frac{1}{N} \sum_{i=1}^N \sup_{t \in [0,T]} \mathbb{E}  \|X_t^i - X_t^{i,N}\|_{\mathbb{H}}^2 \nonumber\\
	\leq\!\!\!\!\!\!\!\!&&\frac{1}{N} \sum_{i=1}^N \sup_{t \in [0,T]} \mathbb{E}  \|X_{t \wedge \tilde {\tau}}^i - X_{t \wedge \tilde {\tau}}^{i,N}\|_{\mathbb{H}}^2  \nonumber\\
	\!\!\!\!\!\!\!\!&&+\frac{1}{N} \sum_{i=1}^N \Big\{ \sup_{t \in [0,T]} \mathbb{E}  \|X_t^i - X_t^{i,N}\|_{\mathbb{H}}^{p}  \Big\}^{\frac{2}{ p}} \Big\{ \mathbb{P}(\tilde {\tau} < T) \Big\}^{\frac{ p-2}{ p}}.
\end{eqnarray}

\vspace{2mm}
\noindent\textbf{Proof of (\ref{pocth3}).} We adopt schemes analogous to those used in Subsection \ref{Proofrate1} to estimate the four terms on the right-hand side of (\ref{rate2all}).

\vspace{1mm}
\noindent\textbf{Step 1.} In this step, our goal is to establish a bound for the first two terms on the right-hand side of (\ref{rate2all}).
Adopting the same arguments as in (\ref{R1all})-(\ref{R1I1-4}), we derive
\begin{eqnarray}\label{R2I1-4}
	\!\!\!\!\!\!\!\!&&\frac{1}{N} \sum_{i=1}^N \mathbb{E} \Big[ \sup_{t \in [0,T \wedge \tilde {\tau}]} \|X_t^i - X_t^{i,N}\|_{\mathbb{H}}^2 \Big]\nonumber\\
	\lesssim\!\!\!\!\!\!\!\!&& \frac{1}{N} \sum_{i=1}^N \mathbb{E} \int_0^{T \wedge \tilde {\tau}} \,_{\mathbb{V}^*}\langle \mathcal{A}(t, X_t^i, \bar{\mu}_t^N)- \mathcal{A}(t, X_t^{i,N}, \mu_t^N), X_t^i - X_t^{i,N} \rangle_{\mathbb{V}}dt\nonumber\\
	\!\!\!\!\!\!\!\!&&+ \frac{1}{N} \sum_{i=1}^N \mathbb{E} \int_0^{T \wedge \tilde {\tau}} \|\mathcal{B}(t, X_t^i, \bar{\mu}_t^N) - \mathcal{B}(t, X_t^{i,N}, \mu_t^N)\|_{L_2(U;\mathbb{H})}^2dt \nonumber\\
	\!\!\!\!\!\!\!\!&&+ \frac{1}{N} \sum_{i=1}^N \mathbb{E} \int_0^{T \wedge \tilde {\tau}} \,_{\mathbb{V}^*}\langle \mathcal{A}(t, X_t^i, \mathscr{L}_{X^i_t})-\mathcal{A}(t, X_t^i, \bar{\mu}_t^N), X_t^i - X_t^{i,N} \rangle_{\mathbb{V}}dt \nonumber\\
	\!\!\!\!\!\!\!\!&&+ \frac{1}{N} \sum_{i=1}^N \mathbb{E} \int_0^{T \wedge \tilde {\tau}} \|\mathcal{B}(t, X_t^i, \mathscr{L}_{X^i_t})-\mathcal{B}(t, X_t^i, \bar{\mu}_t^N)\|_{L_2(U;\mathbb{H})}^2dt \nonumber\\
	=:\!\!\!\!\!\!\!\!&& \sum_{m=1}^4\tilde {I}_m.
\end{eqnarray}
By virtue of condition $(\mathbf{A}''_5)$, we get
\begin{eqnarray}\label{tildeI1-2}
	\!\!\!\!\!\!\!\!&&\tilde {I}_1 + \tilde {I}_2\nonumber\\
	\leq\!\!\!\!\!\!\!\!&& -\frac{\delta_0}{N} \sum_{i=1}^N \mathbb{E} \int_0^{T \wedge \tilde {\tau}}\|X_s^i - X_s^{i,N}\|_{\mathbb{V}}^{\alpha}ds
	\nonumber\\
	\!\!\!\!\!\!\!\!&& +\frac{1}{N} \sum_{i=1}^N \mathbb{E} \int_0^{T \wedge \tilde {\tau}} ( \rho(0, \bar{\mu}_s^N)+\eta(0, \mu_s^N) ) \|X_s^i - X_s^{i,N}\|_{\mathbb{H}}^2 ds\nonumber\\
	\!\!\!\!\!\!\!\!&&+ \frac{1}{N} \sum_{j=1}^N \mathbb{E} \int_0^{T \wedge \tilde {\tau}} ( \rho(X_s^j,\bar{\mu}_s^N)+\eta(X_s^{j,N}, \mu_s^N) ) \mathcal{W}_{2,\mathbb{H}}(\bar{\mu}_s^N, \mu_s^N)^2 ds\nonumber\\
	\leq\!\!\!\!\!\!\!\!&& -\frac{\delta_0}{N} \sum_{i=1}^N \mathbb{E} \int_0^{T \wedge \tilde {\tau}}\|X_s^i - X_s^{i,N}\|_{\mathbb{V}}^{\alpha}ds
	\nonumber\\
	\!\!\!\!\!\!\!\!&&+\frac{1}{N} \sum_{i=1}^N \mathbb{E} \int_0^{T \wedge \tilde {\tau}} (\rho(0, \bar{\mu}_s^N)+\eta(0, \mu_s^N) ) \|X_s^i - X_s^{i,N}\|_{\mathbb{H}}^2 ds\nonumber\\
	\!\!\!\!\!\!\!\!&&+ \frac{1}{N} \sum_{i=1}^N \mathbb{E} \int_0^{T \wedge \tilde {\tau}} \Big[ \frac{1}{N} \sum_{j=1}^N (\rho(X_s^j, \bar{\mu}_s^N)+\eta(X_s^{j,N}, \mu_s^N) ) \Big] \|X_s^i - X_s^{i,N}\|_{\mathbb{H}}^2 ds.
\end{eqnarray}

On the other hand, following the same  procedure used to estimate the term $I_4$ in \textbf{Step 2} in the proof of (\ref{pocth1}),  the term $\tilde {I}_4$ here can be estimated as follows
\begin{equation}\label{tildeI4}
	\tilde {I}_4\lesssim N^{-1}.
\end{equation}
We claim that there exists a constant $C>0$ such that
\begin{equation}\label{tildeI3}
	\tilde {I}_3\leq \frac{\delta_0}{2N} \sum_{i=1}^N \mathbb{E} \int_0^{T \wedge \tilde {\tau}}\|X_s^i - X_s^{i,N}\|_{\mathbb{V}}^{\alpha}ds+C N^{-\frac{1}{\alpha-1}},
\end{equation}
whose proof will be given in \textbf{Step 2} below.
Then substituting (\ref{tildeI1-2})-(\ref{tildeI3}) back into (\ref{R2I1-4}), it follows that
\begin{eqnarray*}
	\!\!\!\!\!\!\!\!&&\frac{1}{N} \sum_{i=1}^N \mathbb{E} \Big[ \sup_{t \in [0,T \wedge \tilde {\tau}]} \|X_t^i - X_t^{i,N}\|_{\mathbb{H}}^2 \Big]+\frac{1}{N} \sum_{i=1}^N \mathbb{E} \int_0^{T \wedge \tilde {\tau}}\|X_s^i - X_s^{i,N}\|_{\mathbb{V}}^{\alpha}ds\nonumber\\
	\lesssim\!\!\!\!\!\!\!\!&& \frac{1}{N} \sum_{i=1}^N \mathbb{E} \int_0^{T \wedge \tilde {\tau}} ( 1 + \rho(0, \bar{\mu}_s^N)+\eta(0, \mu_s^N) ) \|X_s^i - X_s^{i,N}\|_{\mathbb{H}}^2 ds\nonumber\\
	\!\!\!\!\!\!\!\!&&+ \frac{1}{N} \sum_{i=1}^N \mathbb{E} \int_0^{T \wedge \tilde {\tau}} \Big[ \frac{1}{N} \sum_{j=1}^N ( 1 + \rho(X_s^j, \bar{\mu}_s^N)+\eta(X_s^{j,N}, \mu_s^N) ) \Big] \|X_s^i - X_s^{i,N}\|_{\mathbb{H}}^2 ds+N^{-\frac{1}{\alpha-1}}.
\end{eqnarray*}
By the definition of $\tilde {\tau}$ and using (\ref{XNeta}), it follows from the stochastic Gronwall's lemma (cf.~\cite[Lemma 5.3]{GZ}) that
\begin{equation}\label{es10}
	\frac{1}{N} \sum_{i=1}^N \mathbb{E} \Big[ \sup_{t \in [0,T \wedge \tilde {\tau}]} \|X_t^i - X_t^{i,N}\|_{\mathbb{H}}^2 \Big]+\frac{1}{N} \sum_{i=1}^N \mathbb{E} \int_0^{T \wedge \tilde {\tau}}\|X_s^i - X_s^{i,N}\|_{\mathbb{V}}^{\alpha}ds \lesssim_{\varepsilon,K} N^{-\frac{1}{\alpha-1}}.
\end{equation}

\noindent\textbf{Step 2.} In this step, we  prove (\ref{tildeI3}).
By H\"older's inequality, Young's inequality, and $(\mathbf{A}_6)$ we deduce  that
\begin{eqnarray*}
	\tilde{I}_3\!\!\!\!\!\!\!\!&&\leq \frac{1}{N} \sum_{i=1}^N \mathbb{E} \int_0^T \|X_t^i - X_t^{i,N}\|_{\mathbb{V}} \cdot\|  \int \tilde{\mathcal{A}}(t, X_t^i, y) \mathscr{L}_{X^i_t}(dy) - \frac{1}{N}\sum_{j=1}^N \tilde{\mathcal{A}}(t, X_t^i, X_t^j) \|_{\mathbb{V}^*} dt
	\\
	\!\!\!\!\!\!\!\!&&\leq \frac{\delta_0}{2N} \sum_{i=1}^N \mathbb{E} \int_0^T \|X_t^i - X_t^{i,N}\|_{\mathbb{V}}^\alpha  dt \\
	\!\!\!\!\!\!\!\!&&\quad+ \frac{C}{N} \sum_{i=1}^N \mathbb{E} \int_0^T \|  \int \tilde{\mathcal{A}}(t, X_t^i, y) \mathscr{L}_{X^i_t}(dy) - \frac{1}{N}\sum_{j=1}^N \tilde{\mathcal{A}}(t, X_t^i, X_t^j) \|_{\mathbb{V}^*}^{\frac{\alpha}{\alpha-1}} dt\\
	\!\!\!\!\!\!\!\!&&\leq \frac{\delta_0}{2N} \sum_{i=1}^N \mathbb{E} \int_0^T \|X_t^i - X_t^{i,N}\|_{\mathbb{V}}^\alpha  dt \\
	\!\!\!\!\!\!\!\!&&\quad+ C N^{-\frac{2\alpha-1}{\alpha-1}} \sum_{i=1}^N \mathbb{E} \int_0^T \| N \int \tilde{\mathcal{A}}(t, X_t^i, y) \mathscr{L}_{X^i_t}(dy) - \sum_{j=1}^N \tilde{\mathcal{A}}(t, X_t^i, X_t^j) \|_{\mathbb{V}^*}^{\frac{\alpha}{\alpha-1}} dt.
\end{eqnarray*}
Then the assertion (\ref{keyI3}) implies
(\ref{tildeI3}).

\vspace{1mm}
\noindent\textbf{Step 3.} To deal with the latter two terms on the right-hand side of (\ref{rate2all}), we note that (\ref{esq370}), (\ref{XNesti2}), and Lemmas \ref{nIPSesti2} and \ref{IPSesti2} imply that
\begin{eqnarray}\label{tildetaugeT}
	\!\!\!\!\!\!\!\!&&\frac{1}{N} \sum_{i=1}^N \Big\{ \mathbb{E} \Big[ \sup_{t \in [0,T]} \|X_t^i - X_t^{i,N}\|_{\mathbb{H}}^{\gamma p} \Big] \Big\}^{\frac{2}{\gamma p}} \Big\{ \mathbb{P}(\tilde {\tau} < T) \Big\}^{\frac{\gamma p-2}{\gamma p}}\nonumber\\
	\!\!\!\!\!\!\!\!&&\quad+\frac{1}{N} \sum_{i=1}^N \Big\{ \mathbb{E} \Big(\int_0^{T}\|X_s^i - X_s^{i,N}\|_{\mathbb{V}}^{\alpha}ds \Big)^{\frac{\vartheta p}{2} } \Big\}^{\frac{1}{\vartheta p}} \Big\{ \mathbb{P}(\tilde {\tau} < T) \Big\}^{\frac{\vartheta p-2}{\vartheta p}}\nonumber\\
	\lesssim\!\!\!\!\!\!\!\!&&\Big\{ \mathbb{P}(\tilde {\tau} < T) \Big\}^{\frac{\vartheta p-2}{\vartheta p}}\nonumber\\
	\lesssim\!\!\!\!\!\!\!\!&& \Big\{ \mathbb{P}( \tau^\varepsilon < T < \tau^K) + \mathbb{P}(\tau^K \leq T) \Big\}^{\frac{\vartheta p-2}{\vartheta p}}.
\end{eqnarray}
By Chebyshev's inequality, we obtain
\begin{eqnarray}\label{Ptauvarepsilon}
	\!\!\!\!\!\!\!\!&&\quad\mathbb{P}( \tau^\varepsilon < T < \tau^K)
	\nonumber\\
	\!\!\!\!\!\!\!\!&&\leq\mathbb{P}\bigg( \sup_{t \in [0,T]}\mathcal{W}_{\beta,\mathbb{H}} ( \bar{\mu}_{t \wedge \tilde {\tau}}^{N}, \mu_{t \wedge \tilde {\tau}}^{N})^\beta+\int_0^T \mathcal{W}_{\alpha,\mathbb{V}} ( \bar{\mu}_{t \wedge \tilde {\tau}}^{N}, \mu_{t \wedge \tilde {\tau}}^{N})^\alpha dt\geq \varepsilon  \bigg) \nonumber\\
	\!\!\!\!\!\!\!\!&&
	\leq \mathbb{P}\bigg(\sup_{t \in [0,T]} \frac{1}{N} \sum_{i=1}^N \|X_{t \wedge \tilde {\tau}}^{i,N} - X_{t \wedge \tilde {\tau}}^i\|_\mathbb{H}^\beta+\int_0^T\frac{1}{N} \sum_{i=1}^N \|X_{t \wedge \tilde {\tau}}^{i,N} - X_{t \wedge \tilde {\tau}}^i\|_\mathbb{V}^\alpha dt \geq \varepsilon \bigg) \nonumber\\
	\!\!\!\!\!\!\!\!&&\leq \frac{1}{\varepsilon} \cdot \frac{1}{N} \sum_{i=1}^N \bigg\{\mathbb{E} \Big[ \sup_{t \in [0,T]} \|X_{t \wedge \tilde {\tau}}^{i,N} - X_{t \wedge \tilde {\tau}}^i\|_\mathbb{H}^{\beta} \Big]+\mathbb{E}\int_0^T\|X_{t \wedge\tilde {\tau}}^{i,N} - X_{t \wedge \tilde {\tau}}^i\|_\mathbb{V}^\alpha dt\bigg\}.
\end{eqnarray}
If $\beta>2$, using \cite[Proposition 6.10]{FGB} we obtain for any $\beta<q<p$,
\begin{eqnarray}\label{es1000}
\!\!\!\!\!\!\!\!&&\mathbb{E} \Big[ \sup_{t \in [0,T]} \|X_{t \wedge \tilde {\tau}}^{i,N} - X_{t \wedge \tilde {\tau}}^i\|_\mathbb{H}^\beta \Big]\nonumber\\
	\leq\!\!\!\!\!\!\!\!&&
\Big\{
\mathbb{E} \Big[ \sup_{t \in [0,T]} \|X_{t \wedge \tilde {\tau}}^{i,N} - X_{t \wedge \tilde {\tau}}^i\|_\mathbb{H}^2 \Big]\Big\}^{\frac{\theta\beta}{2}}\cdot
\Big\{
\mathbb{E} \Big[ \sup_{t \in [0,T]} \|X_{t \wedge \tilde {\tau}}^{i,N} - X_{t \wedge \tilde {\tau}}^i\|_\mathbb{H}^q \Big]\Big\}^{\frac{(1-\theta)\beta}{q}},
\end{eqnarray}
where $\theta\in(0,1)$ is chosen such that
\begin{equation*}
\frac1\beta=\frac{\theta}{2}+\frac{1-\theta}{q}.
\end{equation*}
Therefore,
\begin{equation*}
\frac{\theta\beta}{2}
=
\frac{q-\beta}{q-2}.
\end{equation*}
Recalling (\ref {Ptauvarepsilon}) and using (\ref{es10}) as well as (\ref{es1000}), it follows that
\begin{equation*}
\mathbb{P}( \tau^\varepsilon < T < \tau^K)\lesssim_{\varepsilon,K}N^{-\frac{1}{\alpha-1}\cdot\frac{q-\beta}{q-2}}
\end{equation*}
If $\beta=2$, the estimate (\ref {Ptauvarepsilon}) can be simplified to
\begin{eqnarray*}
	\!\!\!\!\!\!\!\!&&\quad\mathbb{P}( \tau^\varepsilon < T < \tau^K)
	\nonumber\\
	\!\!\!\!\!\!\!\!&&\leq \frac{1}{\varepsilon} \cdot \frac{1}{N} \sum_{i=1}^N \Big\{\mathbb{E} \Big[ \sup_{t \in [0,T]} \|X_{t \wedge \tilde {\tau}}^{i,N} - X_{t \wedge \tilde {\tau}}^i\|_\mathbb{H}^2 \Big]+\mathbb{E}\int_0^T\|X_{t \wedge\tilde {\tau}}^{i,N} - X_{t \wedge \tilde {\tau}}^i\|_\mathbb{V}^\alpha dt\Big\}  \nonumber\\
	\!\!\!\!\!\!\!\!&&\lesssim_{\varepsilon,K}N^{-\frac{1}{\alpha-1}}.
\end{eqnarray*}
In summary, for any $\beta\geq2$, we can get
\begin{equation*}
\mathbb{P}( \tau^\varepsilon < T < \tau^K)
\lesssim_{\varepsilon,K}N^{-\frac{1}{\alpha-1}\cdot\frac{q-\beta}{q-2}},
\end{equation*}
which combined with (\ref{tauP2}) and  (\ref{tildetaugeT}) implies
\begin{eqnarray}\label{es9}
	\!\!\!\!\!\!\!\!&&\quad\frac{1}{N} \sum_{i=1}^N \Big\{ \mathbb{E} \Big[ \sup_{t \in [0,T]} \|X_t^i - X_t^{i,N}\|_{\mathbb{H}}^{\gamma p} \Big] \Big\}^{\frac{2}{\gamma p}} \Big\{ \mathbb{P}(\tilde {\tau} < T) \Big\}^{\frac{\gamma p-2}{\gamma p}}\nonumber\\
	\!\!\!\!\!\!\!\!&&\quad+\frac{1}{N} \sum_{i=1}^N \Big\{ \mathbb{E} \Big(\int_0^{T}\|X_s^i - X_s^{i,N}\|_{\mathbb{V}}^{\alpha}ds \Big)^{\frac{\vartheta p}{2} } \Big\}^{\frac{1}{\vartheta p}} \Big\{ \mathbb{P}(\tilde {\tau} < T) \Big\}^{\frac{\vartheta p-2}{\vartheta p}}\nonumber\\
	\!\!\!\!\!\!\!\!&&\lesssim_{\varepsilon,K}N^{-\frac{1}{\alpha-1}\cdot\frac{q-\beta}{q-2}\cdot {\frac{\vartheta p-2}{\vartheta p}}},
\end{eqnarray}
where we recall $\frac{2}{p}<\vartheta<\frac{2}{\beta}$ and $\beta<q<p$.

Consequently, it follows from (\ref{es10}) and (\ref{es9}) that
\begin{equation*}
	\frac{1}{N} \sum_{i=1}^N \mathbb{E} \Big[ \sup_{t \in [0,T]} \|X_t^i - X_t^{i,N}\|_{\mathbb{H}}^2 \Big]+\frac{1}{N} \sum_{i=1}^N \mathbb{E} \int_0^{T}\|X_s^i - X_s^{i,N}\|_{\mathbb{V}}^{\alpha}ds\lesssim N^{-\frac{1}{\alpha-1}\cdot\frac{q-\beta}{q-2}\cdot {\frac{\vartheta p-2}{\vartheta p}}}.
\end{equation*}

We complete the proof.
\hspace{\fill}$\Box$

\vspace{2mm}
\noindent\textbf{Proof of (\ref{pocth4}).}
We need to estimate the two terms on the right-hand side of (\ref{rate2all2}).

Firstly, we can get
\begin{eqnarray*}
	\!\!\!\!\!\!\!\!&&\frac{1}{N} \sum_{i=1}^N \mathbb{E} \|X_{t \wedge \tilde {\tau}}^i - X_{t \wedge \tilde {\tau}}^{i,N}\|_{\mathbb{H}}^2 \nonumber\\
	\lesssim\!\!\!\!\!\!\!\!&& \frac{1}{N} \sum_{i=1}^N \mathbb{E} \int_0^{t \wedge \tilde {\tau}} \,_{\mathbb{V}^*}\langle \mathcal{A}(s, X_s^i, \bar{\mu}_s^N) - \mathcal{A}(s, X_s^{i,N}, \mu_s^N), X_s^i - X_s^{i,N} \rangle_{\mathbb{V}}ds\nonumber\\
	\!\!\!\!\!\!\!\!&&+ \frac{1}{N} \sum_{i=1}^N \mathbb{E} \int_0^{t \wedge \tilde {\tau}} \|\mathcal{B}(s, X_s^i, \bar{\mu}_s^N) - \mathcal{B}(s, X_s^{i,N}, \mu_s^N)\|_{L_2(U;\mathbb{H})}^2ds \nonumber\\
	\!\!\!\!\!\!\!\!&&+ \frac{1}{N} \sum_{i=1}^N \mathbb{E} \int_0^{t \wedge \tilde {\tau}} \,_{\mathbb{V}^*}\langle \mathcal{A}(s, X_s^i, \mathscr{L}_{X^i_s})-\mathcal{A}(s, X_s^i, \bar{\mu}_s^N), X_s^i - X_s^{i,N} \rangle_{\mathbb{V}}ds \nonumber\\
	\!\!\!\!\!\!\!\!&&+ \frac{1}{N} \sum_{i=1}^N \mathbb{E} \int_0^{t \wedge \tilde {\tau}} \|\mathcal{B}(s, X_s^i, \mathscr{L}_{X^i_s})-\mathcal{B}(s, X_s^i, \bar{\mu}_s^N)\|_{L_2(U;\mathbb{H})}^2ds.
\end{eqnarray*}
Recalling (\ref{tildeI1-2})-(\ref{tildeI3}), it follows that
\begin{eqnarray*}
	\!\!\!\!\!\!\!\!&&\frac{1}{N} \sum_{i=1}^N \mathbb{E} \|X_{t \wedge \tilde {\tau}}^i - X_{t \wedge \tilde {\tau}}^{i,N}\|_{\mathbb{H}}^2 +\frac{1}{N} \sum_{i=1}^N \mathbb{E} \int_0^{t \wedge \tilde {\tau}}\|X_s^i - X_s^{i,N}\|_{\mathbb{V}}^{\alpha}ds\nonumber\\
	\lesssim\!\!\!\!\!\!\!\!&& \frac{1}{N} \sum_{i=1}^N \mathbb{E} \int_0^{t \wedge \tilde {\tau}} ( 1 + \rho(0, \bar{\mu}_s^N)+\eta(0, \mu_s^N) ) \|X_s^i - X_s^{i,N}\|_{\mathbb{H}}^2 ds\nonumber\\
	\!\!\!\!\!\!\!\!&&+ \frac{1}{N} \sum_{i=1}^N \mathbb{E} \int_0^{t \wedge \tilde {\tau}} \Big[ \frac{1}{N} \sum_{j=1}^N ( 1 + \rho(X_s^j, \bar{\mu}_s^N)+\eta(X_s^{j,N}, \mu_s^N) ) \Big] \|X_s^i - X_s^{i,N}\|_{\mathbb{H}}^2 ds+N^{-\frac{1}{\alpha-1}}.
\end{eqnarray*}
Then due to the definition of the stopping time $\tilde {\tau}$ and using the stochastic Gronwall's lemma (cf.~\cite[Lemma 5.3]{GZ}), we have
\begin{equation}\label{step1result22}
	\frac{1}{N} \sum_{i=1}^N \sup_{t \in [0,T]} \mathbb{E} \|X_{t \wedge \tilde {\tau}}^i - X_{t \wedge \tilde {\tau}}^{i,N}\|_{\mathbb{H}}^2+\frac{1}{N} \sum_{i=1}^N \mathbb{E} \int_0^{T \wedge \tilde {\tau}}\|X_s^i - X_s^{i,N}\|_{\mathbb{V}}^{\alpha}ds\lesssim_{\varepsilon,K} N^{-\frac{1}{\alpha-1}}.
\end{equation}

For the second term on the right-hand side of (\ref{rate2all2}), we note that
\begin{eqnarray*}
	\!\!\!\!\!\!\!\!&&\quad\frac{1}{N} \sum_{i=1}^N \Big\{ \sup_{t \in [0,T]} \mathbb{E}  \|X_t^i - X_t^{i,N}\|_{\mathbb{H}}^{p}  \Big\}^{\frac{2}{ p}} \Big\{ \mathbb{P}(\tilde {\tau} < T) \Big\}^{\frac{ p-2}{ p}}\nonumber\\
	\!\!\!\!\!\!\!\!&&\leq \frac{1}{N} \sum_{i=1}^N \Big\{ \sup_{t \in [0,T]} \mathbb{E}  \|X_t^i\|_{\mathbb{H}}^{p}+\sup_{t \in [0,T]} \mathbb{E} \|X_t^{i,N}\|_{\mathbb{H}}^{p}  \Big\}^{\frac{2}{ p}} \Big\{ \mathbb{P}(\tilde {\tau} < T) \Big\}^{\frac{ p-2}{ p}} \nonumber\\
	\!\!\!\!\!\!\!\!&&\lesssim \Big\{ \mathbb{P}(\tilde {\tau} < T) \Big\}^{\frac{ p-2}{ p}}\nonumber\\
\!\!\!\!\!\!\!\!&&\lesssim \Big\{ \mathbb{P}( \tau^\varepsilon_1 < T < \tau^K_2) + \mathbb{P}(\tau^K_2 \leq T) \Big\}^{\frac{ p-2}{ p}}.
\end{eqnarray*}
By Chebyshev's inequality, we can derive
\begin{eqnarray}\label{tauP12}
	\!\!\!\!\!\!\!\!&&\quad\mathbb{P}( \tau^\varepsilon_1 < T < \tau^K_2)\nonumber\\ \!\!\!\!\!\!\!\!&&\leq\mathbb{P}\bigg(\tilde{\tau}<T, \mathcal{W}_{\beta,\mathbb{H}} ( \bar{\mu}_{\tilde{\tau}}^{N}, \mu_{\tilde{\tau}}^{N})^\beta+ \int_0^{\tilde{\tau}} \mathcal{W}_{\alpha,\mathbb{V}} ( \bar{\mu}_s^{N}, \mu_s^{N})^\alpha ds\geq \varepsilon  \bigg) \nonumber\\
	\!\!\!\!\!\!\!\!&&\leq\mathbb{P}\bigg( \mathcal{W}_{\beta,\mathbb{H}} ( \bar{\mu}_{T \wedge \tilde{\tau}}^{N}, \mu_{T \wedge \tilde{\tau}}^{N})^\beta+ \int_0^{T \wedge\tilde{\tau}} \mathcal{W}_{\alpha,\mathbb{V}} ( \bar{\mu}_s^{N}, \mu_s^{N})^\alpha ds\geq \varepsilon  \bigg) \nonumber\\
	\!\!\!\!\!\!\!\!&&\leq \mathbb{P}\bigg(  \frac{1}{N} \sum_{i=1}^N \|X_{T \wedge \tilde{\tau}}^{i,N} - X_{T \wedge \tilde{\tau}}^i\|_\mathbb{H}^\beta
	+\frac{1}{N} \sum_{i=1}^N  \int_0^{T \wedge \tilde {\tau}}\|X_s^i - X_s^{i,N}\|_{\mathbb{V}}^{\alpha}ds
	\geq \varepsilon \bigg) \nonumber\\
	\!\!\!\!\!\!\!\!&&\leq \frac{1}{\varepsilon} \cdot \bigg(\frac{1}{N} \sum_{i=1}^N \mathbb{E} \|X_{T \wedge \tilde{\tau}}^{i,N} - X_{T \wedge \tilde{\tau}}^i\|_\mathbb{H}^{\beta}+\frac{1}{N} \sum_{i=1}^N \mathbb{E} \int_0^{T \wedge \tilde {\tau}}\|X_s^i - X_s^{i,N}\|_{\mathbb{V}}^{\alpha}ds \bigg).
\end{eqnarray}
Note that
when $\beta>2$,
\begin{equation*}
\mathbb{E} \|X_{T \wedge \tilde{\tau}}^{i,N} - X_{T \wedge \tilde{\tau}}^i\|_\mathbb{H}^\beta
	\leq
\Big\{
\mathbb{E}  \|X_{T \wedge \tilde {\tau}}^{i,N} - X_{T \wedge \tilde {\tau}}^i\|_\mathbb{H}^2 \Big\}^{\frac{\theta\beta}{2}}\cdot
\Big\{
\mathbb{E} \|X_{T \wedge \tilde {\tau}}^{i,N} - X_{T \wedge \tilde {\tau}}^i\|_\mathbb{H}^p \Big\}^{\frac{(1-\theta)\beta}{p}},
\end{equation*}
where $\theta\in(0,1)$ is chosen such that
\begin{equation*}
\frac{\theta\beta}{2}
=
\frac{p-\beta}{p-2}.
\end{equation*}
Hence, in view of (\ref{step1result22}) and (\ref{tauP12}) we have
\begin{equation*}
\mathbb{P}( \tau^\varepsilon_1 < T < \tau^K_2)
\lesssim_{\varepsilon,K}N^{-\frac{1}{\alpha-1}\cdot\frac{p-\beta}{p-2}},
\end{equation*}
which together with (\ref{tauP2}) indicates that
\begin{equation}\label{es11}
	\frac{1}{N} \sum_{i=1}^N \Big\{ \sup_{t \in [0,T]} \mathbb{E}  \|X_t^i - X_t^{i,N}\|_{\mathbb{H}}^{p}  \Big\}^{\frac{2}{ p}} \Big\{ \mathbb{P}(\tilde {\tau} < T) \Big\}^{\frac{ p-2}{ p}}\lesssim_{\varepsilon,K}N^{-\frac{1}{\alpha-1}\cdot{\frac{ p-\beta}{ p}}}.
\end{equation}
When $\beta=2$, the above inequality holds obviously.
Combining  (\ref{step1result22}) with (\ref{es11}),
the proof of (\ref{pocth4}) is completed. \hspace{\fill}$\Box$

\section{Appendix: Proof of Proposition \ref{propos1}}
	Recall
	$$
	Z_j=\varepsilon_j e_{K_j},
	$$
	where $\{(\varepsilon_j,K_j)\}_{j=1}^{\infty}$ are i.i.d. copies of
	$(\varepsilon,K)$. Thus
	$$
	\mathbb P(K_j=k)=p_k,
	\qquad
	\mathbb P(\varepsilon_j=1)
	=
	\mathbb P(\varepsilon_j=-1)
	=
	\frac12.$$

	For each $k\geq2$, the $k$-th coordinate of $Z_j$ is
	$$
	(Z_j)_k
	=
	\varepsilon_j\mathbf 1_{\{K_j=k\}}.
	$$
	Then,
	$$
	(\bar{Z}_N)_k
	=
	\frac1N\sum_{j=1}^N
	\varepsilon_j\mathbf 1_{\{K_j=k\}},
	$$
	and hence
	$$
	\bar{Z}_N
	=
	\sum_{k=2}^{\infty}
	\left(
	\frac1N\sum_{j=1}^N
	\varepsilon_j\mathbf 1_{\{K_j=k\}}
	\right)e_k.
	$$
	We know that
	$$
	\|\bar{Z}_N\|_{\ell^q}^q
	=
	\frac1{N^q}
	\sum_{k=2}^{\infty}
	\Big|
	\sum_{j=1}^N
	\varepsilon_j\mathbf 1_{\{K_j=k\}}
	\Big|^q.
	$$
	Since all summands are nonnegative, this yields
	\begin{equation}\label{r001}
		\mathbb E\|\bar{Z}_N\|_{\ell^q}^q
		=
		\frac1{N^q}
		\sum_{k=2}^{\infty}
		\mathbb E
		\Big|
		\sum_{j=1}^N
		\varepsilon_j\mathbf 1_{\{K_j=k\}}
		\Big|^q.
	\end{equation}

	\noindent\textbf{Proof of the upper bound.}
	Let
	$
	\mathcal K_N:=\sigma(K_1,\ldots,K_N).
	$
	We first condition on \(\mathcal K_N\). For every \(k\geq2\), define
	$$
	I_{N,k}
	:=
	\big\{j\in\{1,\ldots,N\}:K_j=k\big\}.
	$$
	Since $\{\varepsilon_j\}_{j=1}^N$ is independent of
	$\{K_j\}_{j=1}^N$, the variables \(\varepsilon_1,\ldots,\varepsilon_N\)
	remain independent Rademacher random variables under the conditional law given
	\(\mathcal K_N\).
	
	For every $k\geq2$,
	$$
	\sum_{j=1}^N
	\varepsilon_j\mathbf 1_{\{K_j=k\}}
	=
	\sum_{j\in I_{N,k}}\varepsilon_j.
	$$
	Note that \(1<q\leq2\), it follows from H\"{o}lder's inequality under the conditional
	probability measure that
	\begin{equation}\label{r00}
		\mathbb E\Big[
		\big|
		\sum_{j\in I_{N,k}}\varepsilon_j
		\big|^q
		\big|
		\mathcal K_N
		\Big]
		\leq
		\bigg(
		\mathbb E\Big[
		\big|
		\sum_{j\in I_{N,k}}\varepsilon_j
		\big|^2
		\big|
		\mathcal K_N
		\Big]
		\bigg)^{q/2}.
	\end{equation}	
	Since the summation set
	$I_{N,k}$ is $\mathcal K_N$-measurable,
	\begin{equation}\label{r01}
		\mathbb E\Big[
		\big|
		\sum_{j\in I_{N,k}}\varepsilon_j
		\big|^2
		\big|
		\mathcal K_N
		\Big]
		=
		\mathbb E\Big[
		\sum_{i,j\in I_{N,k}}\varepsilon_i\varepsilon_j
		\big|
		\mathcal K_N
		\Big] =
		\sum_{i,j\in I_{N,k}}
		\mathbb E\Big[
		\varepsilon_i\varepsilon_j
		\big|
		\mathcal K_N
		\Big]=
		\sum_{j\in I_{N,k}}1
		=
		|I_{N,k}|.
	\end{equation}
	Substituting (\ref{r01}) into $(\ref{r00})$, we obtain
	\begin{equation}\label{r02}
		\mathbb E\Big[
		\big|
		\sum_{j\in I_{N,k}}\varepsilon_j
		\big|^q
		\big|
		\mathcal K_N
		\Big]
		\leq
		|I_{N,k}|^{q/2}\leq |I_{N,k}|.
	\end{equation}

	By the definition of  the \(\ell^q\)-norm and  applying (\ref{r02}),
	we get
	$$
	\mathbb E\Big[
	\big\|
	\sum_{j=1}^NZ_j
	\big\|_{\ell^q}^q
	\big|
	\mathcal K_N
	\Big]
	=
	\sum_{k=2}^{\infty}
	\mathbb E\Big[
	\big|
	\sum_{j\in I_{N,k}}\varepsilon_j
	\big|^q
	\big|
	\mathcal K_N
	\Big]
	\leq
	\sum_{k=2}^{\infty}|I_{N,k}|.
	$$
	Since each index \(j\in\{1,\ldots,N\}\) belongs to exactly one set \(I_{N,k}\), we deduce that
	\[
	\sum_{k=2}^{\infty}|I_{N,k}|=N.
	\]
	Thus,
	$$
	\mathbb E\Big[
	\big\|
	\sum_{j=1}^NZ_j
	\big\|_{\ell^q}^q
	\big|
	\mathcal K_N
	\Big]
	\leq N.
	$$
	Finally, we conclude that
	$$
	\mathbb E\|\bar{Z}_N\|_{\ell^q}^q
	=
	N^{-q}
	\mathbb E	\big\|
	\sum_{j=1}^NZ_j
	\big\|_{\ell^q}^q
	\leq
	N^{1-q}=N^{-\frac{1}{\alpha-1}}.
	$$
	
	\noindent\textbf{Proof of the lower bound.} Fix \(k\geq2\), and define
	$$
	M_{N,k}
	:=
	\sum_{j=1}^N\mathbf 1_{\{K_j=k\}},
	\quad
	S_{N,k}
	:=
	\sum_{j=1}^N
	\varepsilon_j\mathbf 1_{\{K_j=k\}}.
	$$
	The variable \(M_{N,k}\) counts how many samples have their nonzero coordinate
	at the \(k\)-th location. Since the \(K_j\)'s are i.i.d. and
	\(\mathbb P(K_j=k)=p_k\), we can infer that
	$$
	M_{N,k}\sim\operatorname{B}(N,p_k).
	$$
	
	On the event \(\{M_{N,k}=1\}\), exactly one index \(j_0\) satisfies
	\(K_{j_0}=k\). Thus,
	$$
	S_{N,k}=\varepsilon_{j_0},
	\qquad
	|S_{N,k}|^q=1.
	$$
	Then we have
	\begin{equation}\label{r03}
		\mathbb E|S_{N,k}|^q
		\geq
		\mathbb E\big[
		|S_{N,k}|^q
		\mathbf 1_{\{M_{N,k}=1\}}
		\big]
		=
		\mathbb P(M_{N,k}=1)=
		Np_k(1-p_k)^{N-1}.
	\end{equation}
	Since we choose
	$
	p_k=\frac{C_0}{k(\log k)^2},
	$
	there exists $N_0\in\mathbb N$ such that, for every $N\geq N_0$ and
	every $k\geq N$,
	\begin{equation}\label{r04}	
		p_k\leq \frac1{2N}.
	\end{equation}

	Recall that	if \(0\leq p\leq1/(2N)\), Bernoulli's inequality gives
	\begin{equation}\label{r05}	
		(1-p)^{N-1}
		\geq
		1-(N-1)p
		\geq
		\frac12.
	\end{equation}	
	Thus, by (\ref{r04}) and (\ref{r05}), for \(N\geq N_0\) and \(k\geq N\), we have
	\begin{equation}\label{r06}	
		\mathbb P(M_{N,k}=1)
		\geq
		\frac12Np_k.
	\end{equation}	
	
	Combining (\ref{r001}), (\ref{r03}), and (\ref{r06}), we now derive the lower
	bound. Recall that
	$$
	S_{N,k}
	=
	\sum_{j=1}^N
	\varepsilon_j\mathbf 1_{\{K_j=k\}}.
	$$
	Then, (\ref{r001}) can be rewritten as
	$$
	\mathbb E\|\bar{Z}_N\|_{\ell^q}^q
	=
	\frac1{N^q}
	\sum_{k=2}^{\infty}
	\mathbb E|S_{N,k}|^q\geq
	\frac1{N^q}
	\sum_{k=N}^{\infty}\mathbb E|S_{N,k}|^q.
	$$

	Combining (\ref{r03}) and (\ref{r06}), we obtain
	\begin{equation}\label{r07}	
		\mathbb E\|\bar{Z}_N\|_{\ell^q}^q
		\geq			\frac1{N^q}			\sum_{k=N}^{\infty}\mathbb P(M_{N,k}=1)\geq
		\frac12N^{1-q}
		\sum_{k=N}^{\infty}p_k.
	\end{equation}	
	
	Finally, the function
	$$
	f(x):=\frac1{x(\log x)^2}
	$$
	is decreasing on \([2,\infty)\). Hence, by integral comparison, we deduce
	$$
	\sum_{k=N}^{\infty}\frac1{k(\log k)^2}
	\geq
	\int_N^{\infty}\frac{dx}{x(\log x)^2}
	=
	\frac1{\log N}.
	$$
	Thus,
	\begin{equation}\label{r08}	
		\sum_{k=N}^{\infty}p_k
		=
		C_0
		\sum_{k=N}^{\infty}
		\frac1{k(\log k)^2}
		\geq
		\frac{C_0}{\log N}.
	\end{equation}	
	Substituting (\ref{r08}) into (\ref{r07}), we get
	$$
	\mathbb E\|\bar{Z}_N\|_{\ell^q}^q
	\geq
	\frac{C_0}{2}
	N^{1-q}(\log N)^{-1}=\frac{C_0}{2}N^{-\frac{1}{\alpha-1}}(\log N)^{-1}.
	$$
	We complete the proof.

\vspace{3mm}


\noindent\textbf{Data availability} Data sharing is not applicable to this article as no datasets were generated or analysed during
the current study.

\vspace{3mm}

\noindent\textbf{Conflict of interest} On behalf of all authors, the corresponding author states that there is no conflict of interest.

\end{document}